\documentclass[11pt]{article}

\usepackage{amsmath,amssymb,enumitem,amsthm,subfigure,stmaryrd}
\usepackage{graphics,pstricks,pst-node,color,xy}
\xyoption{all}

\setcounter{topnumber}{1}

\addtolength{\textheight}{1.2in}
\addtolength{\topmargin}{-.6in}
\addtolength{\textwidth}{1.5in}
\addtolength{\oddsidemargin}{-.75in}
\addtolength{\evensidemargin}{-.75in}

\numberwithin{equation}{section}
\newtheorem{thm}{Theorem}[section] 
\newtheorem{prp}[thm]{Proposition}
\newtheorem{lmm}[thm]{Lemma}   
\newtheorem{crl}[thm]{Corollary}

\newtheorem{dfn}[thm]{Definition}
\newtheorem{mythm}{Theorem}

\theoremstyle{definition}

\newtheorem{rmk}{Remark}

\def\eset{\emptyset} 

\def\BE#1{\begin{equation}\label{#1}}
\def\EE{\end{equation}}
\def\lan{\langle}
\def\ran{\rangle}
\def\lr#1{\lan#1\ran}
\def\blr#1{\big\lan#1\big\ran}
\def\bblr#1{\bigg\lan#1\bigg\ran}
\def\ov#1{\overline{#1}}
\def\ti#1{\tilde{#1}}
\def\wt#1{\widetilde{#1}}
\def\e_ref#1{(\ref{#1})}
\def\smsize#1{\begin{small}#1\end{small}}

\def\sf#1{\textsf{#1}}
\def\tn#1{\textnormal{#1}}
\def\Lau#1{\llceil{#1}\rrceil}
\def\coeff#1{\llbracket{#1}\rrbracket}
\def\bigcoeff#1{\big\llbracket{#1}\big\rrbracket}

\def\lra{\longrightarrow}

\def\al{\alpha}
\def\be{\beta}
\def\ga{\gamma}
\def\de{\delta}
\def\ep{\epsilon}
\def\io{\iota}

\def\la{\lambda}
\def\om{\omega}
\def\si{\sigma}
\def\th{\theta}

\def\ze{\zeta}

\def\Om{\Omega}

\def\De{\Delta}

\def\i{\infty}
\def\hb{\hbar}

\def\cA{\mathcal A}
\def\bD{\mathbf D}

\def\C{\mathbb C}
\def\cC{\mathcal C}
\def\ctC{\wt{\mathcal{C}}}
\def\nc{\mathrm{c}}
\def\ntc{\wt{\mathrm{c}}}
\def\fC{\mathfrak C}

\def\bfd{\mathbf d}
\def\D{\mathfrak D}
\def\cD{\mathcal D}
\def\E{\mathbf e}
\def\F{\mathcal F}

\def\nH{\textnormal{H}}
\def\I{\mathfrak i}

\def\cL{\mathcal L}
\def\cM{\mathcal M}
\def\M{\mathfrak M}

\def\O{\mathcal O}
\def\cO{\mathcal O}
\def\P{\mathbb P}
\def\cP{\mathcal P}

\def\Pn{\mathbb P^{n-1}}
\def\R{\mathbb R}
\def\Q{\mathbb Q}

\def\bfr{\mathbf r}

\def\cS{\mathcal S}
\def\T{\mathbb T}
\def\cT{\mathcal T}

\def\U{\mathfrak U}
\def\V{\mathcal V}
\def\cY{\mathcal Y}
\def\Z{\mathbb Z}
\def\cZ{\mathcal Z}
\def\a{\mathbf a}
\def\p{\mathbf p}
\def\cP{\mathcal{P}}
\def\bM{\mathbf{M}}
\def\nd{\textnormal{d}}
\def\nD{\textnormal{D}}
\def\ne{\textnormal{e}}

\def\ev{\textnormal{ev}}

\def\mod{\textnormal{mod~}}

\def\Rs#1{\underset{#1}{\mathfrak R}}

\begin{document}

\title{Mirror Symmetry for\\ Closed, Open, and Unoriented Gromov-Witten Invariants}
\author{Alexandra Popa
and Aleksey Zinger\thanks{Partially supported by DMS grant 0846978}}
\date{\today}
\maketitle

\begin{abstract}
In the first part of this paper, we obtain mirror formulas for twisted genus 0 
two-point Gromov-Witten (GW) invariants of projective spaces and for 
the genus~0 two-point GW-invariants of Fano and Calabi-Yau complete intersections.
This extends previous results for projective hypersurfaces, following
the same approach, but we also completely describe the structure coefficients
in both cases and obtain relations between these coefficients that are vital to
the applications to mirror symmetry in the rest of this paper.
In the second and third parts of this paper, we confirm Walcher's mirror symmetry conjectures
for the annulus and Klein bottle GW-invariants of Calabi-Yau complete intersection threefolds;
these applications are the main results of this paper.
In a separate paper, the genus~0 two-point formulas are used to obtain mirror formulas
for the genus~1 GW-invariants of all Calabi-Yau complete intersections.
\end{abstract}

\tableofcontents

\section{Introduction}
\label{intro_sec}

Gromov-Witten invariants of projective varieties are
counts of curves that are conjectured (and known in some cases)
to possess a rich structure.
In \cite{Gi96}, \cite{Gi99},  and~\cite{LLY},
the original mirror prediction of~\cite{CdGP} for the genus~0 GW-invariants of 
a quintic threefold is verified and shown to be a special case of 
mirror formulas satisfied by the genus~0 one-point GW-invariants 
of complete intersections.
In \cite{BeK}, \cite{bcov0}, \cite{C}, and~\cite{GhT},
these results are used to obtain mirror formulas for two-point genus~0 GW-invariants 
of projective hypersurfaces.
We begin this paper by extending the approach of~\cite{bcov0} to 
Fano and Calabi-Yau projective complete intersections, give a complete description 
of the (equivariant) structure coefficients in both cases, and obtain relations between them.
The mirror formulas and relations of Sections~\ref{equivmainthm_sec} and~\ref{pfs_sec} 
are used in Sections~\ref{annulus_sec} and~\ref{klein_sec} to confirm
the mirror symmetry predictions of Walcher~\cite{W1} concerning 
the annulus and Klein bottle invariants of Calabi-Yau complete intersection threefolds
in the presence of an anti-holomorphic involution; these are the main results of this paper.
Unlike proofs of mirror symmetry in other settings (such as 
in \cite{Po}, \cite{PSW},  and~\cite{bcov1}), our arguments do not rely
on a priori knowledge that the final answers are independent of the toric weights
(i.e.~are purely non-equivariant).

Throughout this paper, $\a=(a_1,a_2,\ldots,a_l)$ denotes a tuple of 
positive integers and $X_{\a}\subset\Pn$, 
a smooth complete intersection of multi-degree~$\a$; for example, $X_{\eset}=\Pn$.
Let
$$\lr{\a}\equiv\prod_{k=1}^la_k,\qquad \a!\equiv\prod_{k=1}^la_k!,
\qquad\a^{\a}\equiv\prod_{k=1}^la_k^{a_k}, \qquad  
|\a|\equiv\sum_{k=1}^l a_k, \qquad
\nu_{\a}\!\equiv n\!-\!|\a|.$$
We consider only the Fano cases, $\nu_{\a}\!>\!0$, and 
the Calabi-Yau cases, $\nu_{\a}\!=0$.
All cohomology groups in this paper are with rational coefficients unless specified otherwise.
Let $\nH\in\!H^2(\Pn)$ denote the hyperplane class. 

Mirror formulas relate GW-invariants of $X_{\a}$ to the hypergeometric series
\BE{tiFdfn_e}
F(w,q)\equiv\sum_{d=0}^{\i}
q^dw^{\nu_{\a}d}\frac{\prod\limits_{k=1}^l\prod\limits_{r=1}^{a_kd}(a_kw+r)}
{\prod\limits_{r=1}^d(w+r)^n}\,.\EE
This is a power series in $q$ with constant term 1 
whose coefficients are rational functions in $w$ which are regular at~$w=0$.
As in \cite{ZaZ}, we denote the subgroup of all such power series by~$\cP$
and define
\BE{DMDfn_e}
\begin{aligned}
&\bD\!:\Q(w)\big[\big[q\big]\big]\lra \Q(w)\big[\big[q\big]\big], 
&\quad& \bM:\cP\lra\cP  \qquad\hbox{by}\\
&\bD H(w,q)\equiv \left\{1+\frac{q}{w}\frac{\nd}{\nd q}\right\}H(w,q),
&\quad&
\bM H(w,q)\equiv\bD\bigg(\frac{H(w,q)}{H(0,q)}\bigg)\,.
\end{aligned}\EE
If $\nu_{\a}\!=0$ and $p\in\Z^{\ge0}$, let
\BE{Ipdfn_e} I_p(q)\equiv \bM^pF(0,q).\EE
For example,
$$I_0(q)=\sum_{d=0}^{\i}q^d\frac{(a_1d)!(a_2d)!\ldots(a_ld)!}{(d!)^n}
\quad\text{if}\quad\nu_{\a}\!=0.$$
If $\nu_{\a}=0$, let 
\BE{mirmap_e}
J(q)\equiv\frac{1}{I_0(q)}\left\{
\sum_{d=1}^{\i}q^d
\frac{\prod\limits_{k=1}^l(a_kd)!}{(d!)^n}
\left(\sum_{k=1}^{l}\sum_{r=d+1}^{a_kd}\frac{a_k}{r}\right)\right\}\quad
\hbox{and}\quad
Q\equiv q\,\ne^{J(q)}.\EE
Thus, the map $q\!\lra\!Q$ is a change of variables;
it will be called the \sf{mirror map}.

\subsection{Mirror formulas for closed GW-invariants}

In light of previous work on genus~0 two-point GW-invariants of projective complete intersections,
the precise statements of Theorems~\ref{nonequiv_thm} and~\ref{main_thm}
concerning these invariants are primarily stepping stones to the results on
open and unoriented GW-invariants of Theorems~\ref{annulus_thm} and~\ref{klein_thm}.
Nevertheless, in this section, we illustrate Theorems~\ref{nonequiv_thm} and~\ref{main_thm}
with explicit examples and describe relations with other work on formulas for 
genus~0 two-point GW-invariants.

Given a smooth subvariety $X\subset\Pn$,
we will write  $\ov\M_{0,m}(X,d)$ for 
the moduli space of stable degree~$d$ maps into $X$
from genus $0$ curves with $m$ marked points and
$$\ev_i:\ov\M_{0,m}(X,d)\lra X$$
for the evaluation map at the $i$-th marked point; see~\cite[Chapter~24]{MirSym}.
For each $i\!=\!1,2,\ldots,m$, let $\psi_i\in H^2(\ov\M_{0,m}(X,d))$
be the first Chern class of the universal cotangent line bundle for
the $i$-th marked point.
Gromov-Witten invariants are obtained by integration of classes 
against the virtual fundamental class of $\ov\M_{0,m}(X,d)$:
\BE{GWdfn_e}\begin{split}
\blr{\tau_{p_1}(\nH^{b_1}),\ldots,\tau_{p_m}(\nH^{b_m})}^X_d
&\equiv \blr{\psi^{p_1}\nH^{b_1},\ldots,\psi^{p_m}\nH^{b_m}}^X_d\\
&\equiv\int_{[\ov\M_{0,m}(X,d)]^{vir}}
\big(\psi_1^{p_1}\ev_1^*\nH^{b_1}\big)\ldots\big(\psi_m^{p_m}\ev_m^*\nH^{b_m}\big),
\end{split}\EE
where $H\!\in\!H^2(\Pn)$ is the hyperplane class.

The two-point mirror formulas take the simplest shape in the two extremal cases:
$\nu_{\a}\!=\!0,n$.

\begin{mythm}\label{proj_thm}
The degree $d\!\geq\!1$ genus~0 two-point descendant 
invariants of $\Pn$ with $n\!\ge\!2$ are given by the following identity in
$\left(\Q[\nH_1,\nH_2]/\{\nH_1^n,\nH_2^n\}\right)\big[\big[\hb_1^{-1},\hb_2^{-1}\big]\big]$:
\begin{equation*}\begin{split}
&\sum_{p_1,p_2\ge0}\! \bblr{\frac{\nH^{n-1-p_1}}{\hb_1\!-\!\psi},
\frac{\nH^{n-1-p_2}}{\hb_2\!-\!\psi}}^{\Pn}_d\!\!\nH_1^{p_1}\nH_2^{p_2}\\
&\hspace{1.5in}
=\frac{1}{\hb_1\!+\!\hb_2}
\sum_{\begin{subarray}{c}p_1+p_2=n-1\\ p_1,p_2\ge0\end{subarray}}
\sum_{\begin{subarray}{c}d_1+d_2=d\\ d_1,d_2\ge0\end{subarray}}
\frac{(\nH_1\!+\!d_1\hb_1)^{p_1}\,(\nH_2\!+\!d_2\hb_2)^{p_2}}
{\prod\limits_{r=1}^{d_1}(\nH_1\!+\!r\hb_1)^n\,
\prod\limits_{r=1}^{d_2}(\nH_2\!+\!r\hb_2)^n}\,.
\end{split}\end{equation*}
\end{mythm}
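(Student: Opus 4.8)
The plan is to pass to $\T=(\C^*)^n$-equivariant cohomology, prove the corresponding equivariant identity, and then specialize. Over $\Pn$ the standard $\T$-action has isolated fixed points $P_1,\dots,P_n$ with $\nH|_{P_i}=\alpha_i$ and invertible tangent-space Euler class $\prod_{j\ne i}(\alpha_i-\alpha_j)$; an equivariant class is determined by its $n$ restrictions, and the stated non-equivariant identity is recovered by setting $\alpha_1=\dots=\alpha_n=0$. The first observation is that the right-hand side is already in ``localized'' form: the summand
\[
\frac{(\nH_j+d_j\hb_j)^{p_j}}{\prod_{r=1}^{d_j}(\nH_j+r\hb_j)^n}
\]
is, with $\nH_j$ in the role of $\nH$, precisely the degree-$d_j$ term of the genus $0$ one-point descendant generating series (the Givental $S$-matrix, equivalently the $J$-function) of $\Pn$, whose closed form $\sum_d q^d\prod_{r=1}^d(\nH+r\hb)^{-n}$ is the content of the $\Pn$ mirror theorem and carries no mirror-map correction. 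The factor $\tfrac1{\hb_1+\hb_2}$ and the convolution $\sum_{d_1+d_2=d}$ are the signatures of a product of two such one-point series glued at a node, and the constraint $p_1+p_2=n\!-\!1$ is the Poincar\'e pairing $\int_{\Pn}\nH^{p_1}\nH^{p_2}=\delta_{p_1+p_2,\,n-1}$ arising from a diagonal $\sum_p\nH^p\!\otimes\!\nH^{n-1-p}$ inserted at that node.

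Hence the heart of the argument is a \emph{factorization}: I would show that the equivariant two-point descendant series on the left equals $\tfrac1{\hb_1+\hb_2}$ times the convolution of two one-point series. Following the localization route of~\cite{bcov0}, I would apply Atiyah--Bott to $[\ov\M_{0,2}(\Pn,d)]$ and organize the fixed-graph sum by the vertices carrying the two marked points. The insertions $\tfrac{\nH^{n-1-p_i}}{\hb_i-\psi}$ turn each fixed-graph contribution into a geometric series in $\psi$, and the standard control on $\psi$-degrees collapses the sum so that only the two legs emanating from the distinguished vertices survive as independent one-point contributions, the rest being absorbed into the equivariant pairing at the breaking node; this simultaneously produces the prefactor $\tfrac1{\hb_1+\hb_2}$ and the degree splitting $d_1+d_2=d$. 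To make this rigorous without assuming non-equivariance a priori, I would cast it in the \cite{bcov0}-style characterization: introduce the equivariant generating series and establish a \emph{recursivity} property (the residues of its $q^d$-coefficient at the poles $\hb=(\alpha_i-\alpha_j)/d$ are governed by strictly lower-degree terms, reflecting the graph recursion obtained by detaching the last edge) together with a \emph{polynomiality/self-consistency} property (suitable images under the operators $\bD$ and $\bM$ of~\eqref{DMDfn_e} are regular in $\hb$). A power series obeying both, with the prescribed degree-$0$ term, is unique.

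I would then check that the explicit right-hand side satisfies exactly these two properties. Polynomiality is a direct manipulation of the hypergeometric factor $\prod_{r=1}^{d_j}(\nH_j+r\hb_j)^{-n}$, and recursivity is the residue computation of the same product at $\hb_j=(\alpha_i-\alpha_j)/d_j$; both are routine given the closed form of $F$ in~\eqref{tiFdfn_e}. The numerator power $(\nH_j+d_j\hb_j)^{p_j}$ is matched to geometry through the divisor and dilaton relations, which shift $\nH\mapsto\nH+d\hb$ under the degree-$d$ evaluation, and the range $p_1+p_2=n\!-\!1$ is matched to the diagonal insertion above. Setting $\alpha_1=\dots=\alpha_n=0$ then recovers the identity in $\bigl(\Q[\nH_1,\nH_2]/\{\nH_1^n,\nH_2^n\}\bigr)[[\hb_1^{-1},\hb_2^{-1}]]$.

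The main obstacle is the factorization step and its compatibility with the equivariant-to-non-equivariant passage. Tracking the $\psi$-class contributions at the two special vertices, verifying that everything beyond the two outer legs collapses into the single node pairing, and producing the \emph{precise} prefactor $\tfrac1{\hb_1+\hb_2}$ rather than some spurious rational function, is where the genuine bookkeeping lies; in the characterization route, the delicate point is proving that the geometric series satisfies the polynomiality constraint (the self-consistency of the mirror transform), since recursivity alone does not pin it down. Matching the hypergeometric closed form of the one-point factor to the geometric one-point function, uniformly in the equivariant parameters, is the remaining technical core.
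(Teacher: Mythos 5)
Your proposal is correct and follows essentially the same route as the paper: the identity is the $\a=\eset$ specialization (at $\al_1=\dots=\al_n=0$) of an equivariant statement proved by the characterization of \cite{bcov0} — recursivity at the poles $\hb=(\al_j-\al_i)/d$, the mutual polynomiality condition, uniqueness given the part mod $\hb^{-1}$, and a residue/localization verification that the explicit hypergeometric right-hand side (here $\nH^p\,\bD^pF(\nH/\hb,Q/\nH^n)$, with no mirror-map correction since $\Pn$ is Fano of large index) satisfies the same properties as the geometric two-point series. The paper packages the factorization you describe as identity \e_ref{main_e2}, with the node pairing appearing through the elementary symmetric polynomials $\si_r$, exactly as in your diagonal-insertion picture.
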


\begin{mythm}\label{descend_thm}
The genus~0 two-point descendant invariants of 
a Calabi-Yau complete intersection $X_{\a}$ in $\Pn$ are given by
the following identity in $\left(\Q[\nH_1,\nH_2]/\{\nH_1^{n-l},\nH_2^{n-l}\}\right)\big[\big[\hb_1^{-1},\hb_2^{-1},q\big]\big]$:
\begin{equation*}\begin{split}
&\sum_{p_1,p_2\ge0}\sum_{d=1}^{\i}Q^d
\bblr{\frac{\nH^{n-1-l-p_1}}{\hb_1-\psi},\frac{\nH^{n-1-l-p_2}}{\hb_2-\psi}}^{X_{\a}}_d
\nH_1^{p_1}\nH_2^{p_2}\\
&\qquad=\frac{\lr{\a}}{\hb_1+\hb_2}
\sum_{\begin{subarray}{c}p_1+p_2=n-1-l\\ p_1,p_2\ge0\end{subarray}}
\left(-1+\ne^{-J(q)\left(\frac{\nH_1}{\hb_1}+\frac{\nH_2}{\hb_2}\right)}
\frac{\bM^{p_1}F\left(\frac{\nH_1}{\hb_1},q\right)}{I_{p_1}(q)}
\frac{\bM^{p_2}F\left(\frac{\nH_2}{\hb_2},q\right)}{I_{p_2}(q)}\right)
\nH_1^{p_1}\nH_2^{p_2}\,,
\end{split}\end{equation*}
with $Q$ and $q$ related by the mirror map~\e_ref{mirmap_e}.
\end{mythm}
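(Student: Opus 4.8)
The plan is to deduce the formula from an equivariant two-point identity on $\Pn$ and then pass to the nonequivariant limit. Let $\T\!=\!(\C^*)^n$ act on $\Pn$ in the standard way, inducing an action on each $\ov\M_{0,2}(\Pn,d)$. The genus~0 two-point invariants of $X_{\a}$ on the left-hand side are recovered, up to the overall factor $\lr{\a}$ and the identification of $H^*(X_{\a})$ with the image of $H^*(\Pn)$, from the invariants of $\Pn$ twisted by the equivariant Euler class $\E$ of the bundle $\bigoplus_{k=1}^l\ev^*\O(a_k)$ whose transverse section cuts out $X_{\a}$; this is the usual comparison between the obstruction bundle of the complete intersection and the twisting bundle on the ambient space, and the truncation $\{\nH_i^{n-l}\}$ reflects the passage from $\dim\Pn\!=\!n\!-\!1$ to $\dim X_{\a}\!=\!n\!-\!1\!-\!l$. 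It therefore suffices to prove the corresponding equivariant statement for the twisted two-point series $\cZ(\hb_1,\hb_2,q)$ and to check that its nonequivariant $q\!\to\!Q$ reduction is the displayed right-hand side; the untwisted case $\a\!=\!\eset$ of this equivariant formula specializes to Theorem~\ref{proj_thm} and serves as a consistency check.

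The engine of the proof is a uniqueness (characterization) principle, in the spirit of~\cite{bcov0} and~\cite{ZaZ}: a power series in $\hb_1^{-1},\hb_2^{-1}$ with coefficients in the localized equivariant cohomology of $\Pn$ is determined by three structural properties — recursivity (a recursion expressing the $\hb_i$-poles of the series in terms of lower-degree data), self-duality (symmetry under $\hb_1\!\leftrightarrow\!\hb_2$ with respect to the equivariant Poincar\'e pairing), and polynomiality (a transform of the series, built from the operator $\bD$, is polynomial rather than merely a Laurent series in the~$\hb_i$). First I would verify that the geometric series $\cZ$ satisfies recursivity and self-duality: recursivity follows from the structure of the $\T$-fixed loci of $\ov\M_{0,2}(\Pn,d)$, which degenerate at the node carrying the descendant insertion so that each fixed-point graph contributes a vertex factor times a lower-degree series, while self-duality follows from the symmetry of the two-point correlator together with the string and dilaton relations.

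Next I would show that the proposed right-hand side — assembled from $F$ via the operators $\bM,\bD$, the normalizations $I_p$, and the mirror map $J$ — satisfies the same three properties. Recursivity and polynomiality are consequences of the hypergeometric recursion built into $F$ (the $\prod_r(a_kw\!+\!r)$ numerator) together with the behaviour of $F$ under $\bD$ and $\bM$ established in the structural results of Section~\ref{equivmainthm_sec}; these are exactly the identities that describe the coefficients $I_p$ completely. Self-duality is the delicate point: it rests on the palindromy relations among the $I_p$ forced by the Calabi-Yau condition $\nu_{\a}\!=\!0$, of the form $I_p\!=\!I_{n-1-l-p}$, which is precisely why those relations are flagged as vital. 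Matching the two sides on the finitely many initial coefficients not constrained by recursivity, the characterization principle forces $\cZ$ to equal the stated expression equivariantly.

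The main obstacle I anticipate is the polynomiality property for the geometric series $\cZ$. Recursivity and self-duality are essentially formal, but polynomiality encodes the nontrivial content: one must show that after applying the appropriate $\bD$-transform the localization contributions, which individually carry poles in the $\hb_i$ and in the equivariant weights, combine into a genuinely polynomial expression. This is where the mirror map $J(q)$ and the change of variables $q\!\to\!Q$ must be introduced to absorb the non-polynomial part, and where the complete-intersection bookkeeping (the powers $\nH^{n-1-l-p_i}$ and the truncation $\nH_i^{n-l}$) has to be tracked with care. Once polynomiality is in hand, the nonequivariant limit is routine and yields the displayed identity.
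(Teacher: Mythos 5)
Your overall architecture is the paper's: reduce to equivariant Euler-class-twisted invariants of $\Pn$, characterize the two-point series by a recursion on its $\hb$-poles plus a polynomiality condition plus finitely many initial coefficients, verify both sides satisfy these, and pass to the non-equivariant limit. However, two of your key attributions are off in ways that would derail the execution. First, there is no ``self-duality'' axiom in the characterization, and the palindromy $I_p=I_{n-1-l-p}$ is not an input to the proof --- in the paper it is a \emph{corollary} of the two-point formula (it is derived in \e_ref{Irefl_e0} by setting $\hb_2=-\hb_1$ in the already-proved identity). The uniqueness principle (Proposition~\ref{uniqueness_prp}) needs only recursivity, mutual polynomiality with respect to a reference series, and vanishing mod~$\hb^{-1}$; the divisibility by $\hb_1+\hb_2$ comes out of the localization identity~\e_ref{pushch_e} for the two-point series, not from a symmetry hypothesis you must verify.

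Second, and more seriously, your sketch never invokes the one-point mirror theorem $\cZ(x,\hb,Q)=\ne^{-J(q)x/\hb}\ne^{-C_1(q)\si_1/\hb}\cY(x,\hb,q)/I_0(q)$ of Givental (\e_ref{Z1ms_e}), which is the external input that makes the whole scheme close up. The polynomiality condition here is \emph{mutual}: the geometric series $\cZ_p$ satisfies the MPC with respect to the one-point series $\cZ$ (proved unconditionally by localization on a graph space inside $\ov\M_{0,m}(\P^1\!\times\!\Pn,(1,d))$ --- no change of variables is needed for this), while the hypergeometric series $\cY_p$ satisfies the MPC with respect to $\cY$. To apply the uniqueness principle to the difference you must know that these two reference series coincide after the mirror transform; that is exactly where $J(q)$ and $Q=q\ne^{J(q)}$ enter, not as a device to ``absorb the non-polynomial part'' of the geometric localization sums. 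Without citing \e_ref{Z1ms_e} (or reproving it), matching initial coefficients of the two-point series does not force equality, so this step needs to be added to your argument.
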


\begin{rmk}
In both theorems, the sums on the right-hand side of the identities
are power series in $\hb_1^{-1}$ and~$\hb_2^{-1}$
(to see this in Theorem~\ref{proj_thm}, divide both the numerator 
and denominator
of each $(p_1,p_2,d_1,d_2)$-summand by $\hb_1^{nd_1}\hb_2^{nd_2}$).
The two theorems state in particular that these sums are divisible by $\hb_1\!+\hb_2$.
This can be seen directly in the case of Theorem~\ref{proj_thm} as follows.
Divisibility by $\hb_1\!+\!\hb_2$ of a series in $\hb_1^{-1}$ and~$\hb_2^{-1}$
is equivalent to the vanishing of the series evaluated at $(\hb_1,\hb_2)=(\hb,-\hb)$.
The sum on the right-hand side of Theorem~\ref{proj_thm} evaluated 
at $(\hb_1,\hb_2)=(\hb,-\hb)$ and multiplied by $H_1\!-\!H_2\!+\!d\hb$~is
\begin{equation*}\begin{split}
&\sum_{\begin{subarray}{c}d_1+d_2=d\\ d_1,d_2\ge0\end{subarray}}
\frac{(\nH_1\!+\!d_1\hb)^n-(\nH_2\!-\!d_2\hb)^n}{\prod\limits_{r=1}^{d_1}(\nH_1\!+\!r\hb)^n
\prod\limits_{r=1}^{d_2}(\nH_2\!-\!r\hb)^n}
= \frac{1}{\prod\limits_{r=1}^{d-1}(\nH_1\!+\!r\hb)^n}
-\frac{1}{\prod\limits_{r=1}^{d-1}(\nH_2\!-\!r\hb)^n}\\
&\hspace{1in}
+\sum_{\begin{subarray}{c}d_1+d_2=d\\ d_1,d_2\ge1\end{subarray}}\left(\frac{1}{\prod\limits_{r=1}^{d_1-1}(\nH_1\!+\!r\hb)^n
\prod\limits_{r=1}^{d_2}(\nH_2\!-\!r\hb)^n}
-\frac{1}{\prod\limits_{r=1}^{d_1}(\nH_1\!+\!r\hb)^n
\prod\limits_{r=1}^{d_2-1}(\nH_2\!-\!r\hb)^n}
\right)=0;
\end{split}\end{equation*}
the first equality above uses $\nH_1^n,\nH_2^n=0$.
\end{rmk}

Theorem~\ref{descend_thm} leads to a simple identity for 
the primary GW-invariants of a Calabi-Yau complete intersection~$X_{\a}$.
Applying $Q\frac{\nd}{\nd Q}=\frac{q}{I_1(q)}\frac{\nd}{\nd q}$
to both sides of the identity in Theorem~\ref{descend_thm} and considering
the coefficient of 
$\nH_1^{n-1-l-b}\nH_2^{b+1}$, we~obtain
$$\frac{1}{\hb_1\hb_2}\sum_{d=1}^{\i}d\blr{\nH^{b},\nH^{n-2-l-b}}_d^{X_{\a}}\,Q^d
=\frac{\lr{\a}}{\hb_1+\hb_2}
\left(\frac{I_{n-1-l-b}(q)}{\hb_1\,I_1(q)}+
\frac{I_{b+1}(q)}{\hb_2\,I_1(q)}-\frac{1}{\hb_1}-\frac{1}{\hb_2}\right),$$
since  $1\!+\!q\frac{\nd J(q)}{\nd q}\!=\!I_1(q)$.
Multiplying both sides 
by $\hb_1\!+\!\hb_2$ 
and setting $\hb_2\!=-\hb_1$, we~obtain 
\begin{equation}\label{Irefl_e0}
I_{b+1}(q)=I_{n-1-l-b}(q)\quad\forall~
b=0,1,\ldots,n\!-\!l\!-\!2.
\end{equation}
The last two equations give
\begin{equation}\label{CY2GW_e}
\lr{\a}+\sum_{d=1}^{\i}d\blr{\nH^{b_1},\nH^{b_2}}^{X_{\a}}_d\,Q^d
=\lr{\a}\frac{I_{b_1+1}(q)}{I_1(q)}
\qquad\textnormal{if}~~b_1\!+b_2=n\!-\!l\!-\!2.
\end{equation}
Taking $(b_1,b_2)=(1,n\!-\!l\!-\!3)$ in \e_ref{CY2GW_e} and 
applying the divisor equation \cite[Section~26.3]{MirSym}, 
we obtain the following identity for one-point primary GW-invariants
of a Calabi-Yau complete intersection $X_{\a}$:
\begin{equation}\label{CY1GW_e}
\lr{\a}+\sum_{d=1}^{\i}d^2\blr{\nH^{n-l-3}}^{X_{\a}}_d\,Q^d
=\lr{\a}\frac{I_2(q)}{I_1(q)}.
\end{equation}

Tables~\ref{BPS0_table}-\ref{BPS4_table}  show low-degree 
two-point genus~$0$ BPS numbers, 
defined from the GW-invariants by equation~(2) in~\cite{KP}, 
for all multi-degree~$\a$ complete intersections $X_{\a}$
in~$\Pn$ with $n\!\le\!10$.
As predicted by Conjecture~0 in~\cite{KP}, these numbers are integers;
using a computer program,
we have confirmed this conjecture for all degree $d\!\le\!100$ two-point 
BPS counts in all Calabi-Yau complete intersections $X_{\a}$ 
in~$\Pn$ with $n\!\le\!10$. 
The degree~1 and~2 BPS numbers in these cases match 
the usual Schubert calculus computations on $G(2,n)$ and $G(3,n)$, respectively;
see~\cite{Ka}.
The degree~3 numbers for the hypersurfaces agree with~\cite{ES};
it should be possible to verify our degree~3 numbers 
for the other complete intersections in the tables 
by the approach of~\cite{ES} as well.

\begin{table}\centering{\renewcommand{\arraystretch}{1.15}
\scalebox{0.85}{
\begin{tabular}{c||c|c|c|c}
\hline
d&1&2&3&4\\
\hline
$X_7$&1707797&510787745643&222548537108926490&113635631482486991647224\\
$X_{26}$&616896&41762262528&4088395365564096&468639130901813987328\\
$X_{35}$&344925&10528769475&465037227025650&24049433312314947000\\
$X_{44}$&284672&6749724672&231518782306304&9297639201854554112\\
$X_{225}$&257600&4672315200&121622886740800&3703337959222528000\\
$X_{234}$&169344&1695326976&24368988329856&409711274829020160\\
$X_{333}$&134865&959370561&9805843550034&117225412143917130\\
$X_{2224}$&126976&755572736&6403783700480&63420292743217152\\
$X_{2233}$&101088&427633344&2578114145376&18160214808655872\\
\hline
\end{tabular}}}
\vspace{0.5mm}
\caption{Low-degree genus~$0$ BPS numbers $(\nH^2,\nH^2)$ for some Calabi-Yau~$5$-folds}
\label{BPS0_table}
\vspace{5.2mm}

\centering{\renewcommand{\arraystretch}{1.15}
\scalebox{0.85}{
\begin{tabular}{c||c|c|c|c}
\hline
d&1&2&3&4\\
\hline
$X_8$&37502976&224340704157696&2000750410187341381632&21122119007324663457380794368\\
$X_{27}$&12302724&14461287750168&25229820971457458076&52062878981745707203195872\\
$X_{36}$&5983632&2687545163520&1790676521197504848&1410987322122907728701952\\
$X_{45}$&4207200&1199825510400&507532701727557600&253883290498940295168000\\
$X_{226}$&4568832&1218545282304&480017733854171904&223463727594724776026112\\
$X_{235}$&2556900&308135971800&54819457086152700&11523817961861217228000\\
$X_{244}$&2113536&197815492608&27330245107728384&4461495054506601185280\\
$X_{334}$&1682208&112043367936&11011993317434016&1278661763157122064384\\
\hline
\end{tabular}}}
\vspace{0.5mm}
\caption{ Low-degree genus~$0$ BPS numbers $(\nH^2,\nH^3)$ for some Calabi-Yau~$6$-folds}
\label{BPS1_table}
\vspace{5.2mm}

\centering{\renewcommand{\arraystretch}{1.15}
\scalebox{0.85}{
\begin{tabular}{c||c|c|c}
\hline
d&2&3&4\\
\hline
$X_9$&93777295128674544&17873898563070361396216980&4116769336772585598746250465113376\\
$X_{28}$&4927955151077376&162926148665902467481600&6500105641339003383917401800704\\
$X_{37}$&705385191838824&7728929806910065428150&102149074253694894133257041184\\
$X_{46}$&232110378925056&1366213248304683678720&9698512727764286393809084416\\
$X_{55}$&161520243390000&777366857564506697500&4511987527454184551984500000\\
\hline
\end{tabular}}}
\vspace{0.5mm}
\caption{Low-degree genus~$0$ BPS numbers $(\nH^2,\nH^4)$ for some Calabi-Yau $7$-folds}
\label{BPS2_table}
\vspace{5.2mm}

\centering{\renewcommand{\arraystretch}{1.15}
\scalebox{0.85}{
\begin{tabular}{c||c|c|c}
\hline
d&2&3&4\\
\hline
$X_9$&156037426159482684&33815935806268253433549768&8638744084627099110538662706812804\\
$X_{28}$&7991674345455616&299081290134892802629632&13191988997947686388859151876096\\
$X_{37}$&1140060797165178&14119492055187150903348&206104052757048604579337400666\\
$X_{46}$&374346228782592&2489348580867704950272&19510528916120073780261924864\\
$X_{55}$&260419900772500&1415758838048143140000&9071479905327228206518687500\\
\hline
\end{tabular}}}
\vspace{0.5mm}
\caption{Low-degree genus~$0$ BPS numbers $(\nH^3,\nH^3)$ for some Calabi-Yau $7$-folds}
\label{BPS3_table}
\vspace{5.2mm}

\centering{\renewcommand{\arraystretch}{1.15}
\scalebox{0.82}{\setlength{\tabcolsep}{2pt}
\begin{tabular}{c||c|c|c}
\hline
d&2&3&4\\
\hline
$(\nH^2,\nH^5)$&40342298386119224000&174824389112955477418055016000&942582519217090098297647146585590400000\\
$(\nH^3,\nH^4)$&100290980400305376000&546627811934015785499223984000&3538531932815556807325167617597092800000\\
\hline
\end{tabular}}}
\vspace{0.5mm}
\caption{Low-degree genus~$0$ BPS numbers for $X_{10}\subset\P^9$}
\label{BPS4_table}
\end{table}

The genus~0 GW-invariants of the form~\e_ref{GWdfn_e} with $m\!=\!1$
are often assembled into a generating function, known 
as \textsf{the small $J$-function} or \textsf{Givental's $J$-function}.
Explicit closed formulas for the small $J$-function are obtained in
\cite{Gi96}, \cite{Gi99},  and~\cite{LLY} and used in many computations throughout GW-theory.
In light of \cite[Theorem~1]{LP}, the small $J$-function determines  all genus~0 
GW-invariants of projective complete intersections of the form~\e_ref{GWdfn_e}.
However, \cite[Theorem~1]{LP} is yet to be directly used to express a generating function
for the GW-invariants of the form~\e_ref{GWdfn_e}, even with $m\!=\!2$, in terms of 
the small $J$-function.
The relation \cite[Formula~1.1]{BeK} determines a generating function for the GW-invariants 
of the form~\e_ref{GWdfn_e} with $m\!=\!2$ in terms of the small $J$-function and
indirectly encodes the consequences of \cite[Theorem~1]{LP} relevant to the $m\!=\!2$ case.
Unfortunately, \cite[Formula~1.1]{BeK} is not a completely explicit relationship.
A different approach, more in the spirit of  \cite{Gi96} and~\cite{Gi99}, 
is used in~\cite{bcov0} to express  a generating function for the GW-invariants 
of the form~\e_ref{GWdfn_e} with $m\!=\!2$ as a linear combination of derivatives
in terms of the small $J$-function, with details sufficient for the computation 
of genus~1 invariants in~\cite{bcov1}.
The same approach is used in Section~\ref{pfs_sec} of this paper to obtain 
a more precise description of the structure coefficients in 
Theorems~\ref{nonequiv_thm} and~\ref{main_thm}, which is vital to the computations
of open and oriented invariants in Sections~\ref{annulus_sec} and~\ref{klein_sec}.
In~\cite{C}, the approach of~\cite{bcov0} is incorporated into
the Mirror Principle of~\cite{LLY}.

In the theory of Frobenius structures, all genus~0 GW-invariants
with descendants at only one marked point,
i.e.~as in~\e_ref{GWdfn_e} with arbitrary~$m$, but with 
pull-backs of arbitrary elements of~$H^*(X)$, not just powers of the hyperplane class,
and with  $p_i\!=\!0$ for all $i\!<\!m$,
are assembled into a generating function, called \textsf{the big $J$-function};
see \cite[Section~1]{Gi98}. 
According to Dubrovin's Reconstruction Formula,
the big $J$-function determines a generating function for all genus~0 GW-invariants
of a symplectic manifold~$X$, i.e.~as in~\e_ref{GWdfn_e}, but 
with pull-backs of arbitrary elements of~$H^*(X)$;
see \cite[(6.46),(6.48)]{Du}, \cite[Section~1]{Gi98}, \cite[Theorem~1]{Gi04}.
However, there is no simple closed formula for the big $J$-function,
even when restricted to the pull-back of $H^*(\Pn)$ in~$H^*(X)$.
Unfortunately, the big $J$-function is sometimes called the one-point $J$-function;
this has lead to some confusion in GW-theory as to whether 
\cite[Formula~1.1]{BeK} and Theorem~\ref{main_thm} in this paper are somehow 
contained in Dubrovin's Reconstruction Formula.
In light of the divisor and string relations \cite[Section~26.3]{MirSym},
the small $J$-function is essentially the restriction of 
the big $J$-function to $H^0(X)\!\oplus\!H^2(\Pn)|_X$;
the analogous statement holds for the small and big generating functions
with $\psi$-classes at two marked points.
While~\e_ref{main_e2} can be viewed as the restriction to $H^0(X)\!\oplus\!H^2(\Pn)|_X$
of a similar relationship for big generating functions,
this is not the case with~\e_ref{main_e1} as Dubrovin's Reconstruction Formula mixes
invariants with different numbers of marked points.
In fact, \cite{GhT} is essentially dedicated to showing that
Dubrovin's Reconstruction Formula restricts to the collection of GW-invariants of 
the form~\e_ref{GWdfn_e}, with $m\!=\!2$ and with pull-backs of powers of the hyperplane class,
and leads to some version of~\e_ref{main_e1};
the argument in~\cite{GhT} uses a version of the differential operators that 
appear in our Theorem~\ref{main_thm}.
In summary, \cite[Formula~1.1]{BeK} and Theorem~\ref{main_thm} in this paper are 
{\it not} contained in Dubrovin's Reconstruction Formula.

\subsection{Mirror formulas for open and unoriented GW-invariants}

Given a symplectic manifold $(X,\om)$ endowed with an anti-symplectic involution $\Om\!:X\!\lra\!X$,
it is natural to fix an $\om$-compatible almost complex structure~$J$ anti-commuting with~$\nd\Om$
and consider $J$-holomorphic maps $\ti{f}\!:\ti{C}\!\lra\!X$
from (possibly nodal) Riemann surfaces~$\ti{C}$ endowed with anti-holomorphic involutions~$\tau$ 
so that 
$$\ti{f}\circ\tau=\Om\circ\ti{f}\!: \ti{C}\lra X.$$
Such a triple $(\ti{C},\tau,\ti{f})$ will be called an \sf{$\Om$-invariant map to~$X$}.
The notion of isomorphic stable maps in GW-theory naturally extends to that of isomorphic 
stable $\Om$-invariant maps, by requiring compatibility with the anti-holomorphic involutions.
Such triples have three discrete parameters: the degree~$\be$ of~$\ti{f}$, 
the genus~$g$ of~$\ti{C}$, and the number~$h$ of fixed components of~$\tau$,
which roughly correspond to the boundary components of~$\ti{C}/\tau$.  
Each moduli space $\ov\M_{g,h,0}(X,\Om,\be)$ of stable $\Om$-invariant maps with fixed distinct
parameters is expected to carry a virtual fundamental class, giving rise to open
and unoriented
GW-invariants.\footnote{The last subscript in $\ov\M_{g,h,0}(X,\Om,\be)$ indicates no marked points.}

If $\ti{C}$ is a smooth Riemann surface of  genus~0 (i.e.~$\ti{C}\!=\!\P^1$), 
the only possible quotients $\ti{C}/\tau$
are $\R P^2$ ($h\!=\!0$) and the disk ($h\!=\!1$).
The former invariants should vanish according to \cite[Section~3.3]{W1}.
Disk invariants of symplectic 4- and 6-folds endowed with anti-symplectic involutions
are defined in \cite{So}.
Such manifolds include smooth Calabi-Yau complete intersection threefolds $X_{\a}\!\subset\!\Pn$
preserved by the involution
\BE{Omdfn_e}
\Om:\Pn\!\lra\!\Pn\,, \qquad
\Om([z_1,z_2,\ldots,z_n])\equiv
\begin{cases}
[\bar{z}_2,\bar{z}_1,\ldots,\bar{z}_{n},\bar{z}_{n-1}],&\hbox{if}~2|n;\\
[\bar{z}_2,\bar{z}_1,\ldots,\bar{z}_{n-1},\bar{z}_{n-2},\bar{z}_n],
&\hbox{if}~2\!\not|n.
\end{cases}\EE
The disk invariants are computed in~\cite{PSW} in the case $\a\!=\!(5)$
and for other CY CI threefolds $X_{\a}$ in~\cite{Sh}.
In these cases, the disk invariants are related to Euler classes of 
certain vector bundles over moduli spaces of stable $\Om$-invariant maps
to~$\Pn$.
The Localization Theorem~\cite{ABo} then reduces these invariants 
to sums over graphs.

Positive-genus analogues of the disk invariants of~\cite{So} have not yet been defined 
mathematically.
However, physical considerations of~\cite{W1} lead to explicit localization data
for such invariants of~$(X_{\a},\Om)$, whenever $X_{\a}$ is a Calabi-Yau 
complete intersection threefold.
As in the disk case, the localization data describes the invariants as sums of 
rational functions in several variables over graphs;
the resulting sums are in particular predicted to be weight-independent,
i.e.~not dependent on the variables involved.
If $\ti{C}$ is a smooth Riemann surface of  genus~1 (i.e.~$\ti{C}$ is a two-torus), 
the only possible quotients $\ti{C}/\tau$ are
the Klein bottle ($h\!=\!0$), the Mobius band ($h\!=\!1$), and the annulus ($h\!=\!2$).
According to \cite[Section~3.3]{W1}, the Mobius band invariants vanish.
In the other two genus~1 subcases, the invariants are predicted to be described
by explicit mirror formulas. 

After reviewing the equivariant setting of \cite{W1} in Section~\ref{OmEquiv_subs},
we recall the graph-sum description of the annulus invariants in Section~\ref{setup_subs}
and of the Klein bottle invariants in Section~\ref{kleindfn_subs}.
We then confirm the mirror symmetry predictions of~\cite{W1} for these invariants, 
{\it without} assuming weight independence;
see Theorems~\ref{annulus_thm} and~\ref{klein_thm} below.
This confirmation implies that the annulus and Klein bottle invariants of
Calabi-Yau complete intersections threefolds~$X_{\a}$ 
are well-defined (independent of the torus weights).
Since the power series $I_0(q)$, $I_1(q)$, $I_2(q)$, and $J(q)$ defined by \e_ref{Ipdfn_e}
and~\e_ref{mirmap_e} are the same for the tuples $(a_1,\ldots,a_l)$ and $(a_1,\ldots,a_l,1)$, 
these invariants for  $X_{(a_1,\ldots,a_l)}$ and~$X_{(a_1,\ldots,a_l,1)}$ are the same,
as expected.
Both types of invariants vanish for odd-degree maps; thus, both theorems concern
only even-degree invariants.\footnote{On the other hand, the disk invariants
in even degrees are expected to vanish according to \cite{W1};
see also \cite[Section~1.5]{PSW}.}

\begin{mythm}\label{annulus_thm} 
The degree $2d$ annulus invariants $A_{2d}$ as described in Definition~\ref{A2d_dfn}
are weight-independent and satisfy
\begin{equation}\label{ann_e}
Q\frac{\nd}{\nd Q}\Big[\sum_{d=1}^{\i}Q^dA_{2d}\Big]
=-\frac{1}{2\lr{\a}}\frac{I_1(q)}{I_2(q)}
\left[\Big\{Q\frac{\nd}{\nd Q}\Big\}^2Z_{disk}(Q)\right]^2\,,
\end{equation}
where $q$ and $Q$ are related by the mirror map \e_ref{mirmap_e}
and $Z_{disk}(Q)$ is the disk potential given by~\e_ref{Zdiskdfn_e}.
\end{mythm}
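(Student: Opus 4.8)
The plan is to start from the explicit localization graph-sum expression for $A_{2d}$ recorded in Definition~\ref{A2d_dfn} and to exploit the degeneration geometry forced by the anti-holomorphic involution. The complex double of an annulus is a genus~$1$ curve, and the stable degenerations relevant to localization are the ``banana'' configurations: two copies $\P^1_{\pm}$ glued at a pair of conjugate nodes, with $\tau$ acting as an anti-holomorphic involution on each $\P^1_{\pm}$ and exchanging the two nodes. Passing to the quotient, such a configuration is precisely two disks glued at a single interior node, with the two disk boundaries giving the two boundary circles of the annulus. Thus I expect every $\Om$-invariant graph contributing to $A_{2d}$ to decompose, after cutting at this node, into an ordered pair of disk graphs exchanged by~$\Om$, together with a smoothing factor supported at the node. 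The first step is to make this bijection precise at the level of decorated graphs and to record the edge/vertex factors produced by the node; a useful consistency check is that, since even-degree disk invariants are expected to vanish (see the footnote above), both disk halves carry odd degree and their degrees sum to the even total $2d$, matching the left-hand side of~\e_ref{ann_e}.

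The second step is to recognize the two factors and the propagator. Each disk half is, by the computations of~\cite{PSW} and~\cite{Sh}, governed by the equivariant disk series whose weight-independent limit is the disk potential~$Z_{disk}$ of~\e_ref{Zdiskdfn_e}; carrying a marked point at the gluing node and applying the degree operator $Q\frac{\nd}{\nd Q}$ turns each half into $\{Q\frac{\nd}{\nd Q}\}^2Z_{disk}(Q)$, which accounts for the square on the right-hand side of~\e_ref{ann_e}. The node itself contributes a closed-string propagator: inserting the normalized diagonal splitting $\frac{1}{\lr{\a}}\sum_{c}\nH^c\otimes\nH^{n-1-l-c}$ at the node and summing over the torus-fixed points couples the two disk halves through the \emph{equivariant} genus~$0$ two-point descendant function, which is given in closed form by Theorem~\ref{descend_thm}; the factor $\frac{1}{\lr{\a}}$ is exactly the normalization of the Poincar\'e pairing on $X_{\a}$, since $\int_{X_{\a}}\nH^{n-1-l}=\lr{\a}$. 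Evaluating this propagator on the relevant diagonal classes, and using the reflection relations~\e_ref{Irefl_e0} together with~\e_ref{CY2GW_e}, I expect the entire node contribution to collapse to the scalar $-\frac{1}{2\lr{\a}}\frac{I_1(q)}{I_2(q)}$, the factor $\frac12$ arising as the symmetry factor that exchanges the two disk halves $\P^1_+\leftrightarrow\P^1_-$.

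The main obstacle is establishing weight-independence \emph{without} assuming it: the graph sum is an a priori ratio of rational functions in the torus weights with high-order poles, and these must be shown to cancel so that only the weight-free expression on the right of~\e_ref{ann_e} survives. The strategy here is to feed the closed-form equivariant two-point function of Theorem~\ref{descend_thm} into the gluing node, so that the only weight-dependence remaining is carried by the disk halves, which by~\cite{PSW,Sh} are already known to be regular at the relevant weights. The delicate point is that the residue extraction at the node mixes the cotangent-line insertions of the two disks with the equivariant parameters $\hb_1,\hb_2$, so one must verify that after summing over the diagonal the poles in the equivariant variables cancel; this is exactly where the structure-coefficient relations established in Sections~\ref{equivmainthm_sec} and~\ref{pfs_sec} are needed. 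Once regularity is secured, setting the weights to their non-equivariant values and matching the coefficient of $Q^{2d}$ yields~\e_ref{ann_e}, with the mirror map~\e_ref{mirmap_e} entering through the change of variables $Q=q\,\ne^{J(q)}$ implicit in both $Z_{disk}$ and the $I_p(q)$.
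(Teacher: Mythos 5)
Your overall skeleton matches the paper's: start from the graph sum of Definition~\ref{A2d_dfn}, view it as two disk halves coupled through the genus~0 two-point function, substitute the closed-form mirror expression for that two-point function, and reduce to the disk potential. But the two load-bearing steps of the argument are missing, and one key input is misidentified.

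First, you repeatedly invoke Theorem~\ref{descend_thm}, which is the \emph{non-equivariant} two-point formula; it cannot be ``fed into the gluing node,'' because the node couples the two disk halves at torus-fixed points $P_i$, $P_j$ with the descendant variables frozen at the equivariant values $\hb_1=2\al_i/\ga$, $\hb_2=2\al_j/\de$. What is actually needed is the equivariant Theorem~\ref{main_thm}, which writes $\cZ(\al_i,\al_j,\hb_1,\hb_2,Q)$ as $\frac{\lr\a}{\hb_1+\hb_2}\sum(-1)^r\si_r\,\cY_{p_1}(\al_i,\hb_1,q)\cY_{p_2-l}(\al_j,\hb_2,q)$. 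Relatedly, \e_ref{Irefl_e0} and \e_ref{CY2GW_e} play no role in the proof; the factor $I_1(q)/I_2(q)$ does not come from the Poincar\'e pairing or the reflection symmetry of the $I_p$, but from the bookkeeping of $Q\frac{\nd}{\nd Q}=\frac{q}{I_1(q)}\frac{\nd}{\nd q}$ against the normalizations in $\D^2\cY_0$.

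Second, and more seriously, you assert that after applying $Q\frac{\nd}{\nd Q}$ ``each half turns into $\{Q\frac{\nd}{\nd Q}\}^2Z_{disk}$'' and that ``the entire node contribution collapses'' to $-\frac{1}{2\lr\a}\frac{I_1}{I_2}$, but you give no mechanism for either claim. In the paper these are exactly the content of two results you would still have to prove. (i) Corollary~\ref{Sij_crl2}: applying $\{\frac{\al_i}{\hb_1}+\frac{\al_j}{\hb_2}+\nD\}$ to the double sum of $\cY$-products cancels the $\frac{1}{\hb_1+\hb_2}$ prefactor and rewrites the result as $\frac{I_2}{I_1}\cY_2\otimes\cY_2$ plus terms with $p_1<2$ or $p_2<2$. (ii) Lemma~\ref{res_lmm}/Corollary~\ref{res_crl}: the weighted sum $\sum_{i,\ga}\hb^{-2}\al_i^{-l}\cD_{i,\ga}\,q^{\al_i/\hb}\D^p\cY_0(\al_i,\hb,q)|_{\hb=2\al_i/\ga}$ is, by the Residue Theorem on $S^2$, equal to a single residue at $w=0$ that \emph{vanishes} for $p<2$ and is manifestly weight-independent and equal to $\frac{1}{2I_2}\{q\frac{\nd}{\nd q}\}^2$ of the disk sum for $p=2$. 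This residue lemma is simultaneously what kills all the unwanted terms, what produces the square of the second derivative of $Z_{disk}$, and what establishes weight-independence without assuming it. Your proposed route to weight-independence --- that the disk halves are ``already known to be regular at the relevant weights'' by \cite{PSW} and \cite{Sh} --- does not work: the paper notes explicitly that the disk-case argument relies on a priori weight-independence, which is unavailable here, and in any case regularity of each summand says nothing about the sum over $(i,\ga,j,\de)$ collapsing to a weight-free expression. Without the residue identity your proof has no way to perform that collapse.
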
 

This confirms the prediction of \cite[(5.21),(5.22)]{W1} with $f_t^{(0,2)}\!=\!0$.
In~\cite{W1}, $\ti{n}_d^{(0,2)}\!=\!A_d$,
the variables $(z,q\!=\!\ne^t)$ are our variables $(q,Q)$,
\BE{Zdiskdfn_e}
-\I\De(q)=Z_{disk}(Q)\equiv \sum_{d\in\Z^+\,\tn{odd}}\!\!\!Q^{\frac{d}{2}}N_d^{disk}\EE
is the disk potential (describing the disk invariants $N_d^{disk}$), and 
the power series~$C$ is the genus~0 generating function
whose third derivative with respect to~$t$ is given by~\e_ref{CY1GW_e}.
It is shown in \cite{Sh}, as well as in \cite{PSW} in the $\a\!=\!(5)$ case, that 
\BE{Fdisk_e}
 Z_{disk}(Q)=  \frac{2}{I_0(q)}
\sum\limits_{d\in\Z^+\,\tn{odd}}
q^{\frac{d}{2}}\frac{\prod\limits_{r=1}^l(a_rd)!!}{(d!!)^n}\EE
whenever all components of $\a$ are odd; otherwise, $Z_{disk}(Q)=0$.
This formula is obtained from equation~\e_ref{diskdfn_e}, which is
the disk analogue of the graph-sum definition~\e_ref{A2ddfn_e} of the annulus invariants.
The argument in~\cite{PSW} deducing \e_ref{Fdisk_e} from~\e_ref{diskdfn_e}
is rather delicate, limited to the case $\a\!=\!(5)$, and 
relies on the weight independence of the right-hand side
in~\e_ref{diskprp_e}, which is an a priori fact in the disk case.
The $(b,p)\!=\!(0,0)$ case of Lemma~\ref{res_lmm} (which is used in 
the proof of Theorem~\ref{annulus_thm}) gives a simple direct argument for this step
in~\cite{PSW}; this approach works for all tuples~$\a$ and does not 
presume weight independence.
A similar argument is used in~\cite{Sh}, based
on an independently discovered variation of  Lemma~\ref{res_lmm}
which is applicable in an overlapping set of cases.

\begin{mythm}\label{klein_thm} 
The degree $2d$ one-point Klein bottle invariants $\ti{K}_{2d}$ as described 
in Definition~\ref{K2d_dfn} are weight-independent and satisfy
\BE{klein_e}
\sum_{d=1}^{\i}Q^d\ti{K}_{2d}
=-Q\frac{\nd}{\nd Q}\ln\big((1\!-\!\a^{\a}q)^{1/4}I_1(q)\big)\,,\EE
where $q$ and $Q$ are related by the mirror map \e_ref{mirmap_e}.
\end{mythm}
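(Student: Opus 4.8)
The plan is to mirror the strategy of the annulus case (Theorem~\ref{annulus_thm}): start from the graph-sum definition of $\ti{K}_{2d}$ in Definition~\ref{K2d_dfn}, rewrite it as a sum of rational functions in the torus weights over the $\Om$-invariant genus-$1$ graphs whose quotient is a Klein bottle, and then collapse this sum to a residue computation to which Lemma~\ref{res_lmm} applies. The reason for routing everything through residues is the same as for the annulus: Lemma~\ref{res_lmm} simultaneously evaluates the localization sum in closed form \emph{and} shows that the apparent dependence on the torus weights cancels, so that weight-independence is proved rather than assumed.

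The key structural step is to decompose the $\Om$-invariant graphs. The constraint $\ti{f}\circ\tau=\Om\circ\ti{f}$ forces the underlying $T$-fixed graph to be symmetric under the involution that $\Om$ induces on the fixed-point data, and for the quotient to be a Klein bottle this involution must act freely on the one-loop skeleton. I expect each graph to split into a distinguished $\Om$-invariant loop carrying the nontrivial genus-$1$ topology, to which closed-string strands are attached at the vertices. Summing over the strands at each vertex replaces the raw localization factors by the normalized series built from $F$, its $\bM$-transforms, and the periods $I_p$ --- exactly the equivariant building blocks that underlie Theorem~\ref{descend_thm} --- while the reflection relations~\eqref{Irefl_e0} collapse the bookkeeping of the hyperplane insertion at the single marked point.

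Applying Lemma~\ref{res_lmm} to the resulting one-parameter sum then removes the weights and leaves two recognizable pieces. The first piece comes from the $I_p$ in the denominators of the vertex factors together with the mirror map~\eqref{mirmap_e} and the relation $1+q\frac{\nd J}{\nd q}=I_1(q)$; after the change of variables $Q\frac{\nd}{\nd Q}=\frac{q}{I_1(q)}\frac{\nd}{\nd q}$ this assembles into $-Q\frac{\nd}{\nd Q}\ln I_1(q)$. The second piece comes from the distinguished loop, whose two ends carry opposite weights swapped by $\Om$; its contribution is the leading-in-weight (equivalently $w\!\to\!\infty$) evaluation of the edge factor in~\eqref{tiFdfn_e}, and since in the Calabi--Yau case $\lim_{w\to\infty}F(w,q)=\sum_{d\ge0}(\a^{\a}q)^d=(1\!-\!\a^{\a}q)^{-1}$, this loop sum produces exactly the discriminant factor $(1\!-\!\a^{\a}q)$.

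The main obstacle lies in the assembly, and is twofold. First, one must pin down the rational prefactor $\tfrac14$: it has to emerge from the symmetry factor of the free $\Z/2$-action on the genus-$1$ skeleton combined with the automorphisms of the distinguished loop, and distinguishing $\tfrac14$ from $\tfrac12$ is the delicate part of the count. Second, even after Lemma~\ref{res_lmm} has removed the weights, one must verify that the two pieces combine into the single logarithm $\ln\big((1\!-\!\a^{\a}q)^{1/4}I_1(q)\big)$ rather than an unrelated expression; I would do this by matching coefficients of $Q^d$ in~\eqref{klein_e}, using $I_1(0)=1$ to fix the constant term and the change of variables above to pass freely between $q$- and $Q$-derivatives.
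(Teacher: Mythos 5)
Your plan transplants the annulus machinery to the Klein bottle, but that route does not go through, and the obstruction is structural rather than technical. Lemma~\ref{res_lmm} evaluates sums of the form $\sum_{i,\gamma}\hb^{-b}\cD_{i,\gamma}\al_i^{-l}q^{\al_i/\hb}\D^p\cY_0(\al_i,\hb,q)\big|_{\hb=2\al_i/\gamma}$, where $\cD_{i,\gamma}$ is the contribution of a doubled \emph{disk}; those disks exist because the annulus involution has two fixed circles. For the Klein bottle the involution $\tau$ is fixed-point free, so the $\T^m$-fixed loci contain no disk components and no factor $\cD_{i,\gamma}$ ever appears: Lemma~\ref{res_lmm} has nothing to act on. The real difficulty is the genus-one loop itself. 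The paper handles it by breaking each graph at the special vertex of the loop and summing, over all trees that can be attached there, the residues at $\hb=0$ of the one-point series $\cZ(\al_i,\hb,Q)$; this is Lemma~\ref{ResSum_lmm}, and it requires the nontrivial fact (Lemmas~\ref{cY0_lmm} and~\ref{cYexp_lmm}) that the hypergeometric series admits an expansion $\ne^{\xi(x,q)/\hb}\sum_s\Phi_s(x,q)\hb^s$ near $\hb=0$, with $\xi$ and $\Phi_0$ expressed through the algebraic function $L(x,q)$ of (\ref{Ldfn_e}). Nothing in your outline supplies a substitute for this step.

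Your account of where the two factors in (\ref{klein_e}) come from is also not right. The discriminant does not arise as $\lim_{w\to\infty}F(w,q)=(1-\a^{\a}q)^{-1}$ on a distinguished edge; it comes out of the product identity $\prod_{i=1}^{2m}L(\al_i,q)=(1-\a^{\a}q)^{-1}\si_{2m}$ for the roots of $\si_{2m}(z)=\a^{\a}qz^{2m}$, together with $I_0^2I_1^2I_2=(1-\a^{\a}q)^{-1}$, and the exponent $1/4$ is produced by an explicit cancellation among the residues of the two-point series $\cZ(\al_i,\al_{\bar i},\hb_1,\hb_2,Q)$, which in turn needs the structure-coefficient relation of Corollary~\ref{ctCsym_crl} (a consequence of (\ref{ctCperrel_e})) --- not a symmetry factor of a $\Z/2$-action on the skeleton. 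Likewise, weight-independence is not delivered by a residue-theorem argument at the end: the paper's final step is precisely where the Residue Theorem on $S^2$ is \emph{not} used; instead one checks directly that $\nD\sum_i\ln L(\al_i,q)=\a^{\a}q/(1-\a^{\a}q)$ and $\nD\sum_iL(\al_i,q)^{-2}=0$ as symmetric-function identities. In short, the proposal reuses tools that are unavailable in this setting and omits the $\hb\to0$ expansion lemma and the $L$-function identities that carry the whole argument.
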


By the divisor relation, this corresponds to the prediction of \cite[(5.26),(5.27)]{W1}.
In~\cite{W1}, the variables $(z,q)$ are our variables $(q,Q)$,
$$\ti{n}_d^{(1,0)_k}\!=\!\ti{K}_d/d, \qquad\hbox{and}\quad diss=1-\a^{\a}q.$$

\subsection{Outline of the paper}

Theorems~\ref{proj_thm} and \ref{descend_thm} are immediate consequences 
of Theorem~\ref{nonequiv_thm} in Section~\ref{mainthm_sec}.
Theorem~\ref{nonequiv_thm} follows immediately from Theorem~\ref{main_thm};
the latter is an equivariant version of the former and extends
\cite[Theorem~1.1]{bcov0} from line bundles to split bundles of arbitrary rank.
Theorem~\ref{nonequiv_thm} is
preceded by an explicit recursive formula
which facilitates the computation of genus~$0$ descendant invariants of the Fano
projective complete intersections as well. 
An example of such a computation is given
for the primary GW-invariants of $X_3\!\subset\!\P^4$.
The equivariant setting is introduced in Section~\ref{equivmainthm_sec},
where we state Theorem~\ref{main_thm}.

It is well-known that equivariant one-point GW-invariants of~$X_{\a}$
are expressed in terms of an equivariant version of 
the hypergeometric series~$F$; see~\e_ref{Z1ms_e}.
It is shown in~\cite{bcov0} that closed formulas for two-point genus~0 
GW-invariants of hypersurfaces are explicit transforms of
the one-point formulas; by Theorem~\ref{main_thm} in this paper, 
this is the case for all projective complete intersections.
The first part of  Section~\ref{pfs_sec} extends the proof of the analogous result
in \cite{bcov0} to the proof of Theorem~\ref{main_thm}.
It consists of showing that both sides satisfy certain good properties which guarantee uniqueness.
The proof that the two-point GW-invariants satisfy these properties 
is nearly identical to the analogous statement of \cite{bcov0}
which uses the Atiyah-Bott Localization Theorem~\cite{ABo}; 
details are given in Section~\ref{Z_subs}.
The main difference with~\cite{bcov0} occurs in constructing
the equivariant hypergeometric series with required properties;
see \e_ref{Ydfn_e} and~\e_ref{Ymldfn_e}.
Unlike \cite{bcov0}, we pay close attention to the Fano case as well,
giving explicit recursions for the structure coefficients of Theorem~\ref{main_thm}
and thus for the non-equivariant structure coefficients of 
Theorem~\ref{nonequiv_thm}.
In Section~\ref{mainthmapp_sec}, we obtain relations 
between the equivariant coefficients appearing in Theorem~\ref{main_thm};
these relations are used in the proofs of Theorems~\ref{annulus_thm} and~\ref{klein_thm}
in the rest of the paper.

In Section~\ref{annulus_sec}, we use the explicit equivariant recursions of
Theorem~\ref{main_thm} in the case $X_{\a}$ is a Calabi-Yau complete 
intersection threefold to study a recent prediction of Walcher \cite{W1}
concerning annulus GW-invariants. 
In particular, we use the Residue Theorem on~$S^2$ to show that the localization formulas
given in \cite{W1} are indeed weight-independent and 
sum up to the simple expression obtained in \cite{W1} based on physical considerations;
see Theorem~\ref{annulus_thm}. 
Along the way, we streamline one of the steps used in~\cite{PSW} 
to obtain a mirror formula for the disk invariants of the quintic threefold~$X_{(5)}$.

In Section~\ref{klein_sec}, we show that the natural one-point analogues of 
the localization formulas in \cite{W1} for Klein bottle invariants are 
indeed weight-independent and yield the closed formula predicted in~\cite{W1};
see Theorem~\ref{klein_thm}.
Unlike the annulus case, this case has a truly genus~one flavor;
the proof of Theorem~\ref{klein_thm} thus has little (if any) similarity to
the proof of Theorem~\ref{annulus_thm}.
We prove Theorem~\ref{klein_thm} by breaking the graphs at the special vertex of each loop
and using a special property of the generating functions for one-point genus~0 GW-invariants,
as in~\cite{bcov1} and~\cite{Po}. 
In contrast to~\cite{bcov1} and~\cite{Po}, we do not presuppose that the sums are weight-independent
and thus carry out the final step in the computation in a completely different way,
without using the Residue Theorem on~$S^2$ (it is still used in earlier steps).

Sections~\ref{annulus_sec} and~\ref{klein_sec} can thus be seen as 
the analogues for the annulus and Klein bottle invariants of
the localization computations confirming mirror symmetry predictions
for the closed genus~0 invariants (\cite{Gi96}, \cite{Gi99},  \cite{LLY}), 
the closed genus~1 invariants (\cite{bcov1}, \cite{Po}), 
and the disk invariants (\cite{PSW}).
However, the localization setup which serves as the starting point for these 
computations still requires a full mathematical justification for 
the annulus and Klein bottle invariants and cannot be presumed to be weight-independent
(the last property is used in \cite{bcov1}, \cite{Po}, and~\cite{PSW}).

We would like to thank V.~Shende and J.~Walcher for pointing out mistakes
in the description of the disk invariants in the original version of this paper
and Y.-P.~Lee for explaining results on GW-invariants
obtained in the literature on Frobenius structures.

\section{Main Theorem for Closed GW-Invariants}
\label{mainthm_sec}

Gromov-Witten invariants of a complete intersection $X_{\a}\subset\Pn$ 
can be computed from twisted GW-invariants of~$\Pn$. 
Let $\pi\!:\U\!\lra\!\ov\M_{0,m}(\Pn,d)$
be the universal curve and $\ev:\U\!\lra\!\Pn$
the natural evaluation map; see \cite[Section~24.3]{MirSym}.
Denote by 
$$\V_{\a}\lra\ov\M_{0,m}(\Pn,d)$$
the vector bundle corresponding to the locally free sheaf
$$\bigoplus_{k=1}^l\pi_*\ev^*\O_{\Pn}(a_k)\lra \ov\M_{0,m}(\Pn,d).$$
The Euler class $e(\V_{\a})$ relates genus~0 GW-invariants of 
$X_{\a}$ to genus~0 GW-invariants of~$\Pn$ by
\BE{hyp_e}
\int_{[\ov\M_{0,m}(X_{\a},d)]^{vir}}\eta
=\int_{[\ov\M_{0,m}(\Pn,d)]}\eta\, e(\V_{\a})
\qquad\forall\,\eta\in H^*\big(\ov\M_{0,m}(\Pn,d)\big);\EE
see \cite[Section 2.1.2]{BDPP}.

For each $i\!=\!1,\ldots,m$, there is a well-defined bundle map
$$\wt\ev_i\!:\V_{\a}\lra\ev_i^*\bigoplus\limits_{k=1}^{l}\O_{\Pn}(a_k), \qquad
\wt\ev_i\big([\cC,f;\xi]\big)=\big[\xi(x_i(\cC))\big],$$
where $x_i(\cC)$ is the $i$-th marked point of $\cC$.
Since it is surjective, its kernel is again a vector bundle.
Let
$$\V_{\a}'\equiv\ker\wt\ev_1\lra \ov\M_{0,m}(\Pn,d)
\qquad\hbox{and}\qquad
\V_{\a}''\equiv\ker\wt\ev_2\lra \ov\M_{0,m}(\Pn,d),$$
whenever $m\!\ge\!1$ and $m\!\ge\!2$, 
respectively.\footnote{In \cite[Part 4]{BDPP}, 
$\V_{\a}=W_{m,d}$, $\varepsilon_{m,d}=e(W_{m,d})$, 
and $\varepsilon'_{2,d}=e(W'_{2,d})$.
In \cite[Section 9]{Gi96}, $e(\V_{\a})=E_d$ and $e(\V_{\a}')=E_d'$.}
With $\ev_1,\ev_2:\ov\M_{0,2}(\Pn,d)\lra\Pn$ denoting
the evaluation maps at the two marked points, define
\begin{equation}\label{nonequivZ_e}
Z_p(\hb,Q)\equiv
\nH^{l+p}+\sum_{d=1}^{\i}\!Q^d\ev_{1*}\!
\left[\frac{e(\V_{\a}'')\ev_2^*\nH^{l+p}}{\hb\!-\!\psi_1}\right]
\in\big(H^*(\Pn)\big)[\hb^{-1}]\big[\big[Q\big]\big]
\end{equation}
for $p\in\Z$ with $p\!\ge\!-l$, and set
\begin{equation}\label{nonequivZ_e2}
Z^*(\hb_1,\hb_2,Q)\equiv
\sum_{d=1}^{\i}\!Q^d\left(\ev_1\!\times\!\ev_2\right)_*\!\!
\left[\frac{e(\V_{\a})}{(\hb_1\!-\!\psi_1)(\hb_2\!-\!\psi_2)}\right]\in
\big(H^*(\Pn\!\times\!\Pn)\big)\![\hb_1^{-1},\hb_2^{-1}]\big[\big[Q\big]\big].
\end{equation}
These power series determine all numbers~\e_ref{hyp_e} with 
$\eta=(\psi_1^{p_1}\ev_1^*\nH^{b_1})(\psi_2^{p_2}\ev_2^*\nH^{b_2})$.
The motivation behind the choice of indexing in~\e_ref{nonequivZ_e}
is analogous to the $l=1$ case in \cite[Section~1.1]{bcov0}. 

Define
\begin{alignat}{1}\label{Fdfn}
F_{-l}(w,q)&\equiv\sum_{d=0}^{\i}
q^dw^{d\nu_{\a}}\frac{\prod\limits_{k=1}^l\prod\limits_{r=0}^{a_kd-1}(a_kw+r)}
{\prod\limits_{r=1}^d(w+r)^n}\in\cP;\\
F_{-l+p}&\equiv\bD^pF_{-l}=\bM^pF_{-l}
\qquad\forall~p=1,2,\ldots,l.\notag
\end{alignat}
In particular, $F_0=F$.
For $\nu_{\a}\!>\!0$, we also define $\nc^{(d)}_{p,s},\ntc^{(d)}_{p,s}\in\!\Q$
with $p,d,s\ge0$ by 
\begin{alignat}{1}
\label{littlec_e}
\sum_{d=0}^{\i}\sum_{s=0}^{\i}\nc^{(d)}_{p,s}w^sq^d&=
\sum_{d=0}^{\i}q^d\frac{(w\!+\!d)^p\!\!\prod\limits_{k=1}^l\prod\limits_{r=1}^{a_kd}(a_kw+r)}
{\prod\limits_{r=1}^d(w+r)^n}
=w^p\bD^pF\big(w,q/w^{\nu_{\a}}\big),\\
\label{littletic_e}
\sum_{\begin{subarray}{c}d_1+d_2=d\\ d_1,d_2\ge0\end{subarray}}
\sum_{r=0}^{p-\nu_{\a}d_1}\ntc^{(d_1)}_{p,r}\nc^{(d_2)}_{r,s}&=\de_{d,0}\de_{p,s}
\qquad \forall\,d,s\!\in\!\Z^{\ge0},\, s\!\le\!p\!-\!\nu_{\a}d.
\end{alignat}
Since $\nc^{(0)}_{p,s}=\de_{p,s}$, \e_ref{littletic_e} expresses $\ntc^{(d)}_{p,s}$
in terms of the numbers $\ntc^{(d_1)}_{p,r}$
with $d_1\!<\!d$; the numbers $\ntc^{(d)}_{p,s}$ with $s\!>\!p\!-\!\nu_{\a}d$
will not be needed.
For example, 
\begin{equation}\label{cs1_e}
\ntc^{(0)}_{p,s}=\de_{p,s}\,, \qquad
\sum_{s=0}^{p-\nu_{\a}}\ntc^{(1)}_{p,s}w^s+
\lr\a\,\frac{\prod\limits_{k=1}^l\prod\limits_{r=1}^{a_k-1}(a_kw+r)}{(w+1)^{n-l-p}}
\in w^{p-\nu_{\a}+1}\Q[[w]].
\end{equation}
For $p\!\ge\!1$, set 
$$F_p(w,q)\equiv\begin{cases}
\bM^pF(w,q),&\hbox{if}~\nu_{\a}\!=\!0;\\
\sum\limits_{d=0}^{\i}
\sum\limits_{s=0}^{p-\nu_{\a}d}
\frac{\ntc^{(d)}_{p,s}\,q^d}{w^{p-\nu_{\a}d-s}}\bD^sF(w,q),&
\hbox{if}~\nu_{\a}\!>\!0.
\end{cases}$$
Thus, $F_p=\bD^pF$ unless $p\!\ge\!\nu_{\a}$.

\begin{mythm}\label{nonequiv_thm}
For every $l$-tuple of positive integers $\a=(a_1,a_2,\ldots,a_l)$,
$$Z^*(\hb_1,\hb_2,Q)= \frac{\lr{\a}}{\hb_1+\hb_2}
\sum_{\begin{subarray}{c} p_1+p_2=n-1-l\\ p_1,p_2\ge0 \end{subarray}}
\!\!\!\!\!\!\big(-\pi_1^*\nH^{l+p_1}\pi_2^*\nH^{l+p_2}+
\pi_1^*Z_{p_1}(\hb_1,Q)\pi_2^*Z_{p_2}(\hb_2,Q)\big),$$
where $\pi_1,\pi_2\!:\Pn\!\times\!\Pn\!\lra\Pn$ are the two projection maps.
For every $p\!\ge\!-l$,
\begin{equation*}\begin{split}
Z_p(\hb,Q)&=\nH^{l+p} \begin{cases}
\ne^{-J(q)\frac{\nH}{\hb}}\frac{F_p\left(\frac{\nH}{\hb},q\right)}{I_p(q)},
&\tn{if}~\nu_{\a}\!=\!0,\\
\ne^{-\a!q\frac{\nH}{\hb}}F_p\left(\frac{\nH}{\hb},q\right),
&\tn{if}~\nu_{\a}\!=1,\\
F_p\left(\frac{\nH}{\hb},q\right),
&\tn{if}~\nu_{\a}\!\ge\!2,\end{cases}
\qquad\tn{with}\qquad
Q=\begin{cases}
q\,\ne^{J(q)},& \tn{if}~\nu_{\a}\!=\!0,\\
\nH^{\nu_{\a}}q,& \tn{if}~\nu_{\a}\!\ge\!1,
\end{cases}\end{split}\end{equation*}
where $J$ is as in~\e_ref{mirmap_e}.
\end{mythm}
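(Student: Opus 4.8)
The plan is to prove the two-point generating-function identity by the classical localization-and-uniqueness strategy, exactly as in \cite{bcov0} but adapted to split bundles of arbitrary rank via the equivariant Theorem~\ref{main_thm}. Since Theorem~\ref{nonequiv_thm} is asserted in the excerpt to follow immediately from the equivariant Theorem~\ref{main_thm}, the real content here is to explain how the non-equivariant closed form emerges. First I would set up the equivariant versions $Z_p$ and $Z^*$ with respect to the standard $\T=(\C^*)^n$-action on $\Pn$, replacing $\nH$ by the equivariant hyperplane class and the localization weights by the torus weights $\al_1,\dots,\al_n$. The key structural fact is that both the two-point series $Z^*$ and the one-point series $Z_p$ localize as sums over decorated graphs; applying the Atiyah--Bott Localization Theorem~\cite{ABo} to the moduli spaces $\ov\M_{0,2}(\Pn,d)$ and $\ov\M_{0,1}(\Pn,d)$ expresses each as a rational function in $\hb_1,\hb_2$ (resp.~$\hb$) and the weights. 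The first half of Section~\ref{pfs_sec}, as announced, shows that both sides of the two-point formula satisfy a list of \emph{good properties} (polynomiality/recursivity in the fixed-point data, correct symmetry under $\hb_1\!\leftrightarrow\!\hb_2$, self-recursion governed by the one-point series, and the correct initial/unitarity conditions) that pin down such a series uniquely. The $\frac{1}{\hb_1+\hb_2}$ prefactor and the product structure $\pi_1^*Z_{p_1}\,\pi_2^*Z_{p_2}$ are precisely what the uniqueness argument forces: the two marked points decouple into one-point contributions glued along the edge carrying the $\frac{1}{\hb_1+\hb_2}$ node factor.

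The second, and to my mind the harder, half is to identify the one-point series $Z_p(\hb,Q)$ with the explicit hypergeometric transforms $F_p$. The strategy is to show that the equivariant hypergeometric series built from $F$ (via the operators $\bD,\bM$ and the structure coefficients $\nc^{(d)}_{p,s},\ntc^{(d)}_{p,s}$ of \e_ref{littlec_e}--\e_ref{littletic_e}) satisfies the \emph{same} good properties as $Z_p$, and then invoke uniqueness again. Concretely, I would check that $F_{-l}$ of \e_ref{Fdfn} encodes the genus-$0$ one-point equivariant GW-data of $X_\a$ (this is the standard mirror theorem input, cf.~\e_ref{Z1ms_e}), that $\bD$ implements the divisor/dilaton shift so that $F_{-l+p}=\bD^pF_{-l}$ matches the insertion of $\psi$-powers, and that the three cases $\nu_\a\!=\!0$, $\nu_\a\!=\!1$, $\nu_\a\!\ge\!2$ arise from how the degree weighting $w^{d\nu_\a}$ interacts with the change of variables $Q=\nH^{\nu_\a}q$ (Fano) versus $Q=q\,\ne^{J(q)}$ (Calabi--Yau). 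The mirror-map exponential $\ne^{-J(q)\nH/\hb}$ and the normalization by $I_p(q)$ in the $\nu_\a\!=\!0$ case are exactly the factors that restore the correct $w=0$ regularity and unit leading term, so that $F_p/I_p$ lands in $\cP$ with constant term $1$.

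The main obstacle I expect is the \emph{structural} Fano case $\nu_\a\!\ge\!1$, specifically verifying that the recursively-defined coefficients $\ntc^{(d)}_{p,s}$ of \e_ref{littletic_e} are precisely the ones making $F_p(w,q)=\sum_{d,s}\frac{\ntc^{(d)}_{p,s}q^d}{w^{p-\nu_\a d-s}}\bD^sF(w,q)$ a genuine element of the image of the localization recursion --- i.e.~that the apparent poles in $w$ cancel so that $Z_p$ is regular, matching the polynomiality property. This amounts to showing that the dual pairing \e_ref{littletic_e} inverts the expansion \e_ref{littlec_e} in exactly the right truncated range $s\le p-\nu_\a d$, and that the subtraction of $\pi_1^*\nH^{l+p_1}\pi_2^*\nH^{l+p_2}$ (the $d=0$ correction) produces a series divisible by $\hb_1+\hb_2$, as noted in the Remark. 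I would handle this by an induction on $d$ driven by \e_ref{littletic_e}, using the relation $\ne^{-\a!q\,\nH/\hb}$ in the $\nu_\a\!=\!1$ borderline case to absorb the linear-in-$q$ anomaly, and checking the base case against \e_ref{cs1_e}. Once regularity and the recursion are confirmed, uniqueness closes the argument and the non-equivariant limit $\al_i\to 0$ collapses the equivariant identity to the stated form.
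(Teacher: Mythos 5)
Your proposal follows essentially the same route as the paper: prove the equivariant Theorem~\ref{main_thm} by the recursivity/mutual-polynomiality/uniqueness scheme of~\cite{bcov0} (extended to split bundles), identify the one-point series with the hypergeometric transforms by checking that both satisfy the same properties and agree mod~$\hb^{-1}$, and then obtain Theorem~\ref{nonequiv_thm} by setting $\al_1,\ldots,\al_n=0$, with the coefficients $\ntc^{(d)}_{p,s}$ of~\e_ref{littletic_e} arising as the non-equivariant shadows of the equivariant $\ctC^{(r)}_{p,s}$ of~\e_ref{tiCrec_e}. The only cosmetic difference is that you phrase the role of the $\ntc^{(d)}_{p,s}$ as cancelling apparent poles in~$w$, whereas the paper normalizes the mod-$\hb^{-1}$ part of $\cY_p$ to~$x^{l+p}$; these are the same condition.
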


Since dropping a component of $\a$ equal to 1 has no effect on 
the power series $F$ in \e_ref{tiFdfn_e},
this also has no effect on the right-hand sides of 
the two formulas in Theorem~\ref{nonequiv_thm}, as expected.
If $\a\!=\!\eset$ and $n\!\ge\!2$,
$$\bD^pF(w,q)=1+\sum_{d=1}^{\i}
q^d\frac{(w\!+\!d)^p w^{nd-p}}{\prod\limits_{r=1}^d(w+r)^n}, \quad
Z_p(\hb,Q)=\nH^p\,\bD^pF\left(\frac{\nH}{\hb},\frac{Q}{\nH^n}\right) \qquad\forall\,p\le n\!-\!1,$$
giving Theorem~\ref{proj_thm}.
Theorem~\ref{descend_thm} is just the $\nu_{\a}\!=\!0$ case of 
Theorem~\ref{nonequiv_thm}.

\begin{rmk}
If $n\!-\!1\!-\!l\!<\!2\nu_{\a}$, only the coefficients $\ntc^{(1)}_{p,s}$
matter for the purposes of Theorem~\ref{nonequiv_thm};
these are given by~\e_ref{cs1_e}. 
For example, if $n=5$ and $\a=(3)$, then
$\ntc^{(1)}_{2,0}=\ntc^{(1)}_{3,0}=-6$ and $\ntc^{(1)}_{3,1}=-21$.
Thus, 
\begin{equation*}\begin{split}
F_p(w,q)&=\sum_{d=0}^{\i}q^dw^{2d-p}(w\!+\!d)^p
\frac{\prod\limits_{r=1}^{3d}(3w+r)}
{\prod\limits_{r=1}^d(w\!+\!r)^5} \qquad \hbox{if~}~p=0,1;\\
F_2(w,q)&=1+3q\sum_{d=0}^{\i}q^d
w^{2d}(w\!+\!d)
\frac{(7w\!+\!7d\!+\!5)\prod\limits_{r=1}^{3d}(3w+r)}
{(w\!+\!d\!+\!1)^2\prod\limits_{r=1}^d(w\!+\!r)^5} \,;\\
F_3(w,q)&=1+6q\sum_{d=0}^{\i}q^d
w^{2d-1}(w\!+\!d)^2
\frac{\prod\limits_{r=1}^{3d}(3w+r)}
{(w\!+\!d\!+\!1)\prod\limits_{r=1}^d(w\!+\!r)^5} \,.
\end{split}\end{equation*}
Thus, modulo $(\hb^{-1})^2$,
\begin{alignat*}{2}
F_0\left(\frac{\nH}{\hb},\frac{Q}{\nH^2}\right) &\cong 1, &\qquad
F_1\left(\frac{\nH}{\hb},\frac{Q}{\nH^2}\right) 
&\cong 1+6\frac{Q}{\nH}\hb^{-1},\\
F_2\left(\frac{\nH}{\hb},\frac{Q}{\nH^2}\right) 
&\cong 1+15\frac{Q}{\nH}\hb^{-1},&\qquad
F_3\left(\frac{\nH}{\hb},\frac{Q}{\nH^2}\right) 
&\cong 1+6\frac{Q}{\nH}\hb^{-1}+18\frac{Q^2}{\nH^3}\hb^{-1}\,.
\end{alignat*}
The two identities in Theorem~\ref{nonequiv_thm} give
\begin{equation}\label{X3p2_e}\begin{split}
&\sum_{d=1}^{\i}Q^d\sum_{p_1,p_2\ge0}\! \bblr{\frac{\nH^{3-p_1}}{\hb_1\!-\!\psi},
\frac{\nH^{3-p_2}}{\hb_2\!-\!\psi}}^{X_3}_d\!\!\nH_1^{p_1}\nH_2^{p_2}\\
&\hspace{1in}
=\frac{3}{\hb_1\!+\!\hb_2}
\sum_{\begin{subarray}{c}p_1+p_2=3\\ p_1,p_2\ge0\end{subarray}}
\left[-1+F_{p_1}\left(\frac{\nH_1}{\hb_1},\frac{Q}{\nH_1^2}\right)
F_{p_2}\left(\frac{\nH_2}{\hb_2},\frac{Q}{\nH_2^2}\right)\right]
\nH_1^{p_1}\nH_2^{p_2}\,,
\end{split}\end{equation}
modulo $\nH_1^4,\nH_2^4$.
Thus, considering the coefficient of $\hb_1^{-1}\hb_2^{-1}$ in \e_ref{X3p2_e}, 
we find that 
$$\blr{\nH^3}^{X_3}_1=\blr{\nH,\nH^3}^{X_3}_1=18,
\qquad \blr{\nH^2,\nH^2}^{X_3}_1=45,
\qquad \blr{\nH^3,\nH^3}^{X_3}_2=54.$$
This agrees with the usual Schubert calculus computations on $G(2,5)$ and $G(3,5)$.
\end{rmk}

In general, if $d\in\Z^{\ge0}$ and  $\nu_{\a} d\le p\!\le\!n\!-\!1\!-\!l$, then
\begin{equation}\label{ntcsym_e}
\sum_{\begin{subarray}{c}d_1+d_2=d\\ d_1,d_2\ge0\end{subarray}}
\ntc^{(d_1)}_{p-\nu_{\a}d_2,p-\nu_{\a}d}\ntc^{(d_2)}_{n-1-l-p+\nu_{\a}d_2,n-1-l-p}
=\begin{cases}
1,&\hbox{if}~d=0;\\
-\a^{\a},&\hbox{if}~d=1;\\
0,&\hbox{if}~d\ge 2.
\end{cases}\end{equation}
This identity is used to simplify four-pointed formulas in \cite{g0ci}; 
we verify it in the appendix.

\section{Equivariant Setting}
\label{equivmainthm_sec}

In Section~\ref{ClosedEquiv_subs} below, we review the equivariant setup used 
in \cite{bcov0}, closely following \cite[Section~3.1]{bcov0}.
After defining equivariant versions of the generating functions~$Z_p$ and~$Z^*$ 
and of the hypergeometric series~$F$ and~$F_{-l}$,
we state an equivariant version of Theorem~\ref{nonequiv_thm}; 
see Theorem~\ref{main_thm} below. 
We conclude Section~\ref{ClosedEquiv_subs} with explicit recursive formulas
for the equivariant structure coefficients that appear in Theorem~\ref{main_thm}.
In Section~\ref{OmEquiv_subs}, we describe the restricted equivariant setup used in~\cite{W1}.

The quotient of the classifying space for the $n$-torus $\T$ is 
$B\T\equiv(\P^{\i})^n$.
Thus, the group cohomology of~$\T$ is
$$H_{\T}^*\equiv H^*(B\T)=\Q[\al_1,\ldots,\al_n],$$
where $\al_i\!\equiv\!\pi_i^*c_1(\ga^*)$,
$\ga\!\lra\!\P^{\i}$ is the tautological line bundle,
and $\pi_i\!: (\P^{\i})^n\!\lra\!\P^{\i}$ is
the projection to the $i$-th component.
The field of fractions of $H^*_{\T}$ will be denoted by
$$\Q_{\al}\equiv \Q(\al_1,\ldots,\al_n).$$
We denote the equivariant $\Q$-cohomology of a topological space $M$
with a $\T$-action by $H_{\T}^*(M)$.
If the $\T$-action on $M$ lifts to an action on a complex vector bundle $V\!\lra\!M$,
let $\E(V)\in H_{\T}^*(M)$ denote the \sf{equivariant Euler class of} $V$.
A continuous $\T$-equivariant map $f\!:M\!\lra\!M'$ between two compact oriented 
manifolds induces a pushforward homomorphism
$$f_*\!: H_{\T}^*(M) \lra H_{\T}^*(M').$$

\subsection{Setting for closed GW-invariants}
\label{ClosedEquiv_subs}

The standard action of $\T$ on $\C^{n-1}$,
$$\big(\ne^{\I\th_1},\ldots,\ne^{\I\th_n}\big)\cdot (z_1,\ldots,z_n) 
\equiv\big(\ne^{\I\th_1}z_1,\ldots,\ne^{\I\th_n}z_n\big),$$
descends to a $\T$-action on $\Pn$.
The latter has $n$ fixed points,
\begin{equation}\label{fixedpt_e} 
P_1=[1,0,\ldots,0], \qquad P_2=[0,1,0,\ldots,0], 
\quad\ldots,\quad P_n=[0,\ldots,0,1].
\end{equation}
The curves preserved by this action are the lines through the fixed points,
$$\ell_{ij}\equiv
\big\{[z_1,z_2,\ldots,z_n]\!\in\!\Pn\!:~
z_k\!=\!0~~\forall\,k\!\notin\!\{i,j\}\big\}.$$
This standard $\T$-action on~$\Pn$ lifts to a natural 
$\T$-action on the tautological line bundle $\ga\!\lra\!\Pn$,
since $\ga\!\subset\!\Pn\!\times\!\C^n$ is preserved by the diagonal $\T$-action.
With 
$$x\equiv\E(\ga^*)\in H_{\T}^*(\Pn)$$
denoting the equivariant hyperplane class,
the equivariant cohomology of $\Pn$ is given~by
\begin{equation}\label{pncoh_e}
H_{\T}^*(\Pn)= \Q[x,\al_1,\ldots,\al_n]\big/(x\!-\!\al_1)\ldots(x\!-\!\al_n).
\end{equation}
Let $x_1,x_2\in H_{\T}^*(\Pn\!\times\!\Pn)$ be the pull-backs
of $x$ by the two projection maps.

The standard action of $\T$ on $\Pn$ induces $\T$-actions on 
$\ov\M_{0,m}(\Pn,d)$, $\U$, $\V_{\a}$, $\V_{\a}'$, and~$\V_{\a}''$;
see Sections~\ref{intro_sec} and \ref{mainthm_sec} for the notation.
Thus, $\V_{\a}$, $\V_{\a}'$, and~$\V_{\a}''$ have well-defined 
equivariant Euler classes
$$\E(\V_{\a}),\E(\V_{\a}'),\E(\V_{\a}'')\in H_{\T}^*\big(\ov\M_{0,m}(\Pn,d)\big).$$
The universal cotangent line bundle for the $i$-th marked point
also has a well-defined equivariant Euler class, which will still be
denoted by~$\psi_i$.
In analogy with \e_ref{nonequivZ_e} and~\e_ref{nonequivZ_e2}, we define 
\begin{alignat}{1}\label{equivZ_e}
\cZ_p(x,\hb,Q)&\equiv
x^{l+p}\!+\!\sum_{d=1}^{\i}\!Q^d\ev_{1*}\!
\left[\frac{\E(\V_{\a}'')\ev_2^*x^{l+p}}{\hb\!-\!\psi_1}\right]
\in\big(H_{\T}^*(\Pn)\big)\big[\big[\hb^{-1},Q\big]\big],\\
\label{equivZ_e2}
\cZ^*(x_1,x_2,\hb_1,\hb_2,Q)&\equiv
\sum_{d=1}^{\i}\!Q^d\left(\ev_1\!\times\!\ev_2\right)_*\!\!
\left[\frac{\E(\V_{\a})}{(\hb_1\!-\!\psi_1)(\hb_2\!-\!\psi_2)}\right]\\
&\hspace{2in}\notag
\in \big(H_{\T}^*(\Pn\!\times\!\Pn)\big)\!\big[\big[\hb_1^{-1},\hb_2^{-1},Q\big]\big],
\end{alignat}
where $p\in\Z$ with $p\!\ge\!-l$ and $\ev_1,\ev_2:\ov\M_{0,2}(\Pn,d)\lra\Pn$,
as before.
By~\e_ref{equivZ_e} and~\e_ref{pncoh_e},
\begin{equation}\label{symmrel_e}\begin{split}
&\sum_{r=0}^n(-1)^r\si_r\cZ_{p-r}(x,\hb,Q)\\
&\quad=x^{l+p-n}\sum_{r=0}^n(-1)^r\si_rx^{n-r}+
\sum_{d=1}^{\i}\!Q^d\ev_{1*}\!
\left[\frac{\E(\V_{\a}'')\ev_2^*\left(x^{l+p-n}\sum\limits_{r=0}^n(-1)^r\si_rx^{n-r} \right)}{\hb\!-\!\psi_1}\right]=0
\end{split}\end{equation}
if $n\!-\!l\!\le\!p\le\!n\!-\!1$,
where  $\si_r\!\in\!\Q_{\al}$ is the $r$-th elementary symmetric polynomial
in $\al_1,\ldots,\al_n$.

The equivariant versions of the hypergeometric series $F$ and $F_{-l}$ 
that we need are
\begin{alignat}{1}\label{Ydfn_e}
\cY(x,\hb,q)&\equiv\sum_{d=0}^{\i}q^d
\frac{\prod\limits_{k=1}^l\prod\limits_{r=1}^{a_kd}(a_kx+r\hb)}
{\prod\limits_{r=1}^{d}\prod\limits_{k=1}^{n}(x\!-\!\al_k\!+\!r\hb)}\,,\\
\label{Ymldfn_e}
\cY_{-l}(x,\hb,q)&\equiv \sum_{d=0}^{\i}q^d
\frac{\prod\limits_{k=1}^l\prod\limits_{r=0}^{a_kd-1}(a_kx+r\hb)}
{\prod\limits_{r=1}^{d}\prod\limits_{k=1}^{n}(x\!-\!\al_k\!+\!r\hb)}\,.
\end{alignat}
With $I_p(q)\!\equiv\!1$ if $\nu_{\a}\!\neq\!0$
(and given by \e_ref{Ipdfn_e} if $\nu_{\a}\!=\!0$), 
for $p\!\ge\!0$ define $\D^p\cY_0(x,\hb,q)$ inductively~by
\BE{Yp_dfn_e2}\begin{split}
\D^0\cY_0(x,\hb,q)&\equiv \cY_0(x,\hb,q)\equiv
\frac{x^l}{I_0(q)}\cY(x,\hb,q),\\
\D^p\cY_0(x,\hb,q)&\equiv
\frac{1}{I_p(q)}
\left\{x+\hb\, q\frac{\nd}{\nd q}\right\}\D^{p-1}\cY_0(x,\hb,q)\,\qquad\forall~ p\ge1.
\end{split}\EE
For $p=0,1,\ldots,l\!-\!1$, let
\begin{equation}\label{Yp_dfn_e}\begin{split}
\cY_{-l+p}(x,\hb,q)&\equiv \D^{-l+p}\cY_0(x,\hb,q)
\equiv \left\{x+\hb\, q\frac{\nd}{\nd q}\right\}^p\cY_{-l}(x,\hb,q)\\
&=\sum_{d=0}^{\i}q^d(x\!+\!d\hb)^p
\frac{\prod\limits_{k=1}^l\prod\limits_{r=0}^{a_kd-1}(a_kx+r\hb)}
{\prod\limits_{r=1}^{d}\prod\limits_{k=1}^{n}(x\!-\!\al_k\!+\!r\hb)}\,.
\end{split}\end{equation}
Thus, $\cY_0(x,\hb,q)=
\frac{1}{I_0(q)}\left\{x+\hb\, q\frac{\nd}{\nd q}\right\}\D^{-1}\cY_0(x,\hb,q)$.

\begin{mythm}\label{main_thm}
For every $l$-tuple of positive integers $\a=(a_1,a_2,\ldots,a_l)$,
\begin{gather}
\label{main_e2}\begin{split}
\cZ^*(x_1,x_2,\hb_1,\hb_2,Q)&=\\
 \frac{\lr{\a}}{\hb_1+\hb_2}&
\sum_{\begin{subarray}{c}p_1+p_2+r=n-1\\ p_1,p_2\ge0 \end{subarray}}
\!\!\!\!\!\!\!\!\!(-1)^r\si_r\big(-x_1^{l+p_1}x_2^{p_2}+
\cZ_{p_1}(x_1,\hb_1,Q)\cZ_{p_2-l}(x_2,\hb_2,Q)\big),
\end{split}\end{gather}
$\si_r\!\in\!\Q_{\al}$ is the $r$-th elementary symmetric polynomial
in $\al_1,\ldots,\al_n$.
Furthermore, there exist $\ctC_{p,s}^{(r)}(q)\in q\cdot\Q[\al_1,\ldots,\al_n][[q]]$ 
such that  
the coefficient of $q^d$ in $\ctC_{p,s}^{(r)}(q)$ is a degree~$r\!-\!\nu_{\a}d$ 
homogeneous symmetric polynomial in $\al_1,\al_2,\ldots,\al_n$ and
\BE{main_e1}
\cZ_p(x,\hb,Q)= \begin{cases}
\cY_p(x,\hb,Q),&\tn{if}~\nu_{\a}\!\geq2,\\
\ne^{-\frac{Q}{\hb}\a!}\cY_p(x,\hb,Q), &\tn{if}~\nu_{\a}\!=1,\\
\ne^{-J(q)\frac{x}{\hb}}\ne^{-C_1(q)\frac{\si_1}{\hb}}\cY_p(x,\hb,q),
 &\tn{if}~\nu_{\a}\!=0,
\end{cases}\EE
where
\BE{main_e0}
\cY_p(x,\hb,q)\equiv\D^p\cY_0(x,\hb,q)
+\sum_{r=1}^{p}\sum_{s=0}^{p-r}\ctC_{p,s}^{(r)}(q)\hb^{p-r-s}\D^s\cY_0(x,\hb,q),\EE
with $Q$ and $q$ related by the mirror map~\e_ref{mirmap_e}
and $C_1(q)\in q\cdot\Q[[q]]$ given by~\e_ref{C1_e}.
\end{mythm}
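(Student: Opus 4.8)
The plan is to follow the strategy of \cite{bcov0} (in the tradition of \cite{Gi1}): isolate two structural properties---\emph{recursivity} and \emph{self-polynomiality}---which together determine an equivariant generating function of the type \e_ref{equivZ_e} uniquely from its $Q^0$-term, verify that \emph{both} sides of \e_ref{main_e1} possess these properties, and conclude equality by a uniqueness lemma. For each fixed point $P_i$ of \e_ref{fixedpt_e} I would consider the restriction $\phi_i(\hb,Q)\equiv\cZ_p(\al_i,\hb,Q)$ obtained by setting $x=\al_i$. Call the tuple $(\phi_i)_i$ \emph{recursive} if its only $\hb$-singularities are simple poles at $\hb=(\al_i\!-\!\al_j)/d$ with $j\neq i$ and $d\geq1$, the residue there being a universal edge factor times $\phi_j$ evaluated at degree reduced by~$d$; call the associated two-point data \emph{self-polynomial} if a prescribed pairing of the $\phi_i$ against the inverse equivariant tangent Euler classes of $\Pn$ at the $P_i$ is a Laurent polynomial in $\hb_1^{-1},\hb_2^{-1}$ of bounded order. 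The abstract lemma to establish is that a recursive, self-polynomial family is determined by its $Q^0$-coefficient; this is where the rigidity comes from and is taken essentially verbatim from \cite{bcov0}.

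On the geometric side I would apply the Atiyah--Bott Localization Theorem \cite{ABo} to the defining pushforwards \e_ref{equivZ_e} and \e_ref{equivZ_e2}. The contributing fixed loci are indexed by decorated trees, and splitting off the edge carrying the first marked point produces exactly the residue recursion above, so $\cZ_p$ is recursive. Self-polynomiality is automatic here: $\cZ^*$ is a genuine class in $H_\T^*(\Pn\!\times\!\Pn)$ (power series in $\hb_1^{-1},\hb_2^{-1},Q$), so its fixed-point expansion is polynomial in the equivariant variables, and unwinding this polynomiality is precisely the assertion \e_ref{main_e2}---with the elementary symmetric functions $\si_r$ appearing there as the structure constants of the equivariant Poincar\'e pairing on $\Pn$ (cf.\ \e_ref{pncoh_e} and \e_ref{symmrel_e}). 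This verification is nearly identical to \cite{bcov0} and would be carried out in Section~\ref{Z_subs}.

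On the hypergeometric side I must show that the right-hand side of \e_ref{main_e1} satisfies the same two properties, and simultaneously \emph{construct} the coefficients $\ctC^{(r)}_{p,s}$. Recursivity follows by a direct residue computation from the explicit product \e_ref{Ydfn_e}: the denominator factors $x\!-\!\al_k\!+\!r\hb$ supply precisely the poles at $\hb=(\al_i\!-\!\al_j)/d$, from which the residue relation for $\cY$, and hence for each $\D^s\cY_0$, can be read off. The raw series $\D^p\cY_0$ is, however, \emph{not} self-polynomial; the correction terms $\sum_{r,s}\ctC^{(r)}_{p,s}(q)\,\hb^{p-r-s}\D^s\cY_0$ in \e_ref{main_e0}, together with the scalar prefactors of \e_ref{main_e1} and the mirror map, are introduced precisely to cancel the non-polynomial remainder. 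I would pin down the $\ctC^{(r)}_{p,s}$ recursively by imposing self-polynomiality order by order in $Q$; the homogeneity of the equivariant weights then forces the stated degree $r\!-\!\nu_\a d$ and the symmetry in $\al_1,\dots,\al_n$, while matching the non-equivariant limit ties these coefficients to the numbers $\ntc^{(d)}_{p,s}$ of \e_ref{littletic_e}.

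Once both sides of \e_ref{main_e1} are shown to be recursive and self-polynomial with common $Q^0$-term $x^{l+p}$, the uniqueness lemma yields \e_ref{main_e1}, and feeding this into the self-polynomiality relation of the preceding two paragraphs gives \e_ref{main_e2}. The main obstacle is the construction step: in \cite{bcov0} the bundle $\V_\a$ has rank one ($l\!=\!1$), whereas here it is split of arbitrary rank, so the numerator of \e_ref{Ydfn_e} is an $l$-fold product and the combinatorics required to restore self-polynomiality---equivalently, the existence and explicit recursion for the $\ctC^{(r)}_{p,s}$ together with their non-equivariant shadows \e_ref{littletic_e}---is substantially heavier than in the hypersurface case. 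I expect all the genuinely new work of Theorem~\ref{main_thm} to be concentrated there.
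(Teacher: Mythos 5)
Your overall architecture is the paper's: recursivity plus a polynomiality condition plus a uniqueness lemma from \cite{bcov0}, with the geometric side handled by localization and the hypergeometric side by explicit residue computations. However, there is a genuine misconception at exactly the step you single out as the crux. You propose to \emph{construct} the coefficients $\ctC^{(r)}_{p,s}$ by ``imposing self-polynomiality order by order in $Q$,'' on the grounds that ``the raw series $\D^p\cY_0$ is not self-polynomial.'' This is false, and the construction as you describe it would produce no equations at all. The polynomiality condition (the MPC of Definition~\ref{MPC_dfn}) is preserved by the transforms $Z\mapsto\{x+\hb\,Q\frac{\nd}{\nd Q}\}Z$ and $Z\mapsto fZ$ with $f\in R_{\al}[\hb][[Q]]$ (Lemma~\ref{Phistr_lmm4}\ref{deriv_ch},\ref{mult_ch}); since $\cY_{-l}$ is $\fC$-recursive and satisfies the MPC with respect to~$\cY$, so does $\D^s\cY_0$ for every $s$, and hence so does the combination \e_ref{main_e0} \emph{for every choice} of the $\ctC^{(r)}_{p,s}$. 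What actually pins them down is the normalization of the part of $\cY_p$ that is polynomial in $\hb$: one must arrange $\cY_p\cong x^{l+p}\pmod{\hb^{-1}}$, matching \e_ref{Z2wmod_e} for $\cZ_p$, and this condition is equivalent to the recursion \e_ref{tiCrec_e} via the expansion \e_ref{Crec_e} of $\D^s\cY_0$.

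Relatedly, the uniqueness input is not the ``$Q^0$-term'' but the full mod-$\hb^{-1}$ part at \emph{every} order in $Q$ (Proposition~\ref{uniqueness_prp} asserts that a recursive series satisfying the MPC and congruent to $0$ mod $\hb^{-1}$ vanishes). For $\cZ_p$ the statement that this part is concentrated in $Q^0$ is immediate from \e_ref{equivZ_e} (the pushforward contributes only negative powers of~$\hb$ in positive degrees), but for $\cY_p$ it is precisely the nontrivial condition \e_ref{cYcong_e2} that defines the $\ctC^{(r)}_{p,s}$. Similarly, \e_ref{main_e2} is not obtained by ``unwinding'' the polynomiality of $\cZ^*$ as a cohomology class: it is proved by the same recursivity/MPC/uniqueness scheme applied to $(\hb_1+\hb_2)\cZ(x_1,x_2,\hb_1,\hb_2,Q)$ in each variable separately, comparing with the product of one-point series mod $\hb_1^{-1}$ and then mod $\hb_2^{-1}$. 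Once these points are corrected, the rest of your outline (localization for recursivity and MPC of $\cZ_p$, residue arguments for $\cY_{-l}$, homogeneity of the weights giving the degree $r-\nu_{\a}d$, and the non-equivariant limit recovering $\ntc^{(d)}_{p,s}$) matches the paper's proof.
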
 

Theorem~\ref{main_thm} implies Theorem~\ref{nonequiv_thm} as follows. 
Setting $\al_1,\ldots,\al_n\!=\!0$ in \e_ref{main_e2} and using
that $Z_p(\hb,Q)=0$ if $p\!\ge\!n\!-\!l$ by \e_ref{nonequivZ_e}, we obtain the first identity in Theorem~\ref{nonequiv_thm}.
By \e_ref{tiFdfn_e}, \e_ref{DMDfn_e}, \e_ref{Ydfn_e}, and \e_ref{Yp_dfn_e2}, 
\begin{equation}\label{YvsF_e}
\D^p\cY_0(x,\hb,q)\big|_{\al=0}=x^{l+p}\cdot
\begin{cases}
\frac{1}{I_p(q)}\bM^pF(x/\hb,q),&\hbox{if}~\nu_{\a}\!=\!0;\\
\bD^pF(x/\hb,q/x^{\nu_a}),&\hbox{if}~\nu_{\a}\!>\!0.
\end{cases}\end{equation}
Thus, if $\nu_{\a}\!=\!0$, setting $\al_1,\ldots,\al_n\!=\!0$ 
in Theorem~\ref{main_thm} gives the corresponding case of the second identity
in Theorem~\ref{nonequiv_thm}.

We now completely describe the power series $\ctC_{p,s}^{(r)}$ of Theorem~\ref{main_thm};
it will be shown in Section~\ref{pfs_sec} below that they indeed satisfy~\e_ref{main_e1}
and~\e_ref{main_e0}.
For $p,r,s\!\ge\!0$, define $\cC_{p,s}^{(r)},\ctC_{p,s}^{(r)}\in\Q[\al_1,\ldots,\al_n][[q]]$
by
\begin{alignat}{1}\label{Crec_e}
x^l\hb^p\sum_{s=0}^{\i}\sum_{r=0}^s\cC_{p,s}^{(r)}(q)x^{s-r}\hb^{-s}
&=\D^p\cY_0(x,\hb,q) \,,\\
\label{tiCrec_e}
\sum_{\begin{subarray}{c}r_1+r_2=r\\ r_1,r_2\ge0\end{subarray}}\sum_{t=0}^{p-r_1}
\ctC_{p,t}^{(r_1)}(q)
\cC^{(r_2)}_{t,s-r_1}(q)&=\de_{r,0}\de_{p,s}
\quad\forall~r,s\!\in\!\Z^{\ge0},\,r\!\le\!s\!\le\!p.
\end{alignat}
By \e_ref{Ydfn_e}, \e_ref{Yp_dfn_e2}, and \e_ref{Crec_e}, 
the coefficient of $q^d$ in $\cC^{(r)}_{p,s}$ is a degree~$r\!-\!\nu_{\a}d$ 
homogeneous symmetric polynomial in~$\al$.
By~\e_ref{YvsF_e} and  $\D^p\cY_0\in x^{l+p}+q\cdot\Q_{\al}(x,\hb)[[q]]$, 
$$\cC_{p,p}^{(0)}(q)=1, \qquad \cC_{p,s}^{(0)}(q)=0~~\forall\,p>s, \qquad 
\cC_{p,s}^{(r)}(q)\in\de_{r,0}\de_{p,s}+q\cdot\Q[\al_1,\ldots,\al_n]\big[\big[q\big]\big].$$
Thus, \e_ref{tiCrec_e} expresses $\ctC^{(r)}_{p,s-r}$
with $r\!\le\!s\!\le\!p$ in terms of the series $\ctC^{(r_1)}_{p,t}$
with $r_1\!<\!r$ or $r_1\!=\!r$ and $t\!<\!s\!-\!r$; 
the series $\ctC^{(r)}_{p,s}$ with $r\!+\!s\!>\!p$ are not needed.
In particular, $\ctC^{(0)}_{p,s}=\de_{p,s}$
and the coefficient of $q^d$ in $\ctC^{(r)}_{p,s}$ is a degree~$r\!-\!\nu_{\a}d$ 
homogeneous symmetric polynomial in~$\al$.
If $\nu_{\a}\!>\!0$,
$$\cC^{(\nu_{\a}d)}_{p,s}\big|_{\al=0}=\nc^{(d)}_{p,s-\nu_{\a}d}q^d
\qquad\forall\,s\!\ge\!\nu_{\a}d$$
by \e_ref{Crec_e}, \e_ref{YvsF_e},  and \e_ref{littlec_e}. 
Thus, setting $\al\!=\!0$ in~\e_ref{tiCrec_e} and comparing with~\e_ref{littletic_e}
with~$s$ replaced by $s\!-\!\nu_{\a}d$, we conclude that 
\begin{equation}\label{cCred_e}
\ctC_{p,s}^{(d\nu_{\a})}(q)|_{\al=0}=\ntc_{p,s}^{(d)}q^d
\qquad\hbox{if}\quad \nu_{\a}\neq0.
\end{equation}
Setting $\al\!=\!0$ in \e_ref{main_e1} and~\e_ref{main_e0}
and using \e_ref{YvsF_e} and~\e_ref{cCred_e}, 
we obtain the $\nu_{\a}\!\neq\!0$ case of the second identity
in Theorem~\ref{nonequiv_thm}.

In the Calabi-Yau case, $|\a|\!=\!n$, 
it is a consequence of \e_ref{symmrel_e} and \e_ref{main_e1} that the structure coefficients 
in~\e_ref{main_e0} satisfy
\BE{ctCperrel_e}
\sum_{r=0}^p(-1)^r\si_r\ctC_{n-l-r,n-l-p}^{(p-r)}(q)=(-1)^p\si_p\prod_{s=0}^{n-l-p}\!\!\!I_s(q)
\qquad\forall\,p\!=\!1,2,\ldots,n\!-\!l\,.
\EE
This relation, a special case of  which is used in the Klein bottle invariants computation 
in Section~\ref{kleinpf_subs}, is proved in Section~\ref{ctC_subs}.
On the other hand, the only property of the structure coefficients $\ctC_{p,s}^{(r)}$
needed to compute the closed genus~1 GW-invariants of
Calabi-Yau hypersurfaces in~\cite{bcov1}
is that they lie in the ideal generated 
by $\si_1,\ldots,\si_{n-1}$.
It is shown in \cite{Po} that the same is 
the case for all Calabi-Yau complete intersections.

\subsection{Setting for open and unoriented GW-invariants}
\label{OmEquiv_subs}

We now recall the equivariant setting used in the graph-sum definition of open
and unoriented invariants in~\cite{W1}.
Throughout this section, $\Om$ denotes the anti-holomorphic involution
of~$\Pn$ described by~\e_ref{Omdfn_e} and its natural lifts to 
the tautological line bundle $\cO_{\Pn}(-1)$ and to the vector bundle
\begin{equation}\label{cLdfn_e}
\cL\equiv\bigoplus_{r=1}^l\cO_{\Pn}(a_k)\lra\Pn\,.
\end{equation}
Define
$$\bar{\cdot}:\{1,\ldots,n\}\lra\{1,\ldots,n\}
\qquad\hbox{by}\qquad \Om(P_i)=P_{\bar{i}}\,,$$
where $P_1,\ldots,P_n\!\in\!\Pn$ are the $\T^n$-fixed points; see~\e_ref{fixedpt_e}.
Denote by $m$ the integer part of $n/2$ and
by $\la_1,\ldots,\la_m$ the weights of the standard representation
of~$\T^m$ on~$\C^m$.
The embedding 
$$\io\!:\T^m\lra\T^n, \qquad 
(u_1,u_2,\ldots,u_m)\lra 
\begin{cases} (u_1,u_1^{-1},\ldots,u_m,u_m^{-1}),&\hbox{if}~n\!=\!2m;\\
(u_1,u_1^{-1},\ldots,u_m,u_m^{-1},1),&\hbox{if}~n\!=\!2m\!+\!1;
\end{cases}$$
induces $\T^m$-actions on $\Pn$ and moduli spaces of stable $\Om$-invariant maps to~$\Pn$.
Note that 
\begin{equation}\label{sp_weights_e}
(\al_1,\ldots,\al_n)\big|_{\T^m}
=\begin{cases}
(\la_1,-\la_1,\ldots,\la_m,-\la_m),&\hbox{if}~n\!=\!2m;\\
(\la_1,-\la_1,\ldots,\la_m,-\la_m,0),&\hbox{if}~n\!=\!2m\!+\!1.
\end{cases}\end{equation}

The following lemma describes the $\T^m$-fixed and $\Om$-fixed
zero- and one-dimensional subspaces of~$\Pn$.
Recall that a subspace $Y\!\subset\!\Pn$ is called $\Om$-fixed
($\T^m$-fixed) if $\Om(Y)\!=\!Y$ ($u(Y)\!=\!Y$ for all $u\!\in\!\T^m$).
If $n\!=\!2m\!+\!1$, $1\!\le\!i\!\le\!m$, and $a,b\!\in\!\C^*$, then
$$\cT_i(a,b)\equiv\big\{[z_1,z_2,\ldots,z_n]\in\Pn\!:~
z_k\!=\!0~~\forall\,k\!\notin\!\{2i\!-\!1,2i,n\},~
az_{2i-1}z_{2i}\!+\!bz_n^2\!=\!0\big\}$$
is a smooth conic contained in the plane spanned by $P_{2i-1},P_{2i},P_n$
and passing through~$P_{2i-1}$ and~$P_{2i}$.

\begin{lmm}\label{fixed_curves}
\begin{enumerate}[label=(\arabic{*}),ref=\arabic{*}]
\item The $\T^m$-fixed points in $\Pn$ are $P_1,P_2,\ldots,P_n$.
\item\label{fixed_curves_2} If $n\!=\!2m$, the $\T^m$-fixed irreducible curves in $\Pn$ 
are the lines $\ell_{ij}$ with $1\!\le\!i\!\neq\!j\!\le\!n$.
If $n\!=\!2m\!+\!1$, the $\T^m$-fixed irreducible curves in $\Pn$ 
are the lines $\ell_{ij}$ with $1\!\le\!i\!\neq\!j\!\le\!n$
and the conics $\cT_i(a,b)$ with $1\!\le\!i\!\le\!m$ and $a,b\!\in\!\C^*$.
\item\label{Om-invar}
If $n\!=\!2m$, the $\Om$-fixed $\T^m$-fixed irreducible curves in $\Pn$
are the lines $\ell_{2i-1,2i}$ with $1\!\le\!i\!\le\!m$.
If $n\!=\!2m\!+\!1$, the $\Om$-fixed $\T^m$-fixed irreducible curves 
in $\Pn$ are the lines $\ell_{2i-1,2i}$ with $1\!\le\!i\!\le\!m$
and the conics $\cT_i(a,b)$ with $1\!\le\!i\!\le\!m$ and 
$a\bar{b}=\bar{a}b$.
\end{enumerate}
\end{lmm}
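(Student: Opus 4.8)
The plan is to handle the three assertions in turn, using throughout the weight decomposition of $\C^n$ under the $\T^m$-action recorded in~\eqref{sp_weights_e}: the coordinate $z_{2i-1}$ has weight $\la_i$, the coordinate $z_{2i}$ has weight $-\la_i$, and, when $n\!=\!2m\!+\!1$, the coordinate $z_n$ has weight~$0$. For part~(1) I would observe that the $2m$ weights $\pm\la_1,\ldots,\pm\la_m$, together with the zero weight in the odd case, are pairwise distinct characters of~$\T^m$. Hence $\C^n$ splits into $n$ distinct one-dimensional weight spaces, namely the coordinate lines $\C\cdot\mathbf e_k$. Since a $\T^m$-fixed point of $\Pn$ is the same as a $\T^m$-invariant line in $\C^n$, it must be one of these weight spaces, i.e.\ a coordinate axis, giving exactly $P_1,\ldots,P_n$ as in~\eqref{fixedpt_e}.

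For part~(2), let $C\subset\Pn$ be an irreducible $\T^m$-fixed curve. By part~(1) the fixed locus is the finite set $\{P_1,\ldots,P_n\}$, so $\T^m$ acts nontrivially on~$C$; therefore $C$ contains a dense one-dimensional orbit $\T^m\!\cdot\![z]$ and $C=\overline{\T^m\!\cdot\![z]}$. Writing $S\!=\!\{k:z_k\!\neq\!0\}$ for the support and $\chi_k$ for the weight of $z_k$, the orbit is $\{[\ldots,\chi_k(u)z_k,\ldots]:u\!\in\!\T^m\}$, and its being one-dimensional forces the weights $\{\chi_k:k\!\in\!S\}$ to take at least two distinct values and to lie on a single affine line in the weight lattice. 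The combinatorial heart of the argument is the enumeration of such collinear subsets of $\{\pm\la_1,\ldots,\pm\la_m,0\}$: any two distinct weights are collinear and give the coordinate line $\ell_{kk'}$, whereas the only collinear triple is $\{\la_i,0,-\la_i\}=\{\chi_{2i-1},\chi_{2i},\chi_n\}$, which exists only when $n\!=\!2m\!+\!1$. For the triple I would parametrize the orbit by $t\!=\!\la_i(u)$ as $[t\,z_{2i-1}:t^{-1}z_{2i}:z_n]$ inside $\Span(P_{2i-1},P_{2i},P_n)$ and check that its closure is cut out by $a\,z_{2i-1}z_{2i}+b\,z_n^2\!=\!0$, with $(a,b)=(z_n^2,-z_{2i-1}z_{2i})$ up to scale and $a,b\!\in\!\C^*$, which is precisely $\cT_i(a,b)$. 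Conversely every $\ell_{kk'}$ is $\T^m$-invariant as a coordinate subspace, and every such conic arises as an orbit closure, completing the two lists.

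For part~(3) I would intersect the lists of~(2) with the $\Om$-fixed condition, computing the action of $\Om$ on each curve type directly from~\eqref{Omdfn_e} and the relation $\Om(P_i)=P_{\bar i}$, where $\overline{2i-1}=2i$, $\overline{2i}=2i-1$, and $\bar n=n$. Since $\Om$ is coordinatewise $z_j\!\mapsto\!\bar z_{\bar j}$, one obtains $\Om(\ell_{kk'})=\ell_{\bar k\bar k'}$ and $\Om(\cT_i(a,b))=\cT_i(\bar a,\bar b)$. For a $\T^m$-fixed line, $\Om$-invariance reads $\{\bar k,\bar k'\}=\{k,k'\}$; as $n$ is the only fixed index of $\bar{\cdot}$, no pair of distinct indices is fixed pointwise, so the pair must be swapped, forcing $\{k,k'\}=\{2i-1,2i\}$ and hence $\ell_{2i-1,2i}$. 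For a conic (only $n$ odd), $\Om$-invariance reads $\cT_i(a,b)=\cT_i(\bar a,\bar b)$, i.e.\ $(\bar a,\bar b)=\mu(a,b)$ for some $\mu\!\in\!\C^*$; eliminating $\mu$ yields exactly $a\bar b=\bar a b$. This reproduces the stated lists in both parities.

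I expect the main obstacle to be the orbit-closure classification in part~(2): justifying that an irreducible $\T^m$-fixed curve is the closure of a single one-dimensional orbit, translating the one-dimensionality of that orbit into the collinearity of the weights on the support, and, above all, identifying the closure of the three-element-support orbit with the conic $\cT_i(a,b)$ while correctly matching the coefficients $(a,b)$. By contrast, the weight count in~(1) and the two involution computations in~(3) are routine once the coordinatewise description $z_j\!\mapsto\!\bar z_{\bar j}$ of $\Om$ is in place.
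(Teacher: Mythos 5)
Your proof is correct and follows essentially the same route as the paper's: both identify the fixed points via the pairwise distinctness of the torus weights, recognize a $\T^m$-fixed irreducible curve as the closure of a one-dimensional orbit (your collinearity-of-weights criterion is exactly the paper's observation that a point with nonzero coordinates from two distinct weight pairs has a two-dimensional orbit, detected there via the ratio functions $z_k/z_{2i-\ep_i}$ and $z_k/z_{2j-\ep_j}$), and then read off the action of $\Om$ on the lines $\ell_{ij}$ and the conics $\cT_i(a,b)$ from the coordinatewise description $z_j\mapsto\bar z_{\bar j}$. The only differences are ones of packaging, and your explicit identification of the three-coordinate orbit closure with $\cT_i(a,b)$, coefficients $(a,b)=(z_n^2,-z_{2i-1}z_{2i})$ included, merely fills in a step the paper leaves implicit.
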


\begin{proof} If a point $p\!\in\!\Pn$ has two distinct nonzero coordinates,
$z_i$ and $z_j$, then the function $z_i/z_j$ is not constant
on the orbit of~$p$; this implies the first claim.
If a point $p\!\in\!\Pn$ has three distinct nonzero coordinates,
$z_{2i-\ep_i}$, $z_{2j-\ep_j}$, and $z_k$ with $i\!\neq\!j\!\le\!m$ 
and $\ep_i,\ep_j\!=\!0,1$, then the image of the orbit of~$p$ under 
the function $(z_k/z_{2i-\ep_i},z_k/z_{2j-\ep_j})$
is the complement of the coordinate axes in~$\C^2$.
Thus, if $C$ is a $\T^m$-fixed irreducible curve which is not one of the lines
$\ell_{ij}$, $C$  lies in the plane spanned by $P_{2i-1}$, $P_{2i}$,
and $P_n$ for some $i\!\le\!m$ and $n\!=\!2m\!+\!1$.
If $p\!\in\!C$ has three nonzero coordinates,
its orbit is then one-dimensional and is contained in some conic $\cT_i(a,b)$;
since $C$ and $\cT_i(a,b)$ are irreducible, it follows that $C\!=\!\cT_i(a,b)$,
confirming the second claim.
The third claim follows immediately from the second, since 
$\Om$ sends the coordinate functions $z_{2i-1}$ and~$z_{2i}$
to~$\bar{z}_{2i}$ and~$\bar{z}_{2i-1}$, respectively, and 
$\cT_i(a,b)\!=\!\cT_i(\bar{a},\bar{b})$ if and only if $a\bar{b}\!=\!\bar{a}b$.
\end{proof}

If $\ti{C}$ is a genus~1 Riemann surface, let $\ti{C}_0\!\subset\!\ti{C}$
be the \sf{principal}, genus-carrying, component or union of components;
this is either a smooth torus or a circle of spheres.
Any anti-holomorphic involution $\tau\!:\ti{C}\!\lra\!\ti{C}$ restricts
to an anti-holomorphic involution on~$\ti{C}_0$;
if in addition, $\ti{C}_0$ is a smooth, then 
$\ti{C}_0/\tau$ is topologically either a Klein bottle, a Mobius band, or an annulus.

\begin{crl}\label{fixedmap_crl}
Let $\ti{C}$ be a genus~1 Riemann surface
and $\ti{f}\!:(\ti{C},\tau)\!\lra\!(\Pn,\Om)$ an $\Om$-invariant map
representing a $\T^m$-fixed equivalence class.
If the principal component~$\ti{C}_0$ of~$\ti{C}$ is a smooth torus, then $\ti{f}|_{\ti{C}_0}$
is constant.
If $\ti{f}|_{\ti{C}_0}$ is constant or the fixed locus of~$\tau$ is not smooth,
then $n\!=\!2m\!+\!1$ and $\ti{f}(p)\!=\!P_n$ for some node $p\!\in\!\ti{C}_0$.  
\end{crl}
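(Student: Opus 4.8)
The plan is to derive both assertions of Corollary~\ref{fixedmap_crl} from the classification in Lemma~\ref{fixed_curves} together with two standard facts about $\T^m$-fixed stable maps and about real structures: first, that in a $\T^m$-fixed stable map each irreducible component is either contracted to one of $P_1,\dots,P_n$ or maps onto a $\T^m$-invariant curve, and every node lands on one of the $\T^m$-fixed points $P_1,\dots,P_n$; and second, that the fixed locus of an anti-holomorphic involution on a \emph{smooth} Riemann surface is a disjoint union of circles. Throughout I use that $\tau$ preserves the genus-carrying locus, so $\tau(\ti{C}_0)=\ti{C}_0$, together with the relation $\ti{f}\circ\tau=\Om\circ\ti{f}$.

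First I would establish that $\ti{f}|_{\ti{C}_0}$ is constant when $\ti{C}_0$ is a smooth torus. Since $\ti{C}_0$ is irreducible, its image is an irreducible $\T^m$-invariant subvariety, hence by Lemma~\ref{fixed_curves} either a point or one of the fixed lines $\ell_{ij}$ or conics $\cT_i(a,b)$. On each such curve a circle subgroup of $\T^m$ acts with the two points $P_i,P_j$ (resp.\ $P_{2i-1},P_{2i}$) as its only fixed points and acts freely on the $\C^*$ in between; one checks this nontriviality directly from the weights in \eqref{sp_weights_e}, which are pairwise distinct. A nonconstant equivariant map from $\ti{C}_0$ onto such a curve is therefore totally ramified over these two fixed points and unramified elsewhere, so Riemann--Hurwitz forces the source to have genus~$0$. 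As $\ti{C}_0$ has genus~$1$, the map onto a fixed curve is impossible and $\ti{f}|_{\ti{C}_0}$ must be constant.

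For the second assertion I would split along the two hypotheses. If $\ti{f}|_{\ti{C}_0}\!\equiv\!P_j$ is constant, then for $x\in\ti{C}_0$ we have $\tau x\in\ti{C}_0$ and $P_j=\ti{f}(\tau x)=\Om(\ti{f}(x))=P_{\bar{j}}$, so $\bar{j}=j$; reading off \eqref{Omdfn_e} shows this forces $n=2m\!+\!1$ and $j=n$, so the constant value is $P_n$. It remains to exhibit a node: if $\ti{C}_0$ is a circle of spheres it already has one, and if $\ti{C}_0$ is a smooth torus then, since the moduli space carries no marked points, stability of the contracted genus-one component requires at least one node on it, coming from an attached component; in either case that node maps to $P_n$. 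If instead the fixed locus of $\tau$ is not smooth, then by the second standard fact $\ti{C}_0$ cannot be a smooth torus, so it is a circle of spheres and the non-smoothness of $\mathrm{Fix}(\tau)$ occurs at a singular point of $\ti{C}_0$, i.e.\ a node $p\in\ti{C}_0$ with $\tau(p)=p$. Then $\ti{f}(p)=\ti{f}(\tau p)=\Om(\ti{f}(p))$ is $\Om$-fixed, while $\ti{f}(p)\in\{P_1,\dots,P_n\}$ because nodes land on $\T^m$-fixed points; as before $\bar{j}=j$ gives $n=2m\!+\!1$ and $\ti{f}(p)=P_n$.

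I expect the genuine subtlety to be the smooth-torus subcase of the second assertion, where there is literally no node \emph{interior} to the torus: the conclusion holds only once one notices that the absence of marked points in $\ov\M_{g,h,0}(\Pn,\Om,\be)$ renders a bare contracted torus unstable, thereby forcing an attaching node on $\ti{C}_0$. The remaining steps are bookkeeping, namely matching the restricted weights in \eqref{sp_weights_e} to confirm each fixed curve carries an effective circle action with exactly two fixed points, and reading from \eqref{Omdfn_e} that the unique $\Om$-fixed coordinate point is $P_n$, existing only when $n=2m\!+\!1$.
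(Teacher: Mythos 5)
Your proof follows the same route as the paper's: both claims are read off from Lemma~\ref{fixed_curves}, the first via Riemann--Hurwitz against the fact that a $\T^m$-fixed irreducible curve carries at most two $\T^m$-fixed points (so the branch locus, which is $\T^m$-invariant because the equivalence class is fixed, is too small for a genus-one source), the second via the observation that the image of $\ti{C}_0$, respectively of any $\tau$-fixed point, is both $\Om$-fixed and $\T^m$-fixed, and the only such point of $\Pn$ is $P_n$, which exists only for $n=2m+1$. Your added detail that stability (no marked points) forces a node on a contracted smooth torus is a point the paper leaves implicit, and is worth making.

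One step is stated imprecisely: from non-smoothness of $\mathrm{Fix}(\tau)$ you conclude that $\ti{C}_0$ must be a circle of spheres and that the bad point is one of its own nodes. The smoothness of fixed loci of anti-holomorphic involutions applies only to the smooth locus of $\ti{C}$, so non-smoothness of $\mathrm{Fix}(\tau)$ on the nodal curve $\ti{C}$ tells you only that there is a $\tau$-fixed node somewhere on $\ti{C}$ --- possibly the attaching node of a $\tau$-invariant tree to a \emph{smooth torus} $\ti{C}_0$, or a node lying entirely inside an attached tree. This does not damage the conclusion, but the repair should be recorded: if $\ti{C}_0$ is a smooth torus, your first claim makes $\ti{f}|_{\ti{C}_0}$ constant and the constant case already applies; and if the $\tau$-fixed node lies in an attached tree $T$, then $\tau(T)=T$, so the attaching point $T\cap\ti{C}_0$ is itself a $\tau$-fixed node on $\ti{C}_0$, whose image is again $\Om$-fixed and $\T^m$-fixed, hence equal to $P_n$ with $n=2m+1$. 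With that adjustment the argument is complete and matches the paper's (much terser) proof.
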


\begin{proof} If $\ti{C}_0$ is a smooth torus and $\ti{f}|_{\ti{C}_0}$ is 
not constant, $\ti{f}(\ti{C}_0)$ is a 
$\T^m$-fixed irreducible curve in $\Pn$ and
the branch locus $B\!\subset\!\ti{f}(\ti{C}_0)$ of $\ti{f}|_{\ti{C}_0}$
is $\T^m$-fixed as well.
However, this is impossible, since $B$ contains at least four points
(by~\e_ref{fixed_curves_2} in Lemma~\ref{fixed_curves} and Riemann-Hurwitz),
while a zero-dimensional $\T^m$-fixed subset of 
$\ti{f}(\ti{C}_0)$ contains at most two points (also by~\e_ref{fixed_curves_2} in Lemma~\ref{fixed_curves}).
This proves the first claim.
The second claim is clear, since $\ti{f}(\ti{C}_0)$ and 
$\ti{f}(p)$ for every $p\!\in\!\ti{C}$ fixed by~$\tau$ is 
an $\Om$-fixed $\T^m$-fixed subset of~$\Pn$.
\end{proof}


\section{Proof of Theorem~\ref{main_thm}}
\label{pfs_sec}

After minor algebraic modifications,
the proofs of \e_ref{symmrel_e} and \e_ref{main_e2} are nearly identical 
to the proofs of \cite[(1.6)]{bcov0} and \cite[(1.17)]{bcov0}, 
which are the $l\!=\!1$ cases of~\e_ref{symmrel_e}  and~\e_ref{main_e2},
respectively.
The verification of~\e_ref{main_e1} is similar to the verification
of \cite[(1.16)]{bcov0}, which is the $\a\!=\!(n)$ case of~\e_ref{main_e1},
once suitable functions $\cY_p$ are constructed.

For any ring $R$, let
$$R\Lau{\hb}\equiv R[[\hb^{-1}]]+R[\hb]$$
denote the $R$-algebra of Laurent series in $\hb^{-1}$ (with finite principal part).
If
$$Z(\hb,Q)=\sum_{d=0}^{\i}
\bigg(\sum_{r=-N_d}^{\i}\!\!\!Z_d^{(r)}\hb^{-r}\bigg)Q^d
\in R\Lau{\hb}\big[\big[Q\big]\big]$$
for some $Z_d^{(r)}\!\in\!R$, we define
$$Z(\hb,Q) \cong \sum_{d=0}^{\i}
\bigg(\sum_{r=-N_d}^{p-1}\!\!\!Z_d^{(r)}\hb^{-r}\bigg)Q^d\quad(\mod \hb^{-p}),$$ 
i.e.~we drop $\hb^{-p}$ and higher powers of $\hb^{-1}$, 
instead of higher powers of~$\hb$.
If in addition $R\!\supset\!\Q$, let
\begin{equation*}\begin{split}
R_{\al}&\equiv \Q_{\al}\otimes_{\Q}\!R
=R[\al_1,\ldots,\al_n]_{\lr{u|u\in\Q[\al_1,\ldots,\al_n]-0}}\,, \\
H_{\T}^*(\Pn;R)&\equiv H_{\T}^*(\Pn)\otimes_{\Q}\!R
=R[\al_1,\ldots,\al_n,x]\big/\prod_{k=1}^n(x\!-\!\al_k);
\end{split}\end{equation*}
the latter ring is the $\T$-equivariant cohomology of $\Pn$ with coefficients in~$R$.
If $R$ is a field, let $R^*\!=\!R\!-\!0$ be the set of invertible elements and 
$$R(\hb) \lhook\joinrel\lra R\Lau{\hb}$$
be the embedding given by taking the Laurent series of rational functions at $\hb^{-1}\!=\!0$.

\subsection{Summary}
\label{pfsumm_subs}

With $\ev_1:\ov\M_{0,2}(\Pn,d)\lra\Pn$ as before, let 
\begin{alignat}{1} \label{Z_e}
\cZ(x,\hb,Q)& \equiv
1+\sum_{d=1}^{\i}\!Q^d\ev_{1*}\left[\frac{\E(\V_{\a}')}{\hb\!-\!\psi_1}\right]
\in \left(H^*_{\T}(\Pn)\right)\big[\big[\hb^{-1},Q\big]\big],\\
\label{C1_e}
C_1(q)&\equiv\frac{1}{I_0(q)}\sum_{d=1}^{\i}q^d
\frac{\prod\limits_{k=1}^l(a_kd)!}{(d!)^n}\left(\sum_{r=1}^d\frac{1}{r}\right).
\end{alignat}
By \cite[Theorems 9.5, 10.7, 11.8]{Gi96} and \e_ref{Ydfn_e}, 
\BE{Z1ms_e}
\cZ(x,\hb,Q)= \begin{cases}
\cY(x,\hb,Q),&\textnormal{if}~\nu_{\a}\!\ge2,\\
\ne^{-\frac{Q}{\hb}\a!}\cY(x,\hb,Q),&\textnormal{if}~\nu_{\a}\!=1,\\
\ne^{-J(q)\frac{x}{\hb}}\ne^{-C_1(q)\frac{\si_1}{\hb}}\cY(x,\hb,q)/I_0(q),
&\textnormal{if}~\nu_{\a}\!=0,
\end{cases}\EE
with $Q$ and $q$ related by the mirror map~\e_ref{mirmap_e}.

We will follow the five steps given in \cite{bcov0} to verify 
\e_ref{main_e2} and \e_ref{main_e1}:
\begin{enumerate}[label=(M\arabic*),leftmargin=*]

\item\label{transM_item} if $R\!\supset\!\Q$ is a field,
 $Y,Z\in H_{\T}^*(\Pn;R)\Lau{\hb}\big[\big[Q\big]\big]$, 
$$Y(x\!=\!\al_i,\hb,Q)\in R_{\al}(\hb)\big[\big[Q\big]\big]\subset 
R_{\al}\Lau{\hb}\big[\big[Q\big]\big] \qquad\forall\,i\!=\!1,2,\ldots,n,$$
$Z$ is recursive in the sense of Definition~\ref{recur_dfn}, and
$Y$ and $Z$ satisfy the mutual polynomiality condition (MPC) of Definition~\ref{MPC_dfn}, 
then the transforms of $Z$ of Lemma~\ref{Phistr_lmm4} are also recursive
and satisfy the MPC with respect to the corresponding transforms of~$Y$;


\item\label{uniqM_item} 
if  $R\!\supset\!\Q$ is a field,
$Y,Z\in H_{\T}^*(\Pn;R)\Lau{\hb}\big[\big[Q\big]\big]$,
$$Y(x\!=\!\al_i,\hb,Q)\in R_{\al}^*+Q\cdot R_{\al}(\hb)\big[\big[Q\big]\big]
\subset R_{\al}\Lau{\hb}\big[\big[Q\big]\big]
 \qquad\forall\,i\!=\!1,2,\ldots,n,$$
$Z$ is recursive in the sense of Definition~\ref{recur_dfn}, and
$Y$ and $Z$ satisfy the mutual polynomiality condition (MPC) of Definition~\ref{MPC_dfn}, 
then $Z$ is determined by its  ``mod $\hb^{-1}$ part'';


\item\label{recM_item} $\cY_p(x,\hb,Q)$ and $\cZ_p(x,\hb,Q)$ 
are $\fC$-recursive in the sense of Definition~\ref{recur_dfn} with $\fC$ as in \e_ref{Cdfn};

\item\label{polM_item}
$\big(\cY(x,\hb,Q),\cY_p(x,\hb,Q)\big)$ and $\big(\cZ(x,\hb,Q),\cZ_p(x,\hb,Q)\big)$ 
satisfy the MPC;

\item\label{equalmodM_item} the two sides of~\e_ref{symmrel_e} and~\e_ref{main_e1}, 
viewed as elements of $H_{\T}^*(\Pn)\Lau{\hb}\big[\big[Q\big]\big]$,  agree mod~$\hb^{-1}$. 

\end{enumerate}

The first two claims above, \ref{transM_item} and \ref{uniqM_item}, sum up
Lemma~\ref{Phistr_lmm4} and Proposition~\ref{uniqueness_prp}, respectively.
Section~\ref{Z_subs} extends the arguments in~\cite{bcov0} to show that 
the GW-generating function~$\cZ_p$  is $\fC$-recursive and satisfies
the MPC with respect to~$\cZ$; this confirms the claims of~\ref{recM_item} 
and~\ref{polM_item} concerning~$\cZ_p$.
It is immediate from~\e_ref{equivZ_e} that 
\begin{equation}\label{Z2wmod_e}
\cZ_p(x,\hb,Q) \cong x^{l+p} \quad(\mod\hb^{-1}\big)
\qquad\forall~p=-l,-l+1,\ldots
\end{equation}
In Section~\ref{Y_subs}, we show that $\cY_{-l}$ is $\fC$-recursive
and satisfies the MPC with respect to~$\cY$.
The admissibility of transforms~\ref{deriv_ch} and~\ref{mult_ch} 
of Lemma~\ref{Phistr_lmm4} implies that the power series $\cY_p$ 
defined by~\e_ref{main_e0} is also $\fC$-recursive 
and satisfies the MPC with respect to~$\cY$
for all~$p$, no matter what the coefficients $\ctC_{p,s}^{(r)}$ are;
this confirms the claims of~\ref{recM_item}  and~\ref{polM_item} concerning~$\cY_p$.
By~\e_ref{Yp_dfn_e2} and~\e_ref{Yp_dfn_e},
\BE{cYcong_e1} \cY_p(x,\hb,Q)\cong x^{l+p} \quad(\mod\hb^{-1}\big)
\qquad\forall~p=-l,-l+1,\ldots,0.\EE
By \e_ref{Crec_e}, 
\BE{cYcong_e2}\sum\limits_{r=0}^{p}
\sum\limits_{s=0}^{p-r}\ctC_{p,s}^{(r)}(q)\hb^{p-r-s}\D^s\cY_0(x,\hb,q)
\cong x^{l+p} \quad(\mod\hb^{-1}\big)
\qquad\forall~p\in\Z^{\ge0}\,\EE
if and only if the coefficients $\ctC_{p,s}^{(r)}(q)$ are 
given by~\e_ref{tiCrec_e}.\footnote{LHS of \e_ref{tiCrec_e} is the coefficient of 
$\hb^px^{l-r}(x/\hb)^s$ in the double sum viewed as an element of 
$\Q_{\al}[[q]][x]\Lau{\hb}$.}
Since $\ctC^{(0)}_{p,s}\!=\!\de_{p,s}$, 
\e_ref{main_e1} follows from \ref{transM_item}, \ref{uniqM_item}, \e_ref{Z1ms_e},
\e_ref{Z2wmod_e}, and~\e_ref{cYcong_e2}.


The proof of \e_ref{main_e2} follows the same principle, which we apply to
a multiple~of
\BE{Z_dfn}
\cZ(x_1,x_2,\hb_1,\hb_2,Q)\equiv
\frac{\lr{\a}x_1^l}{\hb_1+\hb_2}
\sum_{\begin{subarray}{c}p_1+p_2+r=n-1\\ p_1,p_2\ge0 \end{subarray}}
\!\!\!\!\!\!\!\!\!(-1)^r\si_rx_1^{p_1}x_2^{p_2}
+\cZ^*(x_1,x_2,\hb_1,\hb_2,Q).\EE
For each $i\!=\!1,2,\ldots,n$, let
\begin{equation}\label{phidfn_e}
\phi_i\equiv\prod_{k\neq i}(x\!-\!\al_k) \in H_{\T}^*(\Pn).
\end{equation}
By the Localization Theorem~\cite{ABo},
$\phi_i$ is the equivariant Poincar\'{e} dual of the fixed point $P_i\!\in\!\Pn$;
see \cite[Section~3.1]{bcov0}.
Since
\begin{equation}\label{eulerclass_e}
x|_{P_i}=\al_i\,, \qquad \E(\V_{\a})=\lr{\a}\,\ev_2^*x^l\,\E(\V_{\a}''),
\end{equation}
by the defining property of the cohomology push-forward \cite[(3.11)]{bcov0}
and the string relation \cite[Section~26.3]{MirSym}
\begin{equation}\label{pushch_e}\begin{split}
\cZ(\al_i,\al_j,\hb_1,\hb_2,Q)
&=\int_{P_i\times P_j}\cZ(x_1,x_2,\hb_1,\hb_2,Q)
=\int_{\Pn\times\Pn}\cZ(x_1,x_2,\hb_1,\hb_2,Q)\phi_i\!\times\!\phi_j\\
&=\frac{\lr{\a}\al_i^l}{\hb_1\!+\!\hb_2}
\prod_{k\neq i}(\al_j-\al_k)
+\sum_{d=1}^{\i}Q^d\!\!
\int_{\ov\M_{0,2}(\Pn,d)}\!
\frac{\E(\V_{\a})\,\ev_1^*\phi_i\,\ev_2^*\phi_j}{(\hb_1\!-\!\psi_1)(\hb_2\!-\!\psi_2)}\\
&=\lr{\a}\frac{\hb_1\hb_2}{\hb_1\!+\!\hb_2}\sum_{d=0}^{\i}Q^d
\int_{\ov\M_{0,3}(\Pn,d)}\!\!\!
\frac{\E(\V_{\a}'')\,\ev_1^*\phi_i\,\ev_2^*(x^l\,\phi_j)}{(\hb_1\!-\!\psi_1)(\hb_2\!-\!\psi_2)}.
\end{split}\end{equation}
Thus, by Lemmas~\ref{recgen_lmm} and~\ref{polgen_lmm},
$(\hb_1\!+\!\hb_2)\cZ(x_1,x_2,\hb_1,\hb_2,Q)$ is $\fC$-recursive and satisfies the MPC 
with respect to $\cZ(x,\hb,Q)$ for $(x,\hb)\!=\!(x_1,\hb_1)$ and $x_2\!=\!\al_j$ fixed.
By symmetry, it is also $\fC$-recursive and satisfies the MPC 
with respect to $\cZ(x,\hb,Q)$ for $(x,\hb)\!=\!(x_2,\hb_2)$ and 
$x_1\!=\!\al_i$ fixed.
By~\ref{uniqM_item},  it is thus sufficient to compare 
\begin{equation}\label{Zcomp_e} 
(\hb_1\!+\!\hb_2)\cZ(x_1,x_2,\hb_1,\hb_2,Q)
\quad\hbox{and}\quad
\lr{\a}\!\!\!\!\!\!
\sum_{\begin{subarray}{c}p_1+p_2+r=n-1\\ p_1,p_2,r\ge0 \end{subarray}}
\!\!\!\!\!\!\!\!\!(-1)^r\si_r\cZ_{p_1}(x_1,\hb_1,Q)\cZ_{p_2-l}(x_2,\hb_2,Q)
\end{equation}
for all $x_1\!=\!\al_i$ and $x_2\!=\!\al_j$ with $i,j\!=\!1,2,\ldots,n$
modulo~$\hb_1^{-1}$:
\begin{alignat*}{1}
&\big(\hb_1\!+\!\hb_2\big)\cZ(\al_i,\al_j,\hb_1,\hb_2,Q) \cong 
\lr{\a}\al_i^l\!\!\!\!\!\!\!  
\sum_{\begin{subarray}{c}p_1+p_2+r=n-1\\ p_1,p_2,r\ge0\end{subarray}}
\!\!\!\!\!\!\!\!(-1)^r\si_r\al_i^{p_1}\al_j^{p_2}
+\sum_{d=1}^{\i}Q^d\int_{\ov\M_{0,2}(\Pn,d)}\!\!\!
\frac{\E(\V_{\a})\ev_1^*\phi_i\ev_2^*\phi_j}{\hb_2\!-\!\psi_2};\\
&\lr{\a}\!\!\!\!\!\!
\sum_{\begin{subarray}{c}p_1+p_2+r=n-1\\ p_1,p_2,r\ge0\end{subarray}}
\!\!\!\!\!\!\!\!
(-1)^r\si_r \cZ_{p_1}(\al_i,\hb_1,Q)\cZ_{p_2-l}(\al_j,\hb_2,Q)\cong
\lr{\a}\!\!\!\!\!\! 
\sum_{\begin{subarray}{c}p_1+p_2+r=n-1\\ p_1,p_2,r\ge0\end{subarray}}\!\!\!\!\!\!\!
(-1)^r\si_r\al_i^{l+p_1}\cZ_{p_2-l}(\al_j,\hb_2,Q).
\end{alignat*}
In order to see that the two right-hand side power series are the same,
it is sufficient to compare them modulo~$\hb_2^{-1}$:
\begin{alignat*}{2}
&\lr{\a}\al_i^l\!\!\!\!\!\!\!\!\!\!
\sum_{\begin{subarray}{c}p_1+p_2+r=n-1\\ p_1,p_2,r\ge0\end{subarray}}
\!\!\!\!\!\!\!\!\! (-1)^r\si_r \al_i^{p_1}\al_j^{p_2}
+\sum_{d=1}^{\i}Q^d\int_{\ov\M_{0,2}(\Pn,d)}\!\!\!
\frac{\E(\V_{\a})\ev_1^*\phi_i\ev_2^*\phi_j}{\hb_2\!-\!\psi_2}\cong
\lr{\a}\al_i^l\!\!\!\!\!\!
\sum_{\begin{subarray}{c}p_1+p_2+r=n-1\\ p_1,p_2,r\ge0\end{subarray}}
   \!\!\!\!\!\!\!\!(-1)^r\si_r \al_i^{p_1}\al_j^{p_2};\\
&\lr{\a}\!\!\!\!\!
\sum_{\begin{subarray}{c}p_1+p_2+r=n-1\\ p_1,p_2,r\ge0\end{subarray}}
\!\!\!\!\!\!
(-1)^r\si_r\al_i^{l+p_1}\cZ_{p_2-l}(\al_j,\hb_2,Q)\cong
\lr{\a}\!\!\!\!\!\!
\sum_{\begin{subarray}{c}p_1+p_2+r=n-1\\ p_1,p_2,r\ge0\end{subarray}}
\!\!\!\!\!\!(-1)^r\si_r\al_i^{l+p_1}\al_j^{p_2}.
\end{alignat*}
From this we conclude that the two expressions in~\e_ref{Zcomp_e} are the same;
this proves~\e_ref{main_e2}.

\subsection{Recursivity, polynomiality,  and admissible transforms} 
\label{poliC_subs}

We now describe properties of power series, such as $\cY_p$ and $\cZ_p$,
that impose severe restrictions on them; see Proposition~\ref{uniqueness_prp}.
This section sums up the results in \cite[Sections 2.1, 2.2]{bcov0},
extending them slightly.
Let
$$[n]=\{1,2,\ldots,n\}.$$
If $R$ is a ring, $f\!\in\!R[[Q]]$, and $d\!\in\!\Z^{\ge0}$,
let $\coeff{f}_{Q;d}\!\in\!R$ denote the coefficient of~$Q^d$ in~$f$.

\begin{dfn}\label{recur_dfn}
Let $R\!\supset\!\Q$ be a field and $C\equiv(C_i^j(d))_{d,i,j\in\Z^{+}}$
any collection of elements of~$R_{\al}$.
A~power series $Z\!\in\!H_{\T}^*(\Pn;R)\Lau{\hb}\big[\big[Q\big]\big]$ is \sf{$C$-recursive} if
the following holds:
if $d^*\!\in\!\Z^{\ge 0}$ is such~that 
$$\bigcoeff{Z(x\!=\!\al_i,\hb,Q)}_{Q;d^*-d}\in R_{\al}(\hb)
\qquad\forall\,d\!\in\![d^*],\,i\!\in\![n],$$
and $\bigcoeff{Z(\al_i,\hb,Q)}_{Q;d}$ is regular at 
$\hb\!=\!(\al_i\!-\!\al_j)/d$ for all $d\!<\!d^*$ and $i\!\neq\!j$, then
\BE{recur_dfn_e}
\bigcoeff{Z(\al_i,\hb,Q)}_{Q;d^*}-
\sum_{d=1}^{d^*}\sum_{j\neq i}\frac{C_i^j(d)}{\hb-\frac{\al_j-\al_i}{d}}
\bigcoeff{Z(\al_j,z,Q)}_{Q;d^*-d}\big|_{z=\frac{\al_j-\al_i}{d}}
\in R_{\al}\big[\hb,\hb^{-1}\big].\EE
A power  series $Z\!\in\!H_{\T}^*(\Pn;R)\Lau{\hb}\big[\big[Q\big]\big]$ is called \sf{recursive} if
it is $C$-recursive for some collection  
$C\equiv(C_i^j(d))_{d,i,j\in\Z^{+}}$ of elements of~$R_{\al}$.
\end{dfn}

Thus, if $Z\!\in\!H_{\T}^*(\Pn;R)\Lau{\hb}\big[\big[Q\big]\big]$ is recursive, 
then
$$Z(x\!=\!\al_i,\hb,Q)\in R_{\al}(\hb)\big[\big[Q\big]\big]
\qquad\forall\,i\!\in\![n].$$

\begin{dfn}\label{MPC_dfn}
Let $R\!\supset\!\Q$ be a field. For any
$$Y\!\equiv\!Y(x,\hb,Q),Z\!\equiv\!Z(x,\hb,Q)\in H_{\T}^*(\Pn;R)\Lau{\hb}\big[\big[Q\big]\big],$$
define $\Phi_{Y,Z}\in R_{\al}\Lau{\hb}\big[\big[z,Q\big]\big]$ by
$$\Phi_{Y,Z}(\hb,z,Q)=
\sum_{i=1}^n\frac{\ne^{\al_iz}}{\prod\limits_{k\neq i}(\al_i\!-\!\al_k)}
Y\big(\al_i,\hb,Q\ne^{\hb z}\big)Z(\al_i,-\hb,Q).$$
If $Y,Z\!\in\!H_{\T}^*(\Pn;R)\Lau{\hb}\big[\big[Q\big]\big]$,
the pair $(Y,Z)$ \sf{satisfies the mutual polynomiality condition} (\sf{MPC})~if  
$$\Phi_{Y,Z}\in R_{\al}[\hb]\big[\big[z,Q\big]\big].$$
\end{dfn}

If $Y,Z\!\in\!H_{\T}^*(\Pn;R)\Lau{\hb}\big[\big[Q\big]\big]$ and
\BE{YZrat_e}Y(x\!=\!\al_i,\hb,Q),
Z(x\!=\!\al_i,\hb,Q)\in R_{\al}(\hb)\big[\big[Q\big]\big] \qquad\forall\,i\!\in\![n],\EE
then the pair $(Y,Z)$ satisfies the MPC if and only if 
the pair $(Z,Y)$ does; see \cite[Lemma~2.2]{bcov0}.
Thus, if \e_ref{YZrat_e} holds, the statement that $Y$ and $Z$ satisfy the MPC
is unambiguous.

\begin{prp}\label{uniqueness_prp}
Let $R\!\supset\!\Q$ be a field.
If $Y,Z\in H_{\T}^*(\Pn;R)\Lau{\hb}\big[\big[Q\big]\big]$,
$$Y(x\!=\!\al_i,\hb,Q)\in R_{\al}^*+Q\cdot R_{\al}(\hb)\big[\big[Q\big]\big]
\subset R_{\al}\Lau{\hb}\big[\big[Q\big]\big]
 \qquad\forall\,i\!\in\![n],$$
$Z$ is recursive, and $Y$ and $Z$ satisfy the MPC, 
then  $Z\cong 0~(\mod \hb^{-1})$ if and only if $Z=0$.
\end{prp}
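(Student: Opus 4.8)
The backward implication ($Z=0\Rightarrow Z\cong0\ (\mod\hb^{-1})$) is trivial, so I focus on the forward one. The plan is to show $\coeff{Z}_{Q;d}=0$ by induction on~$d$, writing $Z_d(\al_i,\hb):=\coeff{Z(\al_i,\hb,Q)}_{Q;d}$ for the fixed-point restrictions and similarly $Y_{d'}(\al_i,\hb)$ for those of~$Y$. Since $H_\T^*(\Pn;R)$ is free over $R[\al_1,\dots,\al_n]$ and the $\al_i$ are distinct, the joint restriction map to $\prod_i R_\al$ is injective (Vandermonde), so it is enough to prove $Z_d(\al_i,\hb)=0$ for every~$i$. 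Recursivity of~$Z$ and the hypothesis on~$Y$ guarantee that all these restrictions lie in $R_\al(\hb)[[Q]]$, so the $\hb^{-1}$-expansions below are genuine Laurent expansions and the MPC for $(Y,Z)$ is unambiguous.

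First I would pin down the shape of $Z_d(\al_i,\hb)$. Assuming inductively that $Z_{d'}=0$ for $d'<d$, the recursion relation of Definition~\ref{recur_dfn} at $d^*=d$ has vanishing correction sum (all of whose terms involve restrictions $Z_{d''}(\al_j,\cdot)$ with $d''<d$), so $Z_d(\al_i,\hb)\in R_\al[\hb,\hb^{-1}]$ is a Laurent polynomial. Combined with the hypothesis $Z\cong0\ (\mod\hb^{-1})$, which kills all nonnegative powers of~$\hb$, this forces $Z_d(\al_i,\hb)$ to be a polynomial in $\hb^{-1}$ with zero constant term. The engine is then the MPC. Extracting the coefficient of $Q^d$ from $\Phi_{Y,Z}$ of Definition~\ref{MPC_dfn}, expanding $Y(\al_i,\hb,Q\ne^{\hb z})=\sum_{d'\ge0}Y_{d'}(\al_i,\hb)Q^{d'}\ne^{d'\hb z}$, and invoking the inductive hypothesis to discard every contribution except the one pairing the $Q^0$-coefficient of~$Y$ with the $Q^d$-coefficient of~$Z$, I would obtain
\[
\coeff{\Phi_{Y,Z}}_{Q;d}
=\sum_{i=1}^n\frac{y_i\,\ne^{\al_i z}}{\prod_{k\ne i}(\al_i\!-\!\al_k)}\,Z_d(\al_i,-\hb),
\qquad y_i:=\coeff{Y(\al_i,\hb,Q)}_{Q;0}\in R_\al^*.
\]
Here the crucial feature is that $y_i$ is an \emph{$\hb$-independent unit}, so multiplying by it preserves the property that $Z_d(\al_i,-\hb)$ carries only negative powers of~$\hb$.

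By the MPC this expression is polynomial in~$\hb$; but it is manifestly a series in strictly negative powers of~$\hb$, and such a series that is polynomial in~$\hb$ must vanish. Reading off the coefficient of each $\hb^{-r}$ then gives $\sum_i c_{i,r}\ne^{\al_i z}=0$ with $c_{i,r}\in R_\al$, and since the $\al_i$ are distinct the exponentials $\ne^{\al_i z}$ are linearly independent over $R_\al$ (expand in~$z$ and invert the Vandermonde matrix), so every $c_{i,r}=0$; as $y_i$ is a unit, this yields $Z_d(\al_i,\hb)=0$ for all~$i$, hence $Z_d=0$ and the induction closes. The main obstacle I anticipate is purely organizational: correctly tracking the substitution $Q\mapsto Q\ne^{\hb z}$ in the first argument of $\Phi_{Y,Z}$ so that the double $Q$-convolution collapses, under the inductive hypothesis, to the single clean term $y_iZ_d(\al_i,-\hb)$; once that collapse is secured, the contradiction between ``only negative powers of~$\hb$'' (from the mod-$\hb^{-1}$ hypothesis) and ``polynomial in~$\hb$'' (from the MPC) does the rest.
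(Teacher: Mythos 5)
Your argument is correct and is essentially the paper's own proof: the paper defers to \cite[Proposition~2.1]{bcov0}, whose argument is exactly this induction on the $Q$-degree, using recursivity (with the lower coefficients vanishing) to make $\coeff{Z(\al_i,\hb,Q)}_{Q;d}$ a Laurent polynomial with only negative powers of~$\hb$, and then the $Q^d$-coefficient of $\Phi_{Y,Z}$ --- which collapses to $\sum_i y_i\,\ne^{\al_iz}Z_d(\al_i,-\hb)/\prod_{k\ne i}(\al_i\!-\!\al_k)$ --- together with polynomiality in~$\hb$ and linear independence of the $\ne^{\al_iz}$ to force $Z_d(\al_i,\hb)=0$. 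No substantive differences to report.
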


This is essentially \cite[Proposition 2.1]{bcov0}, 
where the recursivity for~$Z$ is with respect to 
a specific collection~$C$.\footnote{The 
assumptions on the second line in \cite[Proposition 2.1]{bcov0}
should have been that 
$$Y,Z\in H_{\T}^*(\Pn)\Lau{\hb}\big[\big[Q\big]\big]
\qquad\hbox{and}\qquad
Y(x\!=\!\al_i,\hb,Q),Z(x\!=\!\al_i,\hb,Q)\!\in\!\Q_{\al}(\hb)\big[\big[Q\big]\big]
\quad\forall\,i\!\in\![n].$$
Since $Z$ is recursive in our case,  
$Z(\al_i,\hb,Q)\!\in\!R_{\al}(\hb)[[Q]]$.}
However, the proof of \cite[Proposition 2.1]{bcov0} does not use any properties
of the specific collection needed for the purposes of~\cite{bcov0}
and thus applies to any collection of elements of~$R_{\al}$.

\begin{lmm}\label{Phistr_lmm4}
Let $R\!\supset\!\Q$ be a field.
If  $Y,Z\!\in\!H_{\T}^*(\Pn;R)\Lau{\hb}\big[\big[Q\big]\big]$,
$$Y(x\!=\!\al_i,\hb,Q)\in R_{\al}(\hb)\big[\big[Q\big]\big]
\subset R_{\al}\Lau{\hb}\big[\big[Q\big]\big]
 \qquad\forall\,i\!\in\![n],$$
$Z$ is recursive, and
$Y$ and $Z$ satisfy the MPC, then
\begin{enumerate}[label=(\roman*)]

\item\label{deriv_ch}  
$\bar{Z}\!\equiv\!\left\{x\!+\!\hb\, Q\frac{\nd}{\nd Q}\right\}Z$
is recursive and satisfies the MPC with respect to~$Y$;


\item\label{mult_ch} if $f\!\in\!R_{\al}[\hb]\big[\big[Q\big]\big]$, then 
$fZ$ is recursive and satisfies the MPC with respect to~$Y$;


\item\label{multoverh_ch} if $f\!\in\!Q\cdot R_{\al}[[Q]]$, then
$\ne^{f/\hb}Z$ is recursive and  satisfies the MPC with respect to~$\ne^{f/\hb}Y$;


\item\label{transf_ch} if $g\!\in\!Q\cdot R_{\al}[[Q]]$,  
$$\bar{Y}(x,\hb,Q)\!\equiv\!\ne^{x g(Q)/\hb}Y\big(x,\hb,Q \ne^{g(Q)}\big) \quad\hbox{and}\quad
\bar{Z}(x,\hb,Q)\!\equiv\!\ne^{x g(Q)/\hb}Z\big(x,\hb,Q \ne^{g(Q)}\big),$$
then $\bar{Z}$ is recursive and  satisfies the MPC with respect to~$\bar{Y}$.
\end{enumerate}
\end{lmm}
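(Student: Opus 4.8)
The plan is to treat the two conclusions---recursivity and the MPC---separately, and to check each of the four transforms in turn; these are (mildly extending \cite[Section~2.2]{bcov0}) stability statements for the class of series satisfying Definitions~\ref{recur_dfn} and~\ref{MPC_dfn}, so I would follow those arguments. The organizing principle is that the recursion relation \eqref{recur_dfn_e} is \emph{linear} in $Z$ once the collection $C=(C_i^j(d))$ is fixed, and that none of the four operations moves the poles $\hb=(\al_j-\al_i)/d$; so in every case I expect the transformed series to be recursive \emph{with the same $C$}, and the work is to verify that the residues at these poles transform correctly.

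For recursivity it is convenient to package the pole data of \eqref{recur_dfn_e} as the single $Q$-series identity
\[
\Res_{\hb=(\al_j-\al_i)/d}Z(\al_i,\hb,Q)=C_i^j(d)\,Q^d\,Z\big(\al_j,(\al_j-\al_i)/d,Q\big),
\]
with the remaining content of Definition~\ref{recur_dfn} being that the poles are simple and occur only at these locations. Abbreviating $c\equiv(\al_j-\al_i)/d$, the identity I would lean on throughout is $\al_i+dc=\al_j$. For \ref{deriv_ch} the operator $x+\hb Q\frac{\partial}{\partial Q}$ multiplies the degree-$d^*$ coefficient at $\al_i$ by $\al_i+d^*\hb$, which at $\hb=c$ equals $\al_j+(d^*-d)c$, exactly the factor produced by the same operator acting on $Z(\al_j,c,Q)$ at level $d^*-d$; hence $C$ is unchanged. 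For \ref{mult_ch} it suffices to treat monomials $\hb^aQ^e$: the partial-fraction identity $\hb^a/(\hb-c)=(\text{polynomial})+c^a/(\hb-c)$ together with the factor $z^a$ carried by $\bar Z(\al_j,z,Q)|_{z=c}$ again reproduces $C$, while the shift by $Q^e$ is absorbed uniformly. For \ref{multoverh_ch} and \ref{transf_ch} the prefactors $\ne^{f(Q)/\hb}$ and $\ne^{\al_ig(Q)/\hb}$ are holomorphic at $\hb=c\neq0$ and the substitution $Q\mapsto Q\ne^{g(Q)}$ commutes with $\Res_{\hb=c}$; here the residue produces the exponential factor $\ne^{(\al_i+dc)g(Q)/c}=\ne^{\al_jg(Q)/c}$, which is precisely the factor carried by $\bar Z(\al_j,c,Q)$, so once more the same $C$ works.

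For the MPC I would compute $\Phi$ for the transformed pair and exhibit it as an $R_{\al}[\hb][[z,Q]]$-admissible operation applied to $\Phi_{Y,Z}$. The target formulas are
\[
\Phi_{Y,\bar Z}=\Big(\tfrac{\partial}{\partial z}-\hb Q\tfrac{\partial}{\partial Q}\Big)\Phi_{Y,Z},\qquad
\Phi_{Y,fZ}=f(-\hb,Q)\,\Phi_{Y,Z}
\]
for \ref{deriv_ch} and \ref{mult_ch}, and
\[
\Phi_{\bar Y,\bar Z}=\ne^{(f(Q\ne^{\hb z})-f(Q))/\hb}\,\Phi_{Y,Z},\qquad
\Phi_{\bar Y,\bar Z}(\hb,z,Q)=\Phi_{Y,Z}\Big(\hb,\,z+\tfrac{g(Q\ne^{\hb z})-g(Q)}{\hb},\,Q\ne^{g(Q)}\Big)
\]
for \ref{multoverh_ch} and \ref{transf_ch}; each follows by a short manipulation of the defining sum---differentiation in $z$ for \ref{deriv_ch}, factoring out the $i$-independent term for \ref{mult_ch} and \ref{multoverh_ch}, and reindexing for \ref{transf_ch}. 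The one fact that makes all of these land back in $R_{\al}[\hb][[z,Q]]$ is that $(h(Q\ne^{\hb z})-h(Q))/\hb$ has \emph{no negative powers of $\hb$} for $h\in Q\cdot R_{\al}[[Q]]$: its $Q^d$-term contributes $\sum_{m\ge1}\frac{d^m}{m!}\hb^{m-1}z^m$. Since $\frac{\partial}{\partial z}$, $\hb Q\frac{\partial}{\partial Q}$, multiplication by $f(-\hb,Q)$, and the substitution above all preserve $R_{\al}[\hb][[z,Q]]$, the MPC is inherited in each case.

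The step I expect to be the main obstacle is the recursivity of \ref{multoverh_ch} and \ref{transf_ch}: because the multiplier $\ne^{f/\hb}$ and the change of variable $Q\mapsto Q\ne^{g(Q)}$ depend on $Q$, the residue of the degree-$d^*$ coefficient at a fixed pole becomes a convolution over $Q$-degrees, and one must check that this convolution reassembles into the single recursion with the undisturbed collection $C$. The formal cancellation hinges on $\al_i+dc=\al_j$, as above, but care is required to confirm that the inductive regularity hypotheses of Definition~\ref{recur_dfn} (rationality at each $x=\al_i$ and simplicity of the poles at lower levels) genuinely propagate through the transforms, rather than merely the formal residue identity. By contrast, the MPC claims and the recursivity of \ref{deriv_ch}--\ref{mult_ch} are essentially bookkeeping once the displayed formulas are established.
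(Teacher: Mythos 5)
Your argument is correct and is essentially the proof the paper relies on: the paper does not reprove this lemma but defers to \cite[Lemma~2.3]{bcov0}, observing that those proofs use nothing about the specific collection $C$, and your residue reformulation of recursivity (with the pivot $\al_i+dc=\al_j$) together with the four displayed transformation formulas for $\Phi$ reproduces exactly that argument --- your formula for $\Phi_{Y,\bar Z}$ in~(i) is precisely the corrected form of \cite[(2.19)]{bcov0} noted in the paper's footnote. The regularity-propagation concern you flag at the end does not cause trouble in practice, since the hypotheses of Definition~\ref{recur_dfn} are verified directly for the specific series to which these transforms are applied, so only the residue identity itself needs to be transported.
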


Parts~\ref{deriv_ch}, \ref{multoverh_ch}, and \ref{transf_ch}
of this lemma are (i), (iv)\footnote{There is a typo in the statement of \cite[Lemma 2.3,(iv)]{bcov0}:
$\ti\Q_{\al}[u]$ should be $\ti\Q_{\al}[[u]]$.}, and (v), respectively, of \cite[Lemma 2.3]{bcov0} 
with $Q\!=\!u\!=\!\ne^t$ and $\Q$ and $\ti\Q_{\al}$ replaced by~$R_{\al}$.
The conclusions in these parts continue to hold because their proofs given in \cite{bcov0}
do not use anything about the collection $C=(C_i^j(d))_{d,i,j}$ of~\cite{bcov0}.
Moreover, replacing $\Q$ and $\ti\Q_{\al}$ by~$R_{\al}$ does not affect the proofs either.
Part~\ref{mult_ch} of Lemma~\ref{Phistr_lmm4} generalizes parts~(ii) and~(iii)
of \cite[Lemma~2.3]{bcov0}
as suggested in the remark following \cite[Lemma~2.3]{bcov0};
it continues to hold for the same reasons as 
the other parts do.\footnote{There are two typos in the proof 
of \cite[Lemma~2.3]{bcov0}:
the last term in the last equality of (2.19) should be
$$C_i^j(d)u^d\left\{d+\frac{\nd}{\nd t}\right\}Z((\al_j-\al_i)/d,\al_j,u)$$
and the third argument of $Z$ on both sides of the first equation 
in the proof of (v) should be
$u\ne^{g(u)}.$}

\subsection{Recursivity and MPC for GW power series}
\label{Z_subs}

This section establishes that the power series $\cZ_p(x,\hb,Q)$ 
defined in~\e_ref{equivZ_e} is $\fC$-recursive in the sense of Definition~\ref{recur_dfn}, 
where
\BE{Cdfn}
\fC_i^j(d)\equiv  \frac{\prod\limits_{k=1}^l    
          \prod\limits_{r=0}^{a_kd-1}\left(a_k\al_i+r\,\frac{\al_j-\al_i}{d}\right)}
{d\underset{(r,k)\neq(d,j)}{\prod\limits_{r=1}^d\prod\limits_{k=1}^n}
            \left(\al_i-\al_k+r\,\frac{\al_j-\al_i}{d}\right)}\in\Q_{\al} \,.\EE
We also show that $\cZ_p(x,\hb,Q)$ satisfies the MPC of Definition~\ref{MPC_dfn}
with respect to the power series $\cZ(x,\hb,Q)$ defined in~\e_ref{Z_e}.
These statements are 
special cases of Lemmas~\ref{recgen_lmm} and~\ref{polgen_lmm} below, since
\begin{equation*}\label{strrel_e}\begin{split}
\int_{\ov\M_{0,2}(\Pn,d)}\!\!\!
\frac{\E(\V_{\a}'')\ev_2^*\eta}{\hb\!-\!\psi_1}\ev_1^*\phi_i
=\hb\!\!\int_{\ov\M_{0,3}(\Pn,d)}\!\!\!
\frac{\E(\V_{\a}'')\ev_2^*\eta}{\hb\!-\!\psi_1}\ev_1^*\phi_i
\quad\forall\,\eta\!\in\!H_{\T}^*(\Pn),\,d\!\in\!\Z^{+},
\end{split}\end{equation*}
by the string relation \cite[Section~26.3]{MirSym}.

\begin{lmm}\label{recgen_lmm} 
If $m\!\ge\!3$, $\ev_j\!:\ov\M_{0,m}(\Pn,d)\!\lra\!\Pn$ is 
the evaluation at the $j$-th marked point, and
$\eta_j\!\in\!H_{\T}^*(\Pn)$ 
and $\be_j\!\in\!\Z^{\geq 0}$ for $j\!=\!1,\ldots,m$,
then the power series
\begin{equation*}\label{recgen_e}
\cZ_{\eta,\be}(x,\hb,Q)\equiv\sum_{d=0}^{\i}Q^d
\ev_{1*}\!\left[
\frac{\E(\V_{\a}'')}{\hb\!-\!\psi_1}
\prod\limits_{j=2}^{m} \big(\psi_j^{\be_j}\ev_j^*\eta_j\big)\right]
\in H_{\T}^*(\Pn)\Lau{\hb}\big[\big[Q\big]\big]
\end{equation*}
is $\fC$-recursive with $\fC$ given by \e_ref{Cdfn}.
\end{lmm}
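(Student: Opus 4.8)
The plan is to prove the lemma by the Atiyah--Bott Localization Theorem~\cite{ABo}, following the template of~\cite{bcov0} and \cite[Chapters~27--30]{MirSym}. First I would restrict to a fixed point, i.e.\ set $x=\al_i$: since $\cZ_{\eta,\be}(\al_i,\hb,Q)$ is obtained from the equivariant pushforward $\ev_{1*}$ by capping with the equivariant Poincar\'e dual $\phi_i$ of $P_i$ (see \eqref{phidfn_e}) and integrating over $\ov\M_{0,m}(\Pn,d)$, localization rewrites each $Q^d$-coefficient as a sum over the $\T$-fixed loci on which the first marked point maps to~$P_i$. These loci are indexed by decorated trees: vertices $v$ carry fixed points $P_{\mu(v)}$, edges are degree-$d_e$ covers of the lines $\ell_{\mu(v)\mu(v')}$, and the marked points $1,\dots,m$ are distributed among the vertices. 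The only $\hb$-dependence is the factor $1/(\hb-\psi_1)$, so all finite nonzero poles in $\hb$ of $\coeff{\cZ_{\eta,\be}(\al_i,\hb,Q)}_{Q;d^*}$ arise from the value of $\psi_1$ on these loci.

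Next I would isolate the poles. A pole at $\hb=(\al_j-\al_i)/d$ with $d\le d^*$ and $j\neq i$ comes exactly from those loci on which the first marked point sits at a one-valent vertex over~$P_i$ joined by a single edge of degree~$d$ to a vertex over~$P_j$, carrying no other marked point; there $\psi_1$ restricts to $(\al_j-\al_i)/d$, producing a simple pole. Computing its residue, the fixed-locus contribution factors as an edge factor times the contribution of the complementary subtree rooted at the $P_j$-vertex. The edge factor is precisely $\fC_i^j(d)$ of \eqref{Cdfn}: its numerator is the restriction of $\E(\V_{\a}'')$ to the degree-$d$ edge (the product $\prod_{k}\prod_{r=0}^{a_kd-1}(a_k\al_i+r(\al_j-\al_i)/d)$, the kernel bundle dropping the node fibre), and its denominator is the edge's contribution to the inverse equivariant Euler class of the normal bundle, with the node-smoothing weight $(r,k)=(d,j)$ omitted. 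The $\psi$-class created by smoothing that node recombines with the factors $\psi_j^{\be_j}\ev_j^*\eta_j$ on the severed side to reproduce exactly $\coeff{\cZ_{\eta,\be}(\al_j,z,Q)}_{Q;d^*-d}\big|_{z=(\al_j-\al_i)/d}$, the variable $z$ being the node weight playing the role of~$\hb$ on the remaining graph. Subtracting $\sum_{d=1}^{d^*}\sum_{j\neq i}\tfrac{\fC_i^j(d)}{\hb-(\al_j-\al_i)/d}\coeff{\cZ_{\eta,\be}(\al_j,z,Q)}_{Q;d^*-d}\big|_{z=(\al_j-\al_i)/d}$ therefore cancels every finite pole, leaving an element of $R_\al[\hb,\hb^{-1}]$, which is exactly condition~\eqref{recur_dfn_e} of Definition~\ref{recur_dfn}.

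The main obstacle I anticipate is the bookkeeping that identifies the residue with $\fC_i^j(d)$ and the complementary factor with $\cZ_{\eta,\be}$ itself. Two points need care. First, that the graph genuinely \emph{breaks} at the node: one must check, using stability and the simplicity of the pole, that no extra marked point or edge can sit at the one-valent vertex, so the severed subtree is again a fixed locus of the same type, and that the excluded $(d,j)$ factor in the denominator of \eqref{Cdfn} is precisely the node-smoothing direction absorbed into the $1/(z-\psi)$ of the smaller generating function. Second, that the restriction of the kernel bundle $\E(\V_{\a}'')$ to an edge yields the stated numerator; this uses the normalization-sequence computation of $\V_{\a}''$ along a multiple cover of a line, exactly as in the $l=1$ case of~\cite{bcov0}, carried out factor-by-factor over $k=1,\dots,l$. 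The hypothesis $m\ge3$ guarantees stability of every vertex component that arises and is what allows the same argument to cover $\cZ_p$ via the string relation quoted just before the statement.
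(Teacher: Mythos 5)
Your proposal is correct and follows essentially the same route as the paper, which proves Lemma~\ref{recgen_lmm} by deferring to the localization argument of \cite[Lemma~1.1]{bcov0}: apply the Atiyah--Bott theorem to $\cZ_{\eta,\be}(\al_i,\hb,Q)$, observe that finite poles in $\hb$ arise only from fixed loci where the first marked point lies directly on an edge of degree~$d$ from $P_i$ to $P_j$, and identify the residue as the edge factor $\fC_i^j(d)$ (with $\E(\V_{\a}'')$ restricted to the edge giving the numerator as a product over $k=1,\ldots,l$) times the generating function of the severed subtree. The only changes the paper records relative to \cite{bcov0} are exactly the ones you carry out: replacing $\V_0''$ by $\V_{\a}''$, the corresponding product formula for the Euler class on an edge, and $C_i^j(d)$ by $\fC_i^j(d)$.
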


\begin{lmm}\label{polgen_lmm} 
For all $m\!\ge\!3$, $\eta_j\!\in\!H_{\T}^*(\Pn)$,
and $\be_j\!\in\!\Z^{\geq 0}$, the power series $\hb^{m-2}\cZ_{\eta,\be}(x,\hb,Q)$,
with $\cZ_{\eta,\be}(x,\hb,Q)$ as in Lemma~\ref{recgen_lmm},
satisfies the MPC with respect to $\cZ(x,\hb,Q)$.
\end{lmm}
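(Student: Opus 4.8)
The plan is to prove the polynomiality by the Atiyah--Bott Localization Theorem \cite{ABo}, following the $l\!=\!1$ argument of \cite{bcov0} and invoking only the multiplicativity $\E(\V_{\a})\!=\!\lr{\a}\,\ev_2^*x^l\,\E(\V_{\a}'')$ recorded in \eqref{eulerclass_e}. The guiding principle is that the sum over $i$ in $\Phi_{Y,Z}$ of Definition~\ref{MPC_dfn}, weighted by $1/\prod_{k\neq i}(\al_i\!-\!\al_k)=1/\E(T_{P_i}\Pn)$, is precisely the fixed-point form of a \emph{global} equivariant integral. A global integral is the equivariant pushforward to a point of an honest cohomology class and therefore has no poles in $\hb$; hence its Laurent expansion lies in $R_{\al}[\hb][[z,Q]]$, which is exactly the MPC. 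So the whole task is to exhibit such a global integral whose localization is $\Phi_{\cZ,\,\hb^{m-2}\cZ_{\eta,\be}}$.

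I would realize this integral on the graph space $\ov\M_{0,m-1}(\Pn\!\times\!\P^1,(d,1))$, letting an auxiliary $\C^*$ act through the $\P^1$-factor with tangent weights $+\hb$ and $-\hb$ at the two fixed points $0,\infty\!\in\!\P^1$. On a $\C^*$-fixed locus the $\Pn$-degree splits as $d=d_0\!+\!d_\infty$ between the components lying over $0$ and over $\infty$, joined by the unique bidegree-$(0,1)$ edge mapping to $\{P_i\}\!\times\!\P^1$; smoothing the two nodes where this edge meets the $\Pn$-parts contributes the factors $1/(\hb\!-\!\psi_1)$ and $1/(-\hb\!-\!\psi_1)$, and the fixed point $P_i$ contributes the weight $1/\prod_{k\neq i}(\al_i\!-\!\al_k)$. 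Matching the $0$-part (carrying $\E(\V_{\a}')$) with $\cZ(\al_i,\hb,Q\ne^{\hb z})$ of \eqref{Z_e}, the $\infty$-part (carrying $\E(\V_{\a}'')$ and the insertions $\psi_j^{\be_j}\ev_j^*\eta_j$) with $\hb^{m-2}\cZ_{\eta,\be}(\al_i,-\hb,Q)$, and absorbing the node and edge data into the factor $\ne^{\al_i z}$ and the shift $Q\!\mapsto\!Q\ne^{\hb z}$, one identifies $\Phi_{\cZ,\,\hb^{m-2}\cZ_{\eta,\be}}$ with the $(z,\hb)$-expansion of this graph-space integral. Here the extra power $\hb^{m-2}$ is exactly what makes the $m\!-\!2$ insertions on the $\infty$-side enter with the correct $\hb$-degree, and the split bundle is handled just as the line bundle of \cite{bcov0} by splitting $\E(\V_{\a})$ across the node via \eqref{eulerclass_e} and its $\V_{\a}'$-analogue. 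The restriction of $\cZ_{\eta,\be}$ to $P_i$ as the expected graph sum, and the conversion of $\psi_1$-insertions under $\ev_{1*}$, are supplied by the proof of Lemma~\ref{recgen_lmm} and the string relation \cite[Section~26.3]{MirSym}.

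The main obstacle is the cancellation of the apparent poles in $\hb$: the two extreme fixed-locus contributions individually carry $1/(\hb\!-\!\psi_1)$ and $1/(-\hb\!-\!\psi_1)$, and one must show that in the reconstructed global integral the would-be pole along $\hb=\psi_1=-\psi_1$ is cancelled by the equivariant Euler class of the node-smoothing deformation, so that no negative powers of $\hb$ survive the pushforward. Equivalently, the delicate point is to check that the fixed-point sum defining $\Phi_{Y,Z}$ is the \emph{complete} localization expansion of the graph-space integral --- that every fixed locus is accounted for and that the edge, node, and fixed-point weights assemble exactly into the summands of Definition~\ref{MPC_dfn}, with the $z$- and $Q$-bookkeeping matching on the nose. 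Once this identification is established the polynomiality is immediate, and the argument runs in parallel with \cite{bcov0}, the passage from a single line bundle to the split bundle $\V_{\a}$ being routine given \eqref{eulerclass_e}.
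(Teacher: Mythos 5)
Your proposal is essentially the paper's own proof: the paper deduces Lemma~\ref{polgen_lmm} from the graph-space localization argument of \cite[Lemma~1.2]{bcov0}, i.e.\ an equivariant integral over (a subspace of) $\ov\M_{0,m}(\P^1\!\times\!\Pn,(1,d))$ under $\T\!\times\!\C^*$ whose fixed-point expansion reproduces $\Phi_{\cZ,\,\hb^{m-2}\cZ_{\eta,\be}}$ and whose globality forces polynomiality in $\hb$, the only modification for the split bundle being the replacement of $\E(\V_0'')$ by $\E(\V_{\a}'')$ via \eqref{eulerclass_e}. The delicate points you flag (completeness of the fixed-locus sum and the exact matching of edge, node, and fixed-point weights with the summands of Definition~\ref{MPC_dfn}) are precisely what \cite{bcov0} verifies, so the argument goes through as you describe.
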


Similarly to \e_ref{pushch_e},
$$\cZ_{\eta,\be}(\al_i,\hb,Q)=
\sum_{d=0}^{\i}Q^d\!\int_{\ov\M_{0,m}(\Pn,d)}\bigg(
\frac{\E(\V_{\a}'')\ev_1^*\phi_i}{\hb\!-\!\psi_1}
\prod\limits_{j=2}^{m} \big(\psi_j^{\be_j}\ev_j^*\eta_j\big)\bigg)$$
for all $i$.
Thus, the $l\!=\!1$ cases of  Lemmas~\ref{recgen_lmm} and \ref{polgen_lmm} 
are  Lemmas~1.1 and~1.2 in \cite[Section~1.3]{bcov0}.\footnote{There is a typo in 
\cite[(1.19)]{bcov0}: $x$ should be $\al_i$.}
The proof of  \cite[Lemma~1.1]{bcov0} consists of applying the Localization 
Theorem~\cite{ABo} to $\cZ_{\eta,\be}(\al_i,\hb,Q)$ with respect to 
the $\T$-action on $\ov\M_{0,m}(\Pn,d)$ induced by the standard $\T$-action 
on $\Pn$.\footnote{There are two typos in the proof of this lemma 
in \cite{bcov0}: in the second factor of the second equation in
\cite[(3.20)]{bcov0}, $\Gamma_0$ should be $\Gamma_c$ and on the right-hand side 
of \cite[(3.23)]{bcov0}, $\cZ_{\Gamma}$ should be $\cZ_{\Gamma_c}$.}
The proof of \cite[Lemma~1.2]{bcov0} consists of applying the Localization Theorem
on a certain subspace of $\ov\M_{0,m}(\P^1\!\times\!\Pn,(1,d))$ with respect to
the action of the $(n\!+\!1)$-torus $\T\times\C^*$ induced by a certain action 
of this torus on $\P^1\times\Pn$.\footnote{The portion of \cite[Section~3.3]{bcov0} following
the statement of Lemma~3.1 is unnecessary: in light of \cite[(3.27)]{bcov0},
\cite[(3.29)]{bcov0} immediately implies the statement of \cite[Lemma~1.2]{bcov0}.
There are also five typos in the proof 
of this lemma in \cite{bcov0}:
in the first equation of \cite[(3.32)]{bcov0} and on the left-hand side 
of \cite[(3.33)]{bcov0}, $\gamma^*$ should be $\gamma_1^*$;
in the second equation of \cite[(3.32)]{bcov0}, $\Gamma_0$ should be $\Gamma_1$ 
when it is pulled-back by $\pi_1^*$ and $\Gamma_2$ when it is pulled-back by $\pi_2^*$;
in the second bracket on the right-hand side
of \cite[(3.33)]{bcov0}, the numerator of the integrand
should be $\E(\V_{0}'')\eta^{\be}\ev_1^*\phi_i$.}
The proofs of  Lemmas~\ref{recgen_lmm} and~\ref{polgen_lmm} are nearly identical:
the base spaces and the tori actions remain the same, while the Euler classes of the vector 
bundles in our case are products of the Euler classes in~\cite{bcov0}.
So only the following modifications need to be made to the proofs in~\cite{bcov0}: 
\begin{enumerate}[label=(\arabic*)]
\item $\V_0''$ should be replaced by $\V_{\a}''$ everywhere in 
\cite[Sections 3.2-3.4]{bcov0};
\item the first equation in \cite[(3.21)]{bcov0} should be replaced by
$$\E(\V_{\a}'')=
\prod_{k=1}^l\prod_{r=0}^{a_kd_0-1}\left(a_k\al_i+r\frac{\al_j-\al_i}{d_0}\right);$$
\item on the right hand-sides of \cite[(3.23), (3.24)]{bcov0} and 
in the last sentence of \cite[Section 3.2]{bcov0},
$C_i^j(d_0)$ should be replaced by $\fC_i^j(d_0).$
\end{enumerate}

\subsection{Properties of the hypergeometric series $\cY_p$}
\label{Y_subs}

Below we verify the two claims 
concerning the power series $\cY_p(x,\hb,q)$
that remain to be proved:
\begin{enumerate}[label=(\alph*)]
\item\label{Ymprec_item} $\cY_{-l}(x,\hb,q)$ is $\fC$-recursive; 
\item\label{YmpPhi_item} $\Phi_{\cY,\cY_{-l}}\!\in\!\Q_{\al}[\hb]\big[\big[z,q\big]\big]$.
\end{enumerate}
The proofs are similar to \cite[Section 2.3]{bcov0},
which treats the $l\!=\!1$ Calabi-Yau case.
The proof of~\ref{Ymprec_item} in~\cite{bcov0}
is applicable in the Fano case as well and so requires 
little modification;
the consideration  of the Fano case in~\ref{YmpPhi_item}
requires only slightly more care.

If $f\!=\!f(z)$ is a rational function in $\hb$ and possibly some other
variables, for any $z_0\!\in\!\P^1\!\supset\!\C$ let
\BE{Rsdfn_e}\Rs{z=z_0}f(z) \equiv \frac{1}{2\pi\I}\oint f(z)\nd z,\EE
where the integral is taken over a positively oriented loop around $z\!=\!z_0$
with no other singular points of $f\nd z$,
denote the residue of the 1-form~$f\nd z$.
If $z_1,\ldots,z_k\!\in\!\P^1$ is any collection of points, let
\BE{Rssumdfn_e}\Rs{z=z_1,\ldots,z_k}f(z)
\equiv\sum_{i=1}^{k}\Rs{z=z_i}f(z).\EE
If $f$ is regular at $z\!=\!0$, let $\left\llbracket f\right\rrbracket_{z;p}$ denote 
the coefficient of $z^p$ in the power series expansion of $f$ around $z\!=\!0$.

\begin{proof}[Proof of~\ref{Ymprec_item}]
In this argument, we view $\cY_{-l}$ as an element of $\Q_{\al}(x,\hb)[[q]]$.
By \e_ref{Ymldfn_e} and \e_ref{Cdfn},
$$\frac{\fC_i^j(d)q^d}{\hb-\frac{\al_j-\al_i}{d}} 
\, \cY_{-l}\big(\al_j,(\al_j\!-\!\al_i)/d,q\big)
=\Rs{z=\frac{\al_j-\al_i}{d}}\bigg\{\frac{1}{\hb\!-\!z}\cY_{-l}(\al_i,z,q)\bigg\}.$$
Thus, by the Residue Theorem on~$S^2$,
\begin{equation}\label{recurpf_e1}\begin{split}
\sum_{d=1}^{\i}\sum_{j\neq i}\frac{\fC_i^j(d)q^d}{\hb-\frac{\al_j-\al_i}{d}}
&\cY_{-l}\big(\al_j,(\al_j\!-\!\al_i)/d,q\big)
=-\Rs{z=\hb,0,\i}\bigg\{\frac{1}{\hb\!-\!z}\cY_{-l}(\al_i,z,q)\bigg\}\\
&\qquad\qquad\qquad\qquad
=\cY_{-l}(\al_i,\hb,q)-\Rs{z=0,\i}\bigg\{\frac{1}{\hb\!-\!z}\cY_{-l}(\al_i,z,q)\bigg\}.
\end{split}\end{equation}
On the other hand, 
\begin{alignat*}{1}
\Rs{z=\i}\bigg\{\frac{1}{\hb\!-\!z}\cY_{-l}(\al_i,z,q)\bigg\}&=1,\\
\Rs{z=0}\bigg\{\frac{1}{\hb\!-\!z}\coeff{\cY_{-l}(\al_i,z,q)}_{q;d}\bigg\}
&=\left\llbracket\frac{1}{\hb\!-\!z}
\frac{\prod\limits_{k=1}^l\prod\limits_{r=0}^{a_kd-1}(a_k\al_i\!+\!rz)}
{d!\prod\limits_{r=1}^{d}\prod\limits_{k\neq i}(\al_i\!-\!\al_k\!+\!rz)}
\right\rrbracket_{z;d-1}\in\Q_{\al}[\hb^{-1}]\quad\forall\,d\!\in\!\Z^{\ge0}.
\end{alignat*}
Thus, \e_ref{recurpf_e1} implies that $\cY_{-l}$ is $\fC$-recursive.
\end{proof}

\begin{proof}[Proof of~\ref{YmpPhi_item}]
In this argument, we view $\cY$ and $\cY_{-l}$ as elements of $\Q_{\al}[x][[\hb^{-1},q]]$;
in particular,
$$\frac{\ne^{xz}}{\prod\limits_{k=1}^n(x\!-\!\al_k)}
\cY\big(x,\hb,q\ne^{\hb z}\big)\cY_{-l}(-x,\hb,q)
\in \Q_{\al}(x)[[\hb^{-1},z,q]]$$
viewed as a function of $x$ has residues only at $x\!=\!\al_i$ with $i\!\in\![n]$
and $x\!=\!\i$.
By \e_ref{Ydfn_e} and~\e_ref{Ymldfn_e},
$$\frac{\ne^{\al_iz}}{\prod\limits_{k\neq i}(\al_i\!-\!\al_k)}
\cY\big(\al_i,\hb,q\ne^{\hb z}\big)\cY_{-l}(\al_i,-\hb,q)
=\Rs{x=\al_i}\left\{\frac{\ne^{xz}}{\prod\limits_{k=1}^n(x\!-\!\al_k)}
\cY\big(x,\hb,q\ne^{\hb z}\big)\cY_{-l}(x,-\hb,q)\right\}.$$
Thus, by the Residue Theorem on~$S^2$,
\begin{equation*}\begin{split}
\Phi_{\cY,\cY_{-l}}(\hb,z,q) &=-
\Rs{x=\i}\left\{\frac{\ne^{xz}}{\prod\limits_{k=1}^n(x\!-\!\al_k)}
\cY\big(x,\hb,q\ne^{\hb z}\big)\cY_{-l}(x,-\hb,q)\right\}\\
&=\sum_{d_1,d_2=0}^{\i}\sum_{p=0}^{\i}
\frac{z^{n-1+p+\nu_{\a}(d_1+d_2)}}
{(n-1+p+\nu_{\a}(d_1\!+\!d_2))!}
q^{d_1+d_2}\ne^{\hb d_1z}\\
&\qquad\times\left\llbracket\frac{1}{\prod\limits_{k=1}^n(1\!-\!\al_kw)}
\frac{\prod\limits_{k=1}^l\prod\limits_{r=1}^{a_kd_1}(a_k\!+\!r\hb w)}
{\prod\limits_{r=1}^{d_1}\prod\limits_{k=1}^{n}(1\!-\!(\al_k\!-\!r\hb)w)}
\cdot\frac{\prod\limits_{k=1}^l\prod\limits_{r=0}^{a_kd_2-1}(a_k\!-\!r\hb w)}
{\prod\limits_{r=1}^{d_2}\prod\limits_{k=1}^{n}(1\!-\!(\al_k\!+\!r\hb)w)}\right\rrbracket_{w;p}.
\end{split}\end{equation*}
The $(d_1,d_2,p)$-summand above is $q^{d_1\!+\!d_2}$\!
times an element of $\Q_{\al}[\hb]\big[\big[z\big]\big]$. 
\end{proof}

\section{Some Applications of Theorem~\ref{main_thm}}
\label{mainthmapp_sec}

Sections~\ref{fewobs_sec} and~\ref{ctC_subs} below contain some corollaries of 
Theorem~\ref{main_thm}.
While the identities we obtain in these sections appear purely technical
in nature, they enter in vital ways in the proofs of Theorem~\ref{annulus_thm}
and Theorem~\ref{klein_thm} in the rest of the paper.

\subsection{Differentiating \e_ref{main_e2}}
\label{fewobs_sec}

In this section, we obtain a description for a derivative of \e_ref{main_e2}.
Let $\nD= q\frac{\nd}{\nd q}$.

By \e_ref{main_e0}, \e_ref{Crec_e}, and \e_ref{tiCrec_e},  
\BE{Ypexp_e}
\cY_p(x,\hb,q)=x^{l+p}+x^l\sum_{s=1}^{\i}\sum_{r=0}^{p+s}B_{p,s}^{(r)}(q)x^{p+s-r}\hb^{-s}
\EE
for some $B_{p,s}^{(r)}(q)\in q\cdot\Q_{\al}[[q]]$ such that the coefficient of $q^d$
in $B_{p,s}^{(r)}(q)$ is a homogeneous polynomial in $\al_1,\ldots,\al_n$ of degree $r\!-\!\nu_{\a}d$.
If $\nu_{\a}\!=\!0$,
\BE{BvsI_e} I_{p+1}(q)=1+DB_{p,1}^{(0)}(q)\EE
by~\e_ref{YvsF_e}, \e_ref{Ipdfn_e}, and
$$x^lF(x/{\hb},q)=\{x\!+\!\hb\nD\}\D^{-1}\cY_0(x,\hb,q)\big|_{\al=0};$$
if $\nu_{\a}\!\neq\!0$, 
\e_ref{BvsI_e} is immediate from $B_{p,1}^{(0)}$ being a constant in $q$.
For any $i,j\!=\!1,2,\ldots,n$, let
\begin{equation}\label{Siisums_e}\begin{split}
S_{ij}^{(2)}(\hb_1,\hb_2,q)
=\sum_{\begin{subarray}{c}p_1+p_2+r=n-l\\ p_1,p_2\ge0\end{subarray}}
\!\!\!\!\!\!\!\!\!\!(-1)^r\si_r\!
\Bigg[\sum_{r'=0}^{p_1}\!DB_{p_1-1,1}^{(r')}(q)
\cY_{p_1-r'}(\al_i,\hb_1,q)\Bigg]\!\cY_{p_2}(\al_j,\hb_2,q).
\end{split}\end{equation}

\begin{lmm}\label{B1_lmm}
For all $i=1,2,\ldots,n$ and $p\!\ge\!-l$, 
$$\big\{\al_i+\hb\nD\big\}\cY_p(\al_i,\hb,q)
=\cY_{p+1}(\al_i,\hb,q)+
\sum_{r=0}^{p+1}\nD B^{(r)}_{p,1}(q)\cY_{p+1-r}(\al_i,\hb,q).$$
\end{lmm}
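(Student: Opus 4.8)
The plan is to deduce the identity from the uniqueness principle of Proposition~\ref{uniqueness_prp}, the same mechanism underlying every other relation among the $\cY_p$. Set
\[
\Delta(x,\hb,q)\equiv\{x+\hb\nD\}\cY_p(x,\hb,q)-\cY_{p+1}(x,\hb,q)-\sum_{r=0}^{p+1}\nD B_{p,1}^{(r)}(q)\,\cY_{p+1-r}(x,\hb,q),
\]
an element of $H_{\T}^*(\Pn;\Q)\Lau{\hb}[[q]]$; restricting $x$ to $\al_i$ recovers the asserted equality, so it suffices to prove $\Delta=0$. I would do this by checking that $\Delta$ is $\fC$-recursive, satisfies the MPC with respect to $\cY$, and vanishes modulo $\hb^{-1}$. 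Proposition~\ref{uniqueness_prp}, applied with $R=\Q$ and $Y=\cY$ — legitimate since the $q^0$-coefficient of $\cY(\al_i,\hb,q)$ equals $1\in\Q_{\al}^*$, so that $\cY(\al_i,\hb,q)\in\Q_{\al}^*+q\cdot\Q_{\al}(\hb)[[q]]$ — then forces $\Delta=0$, and restriction to $x=\al_i$ is the lemma.

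For the structural properties, recall that each $\cY_{p+1-r}$ is $\fC$-recursive and satisfies the MPC with respect to $\cY$, as established in Section~\ref{Y_subs} together with Lemma~\ref{Phistr_lmm4}. Applying part~\ref{deriv_ch} of Lemma~\ref{Phistr_lmm4} with the recursion variable taken to be $q$ (so that its differential operator $\{x+\hb\,q\frac{\nd}{\nd q}\}$ is exactly $\{x+\hb\nD\}$) shows that $\{x+\hb\nD\}\cY_p$ is again $\fC$-recursive and satisfies the MPC with respect to $\cY$. Since $\nD B_{p,1}^{(r)}(q)\in q\cdot\Q_{\al}[[q]]\subset\Q_{\al}[\hb][[q]]$, part~\ref{mult_ch} of the same lemma shows each summand $\nD B_{p,1}^{(r)}\,\cY_{p+1-r}$ enjoys these properties too. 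As $\fC$-recursivity and the MPC with respect to a fixed series are preserved under $\Q_{\al}$-linear combinations, $\Delta$ is $\fC$-recursive and satisfies the MPC with respect to $\cY$.

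It then remains to compute $\Delta$ modulo $\hb^{-1}$. By \eqref{Ypexp_e} the leading term $x^{l+p}$ is the only $\hb$-nonnegative contribution to $\cY_p$, so $\cY_p\cong x^{l+p}$ and $x\cY_p\cong x^{l+p+1}$. Because $\nD$ annihilates the $q$-free term $x^{l+p}$, the series $\hb\nD\cY_p$ carries no positive power of $\hb$, and its $\hb^0$-part arises solely from the $s=1$ terms of \eqref{Ypexp_e}, namely $x^l\sum_{r=0}^{p+1}\nD B_{p,1}^{(r)}(q)\,x^{p+1-r}$. Hence
\[
\{x+\hb\nD\}\cY_p\cong x^{l+p+1}+x^l\sum_{r=0}^{p+1}\nD B_{p,1}^{(r)}(q)\,x^{p+1-r}\quad(\mod\hb^{-1}).
\]
On the other hand \eqref{cYcong_e1} and \eqref{cYcong_e2} give $\cY_{p+1-r}\cong x^{l+p+1-r}$, so the two subtracted terms contribute $x^{l+p+1}+\sum_{r=0}^{p+1}\nD B_{p,1}^{(r)}(q)\,x^{l+p+1-r}$ modulo $\hb^{-1}$, the same expression. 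Thus $\Delta\cong0\ (\mod\hb^{-1})$, and Proposition~\ref{uniqueness_prp} yields $\Delta=0$.

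The only genuinely delicate point is the bookkeeping of the last paragraph: one must verify that the single term produced at order $\hb^0$ by $\hb\nD$ acting on the $s=1$ part of $\cY_p$ is matched, term by term, by the leading behavior of the correction $\sum_r\nD B_{p,1}^{(r)}\cY_{p+1-r}$. This is exactly what singles out the coefficients $B_{p,1}^{(r)}$ appearing in the statement, and it is also the reason for routing the argument through uniqueness rather than differentiating the defining expansion \eqref{main_e0} of $\cY_{p+1}$ directly — the latter would force a comparison of the structure coefficients $\ctC_{p,s}^{(r)}$ across the consecutive levels $p$ and $p+1$, which is considerably messier.
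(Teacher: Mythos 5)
Your proof is correct and follows exactly the route the paper takes: both sides are $\fC$-recursive and satisfy the MPC with respect to $\cY$ (via Lemma~\ref{Phistr_lmm4} and the results of Section~\ref{Y_subs}), so by the uniqueness principle it suffices to compare them modulo $\hb^{-1}$, which is immediate from \eqref{Ypexp_e}. The paper states this in two sentences; your write-up simply supplies the bookkeeping the authors leave implicit.
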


\begin{proof}
Since both sides of this identity with $\al_i$ replaced by $x$ are $\fC$-recursive and satisfy the MPC
with respect to $\cY(x,\hb,q)$, it is sufficient to verify this identity modulo~$\hb^{-1}$.
The latter is immediate from \e_ref{Ypexp_e}.
\end{proof}

\begin{lmm}\label{Sii_lmm}
For all $i,j=1,2,\ldots,n$,
\begin{equation*}\begin{split}
&\bigg\{\frac{\al_i}{\hb_1}+\frac{\al_j}{\hb_2}+\nD\bigg\}
\sum_{\begin{subarray}{c}p_1+p_2+r=n-1\\p_1,p_2\ge0\end{subarray}}
\!\!\!\!\!\!\!\!\!\!
(-1)^r\!\si_r\cY_{p_1}(\al_i,\hb_1,q)\cY_{p_2-l}(\al_j,\hb_2,q)\\
&\hspace{.8in}
=\left(\frac{1}{\hb_1}+\frac{1}{\hb_2}\right)
\left\{\sum_{\begin{subarray}{c}p_1+p_2+r=n-l\\ p_1,p_2\ge0\end{subarray}}
-\sum_{\begin{subarray}{c}p_1+p_2+r=n-l\\ p_1,p_2\le-1\end{subarray}}\right\}
(-1)^r\si_r\cY_{p_1}(\al_i,\hb_1,q)\cY_{p_2}(\al_j,\hb_2,q)\\
&\hspace{1in}
+\frac{1}{\hb_1}S_{ij}^{(2)}(\hb_1,\hb_2,q)
+\frac{1}{\hb_2}S_{ji}^{(2)}(\hb_2,\hb_1,q).
\end{split}\end{equation*}
\end{lmm}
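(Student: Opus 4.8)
The plan is to reduce the statement to Lemma~\ref{B1_lmm} by using that $\nD=q\frac{\nd}{\nd q}$ is a derivation. Writing $\cY^{(1)}_p:=\cY_p(\al_i,\hb_1,q)$ and $\cY^{(2)}_p:=\cY_p(\al_j,\hb_2,q)$, I would first apply the Leibniz rule to expand
$$\Big\{\tfrac{\al_i}{\hb_1}+\tfrac{\al_j}{\hb_2}+\nD\Big\}\big(\cY^{(1)}_{p_1}\cY^{(2)}_{p_2-l}\big)
=\Big(\big\{\tfrac{\al_i}{\hb_1}+\nD\big\}\cY^{(1)}_{p_1}\Big)\cY^{(2)}_{p_2-l}
+\cY^{(1)}_{p_1}\Big(\big\{\tfrac{\al_j}{\hb_2}+\nD\big\}\cY^{(2)}_{p_2-l}\Big),$$
splitting the left-hand side into an ``$i$-part'' and a ``$j$-part''. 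To each inner factor I would apply Lemma~\ref{B1_lmm}, divided by $\hb_1$ (resp.\ $\hb_2$): this replaces $\{\tfrac{\al}{\hb}+\nD\}\cY_p$ by $\tfrac1{\hb}\cY_{p+1}+\tfrac1{\hb}\sum_{r'=0}^{p+1}\nD B^{(r')}_{p,1}\cY_{p+1-r'}$, producing for each part a \emph{leading} family (one index raised by one) and a \emph{correction} family carrying the coefficients $\nD B^{(r')}_{p,1}$.

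The second step is reindexing. Substituting $p_1\mapsto p_1-1$ in the $i$-part turns its correction family into precisely the bracketed inner sum in the definition \eqref{Siisums_e} of $S^{(2)}_{ij}$ (this explains the shift $B^{(r')}_{p_1-1,1}$ there), and the $j$-part analogously yields $S^{(2)}_{ji}(\hb_2,\hb_1,q)$. After the same shifts the two leading families become double sums of $(-1)^r\si_r\cY^{(1)}_{p_1}\cY^{(2)}_{p_2}$ over $p_1+p_2+r=n-l$, but carrying the index ranges $\{p_1\ge1,\,p_2\ge-l\}$ and $\{p_1\ge0,\,p_2\ge1-l\}$ rather than the range $\{p_1,p_2\ge0\}$ appearing on the right-hand side; likewise the correction families overshoot the ranges of $S^{(2)}_{ij}$ and $S^{(2)}_{ji}$. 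Thus, up to a finite collection of boundary terms supported on $p_1=0$ and on $-l\le p_2\le-1$ (and their $j$-analogues), the left-hand side already reproduces $\big(\tfrac1{\hb_1}+\tfrac1{\hb_2}\big)\sum_{p_1,p_2\ge0}(-1)^r\si_r\cY^{(1)}_{p_1}\cY^{(2)}_{p_2}$ together with $\tfrac1{\hb_1}S^{(2)}_{ij}+\tfrac1{\hb_2}S^{(2)}_{ji}$.

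The main obstacle is showing that the leftover boundary terms assemble into exactly the remaining piece $-\big(\tfrac1{\hb_1}+\tfrac1{\hb_2}\big)\sum_{p_1,p_2\le-1}(-1)^r\si_r\cY^{(1)}_{p_1}\cY^{(2)}_{p_2}$ of the right-hand side. Since the differentiation--Lemma~\ref{B1_lmm} machinery always leaves at least one index nonnegative, this both-negative sum cannot appear termwise and must be produced by folding. The natural tool is the relation
$$\sum_{r=0}^{n}(-1)^r\si_r\,\cY_{p-r}(\al_i,\hb,q)=0\qquad(n-l\le p\le n-1),$$
which follows from \eqref{symmrel_e} and \eqref{main_e1} since the exponential prefactor in \eqref{main_e1} does not depend on~$p$; collapsing an inner $\si_r$-sum at the fixed point via this identity is what converts boundary contributions into the desired $\sum_{p_1,p_2\le-1}$ term. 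Carrying out this reconciliation---tracking all boundary terms and checking that the $\nD B^{(r')}_{p,1}$-weighted pieces separate off cleanly into $S^{(2)}_{ij},S^{(2)}_{ji}$ while the $\cY$-only pieces close up via the displayed relation---is the one genuinely delicate computation; everything else is formal reindexing, and the leading-order behaviour $\cY_p\cong x^{l+p}$ recorded in \eqref{Ypexp_e} guarantees there is no residual ambiguity once the boundary terms are matched.
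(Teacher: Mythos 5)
Your toolkit is the right one---Lemma~\ref{B1_lmm}, the Leibniz rule, the vanishing relation coming from (\ref{symmrel_e}) and (\ref{main_e1}), and the recognition that the $\nD B^{(r')}_{p,1}$-weighted pieces are what assemble into $S^{(2)}_{ij}$---but the paper applies these tools in the opposite order, and that reordering is precisely what dissolves the step you defer. The paper first uses the folding relation to rewrite the \emph{entire} left-hand sum as
\begin{equation*}
\sum_{\begin{subarray}{c}p_1+p_2+r=n-1-l\\ p_1,p_2\ge0\end{subarray}}
\!\!\!\!\!\!(-1)^r\si_r\cY_{p_1}(\al_i,\hb_1,q)\cY_{p_2}(\al_j,\hb_2,q)
\;-\!\!\!\!\sum_{\begin{subarray}{c}p_1+p_2+r=n-1-l\\ p_1,p_2\le-1\end{subarray}}
\!\!\!\!\!\!(-1)^r\si_r\cY_{p_1}(\al_i,\hb_1,q)\cY_{p_2}(\al_j,\hb_2,q),
\end{equation*}
and only then applies $\{\al_i/\hb_1+\al_j/\hb_2+\nD\}$ to each piece via Lemma~\ref{B1_lmm}. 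Each piece then transforms cleanly: the all-nonnegative sum produces the first sum on the right-hand side of the lemma together with $\frac1{\hb_1}S^{(2)}_{ij}+\frac1{\hb_2}S^{(2)}_{ji}-\frac{I_0}{\hb_1}S^{(+)}_{ij}-\frac{I_0}{\hb_2}S^{(+)}_{ji}$, while the all-negative sum produces the second together with $\frac{I_0}{\hb_1}S^{(-)}_{ij}+\frac{I_0}{\hb_2}S^{(-)}_{ji}$, the $p=-1$ case of (\ref{BvsI_e}) converting $1+\nD B^{(0)}_{-1,1}$ into $I_0$; here $S^{(\pm)}_{ij}$ are the one-variable boundary sums of (\ref{Siimpdfn_e}), and they cancel upon subtraction because $S^{(+)}_{ij}=-S^{(-)}_{ij}$, again by the folding relation. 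In your ordering the same cancellations must occur, but they are scattered over the awkward ranges $\{p_1\ge1,\,p_2\ge-l\}$ and $\{p_1\ge0,\,p_2\ge1-l\}$, so the ``reconciliation'' you postpone is essentially the whole content of the lemma rather than a routine check. One genuine caution: your closing appeal to $\cY_p\cong x^{l+p}$ is not a way to finish---exact equality of these rational functions does not follow from agreement mod $\hb^{-1}$ unless you also invoke the recursivity/MPC uniqueness machinery of Proposition~\ref{uniqueness_prp} (which is how Lemma~\ref{B1_lmm} itself is proved, but which you have not set up for this identity), so the boundary bookkeeping really does have to be carried out explicitly.
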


\begin{proof}
For any $i,j=1,2,\ldots,n$, we define 
\begin{equation}\label{Siimpdfn_e}\begin{split}
S_{ij}^{(-)}(\hb_1,\hb_2,q)
&\equiv\cY_0(\al_i,\hb_1,q)
\sum_{\begin{subarray}{c}p+r=n-l\\ p\le-1\end{subarray}}\!\!\!\!
(-1)^r\si_r\cY_p(\al_j,\hb_2,q),\\
S_{ij}^{(+)}(\hb_1,\hb_2,q)
&\equiv\cY_0(\al_i,\hb_1,q)
\sum_{\begin{subarray}{c}p+r=n-l\\ p\ge0\end{subarray}}\!\!\!\!
(-1)^r\si_r\cY_p(\al_j,\hb_2,q)
=-S_{ij}^{(-)}(\hb_1,\hb_2,q);
\end{split}\end{equation}
the equality above follows from \e_ref{symmrel_e} and \e_ref{main_e1}.
By Lemma~\ref{B1_lmm} and the $p\!=\!-1$ case of \e_ref{BvsI_e}, 
\begin{equation}\label{DYneg_e}\begin{split}
&\bigg\{\frac{\al_i}{\hb_1}+\frac{\al_j}{\hb_2}+\nD\bigg\}
\sum_{\begin{subarray}{c}p_1+p_2+r=n-1-l\\p_1,p_2\le-1\end{subarray}}
\!\!\!\!\!\!\!\!\!\!\!\!\!
(-1)^r\!\si_r\cY_{p_1}(\al_i,\hb_1,q)\cY_{p_2}(\al_j,\hb_2,q)\\
&\hspace{1.5in}=\left(\frac{1}{\hb_1}+\frac{1}{\hb_2}\right)
\!\sum_{\begin{subarray}{c}p_1+p_2+r=n-l\\p_1,p_2\le-1\end{subarray}}
\!\!\!\!\!\!\!\!\!
(-1)^r\si_r\cY_{p_1}(\al_i,\hb_1,q)\cY_{p_2}(\al_j,\hb_2,q)\\
&\hspace{1.8in}
+\frac{I_0(q)}{\hb_1}S_{ij}^{(-)}(\hb_1,\hb_2,q)
+\frac{I_0(q)}{\hb_2}S_{ji}^{(-)}(\hb_2,\hb_1,q).
\end{split}\end{equation}
Similarly,
\begin{equation}\label{DYpos_e}\begin{split}
&\bigg\{\frac{\al_i}{\hb_1}+\frac{\al_j}{\hb_2}+\nD\bigg\}
\sum_{\begin{subarray}{c}p_1+p_2+r=n-1-l\\p_1,p_2\ge0\end{subarray}}
\!\!\!\!\!\!\!\!\!\!
(-1)^r\!\si_r\cY_{p_1}(\al_i,\hb_1,q)\cY_{p_2}(\al_j,\hb_2,q)\\
&\quad=\left(\frac{1}{\hb_1}+\frac{1}{\hb_2}\right)
\sum_{\begin{subarray}{c}p_1+p_2+r=n-l\\p_1,p_2\ge0\end{subarray}}
\!\!\!\!\!\!\!\!\!\!
(-1)^r\!\si_r\cY_{p_1}(\al_i,\hb_1,q)\cY_{p_2}(\al_j,\hb_2,q)\\
&\qquad~~+\frac{1}{\hb_1}S_{ij}^{(2)}(\hb_1,\hb_2,q)+\frac{1}{\hb_2}S_{ji}^{(2)}(\hb_2,\hb_1,q)
-\frac{I_0(q)}{\hb_1}S_{ij}^{(+)}(\hb_1,\hb_2,q)
-\frac{I_0(q)}{\hb_2}S_{ji}^{(+)}(\hb_2,\hb_1,q).
\end{split}\end{equation}
Since
\begin{equation*}\label{Ysum_e}\begin{split}
\sum_{\begin{subarray}{c}p_1+p_2+r=n-1\\p_1,p_2\ge0\end{subarray}}\!\!\!\!\!\!\!\!\!\!
(-1)^r\!\si_r\cY_{p_1}(\al_i,\hb_1,q)\cY_{p_2-l}(\al_j,\hb_2,q)
=\sum_{\begin{subarray}{c}p_1+p_2+r=n-1-l\\p_1,p_2\ge0\end{subarray}}
\!\!\!\!\!\!\!\!\!\!\!\!
(-1)^r\!\si_r\cY_{p_1}(\al_i,\hb_1,q)\cY_{p_2}(\al_j,\hb_2,q)&\\
-\!\!\!\!\!\!\!\!\!\!
\sum_{\begin{subarray}{c}p_1+p_2+r=n-1-l\\p_1,p_2\leq-1\end{subarray}}
\!\!\!\!\!\!\!\!\!\!\!\!
(-1)^r\si_r\cY_{p_1}(\al_i,\hb_1,q)\cY_{p_2}(\al_j,\hb_2,q)&
\end{split}\end{equation*}
by \e_ref{symmrel_e} and \e_ref{main_e1},
the claim now follows from \e_ref{DYneg_e}, \e_ref{DYpos_e}, and 
the equality in~\e_ref{Siimpdfn_e}.
\end{proof}

\begin{crl}\label{C1_crl}
If $p_1,p_2\!\ge\!0$, then
\begin{equation*}
\sum_{r=0}^p(-1)^r\si_r B_{p-1-r+p_1,1}^{(p-r)}(q)
=\sum_{r=0}^p(-1)^r\si_r B_{p-1-r+p_2,1}^{(p-r)}(q),
\end{equation*}
where $p\!=\!n\!-\!l\!-\!(p_1\!+\!p_2)$.
\end{crl}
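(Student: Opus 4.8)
The plan is to reduce the asserted equality to a single symmetry hidden in Lemma~\ref{Sii_lmm}. First I would record that every $B_{p,s}^{(r)}(q)$ lies in $q\cdot\Q[\al_1,\ldots,\al_n][[q]]$, so that $\nD$ acts injectively on the span of these series (it multiplies the coefficient of $q^d$ by $d\ge1$); it therefore suffices to prove the corollary after applying $\nD$ to both sides. The reason to differentiate is that the coefficients $\nD B^{(\cdot)}_{\cdot,1}$ are exactly the data packaged into the structure sums $S^{(2)}_{ij}$ of \eqref{Siisums_e}, so the differentiated identity can be read directly off Lemma~\ref{Sii_lmm}.

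Next I would isolate the relevant combination. Using \eqref{Ypexp_e} together with the index computation $l+(p-1-r+p_1)+1-(p-r)=l+p_1$, the term $B^{(p-r)}_{p-1-r+p_1,1}(q)$ equals the coefficient of $x^{l+p_1}\hb^{-1}$ in $\cY_{p-1-r+p_1}(x,\hb,q)$, \emph{independently of $r$}. Hence $\sum_{r=0}^p(-1)^r\si_r B^{(p-r)}_{p-1-r+p_1,1}(q)$ is the coefficient of $x^{l+p_1}\hb^{-1}$ in the partial alternating sum $\sum_{r=0}^p(-1)^r\si_r\cY_{p-1-r+p_1}(x,\hb,q)$, and these same blocks reappear inside $S^{(2)}_{ij}$.

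The key step is a swap symmetry. Writing Lemma~\ref{Sii_lmm} once in the stated form and once with the two pairs $(\al_i,\hb_1)$ and $(\al_j,\hb_2)$ interchanged, the two right-hand sides coincide: the block $\big(\tfrac1{\hb_1}+\tfrac1{\hb_2}\big)\{\ldots\}$ is invariant under the interchange since each factor $\cY_{p_k}$ there carries no $-l$ shift and relabelling $p_1\!\leftrightarrow\!p_2$ restores it, while the two terms $\tfrac1{\hb_1}S^{(2)}_{ij}$ and $\tfrac1{\hb_2}S^{(2)}_{ji}$ merely exchange. Equating the two left-hand sides gives
\[
\Big\{\tfrac{\al_i}{\hb_1}+\tfrac{\al_j}{\hb_2}+\nD\Big\}
\sum_{\begin{subarray}{c}p_1+p_2+r=n-1\\ p_1,p_2\ge0\end{subarray}}
(-1)^r\si_r\big[\cY_{p_1}(\al_i,\hb_1,q)\cY_{p_2-l}(\al_j,\hb_2,q)
-\cY_{p_1-l}(\al_i,\hb_1,q)\cY_{p_2}(\al_j,\hb_2,q)\big]=0 .
\]
I would then expand in $\hb_1,\hb_2$. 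The $\hb^0$-part of the bracket is weight-polynomial, hence $\nD$-closed, and its leading contribution cancels once one uses $\sum_{p_1+p_2+r=n-1,\,p_1,p_2\ge0}(-1)^r\si_r\al_i^{p_1}\al_j^{p_2}=0$, which follows from $\prod_k(\al_i-\al_k)=0$. Reading off the coefficient of $\hb_1^{-1}\hb_2^{0}$ and separating $q$-degrees, the $q^{\ge1}$ part (the $B$'s have no constant term) forces the vanishing of an explicit polynomial in $\al_i,\al_j$ built from the $\nD B^{(\cdot)}_{\cdot,1}$; extracting the coefficient of $x^{l+p_1}$ in the first slot and $\al_j^{l+p_2}$ in the second, and reassembling the elementary symmetric functions, produces $\nD$ of the asserted identity, which the injectivity of $\nD$ from the first step upgrades to the corollary.

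The main obstacle will be this final coefficient extraction. Because each $\si_r$ involves all of $\al_1,\ldots,\al_n$, recovering the combination with the \emph{full} elementary symmetric polynomials requires decomposing $\si_r$ with respect to the two distinguished weights $\al_i,\al_j$ and verifying that the pieces recombine into $\si_r$ after the index shifts; one must also handle the boundary value $p=0$, where the alternating sum degenerates to its single $r=0$ term, separately.
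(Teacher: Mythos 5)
Your reduction to the differentiated identity, and your observation that $B^{(p-r)}_{p-1-r+p_1,1}(q)$ is the coefficient of $x^{l+p_1}\hb^{-1}$ in $\cY_{p-1-r+p_1}(x,\hb,q)$, are both fine and match the closing steps of the paper's own argument. The problem is the step you call the key one: the ``swap symmetry'' you extract from Lemma~\ref{Sii_lmm} is vacuous. The \emph{left-hand} side of that lemma is already invariant under interchanging $(\al_i,\hb_1)$ with $(\al_j,\hb_2)$, because by \e_ref{symmrel_e} and \e_ref{main_e1} (this is precisely the unnumbered display in the paper's proof of Lemma~\ref{Sii_lmm})
\begin{equation*}
\sum_{\begin{subarray}{c}p_1+p_2+r=n-1\\ p_1,p_2\ge0\end{subarray}}\!\!\!\!\!\!\!(-1)^r\si_r\cY_{p_1}(\al_i,\hb_1,q)\cY_{p_2-l}(\al_j,\hb_2,q)
=\Bigg\{\sum_{\begin{subarray}{c}p_1+p_2+r=n-1-l\\ p_1,p_2\ge0\end{subarray}}-\sum_{\begin{subarray}{c}p_1+p_2+r=n-1-l\\ p_1,p_2\le-1\end{subarray}}\Bigg\}(-1)^r\si_r\cY_{p_1}(\al_i,\hb_1,q)\cY_{p_2}(\al_j,\hb_2,q),
\end{equation*}
and the right-hand side here is manifestly symmetric under the interchange after relabelling $p_1\!\leftrightarrow\!p_2$. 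Consequently the bracket in your displayed equation is identically zero \emph{before} you apply $\{\al_i/\hb_1+\al_j/\hb_2+\nD\}$; equating the two instances of Lemma~\ref{Sii_lmm} compares two expressions that are equal for trivial reasons, and every coefficient you subsequently extract reads $0=0$. No relation among the $\nD B^{(\cdot)}_{\cdot,1}$ can come out of this.

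The missing ingredient is the one genuinely nontrivial input, namely \e_ref{main_e2}: the sum $\sum(-1)^r\si_r\cZ_{p_1}(x_1,\hb_1,Q)\cZ_{p_2-l}(x_2,\hb_2,Q)$ equals $(\hb_1\!+\!\hb_2)\lr{\a}^{-1}\cZ(x_1,x_2,\hb_1,\hb_2,Q)$ and is therefore divisible by $\hb_1\!+\!\hb_2$. Transporting this through \e_ref{main_e1} shows that $\hb\{\al_i/\hb_1+\al_j/\hb_2+\nD\}$ applied to the $\cY$-sum vanishes at $(\hb_1,\hb_2)=(\hb,-\hb)$. Feeding that into Lemma~\ref{Sii_lmm}, the symmetric block carries the factor $1/\hb_1+1/\hb_2$ and dies on the anti-diagonal, and what survives is the nontrivial relation $S^{(2)}_{ij}(\hb,-\hb,q)=S^{(2)}_{ji}(-\hb,\hb,q)$. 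Its $\hb^0$ part is the polynomial identity \e_ref{Bidentity_e}, from which your coefficient extraction (of $\al_i^{l+p_1}\al_j^{l+p_2}$) and the injectivity of $\nD$ on $q\cdot\Q_{\al}[[q]]$ do finish the proof. So the outer shell of your argument is salvageable, but the symmetry must be sourced from the divisibility of the two-point function by $\hb_1+\hb_2$ evaluated at $\hb_2=-\hb_1$, not from comparing two instances of Lemma~\ref{Sii_lmm} that are related by a symmetry both sides already possess.
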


\begin{proof}
By \e_ref{main_e2} and \e_ref{main_e1},
\begin{equation*}\label{Csum_e}
\hb\cdot\bigg\{\frac{\al_i}{\hb_1}+\frac{\al_j}{\hb_2}+\nD\bigg\}
\sum_{\begin{subarray}{c}p_1+p_2+r=n-1\\p_1,p_2\ge0\end{subarray}}
\!\!\!\!\!\!\!\!\!\!
(-1)^r\!\si_r\cY_{p_1}(\al_i,\hb_1,q)\cY_{p_2-l}(\al_j,\hb_2,q)
\bigg|_{\begin{subarray}{l}\hb_1=\hb\\ \hb_2=-\hb\end{subarray}} =0.
\end{equation*}
In light of Lemma~\ref{Sii_lmm}, this implies that 
$$S_{ij}^{(2)}(\hb,-\hb,q)=S_{ji}^{(2)}(-\hb,\hb,q).$$
Using \e_ref{Siisums_e} and \e_ref{Ypexp_e}, we then obtain
\begin{equation}\label{Bidentity_e}\begin{split}
&\al_i^l\al_j^l\!\!\!\!\!\!\!\!\!
\sum_{\begin{subarray}{c}p_1+p_2+r=n-l\\ p_1,p_2\ge0\end{subarray}}
\!\!\!\!\!\!(-1)^r\si_r\!
\Bigg[\sum_{r'=0}^{p_1}\!\nD B_{p_1-1,1}^{(r')}(q)\al_i^{p_1-r'}\Bigg]\!\al_j^{p_2}\\
&\hspace{2in}
=\al_i^l\al_j^l\!\!\!\!\!\!\!
\sum_{\begin{subarray}{c}p_1+p_2+r=n-l\\ p_1,p_2\ge0\end{subarray}}
\!\!\!\!\!\!\!\!(-1)^r\si_r\!
\Bigg[\sum_{r'=0}^{p_1}\!\nD B_{p_1-1,1}^{(r')}(q)\al_j^{p_1-r'}\Bigg]\!\al_i^{p_2}.
\end{split}\end{equation}
Comparing the coefficients of $\al_i^{l+p_1}\al_j^{l+p_2}$ with 
$p_1,p_2\!\ge\!0$ and $p_1\!+\!p_2\!\le\!n\!-\!l$, we obtain
$$\nD\sum_{r=0}^p(-1)^r\si_rB_{p-1-r+p_1,1}^{(p-r)}(q)
=\nD\sum_{r=0}^p(-1)^r\si_rB_{p-1-r+p_2,1}^{(p-r)}(q)\,,$$
with $p$ as in the statement of the corollary.
Since $B_{p-1-r+p_2,1}^{(p-r)}(q)\in q\cdot\Q_{\al}[[q]]$, this proves the claim.
\end{proof}

\begin{crl}\label{Sii_crl}
For all $i,j\!=\!1,2,\ldots,n$,
\begin{equation*}
S_{ij}^{(2)}(\hb_1,\hb_2,q)=S_{ji}^{(2)}(\hb_2,\hb_1,q).
\end{equation*}
\end{crl}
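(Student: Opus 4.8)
The plan is to prove the symmetry by expanding both $S_{ij}^{(2)}(\hb_1,\hb_2,q)$ and $S_{ji}^{(2)}(\hb_2,\hb_1,q)$ as bilinear expressions in the functions $\cY_a(\al_i,\hb_1,q)$ and $\cY_b(\al_j,\hb_2,q)$ and then matching their coefficients term by term, with the decisive equality of coefficients supplied by Corollary~\ref{C1_crl}. First I would reindex the double sum in the definition~\e_ref{Siisums_e}: for fixed $p_1$ the inner sum runs over $r'$, so setting $a=p_1-r'$ (the lower index of the $\cY$ attached to $\al_i$), $b=p_2$, and eliminating $r=n\!-\!l\!-\!p_1\!-\!b$ recasts $S_{ij}^{(2)}$ as a sum over pairs $(a,b)$ with $a,b\ge0$ and $a\!+\!b\le n\!-\!l$. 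Writing $P=n\!-\!l\!-\!a\!-\!b$ and substituting $r=n\!-\!l\!-\!p_1\!-\!b$ in the remaining sum over $p_1$, the coefficient of $\cY_a(\al_i,\hb_1,q)\cY_b(\al_j,\hb_2,q)$ becomes
$$\sum_{r=0}^{P}(-1)^r\si_r\,\nD B_{P-1-r+a,1}^{(P-r)}(q),$$
which is exactly $\nD$ applied to the left-hand side of Corollary~\ref{C1_crl} with $(p_1,p)=(a,P)$.

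Carrying out the identical reindexing on $S_{ji}^{(2)}(\hb_2,\hb_1,q)$ --- where now the factor attached to $\al_i$ is the ``free'' one while the factor attached to $\al_j$ carries the inner $B$-sum --- shows that the coefficient of the same product $\cY_a(\al_i,\hb_1,q)\cY_b(\al_j,\hb_2,q)$ equals $\sum_{r=0}^{P}(-1)^r\si_r\,\nD B_{P-1-r+b,1}^{(P-r)}(q)$, i.e. the same expression with $a$ and $b$ interchanged. Corollary~\ref{C1_crl}, applied with $p_1=a$, $p_2=b$, and $p=P=n\!-\!l\!-\!a\!-\!b$, asserts precisely that these two $B$-combinations agree; since $\nD$ is linear and commutes with the finite $r$-sum (the $\si_r$ being independent of $q$), the two coefficients coincide for every admissible pair $(a,b)$. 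Because both expansions run over the same family of products $\cY_a(\al_i,\hb_1,q)\cY_b(\al_j,\hb_2,q)$ with matching coefficients, the two series are literally equal, which is the assertion of the corollary; note that no linear independence of the $\cY_p$ is needed, only that the two expansions are organized along the same products.

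The only genuinely delicate point is the bookkeeping in the reindexing: one must check that the substitutions $a=p_1-r'$, $r=n\!-\!l\!-\!p_1\!-\!b$ carry the original ranges $0\le r'\le p_1$, $p_1,p_2\ge0$, $p_1\!+\!p_2\!+\!r=n\!-\!l$ correctly onto $0\le r\le P$, and that the shifted first index $P\!-\!1\!-\!r\!+\!a$ of $B$ matches, on the nose, the index $p\!-\!1\!-\!r\!+\!p_1$ appearing in Corollary~\ref{C1_crl}. I expect this step to be routine but error-prone, with everything else following formally. Conceptually, the content of the proof is that Corollary~\ref{C1_crl}, although it was extracted only from the specialization $\hb_2=-\hb_1$ (modulo $\hb^{-1}$) of the $S^{(2)}$-symmetry in the proof of that corollary, already pins down the structure constants $B_{p,1}^{(r)}$ tightly enough to upgrade that special case to the full two-variable identity, with no further appeal to localization or the Residue Theorem.
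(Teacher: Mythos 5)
Your proof is correct, but it takes a genuinely different route from the paper's. The paper first invokes the recursivity/MPC uniqueness principle (the same reasoning as in the proof of \e_ref{main_e2}, resting on Proposition~\ref{uniqueness_prp}) to reduce the identity to its truncation modulo $\hb_1^{-1}$ and $\hb_2^{-1}$; there, by \e_ref{Ypexp_e}, each $\cY_p(\al_i,\hb,q)$ collapses to $\al_i^{l+p}$, and the resulting polynomial identity in $\al_i,\al_j$ is exactly \e_ref{Bidentity_e}, already established in the proof of Corollary~\ref{C1_crl}. You bypass the uniqueness machinery entirely: after the reindexing $(p_1,p_2,r,r')\mapsto(a,b)=(p_1\!-\!r',p_2)$, both $S_{ij}^{(2)}(\hb_1,\hb_2,q)$ and $S_{ji}^{(2)}(\hb_2,\hb_1,q)$ become linear combinations of the same products $\cY_a(\al_i,\hb_1,q)\cY_b(\al_j,\hb_2,q)$, and the coefficients agree by the $\nD$-derivative of Corollary~\ref{C1_crl} applied with $(p_1,p_2,p)=(a,b,n\!-\!l\!-\!a\!-\!b)$. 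Your bookkeeping checks out: the constraints $0\le r'\le p_1$, $p_1,p_2\ge0$, $p_1\!+\!p_2\!+\!r=n\!-\!l$ translate precisely to $0\le r\le P$ with $P=n\!-\!l\!-\!a\!-\!b\ge0$, the shifted index $P\!-\!1\!-\!r\!+\!a$ is the one appearing in the corollary, and you are right that no linear independence of the $\cY_p$ is needed, since equality of coefficients over a common index set suffices. What each approach buys: yours is more elementary and self-contained, and it makes transparent that Corollary~\ref{C1_crl}, though extracted only from the specialization $\hb_2=-\hb_1$, already encodes the full two-variable symmetry; the paper's version is shorter on the page because the uniqueness apparatus is already in place and is the uniform device for every identity of this type in Sections~\ref{pfs_sec} and~\ref{mainthmapp_sec}. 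Both proofs ultimately rest on the same content, since Corollary~\ref{C1_crl} is the coefficient-extracted form of \e_ref{Bidentity_e}.
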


\begin{proof}
By the same reasoning as in the proof of \e_ref{main_e2} on page~\pageref{Z_dfn}, 
it is sufficient to verify this identity modulo $\hb_1^{-1}$ and $\hb_2^{-1}$.
By \e_ref{Siisums_e} and \e_ref{Ypexp_e},
\begin{equation*}\begin{split}
S_{ij}^{(2)}(\hb_1,\hb_2,q)& \cong
\al_i^l\al_j^l\!\!\!\!\!\!\!\!\!\!\!
\sum_{\begin{subarray}{c}p_1+p_2+r=n-l\\ p_1,p_2\ge0\end{subarray}}
\!\!\!\!\!\!\!\!\!\!(-1)^r\si_r\!
\Bigg[\sum_{r'=0}^{p_1}\!\nD B_{p_1-1,1}^{(r')}(q)\al_i^{p_1-r'}\Bigg]\!\al_j^{p_2}\,,\\
S_{ji}^{(2)}(\hb_2,\hb_1,q)& \cong
\al_i^l\al_j^l\!\!\!\!\!\!\!\!\!\!\!
\sum_{\begin{subarray}{c}p_1+p_2+r=n-l\\ p_1,p_2\ge0\end{subarray}}
\!\!\!\!\!\!\!\!\!\!(-1)^r\si_r\!
\Bigg[\sum_{r'=0}^{p_1}\!\nD B_{p_1-1,1}^{(r')}(q)\al_j^{p_1-r'}\Bigg]\!\al_i^{p_2}\,.
\end{split}\end{equation*}
The two expressions on the right-hand sides 
are the same by \e_ref{Bidentity_e}.
\end{proof}

\begin{crl}\label{Sij_crl2}
For all $i,j\!=\!1,2,\ldots,n$,
\begin{equation*}\begin{split}
&\frac{\hb_1\hb_2}{\hb_1\!+\!\hb_2}
\bigg\{\frac{\al_i}{\hb_1}+\frac{\al_j}{\hb_2}+\nD\bigg\}
\sum_{\begin{subarray}{c}p_1+p_2+r=n-1\\p_1,p_2\ge0\end{subarray}}
\!\!\!\!\!\!\!\!\!\!
(-1)^r\!\si_r\cY_{p_1}(\al_i,\hb_1,q)\cY_{p_2-l}(\al_j,\hb_2,q)\\
&\qquad=\left\{\sum_{\begin{subarray}{c}p_1+p_2+r=n-l\\ p_1,p_2\ge0\end{subarray}}
-\sum_{\begin{subarray}{c}p_1+p_2+r=n-l\\ p_1,p_2\le-1\end{subarray}}\right\}
(-1)^r\si_rI_{p_1}(q)\cY_{p_1}(\al_i,\hb_1,q)\cY_{p_2}(\al_j,\hb_2,q)\\
&\qquad\qquad
+\sum_{\begin{subarray}{c}p_1+p_2+r+r'=n-1-l\\ p_1,p_2,r'\ge0\end{subarray}}
\hspace{-.3in}(-1)^r\si_r \nD B_{p_1+r',1}^{(r'+1)}(q)
\cY_{p_1}(\al_i,\hb_1,q)\cY_{p_2}(\al_j,\hb_2,q),
\end{split}\end{equation*}
where $I_{p_1}(q)\!\equiv\!1$ if $p_1\!<\!0$.
\end{crl}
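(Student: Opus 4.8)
The plan is to obtain this identity directly from Lemma~\ref{Sii_lmm} by multiplying through by $\frac{\hb_1\hb_2}{\hb_1+\hb_2}$ and then simplifying the two contributions on its right-hand side. The first move is the elementary observation
$$\frac{\hb_1\hb_2}{\hb_1+\hb_2}\Big(\frac{1}{\hb_1}+\frac{1}{\hb_2}\Big)=1,$$
so that multiplying the first line of the right-hand side of Lemma~\ref{Sii_lmm} by $\frac{\hb_1\hb_2}{\hb_1+\hb_2}$ simply strips off the factor $\big(\tfrac1{\hb_1}+\tfrac1{\hb_2}\big)$ and leaves the plain double difference $\big\{\sum_{p_1,p_2\ge0}-\sum_{p_1,p_2\le-1}\big\}(-1)^r\si_r\cY_{p_1}(\al_i,\hb_1,q)\cY_{p_2}(\al_j,\hb_2,q)$ over $p_1+p_2+r=n-l$.

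Next I would handle the two $S^{(2)}$ terms. Multiplying $\frac{1}{\hb_1}S_{ij}^{(2)}(\hb_1,\hb_2,q)+\frac{1}{\hb_2}S_{ji}^{(2)}(\hb_2,\hb_1,q)$ by $\frac{\hb_1\hb_2}{\hb_1+\hb_2}$ produces $\frac{\hb_2}{\hb_1+\hb_2}S_{ij}^{(2)}(\hb_1,\hb_2,q)+\frac{\hb_1}{\hb_1+\hb_2}S_{ji}^{(2)}(\hb_2,\hb_1,q)$. By Corollary~\ref{Sii_crl} these two $S^{(2)}$'s are equal, so the coefficients $\frac{\hb_2}{\hb_1+\hb_2}$ and $\frac{\hb_1}{\hb_1+\hb_2}$ add to $1$ and the pair collapses to a single $S_{ij}^{(2)}(\hb_1,\hb_2,q)$. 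At this stage the left-hand side of the corollary has been shown to equal the plain double difference above plus $S_{ij}^{(2)}(\hb_1,\hb_2,q)$, and it remains only to re-expand this $S^{(2)}$.

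The final step is to match $S_{ij}^{(2)}$, via its definition~\e_ref{Siisums_e}, against the two sums in the statement, by splitting the inner sum over $r'$. For the $r'=0$ term I would invoke~\e_ref{BvsI_e} in the form $\nD B_{p_1-1,1}^{(0)}(q)=I_{p_1}(q)-1$ (valid in the Calabi-Yau case by~\e_ref{BvsI_e} and in the Fano case because $B_{p_1-1,1}^{(0)}$ is then constant in $q$ and $I_{p_1}\equiv1$), which turns that contribution into $\sum_{p_1,p_2\ge0}(-1)^r\si_r(I_{p_1}(q)-1)\cY_{p_1}(\al_i,\hb_1,q)\cY_{p_2}(\al_j,\hb_2,q)$; adding this to the plain double difference inserts exactly the factor $I_{p_1}(q)$ into the $\sum_{p_1,p_2\ge0}$ block and leaves the $\sum_{p_1,p_2\le-1}$ block untouched, since there $p_1<0$ forces $I_{p_1}\equiv1$. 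For the terms with $r'\ge1$, introducing $p_1'=p_1-r'$ as the new index of $\cY(\al_i,\cdot)$ and $r''=r'-1$ converts $B_{p_1-1,1}^{(r')}$ into $B_{p_1'+r'',1}^{(r''+1)}$ and the constraint $p_1+p_2+r=n-l$ into $p_1'+p_2+r+r''=n-1-l$, reproducing verbatim the last sum in the statement. The main obstacle is purely this bookkeeping: one must track the ranges of the shifted indices carefully and confirm that no boundary contributions are created or lost under the reindexing; once the $r'=0$ term is correctly identified with $I_{p_1}-1$ through~\e_ref{BvsI_e}, the remainder is a routine change of variables.
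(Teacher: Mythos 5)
Your proposal is correct and follows essentially the same route as the paper: multiply Lemma~\ref{Sii_lmm} by $\frac{\hb_1\hb_2}{\hb_1+\hb_2}$, use Corollary~\ref{Sii_crl} to collapse the two $S^{(2)}$ terms into a single $S_{ij}^{(2)}(\hb_1,\hb_2,q)$, and then expand $S_{ij}^{(2)}$ via \e_ref{Siisums_e} and \e_ref{BvsI_e}, separating the $r'=0$ contribution (which supplies the factor $I_{p_1}(q)$) from the reindexed $r'\ge1$ terms. The bookkeeping in your change of variables $p_1'=p_1-r'$, $r''=r'-1$ is exactly what the paper's displayed identity for $S_{ij}^{(2)}$ encodes.
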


\begin{proof}
By Lemma~\ref{Sii_lmm} and Corollary~\ref{Sii_crl},
\begin{equation*}\begin{split}
&\frac{\hb_1\hb_2}{\hb_1\!+\!\hb_2}
\bigg\{\frac{\al_i}{\hb_1}+\frac{\al_j}{\hb_2}+\nD\bigg\}
\sum_{\begin{subarray}{c}p_1+p_2+r=n-1\\p_1,p_2\ge0\end{subarray}}
\!\!\!\!\!\!\!\!\!\!
(-1)^r\!\si_r\cY_{p_1}(\al_i,\hb_1,q)\cY_{p_2-l}(\al_j,\hb_2,q)\\
&\qquad=\left\{\sum_{\begin{subarray}{c}p_1+p_2+r=n-l\\ p_1,p_2\ge0\end{subarray}}
-\sum_{\begin{subarray}{c}p_1+p_2+r=n-l\\ p_1,p_2\le-1\end{subarray}}\right\}
(-1)^r\si_r\cY_{p_1}(\al_i,\hb_1,q)\cY_{p_2}(\al_j,\hb_2,q)
+S_{ij}^{(2)}(\hb_1,\hb_2,q).
\end{split}\end{equation*}
By \e_ref{Siisums_e} and \e_ref{BvsI_e}, 
\begin{equation*}\begin{split}
S_{ij}^{(2)}(\hb_1,\hb_2,q)&=
\sum_{\begin{subarray}{c}p_1+p_2+r=n-l\\ p_1,p_2\ge0\end{subarray}}
\!\!\!\!\!\!\!\!\!
(-1)^r\si_r\big(I_{p_1}(q)\!-\!1\big)\cY_{p_1}(\al_i,\hb_1,q)\cY_{p_2}(\al_j,\hb_2,q)\\
&\qquad
+\sum_{\begin{subarray}{c}p_1+p_2+r=n-l\\ p_1,p_2\ge0\end{subarray}}
\!\!\!\!\!\!\!\!\!\!(-1)^r\si_r\!
\Bigg[\sum_{r'=1}^{p_1}\!\nD B_{p_1-1,1}^{(r')}(q)
\cY_{p_1-r'}(\al_i,\hb_1,q)\Bigg]\!\cY_{p_2}(\al_j,\hb_2,q).
\end{split}\end{equation*}
The claim is obtained by combining these two identities.
\end{proof}

\begin{lmm}\label{ctCrec_lmm}
The coefficients $\ctC_{p,s}^{(r)}$ of Theorem~\ref{main_thm} satisfy
$$I_p(q)\ctC_{p,s}^{(r)}=\nD\ctC_{p-1,s}^{(r)}+I_s(q)\ctC_{p-1,s-1}^{(r)}
\!-\!\sum_{r'=1}^{\min(p,r)}\!\!\!\nD B_{p-1,1}^{(r')}(q)\ctC_{p-r',s}^{(r-r')}(q)\,
\qquad\forall\,p\in\Z^+,$$
with $\ctC_{p-1,-1}^{(r)},\ctC_{p-1,p-r}^{(r)}\equiv0$.
\end{lmm}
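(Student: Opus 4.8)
The plan is to obtain the recursion by feeding the operator identity of Lemma~\ref{B1_lmm} into the defining expansion \eqref{main_e0} and matching coefficients. First I would rewrite Lemma~\ref{B1_lmm} with $p$ replaced by $p-1$, namely
\[
\{x+\hb\nD\}\cY_{p-1}(x,\hb,q)=\cY_p(x,\hb,q)+\sum_{r=0}^{p}\nD B_{p-1,1}^{(r)}(q)\,\cY_{p-r}(x,\hb,q),
\]
and isolate the $r=0$ term: by \eqref{BvsI_e} one has $\nD B_{p-1,1}^{(0)}(q)=I_p(q)-1$, so the identity becomes
\[
I_p(q)\cY_p=\{x+\hb\nD\}\cY_{p-1}-\sum_{r'=1}^{p}\nD B_{p-1,1}^{(r')}(q)\,\cY_{p-r'}.
\]
This is the operator recursion whose $\D^s\cY_0$-coefficients should be exactly the claimed identity.

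Next I would substitute into each $\cY_m$ its expansion in the $\D^s\cY_0$, written uniformly as $\cY_m=\sum_{r,s\ge0,\,r+s\le m}\ctC_{m,s}^{(r)}(q)\hb^{m-r-s}\D^s\cY_0$ (the $r=0$ term reproduces $\D^m\cY_0$ since $\ctC_{m,s}^{(0)}=\de_{m,s}$). To evaluate $\{x+\hb\nD\}\cY_{p-1}$ I would use the two facts that $\{x+\hb\nD\}\D^s\cY_0=I_{s+1}(q)\D^{s+1}\cY_0$ by \eqref{Yp_dfn_e2} and that $\hb\nD$ obeys the Leibniz rule on the $q$-dependent coefficients $\ctC_{p-1,s}^{(r)}(q)$; the latter is the sole source of the $\nD\ctC_{p-1,s}^{(r)}$ term. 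After reindexing $s\mapsto s+1$ in the first contribution this gives
\[
\{x+\hb\nD\}\cY_{p-1}=\sum_{r,s}\hb^{p-r-s}\big(I_s(q)\ctC_{p-1,s-1}^{(r)}+\nD\ctC_{p-1,s}^{(r)}\big)\D^s\cY_0,
\]
while the last sum contributes $-\sum_{r,s}\hb^{p-r-s}\big(\sum_{r'=1}^{\min(p,r)}\nD B_{p-1,1}^{(r')}\ctC_{p-r',s}^{(r-r')}\big)\D^s\cY_0$, the constraint $r'\le\min(p,r)$ arising from $(p-r')-r''-s=p-r-s$ with $r''=r-r'\ge0$.

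Finally I would compare the coefficients of $\hb^{p-r-s}\D^s\cY_0$ on the two sides. This comparison is legitimate because the family $\{\D^s\cY_0\}_{s\ge0}$ is linearly independent: by \eqref{Crec_e} together with $\cC_{s,j}^{(r)}|_{q=0}=\de_{r,0}\de_{s,j}$ one has $\D^s\cY_0\equiv x^{l+s}\pmod q$, i.e.\ distinct monomials, so a standard $q$-adic argument forces uniqueness of the representation, and for fixed $s$ the power $\hb^{p-r-s}$ separates the individual values of $r$. Reading off the coefficient of $\hb^{p-r-s}\D^s\cY_0$ then yields precisely
\[
I_p(q)\ctC_{p,s}^{(r)}=\nD\ctC_{p-1,s}^{(r)}+I_s(q)\ctC_{p-1,s-1}^{(r)}-\sum_{r'=1}^{\min(p,r)}\nD B_{p-1,1}^{(r')}(q)\,\ctC_{p-r',s}^{(r-r')}(q).
\]
The stated conventions drop out automatically: the term $I_s\ctC_{p-1,s-1}^{(r)}$ at $s=0$ forces $\ctC_{p-1,-1}^{(r)}\equiv0$, and $\ctC_{p-1,p-r}^{(r)}\equiv0$ because that index lies outside the range $r+s\le p-1$ valid for $\cY_{p-1}$.

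The step I expect to be the main obstacle is making the coefficient comparison fully rigorous. Lemma~\ref{B1_lmm} is established through Proposition~\ref{uniqueness_prp}, i.e.\ as an identity in the reduced ring $H_{\T}^*(\Pn;R)\Lau{\hb}[[q]]$ where $\prod_k(x-\al_k)=0$, whereas the coefficients $\ctC_{p,s}^{(r)}$ and $B_{p,s}^{(r)}$ are defined through the \emph{formal} $x$-expansions \eqref{Crec_e} and \eqref{Ypexp_e}. One must therefore perform the matching in a setting in which the monomials $x^{l+s}$ remain independent --- either by treating $x$ as a free variable in $\Q_{\al}[x][[\hb^{-1},q]]$ throughout, or by restricting to the range of $x$-degrees unaffected by the defining relation. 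Once that framework is fixed, the remainder is routine Leibniz bookkeeping.
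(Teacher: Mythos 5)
Your argument reproduces the paper's proof almost verbatim: both apply $\{x+\hb\nD\}$ to the $p-1$ case of \e_ref{main_e0}, use the product rule together with $\{x+\hb\nD\}\D^s\cY_0=I_{s+1}(q)\D^{s+1}\cY_0$, and then substitute Lemma~\ref{B1_lmm} and \e_ref{BvsI_e} to produce the three groups of terms. The one step you leave open --- in what setting the coefficients of $\hb^{p-r-s}\D^s\cY_0$ may legitimately be compared --- is exactly where the paper's phrasing differs from yours: rather than asserting linear independence of the series $\D^s\cY_0$, it invokes the fact, already established in the proof of Theorem~\ref{main_thm} via \e_ref{tiCrec_e}, that the coefficients $\ctC^{(r)}_{p,s}$ with $r+s\le p$ form the \emph{unique} system for which the combination in \e_ref{cYcong_e2} is $\cong x^{l+p}$ modulo $\hb^{-1}$. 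It therefore suffices to check that the candidate coefficients on the right-hand side of the recursion produce a combination congruent to $I_p(q)x^{l+p}$ modulo $\hb^{-1}$; this involves only the non-negative powers of $\hb$, follows from Lemma~\ref{B1_lmm}, \e_ref{BvsI_e}, and \e_ref{main_e0} exactly as in your computation, and requires neither an extension of Lemma~\ref{B1_lmm} to free $x$ nor any independence of the full series $\D^s\cY_0$. In short, the second of your two proposed fixes (work only with the part of the expansion that the defining relation \e_ref{tiCrec_e} controls, i.e.\ the mod-$\hb^{-1}$ part) is the one the paper uses, and with it your argument is complete.
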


\begin{proof} By the proof of Theorem~\ref{main_thm}, there is a unique choice
of the coefficients $\ctC_{p,s}^{(r)}$ so that \e_ref{cYcong_e2} holds.
Thus, similarly to the proof of Theorem~\ref{main_thm}, it is sufficient to show that 
$$\sum_{r=0}^{p}\sum_{s=0}^{p-r}
\bigg(\nD\ctC_{p-1,s}^{(r)}+I_s(q)\ctC_{p-1,s-1}^{(r)}
\!\!-\!\!\sum_{r'=1}^{\min(p,r)}\!\!\!\nD B_{p-1,1}^{(r')}(q)\ctC_{p-r',s}^{(r-r')}(q)   \bigg)
\hb^{p-r-s}\D^s\cY_0(x,\hb,q)$$
is congruent to $I_p(q)x^{l+p}$ modulo $\hb^{-1}$
whenever $x\!=\!\al_1,\ldots,\al_n$.
By the $p\!-\!1$ case of \e_ref{main_e0}, $\ctC^{(0)}_{p,s}\!=\!\de_{p,s}$, and the product rule, 
$$\big\{x\!+\!\hb\nD\big\}\cY_{p-1}(x,\hb,q)
=\sum_{r=0}^{p}\sum_{s=0}^{p-r}
\bigg(\nD\ctC_{p-1,s}^{(r)}+I_s(q)\ctC_{p-1,s-1}^{(r)}\bigg)\hb^{p-r-s}\D^s\cY_0(x,\hb,q).$$
By Lemma~\ref{B1_lmm},  \e_ref{BvsI_e}, and~\e_ref{main_e0}, 
the left-hand side of this identity is congruent modulo $\hb^{-1}$ to
\begin{equation*}\begin{split}
& 
I_p(q)x^{l+p}+\sum_{r'=1}^p\nD B_{p-1,1}^{(r')}(q)\cY_{p-r'}(x,\hb,q)\\
&\hspace{1in}\cong I_p(q)x^{l+p}+\sum_{r=1}^p\sum_{s=0}^{p-r}\sum_{r'=1}^{\min(p,r)}
\!\nD B_{p-1,1}^{(r')}(q)\ctC_{p-r',s}^{(r-r')}(q)\hb^{p-r-s}\D^s\cY_0(x,\hb,q)\,.
\end{split}\end{equation*}
This implies the desired congruence.
\end{proof}

\begin{crl}\label{ctCsym_crl}
For every $p\!=\!1,2,\ldots,n\!-\!l$,
\begin{equation*}\begin{split}
\ctC_{p,0}^{(p)}+\ctC_{n-l-1,n-l-1-p}^{(p)}
&=(-1)^p\si_p\bigg(I_0\!\!\!\!\!\prod_{s=0}^{n-l-1-p}\!\!\!\!\!I_s-1\bigg)\\
&\qquad-\frac{1}{I_p}\sum_{r=1}^{p-1}
\big(\nD B_{p-1,1}^{(r)}\ctC_{p-r,0}^{(p-r)}-\nD B_{n-l-1,1}^{(r)}\ctC_{n-l-r,n-l-p}^{(p-r)}\big)\\
&\qquad+\frac{1}{I_p}\sum_{r=1}^{p-1}
(-1)^r\si_r\big(\nD B_{p-1-r,1}^{(p-r)}\!-\!\nD B_{n-l-1-r,1}^{(p-r)}\!-\!I_0\ctC_{n-l-r,n-l-p}^{(p-r)}\big).
\end{split}\end{equation*}
\end{crl}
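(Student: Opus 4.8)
The plan is to expand each of the two coefficients on the left via the recursion of Lemma~\ref{ctCrec_lmm}, to symmetrize the resulting $\nD B$-terms by Corollary~\ref{C1_crl}, and finally to reduce everything to a relation among ``diagonal'' coefficients that is governed by the symmetry relation \eqref{symmrel_e}. First I would apply Lemma~\ref{ctCrec_lmm} with second index $0$ and superscript $p$. The boundary coefficients vanish ($\ctC_{p-1,-1}^{(p)}=0$ and $\ctC_{p-1,0}^{(p)}=0$, the latter being the convention $\ctC_{p-1,p-r}^{(r)}\equiv0$ at $r=p$), so, isolating the $r=p$ summand with $\ctC_{0,0}^{(0)}=1$, one obtains
\[
I_p\,\ctC_{p,0}^{(p)}=-\nD B_{p-1,1}^{(p)}-\sum_{r=1}^{p-1}\nD B_{p-1,1}^{(r)}\ctC_{p-r,0}^{(p-r)}.
\]

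For the second coefficient I would instead apply Lemma~\ref{ctCrec_lmm} with first index $n\!-\!l$, second index $n\!-\!l\!-\!p$ and superscript $p$, and then solve the resulting relation for the middle term $\ctC_{n-l-1,n-l-1-p}^{(p)}$. The two boundary coefficients again vanish, and the crucial point is that the coefficient multiplying the middle term is $I_{n-l-p}$, which equals $I_p$ by the reflection \eqref{Irefl_e0}; after isolating the $r=p$ summand this produces
\[
I_p\,\ctC_{n-l-1,n-l-1-p}^{(p)}=I_{n-l}\,\ctC_{n-l,n-l-p}^{(p)}+\nD B_{n-l-1,1}^{(p)}+\sum_{r=1}^{p-1}\nD B_{n-l-1,1}^{(r)}\ctC_{n-l-r,n-l-p}^{(p-r)}.
\]

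Adding these two identities, the two $\sum_{r=1}^{p-1}$ contributions already reproduce $I_p$ times the second line of the corollary, so it remains only to identify $\nD B_{n-l-1,1}^{(p)}-\nD B_{p-1,1}^{(p)}+I_{n-l}\ctC_{n-l,n-l-p}^{(p)}$. For the difference of the $\nD B$'s I would invoke Corollary~\ref{C1_crl} with $P=p$, $p_1=0$, $p_2=n\!-\!l\!-\!p$, differentiated once by $\nD$, which gives $\sum_{r=0}^{p}(-1)^r\si_r\big(\nD B_{p-1-r,1}^{(p-r)}-\nD B_{n-l-1-r,1}^{(p-r)}\big)=0$. Evaluating its $r=0$ and $r=p$ terms through \eqref{BvsI_e}, including the $p=-1$ instance $\nD B_{-1,1}^{(0)}=I_0-1$ (already used in the proof of Lemma~\ref{Sii_lmm}), turns this into exactly the $\nD B$-part of the third line plus the correction $(-1)^p\si_p(I_0-I_p)$.

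After these substitutions the whole identity collapses to the single relation
\[
I_{n-l}\,\ctC_{n-l,n-l-p}^{(p)}+I_0\!\sum_{r=1}^{p-1}(-1)^r\si_r\,\ctC_{n-l-r,n-l-p}^{(p-r)}=(-1)^p\si_p\,I_0\Big(\prod_{s=0}^{n-l-p}I_s-1\Big),
\]
a statement purely about the diagonal coefficients $\ctC_{n-l-r,n-l-p}^{(p-r)}$ and the series $I_s$, and I expect this to be the main obstacle. Once the endpoint reflection $I_{n-l}=I_0$ is used to replace $I_{n-l}$ by $I_0$, the displayed relation is precisely $I_0$ times \eqref{ctCperrel_e} with its $r=p$ term $(-1)^p\si_p\ctC_{n-l-p,n-l-p}^{(0)}=(-1)^p\si_p$ separated off; since \eqref{ctCperrel_e} is derived directly from \eqref{symmrel_e} and \eqref{main_e1}, the corollary follows. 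The delicate points to watch are the bookkeeping of the boundary and $r=p$ summands throughout, and the justification of $I_{n-l}=I_0$, which is not formal from \eqref{Irefl_e0} and must instead be read off from the hypergeometric series \eqref{Ipdfn_e}.
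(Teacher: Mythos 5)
Your proposal is correct and follows essentially the same route as the paper: both proofs rest on the two instances of Lemma~\ref{ctCrec_lmm} (giving \eqref{Irec_e2} and the relation for $I_{n-l-p}\ctC_{n-l-1,n-l-1-p}^{(p)}$), Corollary~\ref{C1_crl} with $\{p_1,p_2\}=\{0,n\!-\!l\!-\!p\}$ combined with \eqref{BvsI_e}, the reflection $I_{n-l-p}=I_p$, and \eqref{ctCperrel_e}. The only difference is cosmetic — the paper substitutes \eqref{ctCperrel_e} at the start while you defer it to the final collapsed relation — and your closing caveats (the $r=p$ bookkeeping, the endpoint identity $I_{n-l}=I_0$) are exactly the points the paper handles via $\ctC^{(0)}_{p,s}=\de_{p,s}$ and the citation of \cite[(4.8)]{Po}.
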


\begin{proof}
Using \e_ref{ctCperrel_e} and $\ctC_{p,s}^{(0)}\!=\!\de_{p,s}$ first and 
then Lemma~\ref{ctCrec_lmm}, we obtain
\begin{equation*}\begin{split}
I_{n-l}\ctC_{n-l,n-l-p}^{(p)}&=
I_{n-l}\Bigg((-1)^p\si_p \bigg(\prod_{s=0}^{n-l-p}\!\!\!I_s-1\bigg)
-\sum_{r=1}^{p-1}(-1)^r\si_r\ctC_{n-l-r,n-l-p}^{(p-r)}\Bigg)\\
&=I_{n-l-p}\ctC_{n-l-1,n-l-1-p}^{(p)}
-\sum_{r=1}^p\nD B_{n-l-1,1}^{(r)}\cdot\ctC_{n-l-r,n-l-p}^{(p-r)}.
\end{split}\end{equation*}
Since $I_{n-l-p}\!=\!I_p$ by \cite[(4.8)]{Po}, this gives 
\BE{Irec_e1}\begin{split}
I_p\ctC_{n-l-1,n-l-1-p}^{(p)}&=
(-1)^p\si_p \bigg(I_0\!\!\!\prod_{s=0}^{n-l-p}\!\!\!I_s\!-I_0\bigg)
+\nD B_{n-l-1,1}^{(p)}\\
&\hspace{.5in}+
\sum_{r=1}^{p-1}\big(\nD B_{n-l-1,1}^{(r)}\cdot\ctC_{n-l-r,n-l-p}^{(p-r)}
-(-1)^r\si_rI_0\ctC_{n-l-r,n-l-p}^{(p-r)}\big).
\end{split}\EE
Lemma~\ref{ctCrec_lmm} and $\ctC^{(0)}_{0,0}\!=\!1$  give
\BE{Irec_e2}
I_p\ctC_{p,0}^{(p)}=
-\nD B_{p-1,1}^{(p)}-\sum_{r=1}^{p-1}\nD B_{p-1,1}^{(r)}\cdot \ctC_{p-r,0}^{(p-r)}\,.\EE
Adding up \e_ref{Irec_e1} and \e_ref{Irec_e2} and 
using Corollary~\ref{C1_crl} with $(p_1,p_2)=(n\!-\!l\!-\!p,0)$, \e_ref{BvsI_e},
and $I_{n-l-p}\!=\!I_p$, we obtain the claim.
\end{proof}

\subsection{Proof of \e_ref{ctCperrel_e}}
\label{ctC_subs}

Since both sides of~\e_ref{ctCperrel_e} are symmetric polynomials in 
$\al_1,\ldots,\al_n$ of degree $n\!-\!l$, it is sufficient to verify
\e_ref{ctCperrel_e} with $\al_p\!=\!0$ for all $p\!>\!n\!-\!l$.
Thus, the $n\!-\!l$ statements in~\e_ref{ctCperrel_e} are equivalent~to
\BE{ctCperrel_e2}
\sum_{p=1}^{n-l}\Bigg(\sum_{r=0}^p(-1)^r\si_r
\frac{\ctC_{n-l-r,n-l-p}^{(p-r)}(q)}{\prod\limits_{s=0}^{n-l-p}\!\!\!I_s(q)}-
(-1)^p\si_p\Bigg)x^{n-l-p}=0
\quad
\begin{aligned}
&\forall~x=\al_1,\al_2,\ldots,\al_{n-l}\\
&{}\quad \al_p=0~\forall\,p\!>\!n\!-\!l.
\end{aligned}\EE
Let $\nD= q\frac{\nd}{\nd q}$ as before.

\begin{proof}[Proof of \e_ref{ctCperrel_e2}]
Let $\nD_{x,\hb}=x+\hb\nD$; so, 
$$\D^p\cY_0=x^l\bigg\{\frac{\nD_{x,\hb}}{I_p}\bigg\}\bigg\{\frac{\nD_{x,\hb}}{I_{p-1}}\bigg\}
\ldots\bigg\{\frac{\nD_{x,\hb}}{I_1}\bigg\}
\bigg(\frac{\cY}{I_0}\bigg).$$
Throughout this argument, we assume that the conditions on $x$ and $\al$ in 
\e_ref{ctCperrel_e2} are satisfied. Thus,
\begin{alignat}{1}
\label{alg_e}
\bigg\{\prod_{k=1}^{n-l}(\nD_{x,\hb}\!-\!\al_k)-
q\lr{\a}\prod_{k=1}^l\prod_{r=1}^{a_k-1}\big(a_k\nD_{x,\hb}+r\hb\big)\bigg\}
\cY(x,\hb,q)=0,\\
\label{geom_e}
\cY_{n-l}(x,\hb,q)-\si_1\cY_{n-l-1}(x,\hb,q)+\ldots+(-1)^{n-l}\si_{n-l}\cY_0(x,\hb,q)=0;
\end{alignat}
the first identity follows directly from~\e_ref{Ydfn_e}, while the
second  from the $p\!=\!n\!-\!l$ case of \e_ref{symmrel_e}
and~\e_ref{main_e1}.
Subtracting~\e_ref{alg_e} from $1/x^l$ times~\e_ref{geom_e}
and using~\e_ref{main_e0} and the product rule, we~obtain
\BE{general_e}
\sum_{s=0}^{n-l} \sum_{p=s}^{n-l} \cA_{s;p}(q)\hb^s\nD_{x,\hb}^{n-l-p}\cY(x,\hb,q)=0\EE
for some $\cA_{s;p}(q)\!\in\!\Q[\al][[q]]$ with
\BE{A0_e}\begin{split}
\cA_{0;p}(q)&=\sum_{r=0}^p(-1)^r\si_r
\frac{\ctC_{n-l-r,n-l-p}^{(p-r)}(q)}{\prod\limits_{s=0}^{n-l-p}\!\!\!I_s(q)}
-(-1)^p\si_p+\de_{p,0}\a^{\a}q\,,\\
\cA_{n-l;n-l}(q)&=\bigg\{\frac{\nD}{I_{n-l}(q)}\bigg\}\bigg\{\frac{\nD}{I_{n-l-1}(q)}\bigg\}\ldots
\bigg\{\frac{\nD}{I_1(q)}\bigg\}\bigg(\frac{1}{I_0(q)}\bigg)+\a!\cdot q.\\
\end{split}\EE
By \e_ref{Ydfn_e}, 
\BE{nDxhbY_e}\nD_{x,\hb}^p\cY(x,\hb,q)
=x^p+\Bigg(\prod\limits_{k=1}^l\prod_{r=1}^{a_k-1}(a_kx+r\hb)\bigg)\F_p(x,\hb,q)\EE
for some $\F_p\!\in\!q\cdot\Q_{\al}(x,\hb)[[q]]$ which is regular at $\hb\!=\!-a_kx/r$
for all $r\!\in\![a_k\!-\!1]$.
Since $\cA_{n-l;n-l}\!=\!0$ by~\e_ref{A0_e} and Lemma~\ref{Iprod_lmm} below, 
it follows from~\e_ref{general_e} and~\e_ref{nDxhbY_e} that 
\BE{deg0van_e}\sum_{s=0}^{n-l-1} \sum_{p=s}^{n-l}\cA_{s;p}(q)x^{n-l-p}\hb^s=0
\qquad\forall\, \hb\!=\!-a_kx/r,\,r\!\in\![a_k\!-\!1],~k\!\in\![l].\EE
Since the left-hand side of~\e_ref{deg0van_e} is a polynomial in $\hb$ of
degree $n\!-\!l\!-\!1$ with $n\!-\!l$ zeros 
(counted with multiplicity)\footnote{This is one of the two places in the proof where 
the Calabi-Yau condition, $|\a|\!=\!n$, is used; the other place is the $p\!=\!0$ case
of \e_ref{A0_e}.}, 
it vanishes identically.
On the other hand,
since $\cA_{0;0}\!=\!0$ by~\e_ref{A0_e}, $\ctC^{(0)}_{p,s}\!=\!\de_{p,s}$, and~\cite[(4.9)]{Po}, 
the coefficient of~$\hb^0$ on the left-hand side of~\e_ref{deg0van_e}
is precisely the left-hand side of~\e_ref{ctCperrel_e2}. 
\end{proof}

\begin{lmm}\label{Iprod_lmm}
If $|\a|\!=\!n$, then
$$\bigg\{\frac{\nD}{I_{n-l}(q)}\bigg\}\bigg\{\frac{\nD}{I_{n-l-1}(q)}\bigg\}\ldots
\bigg\{\frac{\nD}{I_1(q)}\bigg\}\bigg(\frac{1}{I_0(q)}\bigg)=-\a!\cdot q.$$
\end{lmm}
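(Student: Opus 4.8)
The plan is to deduce the lemma from the Picard--Fuchs equation satisfied by the hypergeometric series $F(w,q)$ of \eqref{tiFdfn_e} together with a normalized ladder built from the operators $\bD,\bM$ of \eqref{DMDfn_e}. Throughout set $\nD=q\frac{\nd}{\nd q}$ and $\theta=w+\nD$, so that on functions of $(w,q)$ one has $\bD=\frac1w\theta$ and the twisted Leibniz rule $\theta(fg)=f\,\theta g+(\nD f)g$. First I would record the order-$n$ Picard--Fuchs equation in the Calabi--Yau case $|\a|=n$,
\[
\theta^nF=w^n+q\lr\a\,(\theta+1)^l R(\theta)\,F,\qquad R(\theta)\equiv\prod_{k=1}^l\prod_{r=1}^{a_k-1}(a_k\theta+r),
\]
which follows by comparing the coefficients of $q^d$ on the two sides using the ratio of consecutive coefficients of $F$. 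The two numerical facts that will produce the answer are $\lr\a R(0)=\a!$ and that the leading coefficient (in $\theta$) of $q\lr\a(\theta+1)^lR(\theta)$ is $\a^{\a}q$.

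Next I would introduce the normalized series $\wt V_p\equiv \bM^pF/I_p$ (so $\wt V_p(0,q)=1$) and $\Psi_p\equiv w^p\wt V_p$, where $I_p$ is as in \eqref{Ipdfn_e}. Since $\bM^pF=\bD\wt V_{p-1}$, one gets the clean ladder $\theta\Psi_{p-1}=I_p\Psi_p$; thus $\frac1{I_p}\theta$ raises the index by exactly one, and iterating gives the key identity
\[
\tfrac1{I_{n-l}}\theta\cdots\tfrac1{I_1}\theta\,\wt V_0=w^{\,n-l}\wt V_{n-l},
\]
while $\theta^m(I_0\Psi_0)$ is a finite combination $\sum_{p=0}^m b_{m,p}\Psi_p$ with $b_{m+1,p}=I_pb_{m,p-1}+\nD b_{m,p}$ and $b_{0,0}=I_0$. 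Substituting these $\Psi$-expansions into the Picard--Fuchs equation makes both sides finite combinations of $\Psi_0,\dots,\Psi_n$; because $\Psi_p=w^p(1+O(w))$, the expansion of a series in the $\Psi_p$ is unique, and matching the inhomogeneity $w^n$ forces $\Psi_n=w^n$, i.e.\ $\wt V_n\equiv1$, together with the closing relation $\prod_{p=0}^nI_p=(1-\a^{\a}q)^{-1}$ (read off from the $\Psi_n$--component).

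To reach the lemma I would observe that its left-hand side is the value at $w=0$ of $\frac1{I_{n-l}}\theta\cdots\frac1{I_1}\theta(1/I_0)$: since $\theta|_{w=0}=\nD$, the chain $g_p\equiv\frac1{I_p}\theta g_{p-1}$, $g_0=1/I_0$, satisfies $g_p|_{w=0}=\frac1{I_p}\nD(g_{p-1}|_{w=0})$, which is exactly the recursion of the statement. Its non-formal content comes from the full $w$-dependence through the Picard--Fuchs equation: using $\wt V_n\equiv1$ to eliminate the $l$ top levels of the ladder and the reflection symmetry $I_p=I_{n-l-p}$ of \cite{Po} to fold the chain, the key identity above descends to an inhomogeneous order-$(n-l)$ relation for $\wt V_0$ that factors through the operators $\frac1{I_p}\theta$; the same self-dual residue pairing used in the proof of part~\ref{YmpPhi_item} packages the reflection and closing data. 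Carrying this out should force $u_{n-l}\equiv\frac1{I_{n-l}}\nD\cdots\frac1{I_1}\nD(1/I_0)$ to be a multiple of $q$ (equivalently $\nD u_{n-l}=u_{n-l}$); its leading coefficient is then read off directly from $I_p(0)=1$ and $[q^1]I_0=\a!$ as $-\a!$, giving $u_{n-l}=-\a!\,q$.

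The main obstacle is exactly this last reduction: disentangling the $l$ auxiliary ladder steps and the $(\theta+1)^l$ twist (which separates the order-$n$ Picard--Fuchs operator from the order-$(n-l)$ reduced operator governing $X_\a$) from the reflection symmetry, so that all contributions to $u_{n-l}$ of order $q^2$ and higher cancel and only the monomial $-\a!\,q$ remains. Everything preceding this step is routine coefficient bookkeeping, whereas this cancellation is where the Calabi--Yau hypothesis $|\a|=n$ enters decisively --- it is what makes the Picard--Fuchs equation homogeneous of the correct weight and forces $\prod_{p=0}^nI_p=(1-\a^{\a}q)^{-1}$.
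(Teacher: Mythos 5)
Your framework (the Picard--Fuchs equation, the normalized ladder $\theta\Psi_{p-1}=I_p\Psi_p$, the constants $\lr{\a}R(0)=\a!$ and $\a^{\a}q$) is sound, but the proof has a genuine gap at exactly the point you flag as ``the main obstacle,'' and the tools you propose there (the reflection symmetry $I_p=I_{n-l-p}$, a residue pairing) are not what closes it. The first problem is that your expansion runs in the wrong direction: writing $\theta^mF$ in the basis $\Psi_p=w^p\bM^pF/I_p$ never produces the quantity $u_{n-l}=\{\nD/I_{n-l}\}\cdots\{\nD/I_1\}(1/I_0)$ as a coefficient (for instance, the $\Psi_0$-coefficient of $\theta^nF$ is $\nD^nI_0$). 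What is needed is the \emph{inverse} expansion: with $\nD_w\equiv w+\nD$, the product rule $\nD_w(gF)=(\nD g)F+g\,\nD_wF$ gives
$$\bigg\{\frac{\nD_w}{I_{n-l}}\bigg\}\cdots\bigg\{\frac{\nD_w}{I_1}\bigg\}\bigg(\frac{F}{I_0}\bigg)=\sum_{p=0}^{n-l}B_p(q)\,\nD_w^{\,n-l-p}F,$$
in which $u_{n-l}$ is precisely $B_{n-l}$, the coefficient of $\nD_w^0F=F$, while $B_0=\big(\prod_{s=0}^{n-l}I_s\big)^{-1}$. Since the left-hand side equals $w^{n-l}$ by \e_ref{ZaZ_e}, subtracting the reduced order-$(n-l)$ equation \e_ref{simple_e} yields $\sum_{p=0}^{n-l}A_p(q)\nD_w^{n-l-p}F=0$ with $A_0=\big(\prod_sI_s\big)^{-1}-1+\a^{\a}q=0$ by \cite[(4.9)]{Po} and $A_{n-l}=u_{n-l}+\a!\,q$.

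The second, decisive, missing step is the mechanism forcing $A_{n-l}=0$: since $\nD_w^pF=w^p+\big(\prod_{k}\prod_{r=1}^{a_k-1}(a_kw+r)\big)H_p(w,q)$ with $H_p$ regular at $w=-r/a_k$, evaluating the relation at these $n\!-\!l$ points (counted with multiplicity) kills every $H_p$ and leaves $\sum_{p=1}^{n-l}A_p(q)w^{n-l-p}=0$ there; a polynomial of degree $n\!-\!l\!-\!1$ in $w$ with $n\!-\!l$ zeros vanishes identically, so $A_{n-l}=0$. No reflection symmetry and no residue pairing enter. Note also a structural obstruction in your route: the order-$n$ equation with the $(\theta+1)^l$ twist pins down only $\bM^nF/I_n\equiv1$, and constancy does not propagate down the ladder --- the relation $w\,\wt V_{p}+\nD\wt V_{p}=I_{p+1}w\,\wt V_{p+1}$ with $\wt V_{p+1}\equiv1$ gives $\nD\big[\wt V_p\big]_{w^1}=I_{p+1}-1$, which need not vanish a priori --- so you cannot ``eliminate the $l$ top levels.'' You must work with the reduced equation and take $\bM^{n-l}F=I_{n-l}$ and $\prod_{s=0}^{n-l}I_s=(1-\a^{\a}q)^{-1}$ as the genuine external inputs from \cite{Po}, which is exactly what the paper does.
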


\begin{proof} Let $\nD_w= w+\nD$; so, 
$$w^p\bM^pF(w,q)=I_p(q)\bigg\{\frac{\nD_w}{I_p}\bigg\}\bigg\{\frac{\nD_w}{I_{p-1}}\bigg\}
\ldots\bigg\{\frac{\nD_w}{I_1}\bigg\}
\bigg(\frac{F}{I_0}\bigg).$$
The series $F(w,q)$ of \e_ref{tiFdfn_e} satisfies the differential equations
\begin{alignat}{1}
\label{simple_e}
\bigg\{\nD_w^{n-l}-q\lr{\a}\prod\limits_{k=1}^l
\prod_{r=1}^{a_k-1}(a_k\nD_w+r)\bigg\}F&=w^{n-l},\\
\label{ZaZ_e}
\bM^{n-l}F(w,q)&=I_{n-l}(q);
\end{alignat}
the first identity follows directly from \e_ref{tiFdfn_e}, while
the second is proved in \cite[Section~4.1]{Po}.
Subtracting~\e_ref{simple_e} from $w^{n-l}/I_{n-l}(q)$ times~\e_ref{ZaZ_e}
and using the product rule, we obtain
\BE{general_e2}
\sum_{p=0}^{n-l}A_p(q)\nD_w^{n-l-p}F(w,q)=0\EE
for some $A_p(q)\!\in\!\Q[[q]]$ with
\BE{A0_e2}\begin{split}
A_0(q)&=\frac{1}{\prod\limits_{s=0}^{n-l}I_s(q)}-1+\a^{\a}q\,, \\
A_{n-l}(q)&=
\bigg\{\frac{\nD}{I_{n-l}(q)}\bigg\}\bigg\{\frac{\nD}{I_{n-l-1}(q)}\bigg\}\ldots
\bigg\{\frac{\nD}{I_1(q)}\bigg\}\bigg(\frac{1}{I_0(q)}\bigg)
+\a!\cdot q.
\end{split}\EE
By~\e_ref{tiFdfn_e}, 
\BE{nDwF_e}\nD_w^pF(w,q)
=w^p+\Bigg(\prod\limits_{k=1}^l\prod_{r=1}^{a_k-1}(a_kw+r)\bigg)H_p(w,q)\EE
for some $H_p\!\in\!q\cdot\Q(w)[[q]]$ which is regular at $w=-\frac{r}{a_k}$
for all $r\!\in\![a_k\!-\!1]$.
Since $A_0\!=\!0$ by~\e_ref{A0_e2} and \cite[(4.9)]{Po}, 
it follows from~\e_ref{general_e2} and~\e_ref{nDwF_e} that
\BE{deg0van_e2}
\sum_{p=1}^{n-l} A_p(q)w^{n-l-p}=0
\qquad\forall\, w\!=\!-r/a_k, \,r\!\in\![a_k\!-\!1],~k\!\in\![l].\EE
Since the left-hand side of~\e_ref{deg0van_e2} is a polynomial in $w$ of
degree $n\!-\!l\!-\!1$ with $n\!-\!l$ zeros (counted with multiplicity), it vanishes identically;
in particular, $A_{n-l}(q)\!=\!0$. 
The claim now follows from the second identity in~\e_ref{A0_e2}.
\end{proof}

\section{Annulus GW-Invariants of CY CI Threefolds}
\label{annulus_sec}

It remains to establish Theorems~\ref{annulus_thm} and~\ref{klein_thm}
concerning the annulus and Klein bottle invariants of $(X_{\a},\Om)$,
where $\Om\!:\Pn\!\lra\!\Pn$ is the anti-symplectic involution given 
by~\e_ref{Omdfn_e} and $X_{\a}\!\subset\!\Pn$ is 
a smooth CY CI threefold of multi-degree $\a$ such that $\Om(X_{\a})\!=\!X_{\a}$.
For the remainder of the paper, $X_{\a}$ will denote such a threefold.
Furthermore, all statements involving the $\T^n$-weights $\al\!=\!(\al_1,\ldots,\al_n)$
will be taken to mean that they hold when restricted to
the subtorus $\T^m$ of Section~\ref{OmEquiv_subs}; see \e_ref{sp_weights_e}.

\subsection{Description and graph-sum definition}
\label{setup_subs}

A \sf{degree~$d$ annulus doubled map} to $\Pn$ is an $\Om$-invariant map $(\ti{C},\tau,\ti{f})$
to~$\Pn$ such that $\tau$ has two fixed components.
An example of the restriction of~$\tau$ to the principal component~$\ti{C}_0$ of~$\ti{C}$
in this case is given~by
\BE{C0anndfn_e}\begin{split}
&\ti{C}_0=(\Z_{2k}\!\times\!\P^1/\sim), 
~~\hbox{where}~~(s,[0,1])\sim(s\!+\!1,[1,0])~~\forall\,s\!\in\!\Z_{2k}\,,\\
&\hspace{.7in}
\tau\!:\ti{C}_0\lra\ti{C}_0,\quad\tau\big(s,[z_1,z_2]\big)=\big(-s,[\bar{z}_2,\bar{z}_1]\big).
\end{split}\EE
Since $\ti{C}/\tau$ can be naturally identified with two subspaces of $\ti{C}$,
a degree~$d$ annulus doubled map $\ti{f}:\ti{C}\!\lra\!\Pn$ 
corresponds to a pair of maps $f_1,f_2\!:\ti{C}/\tau\!\lra\!\Pn$
that differ by the composition with~$\Om$; 
these maps  will be referred to as \sf{degree~$d$ annulus maps}.
This is the analogue of the definition for disk maps; see \cite[Section~1.3.3]{PSW}. 
The moduli space of stable degree~$d$ doubled annulus maps to a CY CI threefold $X_{\a}\!\subset\!\Pn$,
$$\ov\M_1(X_{\a},d)^{\Om}\equiv \ov\M_{1,2,0}(X_{\a},\Om,d),$$
is expected to have a well-defined virtual degree~$A_d$, 
with $A_d\!=\!0$ if $d$ is odd.\footnote{This is consistent with the vector bundle 
$\V_{1,2d}^{\Om}$ in \e_ref{A2dsum_e} having a direct summand of odd real rank;
thus, its Euler class is zero.}
A combinatorial description of~$A_d$ for $d$ even, due to~\cite{W1}, is recalled 
and motivated below.

If $1\!\le\!i\!\le\!2m$ and $\ga\!\in\!\Z^+$,  let 
$$\ti{f}_{i,\ga}\!:(\P^1,0) \lra \big(\ell_{i\bar{i}},P_i\big)\subset(\Pn,P_i)$$
denote the equivalence class of the degree~$\ga$ cover $\P^1\!\lra\!\ell_{i\bar{i}}$ 
branched over $P_i$ and $P_{\bar{i}}$ only and taking 
the marked point $0\!\in\!\P^1$ to~$P_i$.
Denote by 
$$\ov\M_1(\Pn,d)^{\Om;\T^m}\subset\ov\M_1(\Pn,d)^{\Om} $$
the subspace of the $\T^m$-fixed equivalence classes of stable doubled annulus maps.

Based on the disk case fully treated in~\cite{So}, \cite{PSW}, and~\cite{Sh}, 
one would expect that 
\BE{A2dsum_e} 
A_{2d}=-2\sum_{F\subset\ov\M_1(\Pn,2d)^{\Om;\T^m}}
\int_F\frac{\E(\V_{1,2d}^{\Om})|_F}{\E(NF)}\,,\EE
where the sum is taken over the connected components $F$ of 
$\ov\M_1(\Pn,2d)^{\Om;\T^m}$, $\E(NF)$ is the equivariant Euler
class of the normal ``bundle"~$NF$ of~$F$ in $\ov\M_1(\Pn,2d)^{\Om}$,
and $\E(\V_{1,2d}^{\Om})$ is the equivariant Euler class of the obstruction ``bundle"
$$\V_{1,2d}^{\Om}\equiv \ov\M_1(\cL,2d)^{\Om}\lra\ov\M_1(\Pn,2d)^{\Om}\,,$$
with $\cL$ given by \e_ref{cLdfn_e}.
The factor of~2 in \e_ref{A2dsum_e} is due to the fact that a doubled annulus map
corresponds to 2~annulus maps, while the choice of sign \cite[(3.15),(3.23)]{W1}
is due to delicate physical considerations. 
While $\ov\M_1(\Pn,2d)^{\Om}$ and $\V_{1,2d}^{\Om}$
may be singular near doubled maps $(\ti{C},\tau,\ti{f})$
with contracted principle component~$\ti{C}_0$ or a $\tau$-fixed node on~$\ti{C}$,
the restriction of $\V_{1,2d}^{\Om}$ to the fixed loci~$F$ consisting of such maps
contains a topologically trivial subbundle of weight~0 and 
thus does not contribute to~\e_ref{A2dsum_e}.
For the same reason, there is no genus~0 correction to~$A_{2d}$,
which in the closed genus~1 case arises from the stable maps
with contracted principle component; 
see \cite[(1.5)]{LiZ} and \cite[Theorem~1.1]{g1comp2}.\footnote{Such a correction
would be a multiple of the degree~$2d$ disk invariant, which vanishes.}

Thus, by Corollary~\ref{fixedmap_crl}, all nonzero contributions to~\e_ref{A2dsum_e}
come from fixed loci~$F$ consisting of stable doubled maps $(\ti{C},\tau,\ti{f})$
such that the principal component $\ti{C}_0\!\subset\!\ti{C}$ is a circle of 
spheres as in~\e_ref{C0anndfn_e} and $\ti{f}|_{\ti{C}_0}$ is not constant.
In this case, $\ti{C}_0$ contains two components, $\ti{C}_{0,1}$ and $\ti{C}_{0,2}$,
each of which is mapped by~$\tau$ into itself;
so, $\ti{C}_{0,1}/\tau$ and $\ti{C}_{0,2}/\tau$ are disks.
Similarly to the disk case treated in~\cite{PSW} and~\cite{Sh}, the fixed-point loci~$F$ 
containing maps $(\ti{C},\tau,\ti{f})$ with $\ti{f}|_{\ti{C}_{0,1}}$
or $\ti{f}|_{\ti{C}_{0,2}}$
of even degree should not contribute to~\e_ref{A2dsum_e}.
Thus, the fixed-point loci~$F$ contributing to~\e_ref{A2dsum_e}
consist of stable maps  $(\ti{C},\tau,\ti{f})$ such that 
$\ti{f}|_{\ti{C}_{0,r}}$ is the unique cover of a line $\ell_{i_r\bar{i}_r}$ 
with $1\!\le\!i_r\!\le\!2m$ of odd degree~$\ga_r$ 
branched at the nodes of~$\ti{C}_{0,r}$ and over~$P_{i_r}$ and~$P_{\bar{i}_r}$ only,
for each $r\!=\!1,2$.

Any such annulus $\T^m$-fixed doubled map $(\ti{C},\tau,\ti{f})$
corresponds to a map from the quotient $\ti{C}/\tau$, 
which is either a wedge of two disks, $C_{0,1}$ and~$C_{0,2}$,
or a tree of spheres with two disks attached.
As an equivalence class of maps from a disk with 1~marked point (the node), 
$\ti{f}|_{C_{0,r}}$ equals to the restriction of the map~$\ti{f}_{i_r,\ga_r}$ 
with $1\!\le\!i_r\!\le\!2m$ and $\ga_r\!\in\!\Z^+$ odd
to a disk containing~$0$, for each $r\!=\!1,2$.
If $\ti{C}/\tau$ contains a nonempty tree of spheres~$C'$,
$\ti{f}|_{C'}$ corresponds to a $\T^m$-fixed genus~0 stable map with two marked points
(which are mapped to~$P_{i_1}$ and $P_{i_2}$, respectively),
i.e.~an element of $\ov\M_{0,2}(\Pn,r)^{\T^m}$.
Thus, a fixed locus~$F$ contributing to~\e_ref{A2dsum_e} 
corresponds to a tuple $(i,\ga_1,\ga_2)$ in the first case
and a tuple $(i_1,\ga_1;i_2,\ga_2;F')$ in the second case, where
\begin{itemize}
\item $1\le i,i_1,i_2\le 2m$ describe the fixed points $P_k\!\in\!\Pn$ to
which the nodes of the two disks are mapped by~$\ti{f}$;
\item $\ga_1,\ga_2\!\in\!\Z^+$ are odd and describe the degrees of
the restrictions of $\ti{f}$ to the doubled disks, $\ti{C}_{0,1}$ and $\ti{C}_{0,2}$,
and in the first case $\ga_1\!+\!\ga_2\!=\!2d$;
\item $F'\subset\ov\M_{0,2}(\Pn,r)^{\T^m}$ is a component of the fixed locus
and $\ga_1\!+\!\ga_2\!+\!2r\!=\!2d$ in the second case.
\end{itemize}
It is convenient to view the first case, when the tree of spheres is empty,
as the $r\!=\!0$ extension of the second case, with $i_1\!=\!i_2\!=\!i$.
The two cases are illustrated in Figure~\ref{annulus_fig}.

\begin{figure}
\begin{pspicture}(-1,-2.1)(10,1.8)
\psset{unit=.3cm}
\pscircle*(5,4){.2}\pscircle*(5,-4){.2}
\psarc[linewidth=.07](9,0){5.66}{135}{180}\psarc[linewidth=.07](1,0){5.66}{0}{45}
\psarc[linewidth=.07,linestyle=dashed](9,0){5.66}{180}{225}
\psarc[linewidth=.07,linestyle=dashed](1,0){5.66}{-45}{0}
\rput(5,5){$\mathbf i$}\rput(5,-5){$\mathbf{\bar i}$}
\rput(2.5,0){$\ga_1$}\rput(7.5,0){$\ga_2$}
\rput(5,-7){\smsize{$r=0$ case: $(i,\ga_1,\ga_2)$}}
\rput(35,2){\begin{small}\begin{tabular}{c}
$F'\subset\ev_1^{-1}(P_{i_1})\cap\ev_2^{-1}(P_{i_2})\cap\ov\M_{0,2}(\Pn,r)^{\T^m}$\\
topological component\end{tabular}\end{small}}
\psline[linewidth=.07](22,4)(48,4)\psline[linewidth=.07](22,0)(48,0)
\psline[linewidth=.07](22,4)(22,0)\psline[linewidth=.07](48,4)(48,0)
\pscircle*(30,0){.2}\pscircle*(40,0){.2}
\psline[linewidth=.07](30,0)(30,-2)\psline[linewidth=.07](40,0)(40,-2)
\psline[linewidth=.07,linestyle=dashed](30,-5)(30,-2)
\psline[linewidth=.07,linestyle=dashed](40,-5)(40,-2)
\rput(29.5,-.7){$\mathbf i$}\rput(40.5,-.7){$\mathbf j$}
\rput(29.2,-2.5){$\ga_1$}\rput(40.8,-2.5){$\ga_2$}
\rput(35,-7){\smsize{$r>0$ cases: $(i_1,\ga_1;i_2,\ga_2;F')$}}
\end{pspicture}
\caption{$\T^m$-fixed loci contributing to $A_{\ga_1+\ga_2+2r}$, depicted as half-graphs;
half-dotted edges correspond to the two doubled disks.}
\label{annulus_fig}
\end{figure}


Denote by $NF'\!\lra\!F'$ the normal bundle of $F'$ in $\ov\M_{0,2}(\Pn,r)$
and by $T_{0,\ga}^{\Om}|_{\ti{f}_{i,\ga}}$ the tangent space at $\ti{f}_{i,\ga}$
of the space of maps from disks with~$1$ marked point.
Doubled annulus maps close to one of the above fixed loci $F$ can be obtained
by deforming the disk components of a fixed map $(\ti{C},\tau,\ti{f})$
and its component lying in~$F'$, while keeping them joined at the two nodes;
the last deformation takes place in $\ov\M_{0,2}(\Pn,r)$.
Since the disk nodes can also be smoothed out, 
\BE{NFsplit_e}\begin{split}
\E(NF)&=\frac{\E(T_{0,\ga_1}^{\Om}|_{\ti{f}_{i_1,\ga_1}})\cdot\E(NF')
\cdot\E(T_{0,\ga_2}^{\Om}|_{\ti{f}_{i_2,\ga_2}})}
{\E(T_{P_{i_1}}\Pn)\E(T_{P_{i_2}}\Pn)}
\cdot(\hb_1\!-\!\psi_1)\cdot(\hb_2\!-\!\psi_2)\\
&=\frac{\E(T_{0,\ga_1}^{\Om}|_{\ti{f}_{i_1,\ga_1}})
\cdot\E(T_{0,\ga_2}^{\Om}|_{\ti{f}_{i_2,\ga_2}})\cdot\E(NF')}
{\phi_{i_1}|_{P_{i_1}}\phi_{i_2}|_{P_{i_2}}}\cdot(\hb_1\!-\!\psi_1)\cdot(\hb_2\!-\!\psi_2)\,,
\end{split}\EE
where $\hb_r$ is the equivariant Euler class of 
the tangent space at the node of the $r$-th disk,
$\psi_1,\psi_2$ denote the restrictions of the $\psi$-classes 
on  $\ov\M_{0,2}(\Pn,r)$ to $F'$ if $r\!>\!0$, and 
\BE{NFr0_e}
\frac{\E(NF')}{\phi_{i_2}|_{P_{i_2}}}(\hb_1\!-\!\psi_1)\cdot(\hb_2\!-\!\psi_2)
\equiv \hb_1\!+\!\hb_2 \qquad\hbox{if}~~r\!=\!0.\EE
The restriction of $\V_{1,d}^{\Om}$ to $F$ is the subbundle 
of the direct sum of the corresponding bundles over the components~$F$
consisting of sections that agree at the nodes of the disks.
Thus,
\BE{Vsplit_e}\begin{split}
\E(\V_{1,2d}^{\Om})\big|_F
&=\frac{\E(\V_{0,\ga_1}^{\Om}|_{\ti{f}_{i_1,\ga_1}})\cdot\E(\V_{0,r})|_{F'}
\cdot\E(\V_{0,\ga_2}^{\Om}|_{\ti{f}_{i_2,\ga_2}})}
{\E(\cL)|_{P_{i_1}}\E(\cL)|_{P_{i_2}}}\\
&=\frac{\E(\V_{0,\ga_1}^{\Om}|_{\ti{f}_{i_1,\ga_1}})\cdot
\E(\V_{0,\ga_2}^{\Om}|_{\ti{f}_{i_2,\ga_2}})\cdot\E(\V_{0,r})|_{F'}}
{\lr{\a}^2\al_{i_1}^l\al_{i_2}^l}\,,
\end{split}\EE
where $\V_{0,r}$ denotes the bundle $\V_{\a}\!\lra\!\ov\M_{0,2}(\Pn,r)$  
of Section~\ref{mainthm_sec} if $r\!>\!0$, $\E(\V_{0,0})|_{F'}\!\equiv\!\lr{\a}\al_{i_1}^l$,
and $\V_{0,\ga_r}^{\Om}|_{f_{i_r,\ga_r}}$ is the vector space of maps
$(\ti{C}_{0,r},\tau)\!\lra\!(\cL,\Om)$ lifting~$\ti{f}_{i_r,\ga_r}$.

By \e_ref{NFsplit_e}-\e_ref{Vsplit_e},
\BE{restr_e}
\int_F\frac{\E(\V_{1,2d}^{\Om})|_F}{\E(NF)}
=\frac{1}{\lr{\a}^2\al_{i_1}^l\al_{i_2}^l}\cdot
\frac{\cD_{i_1,\ga_1}}{\hb_1}\cdot\frac{\cD_{i_2,\ga_2}}{\hb_2}\cdot
\begin{cases}
\frac{\lr{\a}\al_{i_1}^l}{\hb_1+\hb_2}\prod\limits_{k\neq i_1}(\al_{i_2}\!-\!\al_k),
&\hbox{if}~r\!=\!0,\\
\int_{F'}\left.\frac{\E(\V_{0,r})(\ev_1^*\phi_{i_1})(\ev_2^*\phi_{i_2})}
{(\hb_1-\psi_1)(\hb_2-\psi_2)}\right|_{F'}\frac{1}{\E(NF')},
&\hbox{if}~r\!>\!0,\end{cases}\EE
where $\cD_{i,\ga}$ is the contribution of the half-edge disk map $\ti{f}_{i,\ga}$
without a marked point to the degree~$\ga$ disk invariant.
This contribution is computed in \cite[Lemma~6]{PSW} and in~\cite{Sh} to be 
\BE{disk-edge_e}\begin{split}
\cD_{i,\ga}
&= (-1)^{\frac{\ga-1}{2}}
\frac{\prod\limits_{k=1}^l(a_k\ga)!!}
{2^{\ga-1}\ga^{\frac{(n-2)\ga+l+4}{2}}\ga!}\frac{\al_i^{\frac{(n-2)\ga+l+2}{2}}}
{\prod\limits_{\begin{subarray}{c}1\le k\le n\\ k\neq i,\bar{i}\end{subarray}}
\prod\limits_{s=0}^{(\ga-1)/2}
\left(\frac{\ga-2s}{\ga}\al_i-\al_k\right)}\\ 
&=2\frac{\prod\limits_{k=1}^l(a_k\ga)!!}
{\ga\underset{(k,s)\neq(i,\ga)}{\prod\limits_{k=1}^{n}
\prod\limits_{\begin{subarray}{c}1\le s\le\ga\\ s\,\tn{odd}\end{subarray}}}
\left(\frac{s}{\ga}\al_i-\al_k\right)}
\bigg(\frac{\al_i}{\ga}\bigg)^{\frac{n\ga+l+2}{2}}
  \hspace{.5in}\forall~\ga~\hbox{odd}
\end{split}\EE
whenever every component $a_k$ of $\a$ is odd;
otherwise, $\cD_{i,\ga}\!=\!0$.

By the classical Localization Theorem~\cite{ABo}, if $r\!>\!0$
\begin{equation}\label{midsum_e}
\sum_{F'}
\int_{F'}\left.\frac{\E(\V_{0,r})(\ev_1^*\phi_{i_1})(\ev_2^*\phi_{i_2})}
{(\hb_1-\psi_1)(\hb_2-\psi_2)}\right|_{F'}\frac{1}{\E(NF')}
=\int_{\ov\M_{0,2}(\Pn,r)}
\frac{\E(\V_{\a})(\ev_1^*\phi_{i_1})(\ev_2^*\phi_{i_2})}
{(\hb_1-\psi_1)(\hb_2-\psi_2)}\,,
\end{equation}
where the sum is taken 
over all fixed loci $F'\!\subset\!\ov\M_{0,2}(\Pn,r)$.
The $r\!=\!0$ case of \e_ref{restr_e} gives the coefficient of~$Q^0$ 
in the power series~$\cZ$ in \e_ref{Z_dfn}.
Since 
$$\hb_r=\frac{\al_{i_r}-\al_{\bar{i}_r}}{\ga_r}=\frac{2\al_{i_r}}{\ga_r}
\qquad\forall\,r\!=\!1,2$$
and we have made 4 choices in describing each fixed locus $F$
(choosing an embedding $\ti{C}/\tau\!\subset\!\ti{C}$ and ordering the resulting disks),
plugging in~\e_ref{restr_e} and~\e_ref{midsum_e} into~\e_ref{A2dsum_e} 
leads to the following compact reformulation 
of the combinatorial description of~$A_{2d}$ given in~\cite{W1}.  

\begin{dfn}\label{A2d_dfn}
The degree $2d$ annulus invariant $A_{2d}$ of $(X_{\a},\Om)$ is given by
\BE{A2ddfn_e}
\sum_{d=1}^{\i}Q^dA_{2d} = -\frac{1}{2\lr{\a}^2}
\sum_{\begin{subarray}{c}1\le i,j\le 2m\\ \ga,\de\in\Z^+\,\tn{odd}\end{subarray}}
Q^{\frac{\ga+\de}{2}}\frac{\cD_{i,\ga}\cD_{j,\de}}{\al_i^l\al_j^l}
\frac{1}{\hb_1\hb_2}
\cZ\big(\al_i,\al_j,\hb_1,\hb_2,Q\big)
\left|_{\begin{subarray}{c}\hb_1=\frac{2\al_i}{\ga}\\ 
\hb_2=\frac{2\al_j}{\de}\end{subarray}}\,.\right.\EE
\end{dfn}

It is shown in \cite{Sh}, as well as in \cite{PSW} in the $\a\!=\!(5)$ case, that 
\BE{diskdfn_e}
Z_{disk}(Q)=\sum_{\begin{subarray}{c}1\le i\le 2m\\
\ga\in\Z^+\,\tn{odd}\end{subarray}}\!\!\!
Q^{\frac{\ga}{2}}\frac{\cD_{i,\ga}}{\al_i^l}\cZ_0(\al_i,\hb,Q)
\left|_{\hb=\frac{2\al_i}{\ga}}\right.\,,\EE
where $Z_{disk}(Q)$ is the disk potential as in~\e_ref{Zdiskdfn_e}.
Definition~\ref{A2d_dfn} for the annulus invariants is analogous
to this property for the disk invariants.
Since $\si_1\!=\!0$ by \e_ref{sp_weights_e}, 
\e_ref{diskdfn_e} and \e_ref{main_e1} give
\BE{diskprp_e}
 Z_{disk}(Q)=\sum_{\begin{subarray}{c}1\leq i\leq 2m\\
\ga\in\Z^+\,\tn{odd}\end{subarray}}\!\!\!
q^{\frac{\al_i}{\hb}}
\frac{\cD_{i,\ga}}{\al_i^l}\cY_0(\al_i,\hb,q)\left|_{\hb=\frac{2\al_i}{\ga}}\right.\,,\EE
with $Q$ and $q$ related by the mirror map~\e_ref{mirmap_e}.
This formula is used in \cite{PSW} and \cite{Sh} to obtain \e_ref{Fdisk_e}
whenever all components of $\a$ are odd;
in the other cases, $Z_{disk}(Q)=0$ by \e_ref{diskprp_e}, since $\cD_{i,\ga}\!=\!0$.

\subsection{Proof of Theorem~\ref{annulus_thm}}
\label{annpf_sec}

If any component $a_k$ of $\a$ is even, 
Theorem~\ref{annulus_thm} is meaningless, simply stating that $0\!=\!0$.
In the remaining cases,  Theorem~\ref{annulus_thm} is proved in this section.

By \e_ref{A2ddfn_e}, \e_ref{main_e2}, \e_ref{main_e1}, and \e_ref{Z_dfn}, 
\begin{equation}\label{annulus_initial_comp2}\begin{split} 
\sum_{d=1}^{\i}Q^dA_{2d}= -& \left.\frac{1}{2\lr{\a}}
\sum_{\begin{subarray}{c}1\le i,j\le 2m\\ \ga,\de\in\Z^+\,\tn{ odd}\end{subarray}}
\right\{ \frac{1}{\left(\hb_1+\hb_2\right)\hb_1\hb_2}
\frac{\cD_{i,\ga}\cD_{j,\de}}{\al_i^l\al_j^l}\\
&\hspace{.8in}\times 
q^{\frac{\al_i}{\hb_1}+\frac{\al_j}{\hb_2}}\!\!\!\!\!\!\!\!\!\!\!
\left.\sum_{\begin{subarray}{c}p_1+p_2+r=n-1\\p_1,p_2\ge 0\end{subarray}}
\!\!\!\!\!\!\!\!\!\!\!(-1)^r
\si_r\cY_{p_1}(\al_i,\hb_1,q)\cY_{p_2-l}(\al_j,\hb_2,q)
\right\}\left|_{\begin{subarray}{c}\hb_1=\frac{2\al_i}{\ga}\\\hb_2=\frac{2\al_j}{\de}\end{subarray}}\right.
\end{split}\end{equation}
with $\cY_p(x,\hb,q)$ given by \e_ref{main_e0}, \e_ref{Crec_e}, and~\e_ref{tiCrec_e}.
Since $Q\frac{\nd}{\nd Q}=\frac{q}{I_1(q)}\frac{\nd}{\nd q}$,
by Corollary~\ref{Sij_crl2}
\begin{equation*}\label{tiZexp_e}\begin{split}
&\frac{\hb_1\hb_2}{\hb_1\!+\!\hb_2}
Q\frac{\nd}{\nd Q}\left\{q^{\frac{\al_i}{\hb_1}+\frac{\al_j}{\hb_2}}
\!\!\!\!\!\!
\sum_{\begin{subarray}{c}p_1+p_2+r=n-1\\p_1,p_2\ge0\end{subarray}}
\!\!\!\!\!\!\!\!\!\!\!(-1)^r\si_r\cY_{p_1}(\al_i,\hb_1,q)\cY_{p_2-l}(\al_j,\hb_2,q)\right\}\\
&\quad=\frac{1}{I_1(q)}q^{\frac{\al_i}{\hb_1}+\frac{\al_j}{\hb_2}}
\left[I_2(q)\cY_2(\al_i,\hb_1,q)\cY_2(\al_j,\hb_2,q)
+\!\!\!\!\sum_{\begin{subarray}{c}p_1,p_2\ge -l\\ p_1<2\,\tn{or}\,p_2<2\end{subarray}}
\!\!\!\!\!\!\!\!\!\!
A_{p_1p_2}(q)\cY_{p_1}(\al_i,\hb_1,q)\cY_{p_2}(\al_j,\hb_2,q)\right],
\end{split}\end{equation*}
for some $A_{p_1p_2}(q)\!\in\!\Q_{\al}[[q]]$.
By the $-l\!\le\!p\!<\!b\!=\!2$ case of Corollary~\ref{res_crl} below,
$$\sum_{\begin{subarray}{c}p_1,p_2\ge -l\\ p_1<2\,\tn{or}\,p_2<2\end{subarray}}
\!\!\!\!\!\!\!\!\!\!
A_{p_1p_2}(q)\!\!\!\!
\sum_{\begin{subarray}{c}1\le i,j\le 2m\\ \ga,\de\in\Z^+\,\tn{odd}\end{subarray}}
\frac{1}{\hb_1^2\hb_2^2}\frac{\cD_{i,\ga}\cD_{j,\de}}{\al_i^l\al_j^l}
q^{\frac{\al_i}{\hb_1}+\frac{\al_j}{\hb_2}}
\cY_{p_1}(\al_i,\hb_1,q)\cY_{p_2}(\al_j,\hb_2,q)
\left|_{\begin{subarray}{c}\hb_1=\frac{2\al_i}{\ga}\\
\hb_2=\frac{2\al_j}{\de}\end{subarray}}\right.=0.$$
By the $p\!=\!b\!=\!2$ case of Corollary~\ref{res_crl} and \e_ref{diskprp_e},
\begin{equation*}\begin{split}
&\frac{I_2(q)}{I_1(q)}\!\!
\sum_{\begin{subarray}{c}1\le i,j\le 2m\\ \ga,\de\in\Z^+\,\tn{odd}\\\end{subarray}}
\!\!\frac{1}{\hb_1^2\hb_2^2}\frac{\cD_{i,\ga}\cD_{j,\de}}{\al_i^l\al_j^l}
q^{\frac{\al_i}{\hb_1}+\frac{\al_j}{\hb_2}}\cY_2(\al_i,\hb_1,q)\cY_2(\al_j,\hb_2,q)
\left|_{\begin{subarray}{c}\hb_1=\frac{2\al_i}{\ga}\\
          \hb_2=\frac{2\al_j}{\de}\end{subarray}}\right.\\
&~~=\frac{I_2(q)}{I_1(q)}
\left[
\sum_{\begin{subarray}{c}1\le i\le 2m\\ \ga\in\Z^+\,\tn{odd}\\\end{subarray}}
\!\!\frac{\cD_{i,\ga}}{\al_i^l}q^{\frac{\al_i}{\hb}}
\frac{1}{I_2(q)}
\left\{\frac{\al_i}{\hb}\!+\!q\frac{\nd}{\nd q}\right\}
\left(\frac{1}{I_1(q)}
\left\{\frac{\al_i}{\hb}\!+\!q\frac{\nd}{\nd q}\right\}\cY_0(\al_i,\hb,q)\right)
\left|_{\hb=\frac{2\al_i}{\ga}}\right.   \!\right]^2\\
&~~=\frac{1}{I_1(q)I_2(q)}\!\!
\left[\! q\frac{\nd}{\nd q}
\left(\frac{1}{I_1(q)}q\frac{\nd}{\nd q}
\sum_{\begin{subarray}{c}1\le i\le 2m\\ \ga\in\Z^+\,\tn{odd}\\\end{subarray}}
\!\!\!\!\! q^{\frac{\al_i}{\hb}}\frac{\cD_{i,\ga}}{\al_i^l}\cY_0(\al_i,\hb,q)
\left|_{\hb=\frac{2\al_i}{\ga}}\right.\!\!\!\right)\!\right]^2\\
&~~=\frac{I_1(q)}{I_2(q)}
\left[\Big\{Q\frac{\nd}{\nd Q}\Big\}^2Z_{disk}(Q)\right]^2\,.
\end{split}\end{equation*}
The identity in Theorem~\ref{annulus_thm} follows immediately from 
\e_ref{annulus_initial_comp2} and the last three equations.

\begin{lmm}\label{res_lmm}
If $\a$ is an $l$-tuple of positive integers with $n\!\equiv\!|\a|\!=\!l\!+\!4$,
$b,p\!\in\!\Z$ with $-l\!\le\!p\!\le\!\min(b,0)$ and $b\!-\!p\!\le\!l\!+\!2$,
and $\cD_{i,\ga}$ is given~by \e_ref{disk-edge_e}, then
\begin{equation*}
\sum_{\begin{subarray}{c}1\le i\le 2m\\ \ga\in\Z^+\tn{odd}\end{subarray}}
\frac{1}{\hb^b}\frac{\cD_{i,\ga}}{\al_i^l}
q^{\frac{\al_i}{\hb}}\D^p\cY_0(\al_i,\hb,q)\left|_{\hb=\frac{2\al_i}{\ga}}\right.
=\begin{cases} 0,&\hbox{if}~p<b;\\
\frac{1}{2^{b-1}I_p(q)}\!\!
\sum\limits_{d\in\Z^+\,\tn{odd}}
\!\!\!\!\!d^pq^{\frac{d}{2}}\frac{\prod\limits_{k=1}^l(a_kd)!!}{(d!!)^n}\,,
&\hbox{if}~p=b. \end{cases}
\end{equation*}
\end{lmm}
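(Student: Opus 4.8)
The plan is to decouple the roles of $b$ and $p$ from the core residue computation: first reduce the general statement to the single case $p=-l$ by an elementary differential identity, and then evaluate that case, degree by degree in $q^{1/2}$, by the Residue Theorem on $S^2$. Write $T(b,p)$ for the left-hand side. The starting point is the identity $q^{x/\hb}\{x+\hb\nD\}f=\hb\,\nD\big(q^{x/\hb}f\big)$, where $\nD=q\frac{\nd}{\nd q}$, which combined with the defining recursion \e_ref{Yp_dfn_e2}, $\D^p\cY_0=\tfrac1{I_p}\{x+\hb\nD\}\D^{p-1}\cY_0$, and the fact that $\hb=2\al_i/\ga$ and the prefactors $\hb^{-b}\cD_{i,\ga}/\al_i^l$ are independent of~$q$, yields $T(b,p)=\tfrac1{I_p(q)}\nD\,T(b-1,p-1)$. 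Since $b-p$ is preserved under this step, I would iterate it down to $p=-l$ (legitimate for $-l\le p\le 0$, with the convention $I_j=1$ for $j<0$), reaching $T(b_0,-l)$ with $b_0=b-p-l\le 2$ and picking up an operator $\nD^{p+l}$ together with one factor $I_0^{-1}$ when $p=0$. Because $\nD^{p+l}$ acts on $q^{D/2}$ as multiplication by $(D/2)^{p+l}$, this manufactures exactly the power $d^{p}$ and the power of~$2$ in the claimed right-hand side and preserves the dichotomy ($p<b\Leftrightarrow b_0>-l$); thus it suffices to treat $p=-l$, which for $(b,p)=(0,0)$ recovers \e_ref{diskprp_e}--\e_ref{Fdisk_e}.

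For the base case I fix an odd total degree $D$ and extract the coefficient of $q^{D/2}$. Substituting $\hb=2\al_i/\ga$ and writing $\al_i=\ga z/2$, the edge factor $\cD_{i,\ga}$ of \e_ref{disk-edge_e} and the degree-$e$ term of $\cY_{-l}$ (with $\ga+2e=D$) combine so that the double factorials telescope: $\cD_{i,\ga}$ supplies the odd indices $s\le\ga$ and the series supplies $\ga<s\le D$. The numerator then collapses to the weight-independent expression $2D^{-l}\prod_k(a_kD)!!\,(z/2)^{M}$ with $M=\tfrac{nD-l+2}{2}$, while the denominator becomes $\ga\prod_{(k,s)\ne(i,\ga)}\big(\tfrac{sz}{2}-\al_k\big)$ over $k\in[n]$ and odd $s\in[1,D]$. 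Consequently each $(i,\ga)$-summand equals $\tfrac12\Rs{z=2\al_i/\ga}\Omega(z)$, where $\Omega(z)=z^{-b_0}\,2D^{-l}\prod_k(a_kD)!!\,(z/2)^{M}\big/\prod_{k,s}\big(\tfrac{sz}{2}-\al_k\big)$ is a \emph{single} rational $1$-form, independent of $i$ and $\ga$; the factor $\tfrac{\ga z}{2}-\al_i$ excluded from the denominator produces the simple pole at $z=2\al_i/\ga$.

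By \e_ref{sp_weights_e} the finite nonzero poles of $\Omega$ are exactly the points $z=2\al_i/\ga$ with $1\le i\le 2m$ and odd $\ga\le D$ (when $n=2m\!+\!1$ the weight $\al_n=0$, so that family merges into $z=0$). The Residue Theorem on $\P^1$ therefore gives $[\,q^{D/2}\,]\,T(b_0,-l)=-\tfrac12\big(\Rs{z=0}\Omega+\Rs{z=\i}\Omega\big)$. A degree count using the Calabi--Yau condition $n-l=4$ shows $\Omega\sim z^{-l-1-b_0}$ as $z\to\i$, so $\Rs{z=\i}\Omega$ vanishes precisely when $b_0>-l$ and equals the explicit leading coefficient when $b_0=-l$; moreover $\Rs{z=0}\Omega=0$ in all cases, because $M-b_0\ge 0$ (for $n=2m$), respectively $M-b_0-\tfrac{D+1}{2}\ge0$ (for $n=2m\!+\!1$), holds exactly by the hypothesis $b-p\le l+2$, i.e. $b_0\le 2$, already at $D=1$. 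Extracting the leading coefficient at infinity for $b_0=-l$ produces $2^{l+1}D^{-l}\prod_k(a_kD)!!/(D!!)^n$, which matches the claim.

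The step I expect to be the genuine obstacle is the bookkeeping in the last two paragraphs: checking that the telescoping really renders $\Omega$ independent of $i$ and $\ga$; that at each $z=2\al_i/\ga$ only the single factor $\tfrac{\ga z}{2}-\al_i$ vanishes, so the pole is simple (here one uses that the weights occur in $\pm$-pairs, so no spurious coincidence $\tfrac{s}{\ga}\al_i=\al_k$ with $k\ne i,\bar i$ occurs as a rational relation in the $\la_j$); and that, after restriction to $\T^m$, the finite poles of $\Omega$ match the index set $1\le i\le 2m$ exactly, with the $\al_n=0$ contribution correctly absorbed into $\Rs{z=0}\Omega$ rather than appearing as an extraneous summand.
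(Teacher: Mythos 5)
Your proof is correct and its core is the same as the paper's: for each fixed odd total degree the $(i,\ga)$-sum is recognized as $\tfrac12$ times a sum of residues of a single rational $1$-form on $S^2$, and the Residue Theorem together with the order counts at $z=0$ (where the hypothesis $b-p\le l+2$ enters) and at $z=\infty$ (where $n=l+4$ enters) yields both the vanishing for $p<b$ and the explicit value for $p=b$. The only difference is your preliminary reduction to $p=-l$ via $q^{x/\hb}\{x+\hb\nD\}=\hb\,\nD\circ q^{x/\hb}$; the paper instead carries the factor $(\ga+2d)^p$ directly through the identical residue computation, so this step is a valid but inessential reorganization.
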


\begin{proof}
By \e_ref{Yp_dfn_e2} and \e_ref{Yp_dfn_e}, 
$$\D^p\cY_0(\al_i,\hb,q)\left|_{\hb=\frac{2\al_i}{\ga}}\right.
=\frac{\al_i^l}{I_p(q)}\sum_{d=0}^{\i}q^d(\ga\!+\!2d)^p 
\left(\frac{\al_i}{\ga}\right)^{nd+p}
\frac{\prod\limits_{k=1}^l\!\frac{(a_k(\ga+2d))!!}{(a_k\ga)!!}}
{\prod\limits_{k=1}^n
\prod\limits_{\begin{subarray}{c}\ga+2\le s\le\ga+2d\\ s~\tn{odd}\end{subarray}}
\left(s\frac{\al_i}{\ga}-\al_k\right)}\,,$$
where $I_p(q)\!\equiv\!1$ if $p\!<\!0$.
Thus, by \e_ref{disk-edge_e} and the Residue Theorem on $S^2$,
\begin{equation*}\begin{split}
&2^{b-1}I_p(q)\!\!\!\!\!\!
\sum_{\begin{subarray}{c}1\le i\le 2m\\ \ga\in\Z^+\tn{odd}\end{subarray}}
\!\!\!\!\frac{1}{\hb^b}\frac{\cD_{i,\ga}}{\al_i^l}
q^{\frac{\al_i}{\hb}}\D^p\cY_0(\al_i,\hb,q)\left|_{\hb=\frac{2\al_i}{\ga}}\right.
=\!\!\!
\sum_{\begin{subarray}{c}1\le i\le 2m\\ \ga\in\Z^+\tn{odd}\end{subarray}}
\sum_{\begin{subarray}{c}t\in\Z^+\tn{odd}\\t\ge\ga\end{subarray}}\!\!\!\! t^pq^{\frac{t}{2}}
\frac{ \left(\frac{\al_i}{\ga}\right)^{\frac{nt+l+2}{2}+p-b}\prod\limits_{k=1}^l\!(a_kt)!!}
{\ga\underset{(k,s)\neq(i,\ga)}{\prod\limits_{k=1}^n
\prod\limits_{\begin{subarray}{c}1\le s\le t\\ s~\tn{odd}\end{subarray}}}
\!\left(s\frac{\al_i}{\ga}\!-\!\al_k\right)}\\
&\quad=\!\sum_{t\in\Z^+\tn{odd}}\!\!\!\! t^pq^{\frac{t}{2}}\!\!\!\!\!
\sum_{\begin{subarray}{c}1\le i\le 2m\\ \ga\in\Z^+\tn{odd}\\\ga\leq t\end{subarray}}
\!\Rs{z=\frac{\al_i}{\ga}}\!\!\left\{
\frac{z^{\frac{nt+l+2}{2}+p-b}\!\!\prod\limits_{k=1}^l\!(a_kt)!!}
{\prod\limits_{k=1}^n
\prod\limits_{\begin{subarray}{c}1\le s\le t\\ s~\tn{odd}\end{subarray}}
\!\left(sz\!-\!\al_k\right)}\right\}
=\!\sum_{t\in\Z^+\tn{odd}}\!\!\!\! t^pq^{\frac{t}{2}}
\Rs{w=0}\!\!\left\{
\frac{w^{b-1-p}\prod\limits_{k=1}^l\!(a_kt)!!}
{\prod\limits_{k=1}^n
\prod\limits_{\begin{subarray}{c}1\le s\le t\\ s~\tn{odd}\end{subarray}}
\!\left(s\!-\!\al_kw\right)}\right\}.
\end{split}\end{equation*}
The last expression above gives the right-hand side of the formula in
Lemma~\ref{res_lmm}.
Since
$$\frac{nt+l+2}{2}+p-b-\frac{t+1}{2}\ge0$$
with our assumptions, there is no residue at $z\!=\!0$. 
\end{proof}

\begin{crl}\label{res_crl}
If $\a$ is an $l$-tuple of positive integers with $n\!\equiv\!|\a|\!=\!l\!+\!4$,
$b,p\!\in\!\Z$ with $-l\!\le\!p\!\le\!b\!+\!1$ and $b\!-\!p\!\le\!l\!+\!2$,
and $\cD_{i,\ga}$ is given~by \e_ref{disk-edge_e}, then
\begin{alignat*}{1}
\sum_{\begin{subarray}{c}1\le i\le 2m\\ \ga\in\Z^+\tn{odd}\end{subarray}}
\frac{1}{\hb^b}\frac{\cD_{i,\ga}}{\al_i^l}
q^{\frac{\al_i}{\hb}}\D^p\cY_0(\al_i,\hb,q)\left|_{\hb=\frac{2\al_i}{\ga}}\right.
&=0  \hspace{2.68in} \hbox{if}~p\!<\!b;\\
\sum_{\begin{subarray}{c}1\le i\le 2m\\ \ga\in\Z^+\tn{odd}\end{subarray}}
\frac{1}{\hb^b}\frac{\cD_{i,\ga}}{\al_i^l}
q^{\frac{\al_i}{\hb}}\cY_p(\al_i,\hb,q)\left|_{\hb=\frac{2\al_i}{\ga}}\right.
&=\sum_{\begin{subarray}{c}1\le i\le 2m\\ \ga\in\Z^+\tn{odd}\end{subarray}}
\frac{1}{\hb^b}\frac{\cD_{i,\ga}}{\al_i^l}
q^{\frac{\al_i}{\hb}}\D^p\cY_0(\al_i,\hb,q)\left|_{\hb=\frac{2\al_i}{\ga}}\right.
\quad\hbox{if}~b\le l\!+\!2.
\end{alignat*}
\end{crl}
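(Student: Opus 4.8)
The plan is to derive both identities from Lemma~\ref{res_lmm} by propagating it through the operator $\{x\!+\!\hb\nD\}$, where $\nD=q\frac{\nd}{\nd q}$. Abbreviate the left-hand side of the first display by
\[
T_{b,p}(q)\equiv\sum_{\begin{subarray}{c}1\le i\le 2m\\ \ga\in\Z^+\,\tn{odd}\end{subarray}}
\frac{1}{\hb^b}\frac{\cD_{i,\ga}}{\al_i^l}\,q^{\frac{\al_i}{\hb}}
\D^p\cY_0(\al_i,\hb,q)\Big|_{\hb=\frac{2\al_i}{\ga}}.
\]
The first step is to establish, for $p\ge1$, the recursion $T_{b,p}=\frac{1}{I_p(q)}\nD\,T_{b-1,p-1}$. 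This rests on the defining relation $\D^p\cY_0=\frac{1}{I_p}\{x\!+\!\hb\nD\}\D^{p-1}\cY_0$ from~\eqref{Yp_dfn_e2} and on the elementary identity $q^{x/\hb}\{x\!+\!\hb\nD\}g=\hb\,\nD\big(q^{x/\hb}g\big)$, which holds for every $g$ because $\nD\,q^{x/\hb}=\frac{x}{\hb}q^{x/\hb}$. Evaluating at $x=\al_i$ and $\hb=2\al_i/\ga$ (both constant in $q$, so the evaluation commutes with $\nD$), multiplying by $\frac{1}{\hb^b}\frac{\cD_{i,\ga}}{\al_i^l}$, and pulling the $q$-independent factors (together with $\frac1{I_p(q)}$, which sits outside the derivative) out of the sum yields the recursion.

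With this in hand I would prove the first display ($T_{b,p}=0$ when $p<b$) by induction on $p$. For $p\le0$ the statement is precisely the vanishing case of Lemma~\ref{res_lmm}: its hypotheses $-l\le p\le\min(b,0)$ and $b-p\le l+2$ hold, since $p<b$ and the corollary assumes $b-p\le l+2$. For $1\le p<b$ the recursion gives $T_{b,p}=\frac1{I_p}\nD\,T_{b-1,p-1}$, and the pair $(b-1,p-1)$ again satisfies $p-1<b-1$, $(b-1)-(p-1)=b-p\le l+2$, and $p-1\ge0$; hence $T_{b-1,p-1}=0$ by induction and $T_{b,p}=0$. Descending $p$ by one at each stage reaches $(b-p,0)$ with $b-p>0$, where the base case applies.

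For the second display I would substitute the expansion~\eqref{main_e0},
\[
\cY_p(\al_i,\hb,q)=\D^p\cY_0(\al_i,\hb,q)
+\sum_{r=1}^{p}\sum_{s=0}^{p-r}\ctC_{p,s}^{(r)}(q)\,\hb^{p-r-s}\D^s\cY_0(\al_i,\hb,q),
\]
into the defining sum. The $\D^p\cY_0$-term reproduces $T_{b,p}$, while the $(r,s)$-summand, after absorbing $\hb^{p-r-s}/\hb^{b}=1/\hb^{\,b-p+r+s}$ and pulling the series $\ctC_{p,s}^{(r)}(q)$ (independent of $i,\ga,\hb$) out front, equals $\ctC_{p,s}^{(r)}(q)\,T_{b-p+r+s,\,s}$. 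For $r\ge2$ one has $s<b-p+r+s$ and $(b-p+r+s)-s=b-p+r\le b\le l+2$ (using $r\le p$ and $b\le l+2$), so the first display forces this term to vanish; the $r=1$ terms vanish as well, since by Theorem~\ref{main_thm} each $q^d$-coefficient of $\ctC_{p,s}^{(1)}$ is a degree-one symmetric polynomial in $\al$, hence a multiple of $\si_1$, which is zero on $\T^m$ by~\eqref{sp_weights_e}. Thus the $\cY_p$-sum equals $T_{b,p}$, which is the asserted identity, uniformly over $-l\le p\le b+1$.

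The main obstacle is the index bookkeeping: one must verify at each stage that the hypotheses of Lemma~\ref{res_lmm}---the lower bound $-l\le\,\cdot\,$ and especially the degree bound $b-p\le l+2$ responsible for annihilating the residue at $z=0$---survive both the descent $(b,p)\mapsto(b-1,p-1)$ and the shift $(b,p)\mapsto(b-p+r+s,s)$, and that the recursion is invoked only for $p\ge1$ so that~\eqref{Yp_dfn_e2} is available. All of the analytic content (the Residue Theorem on $S^2$ and the vanishing of the residue at the origin) is confined to Lemma~\ref{res_lmm}; the corollary is then a purely formal consequence obtained by commuting $\{x\!+\!\hb\nD\}$ past the residue sum.
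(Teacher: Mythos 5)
Your argument is correct and follows essentially the same route as the paper: the first claim by induction on $p$ via the recursion $T_{b,p}=\frac{1}{I_p}\nD T_{b-1,p-1}$ coming from~\e_ref{Yp_dfn_e2} and the product rule, with Lemma~\ref{res_lmm} as the base case, and the second claim by substituting~\e_ref{main_e0} and killing the correction terms using $\ctC_{p,s}^{(1)}=0$ (since $\si_1=0$ on $\T^m$) together with the first claim. The index bookkeeping you carry out ($b-p+r\le b\le l+2$ for $r\le p$, and $r>p-b$ for $r\ge2$ since $p\le b+1$) is exactly what the paper's terse proof leaves implicit.
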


\begin{proof}
The first claim follows from \e_ref{Yp_dfn_e2}, Lemma~\ref{res_lmm}, and 
the product rule by induction on~$p$.
The $p\!<\!0$ cases of the second claim are immediate from~\e_ref{Yp_dfn_e}.
The remaining cases follow from~\e_ref{main_e0} together with
$\ctC_{p,s}^{(1)}\!=\!0$ and the first claim.
\end{proof}

\section{Klein Bottle GW-Invariants of CY CI Threefolds}
\label{klein_sec}

We now consider the one-point analogue of the graph-sum description of 
the Klein bottle invariants in~\cite{W1}.
We show that the resulting sums are weight-independent; 
so the invariants are well-defined for a fixed CY CI threefold~$X_{\a}$.
Furthermore, they satisfy the one-point analogue of 
the Klein bottle mirror symmetry prediction in~\cite{W1};
see Theorem~\ref{klein_thm}.
Once Klein bottle invariants are defined intrinsically and shown to satisfy
some sort of hyperplane relation,
the one-point version of the Klein bottle prediction of~\cite{W1}
will become equivalent to the original one, due to the divisor relation.

\subsection{Description and graph-sum definition}
\label{kleindfn_subs}

A \sf{degree~$d$ one-marked Klein bottle doubled map} to $\Pn$  is 
a tuple $(\ti{C},\tau,y,\ti{f})$ such that 
$(\ti{C},\tau,\ti{f})$ is an $\Om$-invariant map  
to~$\Pn$, $\tau$ is a fixed-point free involution, 
and $y\!\in\!\ti{C}^*$ is a smooth point.
An example of the restriction of~$\tau$ to the principal component~$\ti{C}_0$ of~$\ti{C}$
in this case is given~by
\BE{C0kleindfn_e}\begin{split}
&\ti{C}_0=(\Z_{2k}\!\times\!\P^1/\sim), 
~~\hbox{where}~~(s,[0,1])\sim(s\!+\!1,[1,0])~~\forall\,s\!\in\!\Z_{2k}\,,\\
&\hspace{.7in}
\tau\!:\ti{C}_0\lra\ti{C}_0,\quad\tau\big(s,[z_1,z_2]\big)=\big(s\!+\!k,[\bar{z}_1,\bar{z}_2]\big).
\end{split}\EE
The moduli space of stable degree~$d$ one-marked Klein bottle doubled maps 
to a CY CI threefold $X_{\a}\!\subset\!\Pn$,
$$\ov\M_{1,1}(X_{\a},d)^{\Om}\equiv \ov\M_{1,0,1}(X_{\a},\Om,d),$$
is expected to have a well-defined virtual class so that the number
$$\ti{K}_d\equiv\int_{\big[\ov\M_{1,1}(X_{\a},d)^{\Om}\big]^{vir}}\ev_1^*\nH\in\Q$$
is well-defined, with $\ti{K}_d\!=\!0$ if $d$ is odd.\footnote{This is consistent 
with the vector bundle 
$\V_{1,1,2d}^{\Om}$ in \e_ref{K2dsum_e} having a direct summand of odd real rank;
thus, its Euler class is zero.}
A combinatorial description of~$\ti{K}_d$ for $d$ even, in the spirit of~\cite{W1}, is
 motivated below.

Denote by 
$$\ov\M_{1,1}(\Pn,d)^{\Om;\T^m}\subset\ov\M_{1,1}(\Pn,d)^{\Om}$$
the subspace of the $\T^m$-fixed equivalence classes of stable
one-marked Klein bottle doubled maps to~$\Pn$.
As in the disk and annulus cases, one would expect~that 
\BE{K2dsum_e} 
\ti{K}_{2d}=-\sum_{F\subset\ov\M_{1,1}(\Pn,2d)^{\Om;\T^m}}
\int_F\frac{\E(\V_{1,1,2d}^{\Om})(\ev_1^*x)|_F}{\E(NF)}\,,\EE
where the sum is taken over the connected components $F$ of 
$\ov\M_{1,1}(\Pn,2d)^{\Om;\T^m}$, $\E(NF)$ is the equivariant Euler
class of the normal ``bundle"~$NF$ of~$F$ in $\ov\M_{1,1}(\Pn,2d)^{\Om}$,
and $\E(\V_{1,1,2d}^{\Om})$ is the equivariant Euler class of the obstruction ``bundle"
$$\V_{1,1,2d}^{\Om}\equiv\ov\M_{1,1}(\cL,2d)^{\Om}\lra\ov\M_{1,1}(\Pn,2d)^{\Om}\,,$$
with $\cL$ given by \e_ref{cLdfn_e}.
Similarly to the annulus case, $\ov\M_{1,1}(\Pn,2d)^{\Om}$ and $\V_{1,1,2d}^{\Om}$
may be singular near maps $(\ti{C},\tau,y,\ti{f})$ with contracted principle 
component~$\ti{C}_0$ or such that $\ti{C}/\tau$ contains a copy of~$\R P^2$.
However, the restriction of $\V_{1,1,2d}^{\Om}$ to the fixed loci~$F$ consisting of such maps
again contains a topologically trivial subbundle of weight~0 and 
thus does not contribute to~\e_ref{K2dsum_e}.
The same is the case for the fixed loci consisting of maps  $(\ti{C},\tau,y,\ti{f})$
such that $\ti{f}$ takes the point in the principal component $\ti{C}_0\!\subset\!\ti{C}$
closest to~$y$ to~$P_{2m+1}$ (if $n$ is odd).
There is also no genus~0 correction to~$\ti{K}_{2d}$.

Thus, by Corollary~\ref{fixedmap_crl}, all nonzero contributions to~\e_ref{K2dsum_e}
come from fixed loci~$F$ consisting of stable maps $(\ti{C},\tau,y,\ti{f})$
such that the principal component $\ti{C}_0\!\subset\!\ti{C}$ is a circle of 
spheres as in~\e_ref{C0kleindfn_e}, $\ti{f}|_{\ti{C}_0}$ is not constant, and
the irreducible component of~$\ti{C}_0$ closest to~$y$ is mapped 
to a fixed point~$P_i$ with $i\!\le\!2m$.
In this case, $\ti{C}_0$ breaks in a unique way into four connected subsets:
\begin{enumerate}
\item[(c1)] $\ti{C}_{0;c1}$,
the maximum connected union of irreducible components of $\ti{C}_0$
which is contracted by~$\ti{f}$ and contains the irreducible component of  $\ti{C}_0$
closest to~$y$;
\item[(c2)] $\ti{C}_{0;c2}=\tau(\ti{C}_{0;c1})$,
the maximum connected union of irreducible components of $\ti{C}_0$
which is contracted by $\ti{f}$ and contains the irreducible component of  $\ti{C}_0$
closest to~$\tau(y)$;
\item[(e1)] $\ti{C}_{0;e1}$, a chain of spheres running from $\ti{C}_{0;c1}$ 
to $\ti{C}_{0;c2}$;
\item[(e2)] $\ti{C}_{0;e2}\!=\!\tau(\ti{C}_{0;e1})$, the other chain of 
spheres running from $\ti{C}_{0;c1}$ to~$\ti{C}_{0;c2}$.\footnote{$e$ stands for {\it effective}, 
as $\ti{f}|_{\ti{C}_{0;e1}}$ and $\ti{f}|_{\ti{C}_{0;e2}}$ are not constant}
\end{enumerate}
The restrictions $\ti{f}_{\ti{C}|_{0;e1}}$ and $\ti{f}|_{\ti{C}_{0;e2}}$ are distinguished
in the counting scheme of~\cite{W1}.
Let $\ti{C}_{e1}\!\subset\!\ti{C}$ be the maximal connected union of irreducible
components containing~$\ti{C}_{0;e1}$, but not~any of the irreducible components
of~$\ti{C}_{0;c1}$ or~$\ti{C}_{0;c2}$.
The restriction $\ti{f}|_{\ti{C}_{e1}}$ determines an element 
$$[\ti{f}_{e1}]\in \ev_1^{-1}(P_i)\cap \ev_2^{-1}(P_{\bar{i}})\cap\ov\M_{0,2}(\Pn,r)^{\T^m}$$ 
for some $r\!\in\!\Z^+$ such that 
the restrictions of~$\ti{f}_{e1}$ to the irreducible components of~${\ti{C}_{e1}}$
containing the two marked points are not constant.
Denote the set of such maps by~$F_{r;i\bar{i}}$. 
Since $F_{r;i\bar{i}}$ is a union of topological components of $\ov\M_{0,2}(\Pn,r)^{\T^m}$,
it is smooth and so has a well-defined normal bundle,~$NF_{r;i\bar{i}}$.
In addition to the nodes $\ti{C}_{0;c1}\!\cap\!\ti{C}_{0;e1}$
and  $\ti{C}_{0;c1}\!\cap\!\ti{C}_{0;e2}$,
$\ti{C}_{0;c1}$ carries either the marked point~$y$ or the node that separates  
$\ti{C}_{0;c1}$ and~$y$ as well as $B\!\in\!\Z^{\ge0}$ additional nodes.
If $y\!\not\in\!\ti{C}_{0;c1}$, the restriction of~$\ti{f}$ to 
the maximal connected union~$\ti{C}_y$ of 
irreducible components of~$\ti{C}$ 
containing~$y$, but not any of the irreducible components of~$\ti{C}_{0;c1}$, determines an element 
$$[\ti{f}_y]\in\ev_1^{-1}(P_i)\cap\ov\M_{0,2}(\Pn,r^*)^{\T^m},$$ 
for some $r^*\!\in\!\Z^+$, such that the restriction of~$\ti{f}_y$ 
to the irreducible component of~$\ti{C}_y$ 
containing the first marked point is not constant ($y$ corresponds to the second marked point).
Denote the set of such maps by~$F_{r^*;i}$. 
Since $F_{r^*;i}$ is a union of topological components of $\ov\M_{0,2}(\Pn,r^*)^{\T^m}$,
it is smooth and so has a well-defined normal bundle,~$NF_{r^*;i}$.
Finally, for each $b\!=\!4,\ldots,B\!+\!3$,
the restriction of~$\ti{f}$ to~$\ti{C}_b$, the maximal connected union of 
irreducible components of~$\ti{C}$ containing the additional node~$b$, 
but not any of the irreducible components of~$\ti{C}_{0;c1}$,
determines an element 
$$[\ti{f}_b]\in\ev_1^{-1}(P_i)\cap\ov\M_{0,1}(\Pn,r_b)^{\T^m}$$ 
for some $r_b\!\in\!\Z^+$ such that the restriction of~$\ti{f}_b$ 
to the irreducible component of~$\ti{C}_b$ 
containing the marked point is not constant.
Denote the set of such maps by~$F_{r_b;i}'$. 
Since $F_{r_b;i}'$ is a union of topological components of $\ov\M_{0,1}(\Pn,r_b)^{\T^m}$,
it is smooth and has a well-defined normal bundle,~$NF_{r_b;i}'$.

In summary, a $\T^m$-fixed locus $F$ of $\ov\M_{1,1}(\Pn,d)^{\Om}$ contributing to~\e_ref{K2dsum_e}
admits a decomposition
\BE{Fklein_e} F\approx\big(\ov\cM_{0,3+B}\times F_{r;i\bar{i}}\times F_{r^*;i}
\times\prod_{b=4}^{B+3}F_{r_b;i}'\big)/S_B,\EE
where
\begin{itemize}
\item $B\!\in\!\Z^{\ge0}$, $1\!\le\!i\!\le\!2m$, and $\ov\cM_{0,3+B}$
is the moduli space of stable rational curves with $3\!+\!B$ marked points;
\item $r\!\in\!\Z^+$ and 
$F_{r;i\bar{i}}\subset\ev_1^{-1}(P_i)\cap\ev_2^{-1}(P_{\bar{i}})\cap\ov\M_{0,2}(\Pn,r)^{\T^m}$ 
is the subset of maps with non-trivial restrictions to 
the irreducible components of the domain containing the two marked points;
\item $r^*\!\in\!\Z^{\ge0}$ and $F_{r^*;i}\subset\ev_1^{-1}(P_i)\cap\ov\M_{0,2}(\Pn,r^*)^{\T^m}$ 
is the subset of maps  with non-trivial restriction to 
the irreducible component of the domain containing the first marked point if $r^*\!>\!0$,
while $F_{0;i}\!\equiv\!\{pt\}$;
\item $r_b\!\in\!\Z^+$ and $F_{r_b;i}'\subset\ev_1^{-1}(P_i)\cap\ov\M_{0,1}(\Pn,r_b)^{\T^m}$ 
is the subset of maps  with non-trivial restriction to 
the irreducible component of the domain containing the marked point;
\item $S_B$ is the symmetric group on $B$ elements that acts on $\ov\cM_{0,3+B}$ by permuting
the last $B$ marked points and on the $B$-fold product in~\e_ref{Fklein_e} by permuting its factors.
\end{itemize}
This is illustrated in Figure~1, with $B\!=\!2$.

\begin{figure}
\begin{pspicture}(2.3,-2.3)(10,1.8)
\psset{unit=.3cm}
\rput(35,2){\smsize{$F_{r;i\bar{i}}
 \subset\ev_1^{-1}(P_i)\cap\ev_2^{-1}(P_{\bar{i}})\cap\ov\M_{0,2}(\Pn,r)^{\T^m}$}}
\psline[linewidth=.07](22,3.5)(48,3.5)\psline[linewidth=.07](22,0.5)(48,0.5)
\psline[linewidth=.07](22,3.5)(22,0.5)\psline[linewidth=.07](48,3.5)(48,0.5)
\pscircle*(48,.5){.2}\pscircle*(22,0.5){.2}
\psline[linewidth=.07,linestyle=dashed](22,-6.5)(48,-6.5)
\psline[linewidth=.07,linestyle=dashed](22,-3.5)(48,-3.5)
\psline[linewidth=.07,linestyle=dashed](22,-6.5)(22,-3.5)
\psline[linewidth=.07,linestyle=dashed](48,-6.5)(48,-3.5)
\pscircle*(48,-3.5){.2}\pscircle*(22,-3.5){.2}
\pscircle[linewidth=.07](50,-1.5){2.83}
\rput(50.2,-.7){\smsize{$\ov\cM_{0,3+2}$}}\rput(50,-3.4){$P_i$}
\psline[linewidth=.03]{->}(50,-1.5)(50,-2.7)
\pscircle[linewidth=.07,linestyle=dashed](20,-1.5){2.83}
\rput(20.2,-.7){\smsize{$\ov\cM_{0,3+2}$}}\rput(20,-3.4){$P_{\bar{i}}$}
\psline[linewidth=.03]{->}(20,-1.5)(20,-2.6)
\rput{90}(50,3.5){\smsize{$F_{r^*;i}$}}
\psline[linewidth=.07](48.7,1.33)(51.3,1.33)\psline[linewidth=.07](48.7,5.5)(51.3,5.5)
\psline[linewidth=.07](48.7,1.33)(48.7,5.5)\psline[linewidth=.07](51.3,1.33)(51.3,5.5)
\pscircle*(50,1.33){.2}\pscircle*(20,-4.33){.2}
\psline[linewidth=.07,linestyle=dashed](18.7,-4.33)(21.3,-4.33)
\psline[linewidth=.07,linestyle=dashed](18.7,-8.5)(21.3,-8.5)
\psline[linewidth=.07,linestyle=dashed](18.7,-4.33)(18.7,-8.5)
\psline[linewidth=.07,linestyle=dashed](21.3,-4.33)(21.3,-8.5)
\rput{-90}(50,-6.5){\smsize{$F_{r_5;i}'$}}
\psline[linewidth=.07](48.7,-4.33)(51.3,-4.33)\psline[linewidth=.07](48.7,-8.5)(51.3,-8.5)
\psline[linewidth=.07](48.7,-4.33)(48.7,-8.5)\psline[linewidth=.07](51.3,-4.33)(51.3,-8.5)
\pscircle*(50,-4.33){.2}\pscircle*(20,1.33){.2}
\psline[linewidth=.07,linestyle=dashed](18.7,1.33)(21.3,1.33)
\psline[linewidth=.07,linestyle=dashed](18.7,5.5)(21.3,5.5)
\psline[linewidth=.07,linestyle=dashed](18.7,1.33)(18.7,5.5)
\psline[linewidth=.07,linestyle=dashed](21.3,1.33)(21.3,5.5)
\rput(55,-1.5){\smsize{$F_{r_4;i}'$}}
\psline[linewidth=.07](52.83,-.2)(52.83,-2.8)\psline[linewidth=.07](57,-.2)(57,-2.8)
\psline[linewidth=.07](52.83,-.2)(57,-.2)\psline[linewidth=.07](52.83,-2.8)(57,-2.8)
\pscircle*(52.83,-1.5){.2}\pscircle*(17.17,-1.5){.2}
\psline[linewidth=.07,linestyle=dashed](17.17,-.2)(17.17,-2.8)
\psline[linewidth=.07,linestyle=dashed](13,-.2)(13,-2.8)
\psline[linewidth=.07,linestyle=dashed](17.17,-.2)(13,-.2)
\psline[linewidth=.07,linestyle=dashed](17.17,-2.8)(13,-2.8)
\end{pspicture}
\caption{$\T^m$-fixed locus contributing to $\ti{K}_{2(r+r^*+r_4+r_5)}$;
the dotted portion is the reflection of the solid portion under the composition
with~$\Om$ and~$\tau$}
\label{klein_fig}
\end{figure}

The composition of maps with $\tau$ on the right and $\Om$ on the left determines
an automorphism
$$\ov\M_{0,2}(\Pn,r)\lra\ov\M_{0,2}(\Pn,r).$$
Let $\bar\psi_i\in H^*(\ov\M_{0,2}(\Pn,r))$ denote the pull-back 
of the usual $i$-th $\psi$-class by this automorphism.
With respect to the decomposition~\e_ref{Fklein_e}, the normal bundle of the fixed locus
$F\subset\ov\M_{1,1}(\Pn,d)^{\Om}$ is described~by
\begin{equation*}\begin{split}
\frac{\E(NF)}{\E(T_{P_i}\Pn)}&=
\frac{\E(NF_{r;i\bar{i}})(\hb_1\!-\!\psi_1)(\hb_2\!-\!\bar\psi_2)}{\E(T_{P_i}\Pn)\E(T_{P_{\bar{i}}}\Pn)}
\cdot \frac{\E(NF_{r^*;i})(\hb_3\!-\!\psi_3)}{\E(T_{P_i}\Pn)}
\cdot \prod_{b=4}^{B+3}\frac{\E(NF_{r_b;i}')(\hb_b\!-\!\psi_b)}{\E(T_{P_i}\Pn)}\\
&=\frac{\E(NF_{r;i\bar{i}})(\hb_1\!-\!\psi_1)(\hb_2\!-\!\bar\psi_2)}
{\phi_i|_{P_i}\,\phi_{\bar{i}}|_{P_{\bar{i}}}}
\cdot \frac{\E(NF_{r^*;i})(\hb_3\!-\!\psi_3)}{\phi_i|_{P_i}}
\cdot \prod_{b=4}^{B+3}\frac{\E(NF_{r_b;i}')(\hb_b\!-\!\psi_b)}{\phi_i|_{P_i}}\,,
\end{split}\end{equation*}
where $\hb_b\!\equiv\!c_1(L_b)\in H^*(\ov\cM_{0,3+B})$ is the first Chern class 
of the universal tangent line bundle at the $b$-th marked point, 
$\psi_b\in  H^*\big(\ov\M_{0,1}(\Pn,r_b)\big)$
is the usual $\psi$-class for $b\!\ge\!4$, and $\psi_3$ denotes the pull-back of 
$\psi_1\in H^*(\ov\M_{0,2}(\Pn,r^*))$ by the third component projection map
in~\e_ref{Fklein_e}, with the convention~that 
$$\frac{\E(NF_{r^*;i})(\hb_3\!-\!\psi_3)}{\phi_i|_{P_i}} \equiv 1
\qquad\hbox{if}\quad r^*=0.$$
Similarly, the restriction of the numerator in~\e_ref{K2dsum_e} splits~as
\begin{equation*}\begin{split}
\frac{(\ev_1^*x)\E(\V_{1,1,2d}^{\Om})}{\E(\cL)|_{P_i}}\bigg|_F
&= \frac{\E(\V_{0,r})}{\E(\cL)|_{P_i}\E(\cL)|_{P_{\bar{i}}}}
\cdot\frac{(\ev_2^*x)\E(\V_{0,r^*})}{\E(\cL)|_{P_i}}
\cdot\prod_{b=4}^{B+3}\frac{\E(\V_{0,r_b})}{\E(\cL)|_{P_i}}\\
&=\frac{\E(\V_{0,r})}{\lr\a^2 \al_i^l\al_{\bar{i}}^l}
\cdot \frac{\E(\V_{0,r^*}'')(\ev_2^*x^{l+1})}{\al_i^l}
\cdot  \prod_{b=4}^{B+3}\E(\V_{0,r_b}')\,,
\end{split}\end{equation*}
where $\V_{0,r^*}'',\V_{0,r_b}'$ are as in Section~\ref{mainthm_sec} if $r\!>\!0$ and
$$\frac{\E(\V_{0,r^*}'')(\ev_2^*x^{l+1})}{\al_i^l} \equiv \al_i \qquad\hbox{if}\quad  r^*=0.$$
Putting this all together, we conclude that 
\BE{FKleinint_e}\begin{split}
&\int_F\frac{\E(\V_{1,1,2d}^{\Om})(\ev_1^*x)|_F}{\E(NF)}
=\frac{(-1)^{l+1}}{\lr\a\al_i^{2l}\!\!\prod\limits_{k\neq i}\!(\al_i\!-\!\al_k)}
\sum_{\begin{subarray}{c}p_1+\ldots+p_{B+3}=B\\ p_1,\ldots,p_{B+3}\ge0 \end{subarray}}\\
&\hspace{.5in}
\left(\!\!\bigg(\!\frac{\psi_1^{-(p_1+1)}\bar\psi_2^{-(p_2+1)}}{p_1!p_2!}\!\!\!\int_{F_{r;i\bar{i}}}
\!\!\!\!\!\!\frac{\E(\V_{0,r})(\ev_1^*\phi_i)(\ev_2^*\phi_{\bar{i}})}{\E(NF_{b;i\bar{i}})} \bigg)
\right.\\
&\hspace{.7in}
\left. \times\!
\bigg(\!\frac{\psi_3^{-(p_3+1)}}{p_3!}\!\!\!\int_{F_{r^*;i}}
\!\!\!\!\!\!\frac{\E(\V_{0,r^*}'')(\ev_1^*\phi_i)(\ev_2^*x^{l+1})}{\E(NF_{r^*;i})} \bigg)
\prod_{b=4}^{B+3}\!\bigg(\!\frac{\psi_b^{-(p_b+1)}}{p_b!}\!\!\!\int_{F_{r_b;i}'}
\!\!\!\!\!\!\frac{\E(\V_{0,r_b}')(\ev_1^*\phi_i)}{\E(NF_{r_b;i}')}\bigg)\!\!\right)\!,
\end{split}\EE
with
\BE{cZ0_e} \bigg(\!\frac{\psi_3^{-(p_3+1)}}{p_3!}\!\!\!\int_{F_{r^*;i}}
\!\!\!\!\!\!\frac{\E(\V_{0,r^*}'')(\ev_1^*\phi_i)(\ev_2^*x^{l+1})}{\E(NF_{b;i})} \bigg)
\equiv -\de_{p_3,0}\al_i^{l+1} \qquad\hbox{if}\quad r^*=0.\EE
Re-writing the right-hand side of \e_ref{FKleinint_e} explicitly in terms of 
the vertices and edges of the graph encoding~$F$ gives the one-marked version of the long formula
in \cite[Section~3.4]{W1}.

We will instead use~\e_ref{FKleinint_e} to re-write~\e_ref{K2dsum_e} 
in terms of the generating functions~$\cZ$ of \e_ref{Z_dfn} and~\e_ref{equivZ_e2}
and~$\cZ_p$ of~\e_ref{equivZ_e}.
If $1\!\le\!i\!\neq\!j\!\le\!n$ and $d\!\in\!\Z^+$, let 
\BE{tifCdfn_e}\ti\fC_i^j(d)=\frac{\prod\limits_{k=1}^l    
          \prod\limits_{r=1}^{a_kd}\left(a_k\al_i+r\,\frac{\al_j-\al_i}{d}\right)}
{d\underset{(r,k)\neq(d,j)}{\prod\limits_{r=1}^d\prod\limits_{k=1}^n}
            \left(\al_i-\al_k+r\,\frac{\al_j-\al_i}{d}\right)}\in\Q_{\al}.\EE
This is undefined if $n$ is odd, $d$ is even, and $j\!=\!\bar{i}$.
In such a case, we cancel the $(k,r)\!=\!(n,d/2)$ factor in the denominator 
with the $(k,r)\!=\!(1,a_1d/2)$ factor in the numerator to define
\BE{tifCdfn_e2}\ti\fC_i^{\bar{i}}(d)=\de_{l,1}\frac{n   
          \prod\limits_{r=1}^{nd/2-1}\!\!\left(r\,\frac{\al_i}{d/2}\right)
          \cdot \prod\limits_{r=1}^{nd/2}\!\!\left(r\,\frac{\al_{\bar{i}}}{d/2}\right) }
{d\prod\limits_{r=1}^{d/2-1}\prod\limits_{k=1}^n\!\!\left(\frac{2r}{d}\al_i-\al_k\right)
\cdot\prod\limits_{k=1}^{2m}(-\al_k)\cdot
\underset{(r,k)\neq(d/2,\bar{i})}{\prod\limits_{r=1}^{d/2}\prod\limits_{k=1}^n}
           \!\!\!\!\!\left(\frac{2r}{d}\al_{\bar{i}}\!-\!\al_k\right)}.\EE
With $\ev_1:\ov\M_{0,1}(\Pn,d)\lra\Pn$ as before, let 
$$\cZ'(x,\hb,Q) \equiv
\hb+\sum_{d=1}^{\i}\!Q^d\ev_{1*}\left[\frac{\E(\V_{\a}')}{\hb\!-\!\psi_1}\right]
\in \left(H^*_{\T}(\Pn)\right)\big[\big[\hb^{-1},Q\big]\big].$$
By the string relation \cite[Section~26.3]{MirSym},
\BE{cZprvscZ_e}\cZ'(x,\hb,Q)=\hb\cZ(x,\hb,Q)\EE
and thus is described by~\e_ref{Z1ms_e}.
In particular,
\BE{cZprRS_e}\Rs{\hb=\frac{\al_j-\al_i}{d}}\big\{\hb^{-(p+1)}\cZ'(\al_i,\hb,Q)\big\}
=\bigg(\frac{\al_j-\al_i}{d}\bigg)^{-(p+1)}\ti\fC_i^j(d)Q^d\cZ'(\al_j,(\al_j\!-\!\al_i)/d,Q),\EE
if $1\!\le\!i\!\neq\!j\!\le\!n$, $d\!\in\!\Z^+$, and $p\!\in\!\Z$.

If $n$ is even, the restriction of an element of~$F_{r_b;i}'$ to the component 
of its domain containing the marked point is the degree~$d_b$ cover of the line~$\ell_{ij_b}$
branched over only~$P_i$ and~$P_{j_b}$, for some $d_b\!\in\!\Z^+$ and $j_b\!\in\![n]\!-\!i$.
Similarly to \cite[Section~30.1]{MirSym} and \cite[Section~2.2]{bcov1},
\BE{sidesum_e}\begin{split}
&\sum_{\begin{subarray}{c}F_{r_b;i}'\\ d_b=d,j_b=j\end{subarray}}
\!\!\!\!\!\!Q^{r_b}\bigg(\!\psi_b^{-(p+1)}\!\!\!\int_{F_{r_b;i}'}
\!\!\!\!\!\!\frac{\E(\V_{0,r_b}')(\ev_1^*\phi_i)}{\E(NF_{r_b;i}')}\bigg)\\
&\hspace{1.5in}
=Q^d\bigg(\frac{\al_j\!-\!\al_i}{d}\bigg)^{-(p+1)}
\ti\fC_i^j(d)\cZ'(\al_j,(\al_j\!-\!\al_i)/d,Q)
\end{split}\EE
for all $j\!\in\![n]\!-\!i$ and $d\!\in\!\Z^+$.
If $n$ is odd, but $j\!\neq\!\bar{i}$ or $d$ is odd, 
\e_ref{sidesum_e} holds by the same reasoning.
If $n$ is odd, $j\!=\!\bar{i}$, and $d$ is even, the first component~$F_{r_b;i}'^{(1)}$ 
of the locus~$F_{r_b;i}'$ consists of
the $d/2$-fold covers of the conics~$\cT_i(a,b)$, with $[a,b]\!\in\!\P^1$, branched
over only~$P_i$ and~$P_{\bar{i}}$.
The contribution of this component to the integral in~\e_ref{sidesum_e} can be computed
by turning the action of the last component of~$\T^n$ back~on (making $\al_n\!\neq\!0$).
This action on~$F_{r_b;i}'^{(1)}$ has two fixed points: the $d$-fold cover of the line $\ell_{i\bar{i}}$
branched over only~$P_i$ and~$P_{\bar{i}}$ and  
the $d/2$-fold cover of $\ell_{in}\!\cup\!\ell_{n\bar{i}}$,
whose restriction to each of the two components of the domain is branched over only~$P_n$ 
and either~$P_i$ or~$P_{\bar{i}}$.
The contribution of the former to the integral in~\e_ref{sidesum_e} 
vanishes.\footnote{As in \cite[Section~30.1]{MirSym} and \cite[Section~2.2]{bcov1},
the Euler class of the restriction of the bundle~$\V_{\a}'$ to 
this $d$-fold cover is given by the numerator in~\e_ref{tifCdfn_e}, but
its $(k,r)\!=\!(1,a_1d/2)$ factor vanishes in this case.}
The contribution of the latter is given by the right-hand side of~\e_ref{sidesum_e} with  
$\fC^j_i(d)\!=\!\fC^{\bar{i}}_i(d)$ given by~\e_ref{tifCdfn_e2}.\footnote{By 
\cite[Section~27.4]{MirSym}, the normal bundle 
to~$F_{r_b;i}'^{(1)}$ contains the line bundle of smoothings
of the node of the domain of the cover of  $\ell_{in}\!\cup\!\ell_{n\bar{i}}$;
its first Chern class is 
$$\frac{\al_n\!-\!\al_i}{d/2}+\frac{\al_n\!-\!\al_{\bar{i}}}{d/2}=\frac{4\al_n}{d}\,.$$
By \cite[Section~27.2]{MirSym}, the restriction of~$\V_{\a}$ contains line bundles
with first Chern classes $a_k\al_n$ with $k\!\in\![n]$.
Thus, the only factor of~$\al_n$ in the denominator can be canceled out with a factor in
the numerator, before $\al_n$ is set to~0.}
Thus, \e_ref{sidesum_e} holds in all cases.

By \e_ref{sidesum_e}, \e_ref{cZprRS_e}, and~\e_ref{cZprvscZ_e}, 
\BE{sidesum_e2}\sum_{F_{r_b;i}'}Q^{r_b}\bigg(\!\psi_b^{-(p+1)}\!\!\!\int_{F_{r_b;i}'}
\!\!\!\!\!\!\frac{\E(\V_{0,r_b}')(\ev_1^*\phi_i)}{\E(NF_{r_b;i}')}\bigg)
=\sum_{d=1}^{\i}\sum_{j\neq i}
\Rs{\hb=\frac{\al_j-\al_i}{d}}\big\{\hb^{-p}\cZ(\al_i,\hb,Q)\big\},\EE
where the first sum is taken over all possibilities for 
$F_{r_b;i}'\subset\ov\M_{0,1}(\Pn,r_b)^{\T^m}$ as described above.
By~\e_ref{Z1ms_e}, $\cZ(\al_i,\hb,Q)$ is a power series in~$Q$ with coefficients
in~$\Q_{\al}(\hb)$ that are regular away from $\hb\!=\!(\al_j\!-\!\al_i)/d$ with $i,j\!\in\![n]$
and $d\!\in\!\Z^+$.
Furthermore, by \e_ref{Z_e},
$$\Rs{\hb=\i}\big\{\hb^{-p}\cZ(\al_i,\hb,Q)\big\}=-\de_{p,1}\,
\qquad\forall\,p\in\Z^{\ge0}.$$
Thus, by~\e_ref{sidesum_e2} and the Residue Theorem on $S^2$,
\BE{sidesum_e3}\sum_{F_{r_b;i}'}Q^{r_b}\bigg(\!\psi_b^{-(p+1)}\!\!\!\int_{F_{r_b;i}'}
\!\!\!\!\!\!\frac{\E(\V_{0,r_b}')(\ev_1^*\phi_i)}{\E(NF_{r_b;i}')}\bigg)
=-\Rs{\hb=0}\big\{\hb^{-p}\cZ^*(\al_i,\hb,Q)\big\} \qquad\forall\,p\in\Z^{\ge0},\EE
where 
$$\cZ^*(x,\hb,Q)\equiv \cZ(x,\hb,Q)-1\in Q\cdot\Q_{\al}(\hb)\big[\big[Q\big]\big].$$

By the same reasoning, 
\BE{sidesum_e4}
\sum_{F_{r^*;i}}Q^{r^*}
\bigg(\!\psi_3^{-p}\!\!\!\int_{F_{r^*;i}}
\!\!\!\!\!\!\frac{\E(\V_{0,r^*}'')(\ev_1^*\phi_i)(\ev_2^*x^{l+1})}{\E(NF_{r^*;i})} \bigg)
=-\Rs{\hb=0,\i}\big\{\hb^{-p}\cZ_1(\al_i,\hb,Q)\big\} \qquad\forall\,p\in\Z^+.\EE
The analogue of \e_ref{sidesum_e} holds with $\ti\fC_i^j(d)$
replaced by the constant $\fC_i^j(d)$ defined in~\e_ref{Cdfn} 
and its reduction similarly to~\e_ref{tifCdfn_e2}, while
$$\Rs{\hb=\i}\big\{\hb^{-p}\cZ_1(\al_i,\hb,Q)\big\}=-\de_{p,1}\al_i^{l+1}\,
\qquad\forall\,p\in\Z^+.$$
This extra correction (which appears with the opposite sign) 
is off-set by the $r^*\!=\!0$ case; see \e_ref{cZ0_e}.
Finally,
\BE{sidesum_e5}\begin{split}
&\sum_{F_{r;i\bar{i}}}Q^r
\bigg(\!\psi_1^{-p_1}\bar\psi_2^{-p_2}\!\!\!\int_{F_{r;i\bar{i}}}
\!\!\!\!\!\!\frac{\E(\V_{0,r})(\ev_1^*\phi_i)(\ev_2^*\phi_{\bar{i}})}{\E(NF_{b;i\bar{i}})} \bigg)\\
&\hspace{1in}
=(-1)^{p_2}\sum_{F_{r;i\bar{i}}}Q^r
\bigg(\!\psi_1^{-p_1}\psi_2^{-p_2}\!\!\!\int_{F_{r;i\bar{i}}}
\!\!\!\!\!\!\frac{\E(\V_{0,r})(\ev_1^*\phi_i)(\ev_2^*\phi_{\bar{i}})}{\E(NF_{b;i\bar{i}})}\bigg)\\
&\hspace{1in}
=(-1)^{p_2}\Rs{\hb_1=0}\Big\{\Rs{\hb_2=0}\big\{
\hb_1^{-p_1}\hb_2^{-p_2}\cZ(\al_i,\al_{\bar{i}},\hb_1,\hb_2,Q)\big\}\Big\}
\qquad\forall\,p_1,p_2\!\in\!\Z^+.
\end{split}\EE
The analogue of \e_ref{sidesum_e} now holds for $\hb_1$ and $\hb_2$ separately,
with  $\ti\fC_i^j(d)$ replaced by~$\fC_i^j(d)$, as above.
The coefficient of $Q^0$ in~\e_ref{Z_dfn}
has no effect on the residue in this case.

Combining \e_ref{K2dsum_e}, \e_ref{FKleinint_e}, and \e_ref{sidesum_e3}-\e_ref{sidesum_e5}, 
we obtain the following 
re-formulation of the combinatorial definition of the Klein bottle invariants
in~\cite{W1}.

\begin{dfn}\label{K2d_dfn}
The degree $2d$ one-point Klein bottle invariant $\ti{K}_{2d}$ of $(X_{\a},\Om)$ is given by
\BE{K2ddfn_e}\begin{split}
&\sum_{d=1}^{\i}Q^d \ti{K}_{2d}= \frac{(-1)^{l+1}}{\lr{\a}}
\sum_{i=1}^{2m}
\frac{1}{\al_i^{2l}\!\!\prod\limits_{k\neq i}\!(\al_i\!-\!\al_k)}
\sum_{B=0}^{\i}
\sum_{\begin{subarray}{c}p_1+\ldots+p_{B+3}=B\\ p_1,\ldots,p_{B+3}\ge0 \end{subarray}}\\
&\hspace{1.2in}\left( 
\Rs{\hb_1=0}\bigg\{\Rs{\hb_2=0}\bigg\{
\frac{(-1)^{p_1}}{p_1!p_2!\,\hb_1^{p_1+1}\hb_2^{p_2+1}}
\cZ(\al_i,\al_{\bar{i}},\hb_1,\hb_2,Q)\bigg\}\bigg\}
\right.\\
&\hspace{1.5in}\left.
\times\Rs{\hb=0}\bigg\{\frac{(-1)^{p_3}}{p_3!\,\hb^{p_3+1}}\cZ_1(\al_i,\hb,Q)\bigg\}
\prod_{b=4}^{B+3}
\Rs{\hb=0}\bigg\{\frac{(-1)^{p_b}}{p_b!\,\hb^{p_b}}\cZ^*(\al_i,\hb,Q)\bigg\}\right).
\end{split}\EE
\end{dfn}

\subsection{Some preliminaries}
\label{kleinprelim_subs}

The last factor in~\e_ref{K2ddfn_e} can be readily summed~up
over all possibilities for $(p_4,p_5,\ldots)$ if the power series
$\cZ(\al_i,\hb,Q)\in\Q_{\al}(\hb)[[Q]]$ admits
an expansion of a certain form; see Lemma~\ref{ResSum_lmm} below.
By Lemma~\ref{cYexp_lmm} below and~\e_ref{Z1ms_e}, $\cZ(\al_i,\hb,Q)$ does admit such an expansion;
this lemma also provides the two  coefficients for this expansion that are relevant for our purposes.

For $r\!\!\in\!\Z^{\ge0}$, denote by $\si_r(z)$ the $r$-th elementary symmetric polynomial
in $\{z\!-\!\al_k\}$.
Define
\BE{Ldfn_e}\begin{split}
&L(x,q)\in x+x^n q\cdot\Q[\al_1,\ldots,\al_n,x,\si_{n-1}(x)^{-1}]\big[\big[x^{n-1}q\big]\big],\\
&\qquad\hbox{by}\hspace{.2in}
\si_n\big(L(x,q)\big)-q\,\a^{\a}L(x,q)^n=\si_n(x).
\end{split}\EE

\begin{lmm}[{\cite[Lemma~2.2(ii)]{bcov1}}\footnotemark]
\label{ResSum_lmm}
Let $R\!\supset\!\Q$ be any field. 
If $\F^*\!\in\!R(\hb)[[q]]$ admits an expansion around $\hb\!=\!0$ of the form
$$1+\F^*(\hb,q)= \ne^{\ze(q)/\hb}\sum_{s=0}^{\i}\Psi_s(q)\hb^s\,$$
with $\xi,\Psi_1,\Psi_2,\ldots\in qR[[q]]$ and $\Psi_0\in 1+q R[[q]]$, then
$$\sum_{B=0}^{\i}\sum_{\begin{subarray}{c}p_1+\ldots+p_B=B-p\\ p_1,\ldots,p_B\ge0 \end{subarray}}
\prod_{b=1}^B
\Rs{\hb=0}\bigg\{\frac{(-1)^{p_b}}{p_b!\,\hb^{p_b}}\F^*(\hb,q)\bigg\}
=\frac{\ze(q)^p}{\Psi_0(q)} \qquad\forall\,p\in\Z^{\ge0}\,.$$
\end{lmm}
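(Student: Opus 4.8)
The plan is to collapse the nested, constrained residue sum into a single coefficient of an explicit generating series in an auxiliary variable $t$, and then to evaluate that coefficient by factoring the series. Write $a_p\equiv\Rs{\hb=0}\big\{\tfrac{(-1)^p}{p!\,\hb^p}\F^*(\hb,q)\big\}$ for $p\in\Z^{\ge0}$. Since the constraint $p_1+\dots+p_B=B-p$ is equivalent to $\sum_b(1-p_b)=p$, weighting the $b$-th factor by $t^{1-p_b}$ turns the sum into a geometric series: with $\tilde A(t)\equiv\sum_{p\ge0}a_p\,t^{1-p}$, the left-hand side for a given $p$ is exactly the coefficient of $t^p$ in $\sum_{B\ge0}\tilde A(t)^B=(1-\tilde A(t))^{-1}$. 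As $\F^*\in q\cdot R(\hb)[[q]]$, every $a_p$ is $O(q)$, so $\tilde A$ is $q$-adically small and the geometric series is a well-defined element of $R[[q]]$ with $t$ serving as a Laurent bookkeeping variable; the lemma reduces to the assertion $[t^p](1-\tilde A)^{-1}=\ze^p/\Psi_0$ for all $p\ge0$.

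Next I would put $\tilde A$ in closed form. Pulling the $p$-sum through the linear operation $\Rs{\hb=0}$ and using $\sum_{p\ge0}\tfrac{(-1)^p}{p!}(\hb t)^{-p}=\ne^{-1/(\hb t)}$ gives $\tilde A(t)=\Rs{\hb=0}\{t\,\ne^{-1/(\hb t)}\F^*(\hb,q)\}$. Substituting the hypothesis $1+\F^*=\ne^{\ze/\hb}\sum_{s\ge0}\Psi_s\hb^s$ and reading off the coefficient of $\hb^{-1}$ (the subtracted $1$ produces only a constant), I obtain, writing $\Phi(w)\equiv\sum_{s\ge0}\tfrac{\Psi_s}{(s+1)!}w^{s+1}$ and $v\equiv 1/t$,
\[
1-\tilde A(t)=-\,t\sum_{s\ge0}\frac{\Psi_s}{(s+1)!}\Big(\ze-\tfrac1t\Big)^{s+1}=-\tfrac1v\,\Phi(\ze-v).
\]

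The final step is to factor and extract. Because $\Psi_0\in 1+qR[[q]]$ is invertible, $\Phi(w)=\Psi_0\,w\,\hat U(w)$ for a unique unit power series $\hat U$ with $\hat U(0)=1$, whence
\[
1-\tilde A(t)=\Psi_0\,(1-\ze t)\,\hat U\big(\ze-\tfrac1t\big),\qquad (1-\tilde A)^{-1}=\frac{1}{\Psi_0(1-\ze t)}\cdot\frac{1}{\hat U(\ze-\tfrac1t)}.
\]
The first factor $\tfrac1{\Psi_0}\sum_{p\ge0}(\ze t)^p$ carries only nonnegative powers of $t$, while $\hat U(\ze-\tfrac1t)^{-1}=\sum_{k\ge0}d_k t^{-k}$ carries only nonpositive powers. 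For $p\ge0$ the convolution gives $[t^p](1-\tilde A)^{-1}=\tfrac{\ze^p}{\Psi_0}\sum_{k\ge0}d_k\ze^k$, and the inner sum is $\sum_k d_k t^{-k}$ evaluated at $t^{-1}=\ze$, i.e. $\hat U(\ze-\ze)^{-1}=\hat U(0)^{-1}=1$. Hence $[t^p](1-\tilde A)^{-1}=\ze^p/\Psi_0$, as required. (Equivalently, this coefficient is $\Rs{v=\ze}\{-v^p/\Phi(\ze-v)\}$, the residue at the simple zero $v=\ze$ of $\Phi(\ze-v)$, which again evaluates to $\ze^p/\Psi_0$.)

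The \emph{main obstacle} is the bookkeeping rather than any single computation: one must check that the two expansions live in compatible completions — the geometric factor contributing positive powers of $t$ and $\hat U(\ze-1/t)^{-1}$ contributing nonpositive ones — that only the coefficients for $p\ge0$ are being asserted (the negative ones genuinely differ from $\ze^p/\Psi_0$), and that the collapse $\sum_k d_k\ze^k=1$ via the substitution $t^{-1}=\ze$ is legitimate. All three rest on keeping the $q$-adic filtration explicit, using $\ze,\Psi_{s\ge1}\in qR[[q]]$ and the invertibility of $\Psi_0$; once this is in place the formal residue and coefficient manipulations are routine.
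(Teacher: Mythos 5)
Your argument is correct. Note that the paper does not actually prove this lemma in the text: it is imported from \cite[Lemma~2.2(ii)]{bcov1}, with only a footnote observing that the argument there works over any field $R\supset\Q$. So your proposal supplies a self-contained proof rather than a rival to one printed here; the route you take --- encoding the constraint $\sum_b(1-p_b)=p$ with a bookkeeping variable $t$, summing the geometric series, and computing
$1-\tilde A(t)=-t\sum_{s\ge0}\tfrac{\Psi_s}{(s+1)!}\big(\ze-\tfrac1t\big)^{s+1}=\Psi_0(q)\,(1-\ze t)\,\hat U\big(\ze-\tfrac1t\big)$
--- is essentially the generating-function computation underlying the cited result. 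The individual steps all check out: the residue of the subtracted $1$ contributes exactly the constant needed to form $1-\tilde A$; the factorization $\Phi(w)=\Psi_0\,w\,\hat U(w)$ with $\hat U(0)=1$ is precisely what forces $\sum_k d_k\ze^k=\hat U(0)^{-1}=1$; and the split into a factor with only nonnegative powers of $t$ and one with only nonpositive powers is what isolates $\ze^p/\Psi_0$ for $p\ge0$ (and correctly fails for $p<0$). The convergence issues you flag are genuine but routine: since $\ze,\Psi_1,\Psi_2,\ldots\in qR[[q]]$ and each $a_p\in qR[[q]]$, every series involved lies in the ring of formal sums $\sum_m f_m(q)t^m$ with $f_m\in R[[q]]$ and $\mathrm{ord}_q f_m\ge m$ for $m>0$, where products are $q$-adically convergent coefficientwise and inverses are unique, so the two expressions for $(1-\tilde A)^{-1}$ must coincide.
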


\footnotetext{\cite[Lemma~2.2(ii)]{bcov1} is stated only for $R\!=\!\Q_{\al}$,
but the argument applies to any field containing~$\Q$.}

\begin{lmm}\label{cY0_lmm}
The power series 
$$\wt\cY_{-l}(x,\hb,q)
\equiv
\sum_{d=0}^{\i}q^d\frac{\prod\limits_{k=1}^l\prod\limits_{r=0}^{a_kd-1}(a_kx+r\hb)}
{\prod\limits_{r=1}^d\big(\si_n(x\!+\!r\hb)-\si_n(x)\big)}
\in\Q_{\al}(x,\hb)\big[\big[q\big]\big]$$
admits an expansion around $\hb\!=\!0$ of the form
$$\wt\cY_{-l}(x,\hb,q)=\ne^{\xi(x,q)/\hb}\sum_{s=0}^{\i}\Phi_{-l;s}(x,q)\hb^s\,$$
with $\xi,\Phi_{-l;1},\Phi_{-l;2},\ldots\in q\cdot\Q_{\al}(x)[[q]]$
and $\Phi_{-l;0}\in 1+q\cdot\Q_{\al}(x)[[q]]$.
\end{lmm}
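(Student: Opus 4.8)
The plan is to show that $\wt\cY_{-l}$ satisfies a linear differential equation in $q$ whose characteristic (eikonal) equation is precisely the defining equation \eqref{Ldfn_e} of $L(x,q)$, and then to extract the asserted expansion by a WKB / stationary-phase analysis. First I would derive the ODE. Writing $y_d=y_d(x,\hb)$ for the coefficient of $q^d$ in $\wt\cY_{-l}$, the ratio $y_d/y_{d-1}$ equals $\prod_{k=1}^l\prod_{r=a_k(d-1)}^{a_kd-1}(a_kx+r\hb)$ divided by $\si_n(x+d\hb)-\si_n(x)$. Introducing $\nD=q\frac{\nd}{\nd q}$ and $\nD_{x,\hb}=x+\hb\nD$, which acts on $q^dy_d$ as multiplication by $x+d\hb$, the denominator factor is produced by $\si_n(\nD_{x,\hb})-\si_n(x)$ and the numerator factor by $\prod_{k=1}^l\prod_{r=0}^{a_k-1}(a_k\nD_{x,\hb}+r\hb)$ acting on $q^{d-1}y_{d-1}$ (the shift of the $r$-index matches because $a_k\nD_{x,\hb}$ evaluates to $a_k(x+(d-1)\hb)$ there). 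Summing the term recursion over $d\ge1$ and using $y_0=1$ then yields
\[
\Bigl\{\si_n(\nD_{x,\hb})-\si_n(x)-q\prod_{k=1}^l\prod_{r=0}^{a_k-1}\bigl(a_k\nD_{x,\hb}+r\hb\bigr)\Bigr\}\wt\cY_{-l}(x,\hb,q)=0 ,
\]
an order-$n$ equation of the same flavor as \eqref{alg_e}.

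Next I would substitute the ansatz $\wt\cY_{-l}=\ne^{\xi/\hb}\Phi$ with $\Phi=\sum_{s\ge0}\Phi_{-l;s}(x,q)\hb^s$. Conjugation by $\ne^{\xi/\hb}$ replaces $\nD_{x,\hb}$ by $x+\eta+\hb\nD$, where $\eta=q\frac{\nd\xi}{\nd q}$. Setting $\hb=0$ in the conjugated operator gives the eikonal equation $\si_n(x+\eta)-q\,\a^{\a}(x+\eta)^n=\si_n(x)$ acting on $\Phi_{-l;0}$; since $\Phi_{-l;0}\neq0$ is needed, this forces $x+\eta=L(x,q)$, i.e.\ $\eta=L(x,q)-x$. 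By \eqref{Ldfn_e} this lies in $q\cdot\Q_{\al}(x)[[q]]$, so $\xi\in q\cdot\Q_{\al}(x)[[q]]$ is determined. The coefficients of the higher powers of $\hb$ give the transport equations: each is a first-order linear ODE in $q$ for $\Phi_{-l;s}$ in terms of $\Phi_{-l;0},\dots,\Phi_{-l;s-1}$, whose leading coefficient is the nonvanishing quantity $\si_n'(L)-nq\,\a^{\a}L^{n-1}$ (the $\eta$-derivative of the eikonal at its simple root $L=x+O(q)$). Solving recursively, with the normalization $\Phi_{-l;0}\equiv1$ and $\Phi_{-l;s}\equiv0$ at $q=0$, produces a formal solution of the ODE of exactly the required shape, with $\Phi_{-l;0}\in1+q\cdot\Q_{\al}(x)[[q]]$ and $\Phi_{-l;s}\in q\cdot\Q_{\al}(x)[[q]]$ for $s\ge1$.

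The main obstacle is the final step: identifying this WKB solution with $\wt\cY_{-l}$ itself. Here I would invoke uniqueness. The coefficient of $q^d$ in $\wt\cY_{-l}$ has a pole of order at most $d$ at $\hb=0$, with top pole coefficient $L_1^d/d!$, where $L_1=\a^{\a}x^n/\si_n'(x)$; this matches the $q^d\hb^{-d}$ term of $\ne^{\xi/\hb}$, so that in $\ne^{-\xi/\hb}\wt\cY_{-l}$ the extreme negative $\hb$-power already cancels, since $\sum_{d_1+d_2=d}\frac{L_1^{d_1}}{d_1!}\frac{(-L_1)^{d_2}}{d_2!}=0$ for $d\ge1$. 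Order-by-order invertibility of the transport operator $\si_n'(L)-nq\,\a^{\a}L^{n-1}$ shows that the ODE has a unique solution in $\ne^{\xi/\hb}\cdot\Q_{\al}(x)[[\hb]][[q]]$ normalized to equal $1$ at $q=0$; as $\wt\cY_{-l}$ satisfies the ODE, equals $1$ at $q=0$, and has the bounded-pole structure above, it must coincide with the constructed solution. This is the analogue for $\wt\cY_{-l}$ of the WKB arguments in \cite{bcov1} and \cite{ZaZ}, and I would organize the uniqueness exactly as there; the lemma then feeds directly into Lemma~\ref{ResSum_lmm}.
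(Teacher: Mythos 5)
Your route is genuinely different from the paper's: you construct a formal WKB solution of the $q$-ODE and then try to identify it with $\wt\cY_{-l}$, whereas the paper argues by contradiction on the logarithm --- it writes $\ln\wt\cY_{-l}=\sum_{d,s}C_{d,s}(x)\hb^sq^d$, takes $d^*$ minimal with some $C_{d^*,s^*}\neq0$, $s^*\le-2$, and extracts $C_{d^*,s^*}=0$ from the coefficient of $q^{d^*}\hb^{s^*+1}$ in the same differential equation you derive. Your derivation of the ODE, of the eikonal equation forcing $x+\nD\xi=L(x,q)$, and of the transport recursion (with leading coefficient $\si_{n-1}(L)-nq\,\a^{\a}L^{n-1}$, nonzero at $q=0$) are all correct, and your version even buys more: it identifies $\xi$ in terms of $L$, which the paper defers to Lemma~\ref{cYexp_lmm}. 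However, the final identification step has a genuine gap. Uniqueness of the normalized solution \emph{inside} $\ne^{\xi/\hb}\cdot\Q_{\al}(x)[[\hb]][[q]]$ is of no use unless you already know that $\wt\cY_{-l}$ lies in that space --- which is exactly the statement being proved. The ``bounded-pole structure'' you verify (pole of order at most $d$ at $\hb=0$ in the $q^d$-coefficient, with the extreme coefficient $L_1^d/d!$ cancelling against $\ne^{-\xi/\hb}$) only kills the single power $\hb^{-d}$ and says nothing about $\hb^{-d+1},\dots,\hb^{-1}$; moreover the space of series with poles of order at most $d$ in the $q^d$-coefficient is strictly larger than $\ne^{\xi/\hb}\Q_{\al}(x)[[\hb]][[q]]$ and is stable under multiplication by $\ne^{\pm\xi/\hb}$, so no contradiction is available from that information. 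As written, the argument is circular.

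The repair is easy and makes the pole discussion unnecessary. Run the uniqueness in the larger ring $R[[q]]$ with $R=\Q_{\al}(x)((\hb))$ the field of Laurent series at $\hb=0$: the ODE gives $\big(\si_n(x\!+\!d\hb)-\si_n(x)\big)\coeff{Z}_{q;d}=(\cdots)\coeff{Z}_{q;d-1}$, and $\si_n(x\!+\!d\hb)-\si_n(x)=d\hb\,\si_{n-1}(x)+O(\hb^2)$ is invertible in $R$, so a solution with $Z|_{q=0}=1$ is unique in $R[[q]]$. The $q^d$-coefficient of your WKB series lies in $\hb^{-d}\Q_{\al}(x)[[\hb]]\subset R$, and the Laurent expansion of $\wt\cY_{-l}$ at $\hb=0$ also lies in $R[[q]]$; both satisfy the ODE and equal $1$ at $q=0$, hence coincide. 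With that substitution for your last paragraph, the proof is complete and constitutes a valid alternative to the paper's argument.
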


\begin{proof}
Since $\wt\cY_{-l}\!\in\!1+q\cdot\Q_{\al}(x,\hb)[[q]]$, there is an expansion
\BE{cY0ln_e}
\ln\wt\cY_{-l}(x,\hb,q)=\sum_{d=1}^{\i}\sum_{s=s_{\min}(d)}^{\i}\!\!\!\!\!C_{d,s}(x)\hb^sq^d\,\EE
around $\hb\!=\!0$, with $C_{d,s}(x)\!\in\!\Q_{\al}(x)$;
we can assume that $C_{d,s_{\min}(d)}\!\neq\!0$ if $s_{\min}(d)\!<\!0$.
The claim of Lemma~\ref{cY0_lmm} is equivalent to the statement 
$s_{\min}(d)\!\ge\!-1$ for all $d\!\in\!\Z^+$; in such a case
$$\xi(x,q)=\sum_{d=1}^{\i}C_{d,-1}(x)q^d\,.$$
Suppose instead $s_{\min}(d)\!<\!-1$ for some $d\!\in\!\Z^+$.
Let
\BE{dmin_e} d^*\equiv\min\big\{d\!\in\!\Z^+\!:\,s_{\min}(d)\!<\!-1\big\}\ge1,
\qquad s^*\equiv s_{\min}(d^*)\le-2.\EE
The power series $\wt\cY_{-l}$ satisfies the differential equation
\BE{cY0ode_e}
\Bigg\{\si_n(x\!+\!\hb\nD)
-q\prod_{k=1}^l\prod_{r=0}^{a_k-1}\big(a_kx\!+\!a_k\hb\nD\!+\!r\hb\big)\Bigg\}\wt\cY_{-l}(x,\hb,q)
=\si_n(x)\wt\cY_{-l}(x,\hb,q),\EE
where $\nD\!=\!q\frac{\nd}{\nd q}$ as before.
By \e_ref{cY0ln_e}, \e_ref{dmin_e}, and induction on the number of derivatives taken,
\BE{cY0lnchng_e}\begin{split}
\frac{\left\{\si_n(x\!+\!\hb\nD)\right\}\wt\cY_{-l}(x,\hb,q)}
{\si_n(x)\cdot\wt\cY_{-l}(x,\hb,q)}
&=1+\sum_{k=1}^n\frac{d^*C_{d^*,s^*}}{x\!-\!\al_k}\hb^{s^*+1}q^{d^*}+A(x,\hb,q)\,,\\
q\,\frac{\left\{\prod\limits_{k=1}^l\prod\limits_{r=0}^{a_k-1}
\big(a_kx\!+\!a_k\hb\nD\!+\!r\hb\big)\right\}\wt\cY_{-l}(x,\hb,q)}
{\prod\limits_{k=1}^l\prod\limits_{r=0}^{a_k-1}(a_kx\!+\!r\hb)\cdot\wt\cY_{-l}(x,\hb,q)}
&= B(x,\hb,q)\,,
\end{split}\EE
for some
$$A,B\in q\cdot\Q_{\al}(x,\hb)_0\big[\big[q\big]\big]
+q^{d^*}\hb^{s^*+2}\cdot\Q_{\al}(x,\hb)_0\big[\big[q\big]\big]
+q^{d^*+1}\cdot\Q_{\al}(x,\hb)\big[\big[q\big]\big],$$
where $\Q_{\al}(x,\hb)_0\subset\Q_{\al}(x,\hb)$ is the subring of rational functions
in $\al$, $x$, and $\hb$ that are regular at $\hb\!=\!0$.
Combining \e_ref{cY0ode_e} and \e_ref{cY0lnchng_e}, we conclude that $C_{d^*,s^*}\!=\!0$,
contrary to the assumption.
\end{proof}

\begin{lmm}\label{cYexp_lmm}
The power series 
$$\wt\cY(x,\hb,q)
\equiv
\sum_{d=0}^{\i}q^d\frac{\prod\limits_{k=1}^{l}\prod\limits_{r=1}^{a_kd}(a_kx+r\hb)}
{\prod\limits_{r=1}^{d}\big(\si_n(x\!+\!r\hb)-\si_n(x)\big)}\in\Q_{\al}(x,\hb)\big[\big[q\big]\big]$$
admits an expansion around $\hb\!=\!0$ of the form
\BE{cY0exp_e2}
\wt\cY(x,\hb,q)=\ne^{\xi(x,q)/\hb}\sum_{s=0}^{\i}\Phi_s(x,q)\hb^s\,\EE
with $\xi,\Phi_1,\Phi_2,\ldots\in q\cdot\Q_{\al}(x)[[q]]$
and $\Phi_0\in 1+q\cdot\Q_{\al}(x)[[q]]$ such that 
\begin{gather}
\label{xi_e}
x+\nD\xi(x,q)=L(x,q), \\
\label{Phi0_e}
\Phi_0(x,q)=\left(\frac{x\cdot\si_{n-1}(x)}
{L(x,q)\,\si_{n-1}(L(x,q))-n(\si_n(L(x,q))-\si_n(x))}\right)^{1/2}
\bigg(\frac{L(x,q)}{x}\bigg)^{(l+1)/2}\,.
\end{gather}
\end{lmm}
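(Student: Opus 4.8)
The plan is to reduce the existence of the expansion \e_ref{cY0exp_e2} to Lemma~\ref{cY0_lmm} and then to pin down $\xi$ and $\Phi_0$ by feeding the expansion into the differential equation that $\wt\cY$ satisfies. First I would record the termwise identity $\prod_{r=1}^{a_kd}(a_kx+r\hb)=\tfrac{x+d\hb}{x}\prod_{r=0}^{a_kd-1}(a_kx+r\hb)$, which holds because $a_kx+a_kd\hb=a_k(x+d\hb)$. Since $\wt\cY$ and $\wt\cY_{-l}$ have identical denominators, this yields the operator relation
\[
\wt\cY(x,\hb,q)=\frac{1}{x^l}\bigl(x+\hb\nD\bigr)^l\,\wt\cY_{-l}(x,\hb,q),\qquad \nD=q\tfrac{\nd}{\nd q},
\]
because $x+\hb\nD$ acts on the coefficient of $q^d$ as multiplication by $x+d\hb$. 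A one-line computation shows that $x+\hb\nD$ carries any series $\ne^{\xi/\hb}(\text{regular in }\hb)$ to another of the same shape with the \emph{same} exponential factor, namely $(x+\hb\nD)(\ne^{\xi/\hb}H)=\ne^{\xi/\hb}((x+\nD\xi)H+\hb\nD H)$. Hence the expansion of $\wt\cY_{-l}$ from Lemma~\ref{cY0_lmm} propagates to $\wt\cY=\ne^{\xi/\hb}\sum_{s\ge0}\Phi_s\hb^s$ with the same $\xi$ and with $\Phi_0=x^{-l}(x+\nD\xi)^l\Phi_{-l;0}$; as $\xi\in q\Q_{\al}(x)[[q]]$ and $\Phi_{-l;0}\in1+q\Q_{\al}(x)[[q]]$, this gives $\xi\in q\Q_{\al}(x)[[q]]$ and $\Phi_0\in1+q\Q_{\al}(x)[[q]]$, establishing the stated form.

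Next I would derive the ODE for $\wt\cY$. Comparing consecutive terms of the defining sum gives, for the coefficient $T_d$ of $q^d$, the recursion $(\si_n(x+d\hb)-\si_n(x))T_d=q\prod_{k=1}^l\prod_{j=1}^{a_k}(a_k(x+(d-1)\hb)+j\hb)T_{d-1}$; summing over $d$ and using that $\nD$ multiplies $T_d$ by $d$ converts this into
\[
\bigl(\si_n(x+\hb\nD)-\si_n(x)\bigr)\wt\cY=q\prod_{k=1}^{l}\prod_{j=1}^{a_k}\bigl(a_k(x+\hb\nD)+j\hb\bigr)\wt\cY,
\]
where $\si_n(x+\hb\nD)\equiv\prod_{k=1}^n(x+\hb\nD-\al_k)$ as an operator. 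I would conjugate this by $\ne^{\xi/\hb}$, which replaces every factor $x+\hb\nD$ by $L+\hb\nD$ with $L:=x+\nD\xi$, and then collect powers of $\hb$. At order $\hb^0$ all $\hb\nD$ drop out, so the equation reduces to $(\si_n(L)-\si_n(x))\Phi_0=q\,\a^{\a}L^{n}\Phi_0$ (using $|\a|=n$, so $\prod_k(a_kL)^{a_k}=\a^{\a}L^n$); since $\Phi_0$ is invertible, $L$ solves $\si_n(L)-q\,\a^{\a}L^n=\si_n(x)$, and with $L|_{q=0}=x$ the uniqueness of the power-series solution of \e_ref{Ldfn_e} forces $L=L(x,q)$. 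This is exactly \e_ref{xi_e}.

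Finally, \e_ref{Phi0_e} comes from the order-$\hb^1$ part, which is the main obstacle and requires careful bookkeeping of the non-commuting operators. Expanding the two conjugated operators to first order in $\hb$ (each insertion carries a single $\nD$, so only first derivatives $\nD\Phi_0$ and $\nD L$ appear), the $\Phi_1$-terms cancel by the $\hb^0$ relation and one is left with the first-order linear ODE
\[
\si_{n-1}(L)\nD\Phi_0+\si_{n-2}(L)(\nD L)\Phi_0=q\a^{\a}L^{n-1}\Bigl[n\,\nD\Phi_0+\tfrac{n+l}{2}\Phi_0+\tfrac{n(n-1)}{2L}(\nD L)\Phi_0\Bigr],
\]
where I use $\si_n'(L)=\si_{n-1}(L)$, $\si_{n-1}'(L)=2\si_{n-2}(L)$ for the elementary symmetric functions in $\{L-\al_k\}$, together with $\sum_k(a_k+1)=n+l$. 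Differentiating \e_ref{Ldfn_e} gives $\si_{n-1}(L)-nq\a^{\a}L^{n-1}=D/L$ and $\nD L=L(\si_n(L)-\si_n(x))/D$, where $D:=L\si_{n-1}(L)-n(\si_n(L)-\si_n(x))$. Substituting these, a short manipulation collapses the ODE to the exact form $\nD\log\Phi_0=-\tfrac12\nD\log D+\tfrac{l+1}{2}\nD\log L$, i.e.\ $\Phi_0\,D^{1/2}L^{-(l+1)/2}$ is independent of $q$. Evaluating at $q=0$, where $L=x$, $D=x\si_{n-1}(x)$, and $\Phi_0=1$, identifies the constant and yields precisely \e_ref{Phi0_e}.
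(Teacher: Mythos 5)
Your proof is correct and follows essentially the same route as the paper: propagate the expansion of $\wt\cY_{-l}$ from Lemma~\ref{cY0_lmm} through the operator $x^{-l}(x+\hb\nD)^l$, then extract the $\hb^0$ and $\hb^1$ coefficients of the conjugated differential equation to identify $\xi$ via \e_ref{Ldfn_e} and $\Phi_0$ via the resulting exact first-order ODE. Your order-$\hb^1$ bookkeeping (the coefficients $\si_{n-2}(L)$, $\tfrac{n+l}{2}$, $\tfrac{n(n-1)}{2}$ and the collapse to $\nD\log\bigl(\Phi_0\Psi^{1/2}L^{-(l+1)/2}\bigr)=0$) checks out and simply makes explicit what the paper leaves as a "short manipulation."
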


\begin{proof}
Since the power series $\wt\cY_{-l}$ admits an expansion of the form~\e_ref{cY0exp_e2}
by Lemma~\ref{cY0_lmm} and
\BE{cY0vscYml_e}\wt\cY(x,\hb,q)=\bigg\{1+\frac{\hb}{x}D\bigg\}^l\wt\cY_{-l}(x,\hb,q)\,,\EE
the power series $\wt\cY$ also admits an expansion of the form~\e_ref{cY0exp_e2},
with the same power series~$\xi$.
Let
$$\ov\cY(x,\hb,q)\equiv\ne^{-\xi(x,q)/\hb}\wt\cY(x,\hb,q)
=\sum_{s=0}^{\i}\Phi_s(x,q)\hb^s\,.$$
Since the power series $\wt\cY(x,\hb,q)$ satisfies
$$\Bigg\{\si_n(x\!+\!\hb\nD)
-q\prod_{k=1}^l\prod_{r=1}^{a_k}\big(a_kx\!+\!a_k\hb\nD\!+\!r\hb\big)\Bigg\}\wt\cY(x,\hb,q)
=\si_n(x)\wt\cY(x,\hb,q),$$
the power series $\ov\cY(x,\hb,q)$ satisfies
\BE{ovcY_e}\Bigg\{\si_n(x\!+\!\nD\xi\!+\!\hb\nD)
-q\prod_{k=1}^l\prod_{r=1}^{a_k}\big(a_k(x\!+\!\nD\xi)\!+\!a_k\hb\nD\!+\!r\hb\big)
-\si_n(x)\Bigg\}\ov\cY(x,\hb,q)=0.\EE
Considering the coefficient of $\hb^0$ in this equation, we obtain
$$\big\{\si_n(x\!+\!\nD\xi)-q\a^{\a}(x\!+\!\nD\xi)^n-\si_n(x)\big\}\Phi_0(x,q)=0.$$
Since $\Phi_0(x,0)\!=\!1$, this gives \e_ref{xi_e} by~\e_ref{Ldfn_e}.
Note that 
\BE{Lder_e}\begin{split}
\frac{\nD L(x,q)}{L(x,q)}
&=\frac{\si_n(L(x,q))\!-\!\si_n(x)}{
L(x,q)\si_{n-1}\big(L(x,q)\big)-n\big(\si_n(L(x,q))\!-\!\si_n(x)\big)}\\
&=\frac{q\a^{\a}L(x,q)^n}{L(x,q)\si_{n-1}\big(L(x,q)\big)-n\big(\si_n(L(x,q))\!-\!\si_n(x)\big)}\,
\end{split}\EE
by \e_ref{Ldfn_e}.
Substituting $L$ for $x\!+\!\nD\xi$ in~\e_ref{ovcY_e}, taking the coefficient of~$\hb^1$
of this equation, and using~\e_ref{Ldfn_e} and~\e_ref{Lder_e}, we obtain
\begin{equation*}\begin{split}
\nD\left\{
\left(\frac{x\cdot\si_{n-1}(x)}
{L(x,q)\,\si_{n-1}(L(x,q))-n(\si_n(L(x,q))-\si_n(x))}\right)^{-\frac12}
\bigg(\frac{L(x,q)}{x}\bigg)^{-\frac{l+1}2} \Phi_0\right\}=0.
\end{split}\end{equation*}
Along with $\Phi_0(x,0)\!=\!1$, this gives \e_ref{Phi0_e}.
\end{proof}

\subsection{Proof of Theorem~\ref{klein_thm}}
\label{kleinpf_subs}

With $\xi$ as in Lemma~\ref{cYexp_lmm}, let $\eta(x,q)=\xi(x,q)-J(q)x$.
Since
$$\Rs{\hb=0}\bigg\{\frac{1}{\hb^p}\cY(\al_i,\hb,q)\bigg\}
=\Rs{\hb=0}\bigg\{\frac{1}{\hb^p}\wt\cY(\al_i,\hb,q)\bigg\}
=\bigg(\Rs{\hb=0}\bigg\{\frac{1}{\hb^p}\wt\cY(x,\hb,q)\bigg\}\bigg)\bigg|_{x=\al_i}
\qquad\forall\,i\in[n],$$
by \e_ref{Z1ms_e} and Lemmas~\ref{ResSum_lmm} and~\ref{cYexp_lmm} 
\BE{ResSumCrl_e}
\sum_{B=0}^{\i}
\sum_{\begin{subarray}{c}p_4+\ldots+p_{B+3}=B-p\\ p_4,\ldots,p_{B+3}\ge0 \end{subarray}}\\
\prod_{b=4}^{B+3}
\Rs{\hb=0}\bigg\{\frac{(-1)^{p_b}}{p_b!\,\hb^{p_b}}\cZ^*(x,\hb,Q)\bigg\}\bigg|_{x=\al_i}
=\frac{\eta(x,q)^p}{\Phi_0(x,q)/I_0(q)}\bigg|_{x=\al_i}
 \quad\forall\,p\in\Z^{\ge0}.\EE
By \e_ref{Ldfn_e}, $ L(-x,q)\!=\!-L(x,q)$;
since $\al_{\bar{i}}\!=\!-\al_i$, $\eta(\al_{\bar{i}},q)\!=\!-\eta(\al_i,q)$
by~\e_ref{xi_e}.
Thus, by~\e_ref{K2ddfn_e}  and~\e_ref{ResSumCrl_e},
\BE{K2ddfn_e2}\begin{split}
&\sum_{d=1}^{\i}Q^d\ti{K}_{2d} = \frac{(-1)^{l+1}}{\lr\a}
\sum_{i=1}^{2m}\left( \frac{I_0(q)/\Phi_0(x,q)}{x^{2l}\si_{n-1}(x)}\,
\Rs{\hb=0}\bigg\{\frac{\ne^{-\frac{\eta(x,q)}{\hb}}\cZ_1(x,\hb,Q)}{\hb}\bigg\}\right.\\
&\hspace{1.8in}\times\left.
\Rs{\hb_1=0}\bigg\{\Rs{\hb_2=0}\bigg\{
\frac{\ne^{-\frac{\eta(x,q)}{\hb_1}-\frac{\eta(\bar{x},q)}{\hb_2}}
\cZ(x,\bar{x},\hb_1,\hb_2,Q)}{\hb_1\hb_2}\bigg\}\bigg\}
\right)\Bigg|_{x=\al_i}.
\end{split}\EE

By \e_ref{main_e1}, \e_ref{main_e0}, Lemmas~\ref{cY0_lmm} and~\ref{cYexp_lmm},
and~\e_ref{cY0vscYml_e}, 
\begin{gather}
\label{Res0_e}
\Rs{\hb=0}\bigg\{\frac{\ne^{-\frac{\eta(x,q)}{\hb}}\cZ_p(x,\hb,Q)}{\hb}\bigg\}
=x^l\Phi_0(x,q)L(x,q)^p
\sum_{r=0}^{|p|}\frac{\ctC_{p,p-r}^{(r)}(q)}{L(x,q)^r\!\prod\limits_{s=0}^{p-r}\!I_s(q)}\,,\\
\label{Res1_e}
\begin{split}
\Rs{\hb=0}\bigg\{\frac{\ne^{-\frac{\eta(x,q)}{\hb}}\cZ_p(x,\hb,Q)}{\hb^2}\bigg\}
=x^l\Phi_0(x,q)L(x,q)^p
\left(\sum_{r=2}^{p-1}
\frac{\ctC_{p,p-r-1}^{(r)}(q)}{L(x,q)^{r+1}\!\prod\limits_{s=0}^{p-r-1}\!\!\!I_s(q)}
\right.\qquad\qquad&\\
\left.+\sum_{r=0}^{|p|}\frac{\ctC_{p,p-r}^{(r)}(q)}{L(x,q)^r\!
\prod\limits_{s=0}^{p-r}\!I_s(q)}\bigg(\frac{\Phi_1(x,q)}{\Phi_0(x,q)}+A_{p-r}(x,q)\bigg)
\right)&\,,
\end{split}
\end{gather}
with $\ctC_{p,s}^{(r)}\!\equiv\!\de_{p,s}\de_{r,0}$ for $p\!<\!0$ or $s\!<\!0$ and
$$A_p=\frac{1}{L}\bigg(p\frac{\nD\Phi_0}{\Phi_0}+\frac{p(p\!-\!1)}{2}\frac{\nD L}{L}
-\sum_{s=0}^p(p\!-\!s)\frac{\nD I_s}{I_s}\bigg).$$
In this case, $n\!-\!l\!=\!4$.
Thus, by \e_ref{symmrel_e}, \e_ref{main_e2}, \e_ref{Z_dfn}, and $\si_r\!=\!0$ for $r\!\in\!\Z$ odd,
\BE{Z2sym_e}
\cZ(x,\bar{x},\hb_1,\hb_2,Q)=\frac{\lr\a}{\hb_1\!+\!\hb_2}
\Bigg\{\sum_{\begin{subarray}{c}p_1+p_2+2r=3\\ p_1,p_2\ge0 \end{subarray}}
   -\sum_{\begin{subarray}{c}p_1+p_2+2r=3\\ -l\le p_1,p_2<0 \end{subarray}}
     \Bigg\}\si_{2r}\cZ_{p_1}(x,\hb_1,Q)\cZ_{p_2}(\bar{x},\hb_2,Q).\EE
By the $p\!=\!2$ case of Corollary~\ref{ctCsym_crl} and $\si_1,B^{(1)}_{s,1}=0$, 
\BE{ctCrel_e2} \ctC_{2,0}^{(2)}+\ctC_{3,1}^{(2)}+\si_2=I_0^2I_1\si_2\,.\EE  
Since $L(-x,q)\!=\!-L(x,q)$ by \e_ref{Ldfn_e}, $\Phi_0(-x,q)\!=\!\Phi_0(x,q)$ by \e_ref{Phi0_e}.
By \cite[(4.8)]{Po}, $I_3(q)\!=\!I_1(q)$.
Thus, by \e_ref{Z2sym_e}, \e_ref{Res0_e}, \e_ref{Res1_e}, \e_ref{ctCrel_e2}, and $\ctC^{(0)}_{p,p}\!=\!1$,
\BE{Res2_e}\begin{split}
&\frac{(-1)^l}{\lr\a x^{2l}}
\Rs{\hb_1=0}\bigg\{\Rs{\hb_2=0}\bigg\{
\frac{\ne^{-\frac{\eta(x,q)}{\hb_1}-\frac{\eta(\bar{x},q)}{\hb_2}}
\cZ(x,\bar{x},\hb_1,\hb_2,Q)}{\hb_1\hb_2}\bigg\}\bigg\}
=\Phi_0(x,q)^2\bigg(\frac{\ctC_{3,0}^{(2)}(q)}{I_0(q)^2}-\si_2\frac{\nD I_0(q)}{I_0(q)}\bigg)\\
&\hspace{.5in}+\frac{L(x,q)^2\Phi_0(x,q)^2}{I_0(q)^2I_1(q)^2I_2(q)}
\bigg(2\frac{\nD\Phi_0(x,q)}{\Phi_0(x,q)}+2\frac{\nD L(x,q)}{L(x,q)}
-2\frac{\nD I_0(q)}{I_0(q)}-\frac{\nD I_1(q)}{I_1(q)}-\frac{\nD I_2(q)}{I_2(q)} \bigg)\\
&\hspace{.5in}-\sum_{r=1}^m(r\!-\!2)
\frac{\si_{2r}\Phi_0(x,q)^2}{L(x,q)^{2r-2}}
\bigg(\frac{\nD\Phi_0(x,q)}{\Phi_0(x,q)}-(r\!-\!1)\frac{\nD L(x,q)}{L(x,q)}\bigg).
\end{split}\EE
We note that the identities \e_ref{Res0_e}-\e_ref{Res2_e} above hold in 
$H^*_{\T^m}(\Pn)$, or equivalently with $x\!=\!\al_i$ for each $i\!\in\![n]$
and $\al_i$ as in~\e_ref{sp_weights_e}.

Let
\begin{equation*}\begin{split}
\Psi(x,q)&=L(x,q)\si_{n-1}\big(L(x,q)\big)-n\big(\si_n(L(x,q))\!-\!\si_n(x)\big),\\
\dot\Psi(x,q)&\equiv L(x,q)\frac{\nd \Psi}{\nd L}(x,q)
=2L(x,q)^2\si_{n-2}\big(L(x,q)\big)-(n\!-\!1)L(x,q)\si_{n-1}\big(L(x,q)\big).
\end{split}\end{equation*}
By \cite[(4.8),(4.9)]{Po},
$$I_0(q)^2I_1(q)^2I_2(q)=\big(1-\a^{\a}q\big)^{-1}.$$
Thus, by \e_ref{K2ddfn_e2}, \e_ref{Res2_e}, the $p\!=\!1$ case of~\e_ref{Res0_e}, 
and~\e_ref{Phi0_e}, 
\BE{K2ddfn_e3}\begin{split}
\sum_{d=1}^{\i}Q^d\ti{K}_{2d} &=\frac{1}{I_1(q)}\sum_{i=1}^{2m}
\left\{\frac{L(x,q)^n}{\Psi(x,q)}\left[
\frac12\sum_{r=0}^m\frac{(r\!-\!2)\si_{2r}}{L(x,q)^{2r}}
\bigg(n\!-\!1\!-\!2r-\frac{\dot\Psi(x,q)}{\Psi(x,q)}\bigg)\frac{\nD L(x,q)}{L(x,q)}
\right.\right.\\
&\hspace{.7in}+\a^{\a}q\bigg(n\!-\!1\!-\!\frac{\dot\Psi(x,q)}{\Psi(x,q)}\bigg)\frac{\nD L(x,q)}{L(x,q)}
+\bigg(\a^{\a}q-(1\!-\!\a^{\a}q) \frac{\nD I_1(q)}{I_1(q)}\bigg)\\
&\hspace{.7in}\left.\left.
-\frac1{L(x,q)^2}
\bigg(\frac{\ctC_{3,0}^{(2)}(q)}{I_0(q)^2}- \si_2\frac{\nD I_0(q)}{I_0(q)}\bigg)
\right]\right\}\Bigg|_{x=\al_i}\,.
\end{split}\EE
By \e_ref{Ldfn_e} and \e_ref{Lder_e}, 
$$\frac{L(x,q)^n}{\Psi(x,q)}\sum_{r=0}^m\frac{(r\!-\!2)\si_{2r}}{L(x,q)^{2r}}
=-\frac12-2\frac{\nD L(x,q)}{L(x,q)}+\frac{n\!-\!4}{2}\cdot\frac{\si_n(x)}{\Psi(x,q)}\,.$$
This identity and \e_ref{Lder_e} give
$$\frac{L(x,q)^n}{\Psi(x,q)}\Bigg[
\frac12\sum_{r=0}^m\frac{(r\!-\!2)\si_{2r}}{L(x,q)^{2r}}
\bigg(n\!-\!1\!-\!2r-\frac{\dot\Psi(x,q)}{\Psi(x,q)}\bigg)
+\a^{\a}q\bigg(n\!-\!1\!-\!\frac{\dot\Psi(x,q)}{\Psi(x,q)}\bigg)\Bigg]
=-\frac34$$
for $x\!=\!\al_i$ with $i\!\in\![n]$.
Combining this with \e_ref{K2ddfn_e3} and using \e_ref{Lder_e} again, we obtain
\BE{K2ddfn_e4}\begin{split}
\sum_{d=1}^{\i}Q^d\ti{K}_{2d} &=\frac{1}{I_1(q)}\Bigg( 
\bigg(\frac14-\frac{(1\!-\!\a^{\a}q)}{\a^{\a}q}\frac{\nD I_1(q)}{I_1(q)}\bigg) 
\nD\sum_{i=1}^{2m}\ln L(\al_i,q)\\
&\hspace{1in}+\frac{1}{2\a^{\a}q}\bigg(\frac{\ctC_{3,0}^{(2)}(q)}{I_0(q)^2}- 
\si_2\frac{\nD I_0(q)}{I_0(q)}\bigg)
\nD\sum_{i=1}^{2m}\frac1{L(\al_i,q)^2}\Bigg).
\end{split}\EE
Since $\{L(\al_i,q)\}$ with $i\!\in\![2m]$ are the roots of the polynomial equation
$\si_{2m}(z)=\a^{\a}qz^{2m}$,
\begin{equation*}\begin{split}
\nD \ln\prod_{i=1}^{2m}L(\al_i,q)
&=\nD\ln\big((1\!-\!\a^{\a}q)^{-1}\si_{2m}\big)=\frac{\a^{\a}q}{1\!-\!\a^{\a}q}\,,\\
\nD\sum_{i=1}^{2m}\frac1{L(\al_i,q)^2}&=
\nD\bigg(-2\frac{\si_{2m-2}}{\si_{2m}}\bigg)=0~~\forall\,m\ge2.
\end{split}\end{equation*}
Plugging these two identities into \e_ref{K2ddfn_e4} and using 
$Q\frac{\nd}{\nd Q}=\frac{1}{I_1(q)}\nD$, we obtain \e_ref{klein_e}.

\appendix

\section{Proof of \e_ref{ntcsym_e}}
\label{ntcsym_app}

We assume that $\nu_{\a}\!>\!0$ and denote $\a^{\a}$ by $A$ throughout this section.
Let ${\mathfrak R}$ be as defined in~\e_ref{Rsdfn_e} and~\e_ref{Rssumdfn_e}.
For each $p\!\in\!\Z$, let $\hat{p}=n\!-\!1\!-\!l\!-\!p$.

The $d\!=\!0$ case of \e_ref{ntcsym_e} follows immediately from the first equation
in~\e_ref{cs1_e}.
Lemma~\ref{ncsym_lmm} is the key observation in the proof 
of the remaining cases of~\e_ref{ntcsym_e}.
Since $\ntc_{p,s}^{(1)}=-\nc_{p,s}^{(1)}$ whenever $s\!\le\!p\!-\!\nu_{\a}$,
the $(d,s)\!=\!(1,p\!-\!\nu_{\a})$ case 
of this lemma is precisely the $d\!=\!1$ case of~\e_ref{ntcsym_e}.

\begin{lmm}\label{ncsym_lmm}
If $d\!\in\!\Z^+$ and $p,s\!\in\!\Z^{\ge0}$ are such that 
$p,s\!\le\!n\!-\!1\!-\!l$ and $p\!-\!s\!\le\!\nu_{\a}d$, then
$$\nc_{p,s}^{(d)}=\de_{p-s,\nu_{\a}d}A^d
-(-1)^{\nu_{\a}d+p+s}\nc_{\hat{s},\hat{p}}^{(d)}
-\sum_{\begin{subarray}{c}d_1+d_2=d\\ d_1,d_2\ge1\end{subarray}} 
\sum_{t=0}^{n-1-l}\!\!
(-1)^{\nu_{\a}d_2+s+t}\nc_{p,t}^{(d_1)}\nc_{\hat{s},\hat{t}}^{(d_2)}\,.$$
\end{lmm}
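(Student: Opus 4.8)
The plan is to realize the numbers $\nc^{(d)}_{p,s}$ as Taylor coefficients of an explicit rational function and then apply the Residue Theorem on $S^2$. By~\e_ref{littlec_e}, $\nc^{(d)}_{p,s}$ is the coefficient of $w^s$ in $G^{(d)}_p(w)\equiv(w+d)^p\Xi^{(d)}(w)$, where
$$\Xi^{(d)}(w)\equiv\frac{\prod_{k=1}^l\prod_{r=1}^{a_kd}(a_kw+r)}{\prod_{r=1}^d(w+r)^n},$$
so that $\nc^{(d)}_{p,s}=\Rs{w=0}\{w^{-s-1}G^{(d)}_p(w)\}$. The only poles of the $1$-form $w^{-s-1}G^{(d)}_p(w)\,\nd w$ on $S^2$ are at $w=0$, at $w=-1,\ldots,-d$, and at $w=\infty$: for $1\le r\le d$ the numerator has a simple zero at $w=-r$ coming from the factor $a_kw+a_kr$ for each $k$, so the pole order there is exactly $n\!-\!l$. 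The Residue Theorem then gives
$$\nc^{(d)}_{p,s}=-\Rs{w=\infty}\big\{w^{-s-1}G^{(d)}_p(w)\big\}-\sum_{r=1}^d\Rs{w=-r}\big\{w^{-s-1}G^{(d)}_p(w)\big\},$$
and the whole proof reduces to evaluating these two kinds of residues.

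First I would treat the residue at infinity. The substitution $w=1/u$ yields $G^{(d)}_p(1/u)=u^{\nu_{\a}d-p}\Psi^{(d)}(u)$, where
$$\Psi^{(d)}(u)=\frac{(1+du)^p\prod_{k=1}^l\prod_{r=1}^{a_kd}(a_k+ru)}{\prod_{r=1}^d(1+ru)^n}\in\Q[\al_1,\ldots,\al_n][[u]],\qquad \Psi^{(d)}(0)=A^d,$$
and it identifies $-\Rs{w=\infty}\{w^{-s-1}G^{(d)}_p\}$ with the coefficient of $u^{\,p-\nu_{\a}d-s}$ in $\Psi^{(d)}$. Since $\Psi^{(d)}$ is regular at $u=0$ and the hypothesis $p-s\le\nu_{\a}d$ forces $p-\nu_{\a}d-s\le0$, this coefficient is $\de_{p-s,\nu_{\a}d}A^d$, which is exactly the first term in the statement.

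The main step — and the principal obstacle — is the factorization of the order-$(n\!-\!l)$ residue at each $w=-r$. Writing $w=-r+u$ and splitting every product where it meets the node (at $\rho=r$ in the denominator and at $\rho=a_kr$ in the $k$-th numerator factor), I would prove the clean factorization
$$\frac{G^{(d)}_p(-r+u)}{(-r+u)^{s+1}}=(-1)^{(n-l)-\nu_{\a}r-s-1}\,u^{-(n-l)}\,G^{(d-r)}_p(u)\,G^{(r)}_{\hat s}(-u),$$
where the two tails reassemble into $G^{(d-r)}_p(u)=(d-r+u)^p\Xi^{(d-r)}(u)$ and $G^{(r)}_{\hat s}(-u)=(r-u)^{\hat s}\Xi^{(r)}(-u)$ — note that $n\!-\!1\!-\!l\!-\!s=\hat s$ is precisely the exponent on $(r-u)$ left over after cancellation. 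Extracting $\Rs{u=0}\{u^{-(n-l)}(\cdots)\}=\left\llbracket G^{(d-r)}_p(u)G^{(r)}_{\hat s}(-u)\right\rrbracket_{u;\,n-1-l}$ and writing the complementary index as $\hat t$ then gives
$$\Rs{w=-r}\big\{w^{-s-1}G^{(d)}_p\big\}=\sum_{t=0}^{n-1-l}(-1)^{\nu_{\a}r+s+t}\,\nc^{(d-r)}_{p,t}\,\nc^{(r)}_{\hat s,\hat t}.$$
The delicate part is the sign bookkeeping: the two factors $(-1)^{|\a|r-l-(r-1)n}$ (from the split) and $(-1)^{\hat t}$ (from evaluating $\Xi^{(r)}$ at $-u$) must be shown to collapse to $(-1)^{\nu_{\a}r+s+t}$ after the even $n\!-\!l$ contributions cancel, and one must check that the numerator zero reduces the pole order to exactly $n\!-\!l$ so that $\Xi^{(r)}$ and $\Xi^{(d-r)}$ emerge undamaged.

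Finally I would reassemble. Summing the last display over $r=1,\ldots,d$ and re-indexing $d_2=r$, $d_1=d-r$ produces $\sum_{d_1+d_2=d,\,d_2\ge1}\sum_t(-1)^{\nu_{\a}d_2+s+t}\nc^{(d_1)}_{p,t}\nc^{(d_2)}_{\hat s,\hat t}$. The term $r=d$ (i.e.\ $d_1=0$) equals, via $\nc^{(0)}_{p,t}=\de_{p,t}$, the quantity $(-1)^{\nu_{\a}d+p+s}\nc^{(d)}_{\hat s,\hat p}$, which I would peel off to match the lemma's second term; the remaining $d_1,d_2\ge1$ terms give the stated convolution. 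Combined with the residue-at-infinity computation, this is exactly the asserted identity. As a consistency check, the $(d,s)=(1,p\!-\!\nu_{\a})$ specialization should recover the $d=1$ case of~\e_ref{ntcsym_e}, since $\ntc^{(1)}_{p,s}=-\nc^{(1)}_{p,s}$ for $s\le p\!-\!\nu_{\a}$.
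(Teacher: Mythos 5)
Your proposal is correct and follows essentially the same route as the paper's proof: both express $\nc^{(d)}_{p,s}$ as $\Rs{w=0}\{w^{-s-1}G^{(d)}_p(w)\}$, apply the Residue Theorem on $S^2$, identify the residue at infinity with $\de_{p-s,\nu_{\a}d}A^d$, and factor the order-$(n\!-\!l)$ residue at each $w=-r$ into the convolution $\sum_t(-1)^{\nu_{\a}r+s+t}\nc^{(d-r)}_{p,t}\nc^{(r)}_{\hat s,\hat t}$ (I checked your factorization identity and its sign; they are right). The only cosmetic difference is that the paper treats the pole at $w=-d$ separately via an explicit $w\mapsto -w-d$ reflection of the defining residue, whereas you absorb it into the convolution and peel it off using $\nc^{(0)}_{p,t}=\de_{p,t}$ — the two are equivalent.
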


\begin{proof}
If $d,p,s$ are as above,
\BE{ncrsres_e}\begin{split}
\nc_{p,s}^{(d)}&=\Rs{w=0}\left\{
\frac{(w\!+\!d)^p\!\!\prod\limits_{k=1}^l\prod\limits_{r=1}^{a_kd}(a_kw\!+\!r)}
{w^{s+1}\prod\limits_{r=1}^d(w\!+\!r)^n}\right\}
=\lr\a\Rs{w=0}\left\{
\frac{\prod\limits_{k=1}^l\prod\limits_{r=1}^{a_kd-1}(a_kw\!+\!r)}
{w^{s+1}(w\!+\!d)^{n-l-p}\prod\limits_{r=1}^{d-1}(w\!+\!r)^n}\right\}\\
&=(-1)^{\nu_{\a} d+p+s}\lr\a\Rs{w=-d}\left\{
\frac{\prod\limits_{k=1}^l\prod\limits_{r=1}^{a_kd-1}(a_kw\!+\!r)}
{(w\!+\!d)^{s+1}w^{n-l-p}\prod\limits_{r=1}^{d-1}(w\!+\!r)^n}\right\};
\end{split}\EE
the last equality is obtained by substituting $-w-d$ for $w$.
If in addition $d_2\!\in\!\Z^+$ is such that $d_2\!<\!d$,
\begin{equation*}\begin{split}
&\Rs{w=-d_2}\left\{
\frac{(w\!+\!d)^p\!\!\prod\limits_{k=1}^l\prod\limits_{r=1}^{a_kd}(a_kw\!+\!r)}
{w^{s+1}\prod\limits_{r=1}^d(w\!+\!r)^n}  \right\}\\
&\quad =\lr\a\!\!\!\sum_{t=0}^{n-1-l}
\Rs{w=-d_2}\!\!\left\{\!\!\frac{\prod\limits_{k=1}^l\prod\limits_{r=1}^{a_kd_2-1}\!\!\!(a_kw\!+\!r)}
{w^{s+1}(w\!+\!d_2)^{\hat{t}+1}\prod\limits_{r=1}^{d_2-1}\!\!(w\!+\!r)^n}\!\! \right\}
\Rs{w=-d_2}\!\!\left\{\!\!\frac{(w\!+\!d)^p
\prod\limits_{k=1}^l\prod\limits_{r=a_kd_2+1}^{a_kd}(a_kw\!+\!r)}
{(w\!+\!d_2)^{t+1}\prod\limits_{r=d_2+1}^d\!\!\!(w\!+\!r)^n}\!\! \right\}\\
&\quad =\sum_{t=0}^{n-1-l}(-1)^{\nu_{\a} d_2+s+t} \nc_{\hat{s},\hat{t}}^{(d_2)}\nc_{p,t}^{(d-d_2)}\,;
\end{split}\end{equation*}
the last equality follows from \e_ref{ncrsres_e} with $(d,p,s)$ replaced
by $(d_2,\hat{s},\hat{t})$.
Thus, by the Residue Theorem~on~$S^2$, 
\begin{equation*}\begin{split}
\nc_{p,s}^{(d)}&=\Rs{w=0}\left\{
\frac{(w\!+\!d)^p\!\!\prod\limits_{k=1}^l\prod\limits_{r=1}^{a_kd}(a_kw\!+\!r)}
{w^{s+1}\prod\limits_{r=1}^d(w\!+\!r)^n}  \right\}
=-\Rs{w=\i,-1,-2,\ldots,-d}\left\{
\frac{(w\!+\!d)^p\!\!\prod\limits_{k=1}^l\prod\limits_{r=1}^{a_kd}(a_kw\!+\!r)}
{w^{s+1}\prod\limits_{r=1}^d(w\!+\!r)^n}  \right\}\\
&=\de_{p-s,\nu_{\a}d}A^d-(-1)^{\nu_{\a} d+p+s}\nc_{\hat{s},\hat{p}}^{(d)}
-\sum_{\begin{subarray}{c}d_1+d_2=d\\ d_1,d_2\ge1\end{subarray}} 
\sum_{t=0}^{n-1-l}\!\!
(-1)^{\nu_{\a}d_2+s+t}\nc_{p,t}^{(d_1)}\nc_{\hat{s},\hat{t}}^{(d_2)}\,,
\end{split}\end{equation*}
as claimed.
\end{proof}

For any $k$-tuple $\bfd\!\equiv\!(d_i)_{i\in[k]}\!\in\!(\Z^+)^k$, let
$$\ell(\bfd)\equiv k, \qquad |\bfd)\equiv d_k\,.$$
If in addition $p\!\in\!\Z^{\ge0}$ and  
$\bfr\!\equiv\!(r_i)_{i\in[k]}\!\in\!(\Z^{\ge0})^k$
is another $k$-tuple, let
$$\nc_{p\bfr}^{(\bfd)}\equiv (-1)^k\cdot
\nc^{(d_1)}_{p,r_1}\cdot\prod\limits_{i=2}^{k}\!\nc^{(d_i)}_{r_{i-1},r_i}\,.
$$
If $d\!\in\!\Z^+$ and $p\!\in\!\Z^{\ge0}$, let
\begin{equation*}\begin{split}
\cS_p(d)&\equiv
\big\{(\bfd,\bfr)\!\in\!(\Z^+)^k\!\times\!(\Z^{\ge0})^{k-1}\!:\,k\!\in\!\Z^+,~
\sum_{i=1}^kd_i\!=\!d,~r_i\le p\!-\!\nu_{\a}\sum_{j=1}^id_j~~
\forall\,i\!\in\![k\!-\!1]\big\},\\
\cS_p^*(d)&\equiv\big\{(\bfd,\bfr)\!\in\!\cS_p(d)\!:\,
r_i< p\!-\!\nu_{\a}\sum_{j=1}^id_j~~
\forall\,i\!\in\![k\!-\!1]\big\}.
\end{split}\end{equation*}
By \e_ref{littletic_e}, if in addition $s\!\le\! p-\nu_{\a}d$, then 
\BE{ntcexp_e}
\ntc_{p,s}^{(d)}=\sum_{(\bfd,\bfr)\in \cS_p(d)}\!\!\!\!\!\!\nc_{p\bfr s}^{(\bfd)}\,,
\qquad\hbox{where}\quad 
\nc_{p\bfr s}^{(\bfd)}\equiv \nc_{ps}^{(d)}\equiv-\nc_{p,s}^{(d)}~~~\hbox{if}~~\ell(\bfd)\!=\!1.
\EE

\begin{crl}\label{ncflip_crl}
If $d\!\in\!\Z^+$ and $p,s\!\in\!\Z^{\ge0}$ are such that 
$p,s\!\le\!n\!-\!1\!-\!l$ and $p\!-\!s\!\le\!\nu_{\a}d$, then
$$\nc_{p,s}^{(d)}=\de_{p-s,\nu_{\a}d}A^d
+(-1)^{\nu_{\a}d+p+s}\hspace{-.15in}
\sum_{(\bfd,\bfr )\in \cS_{\hat{s}}^*(d)}\!\!\!
\nc_{\hat{s}\bfr\hat{p}}^{(\bfd)}
+\sum_{\begin{subarray}{c}d_1+d_2=d\\ d_1,d_2\ge1\end{subarray}}
\sum_{(\bfd,\bfr )\in \cS_{\hat{s}}^*(d_2)}\!\!\!\!
\sum_{t=0}^{s+\nu_{\a}d_2}\!\!\! (-1)^{\nu_{\a}d_2+s+t}
\nc_{p,t}^{(d_1)}\nc_{\hat{s}\bfr\hat{t}}^{(\bfd)}\,.$$
\end{crl}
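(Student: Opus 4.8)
The plan is to recast Corollary~\ref{ncflip_crl} in terms of the \emph{strict chain sums}
$$U_{P,S}^{(D)}\equiv\sum_{(\bfd,\bfr)\in\cS_P^*(D)}\nc_{P\bfr S}^{(\bfd)},$$
so that the middle term on the right-hand side is $(-1)^{\nu_{\a}d+p+s}U_{\hat{s},\hat{p}}^{(d)}$ and the inner factor of the last term is $U_{\hat{s},\hat{t}}^{(d_2)}$. The first step is an elementary recursion for $U$, obtained by peeling the last factor $\nc_{r,S}^{(e)}$ off a strict chain: this preserves the start index $P$ together with its global degree budget, and the constraint on the end index of the shortened chain becomes a \emph{strict} inequality, giving
$$U_{P,S}^{(D)}=-\nc_{P,S}^{(D)}-\sum_{\substack{D'+e=D\\ D',e\ge1}}\sum_{r=0}^{P-\nu_{\a}D'-1}U_{P,r}^{(D')}\,\nc_{r,S}^{(e)}.$$
The crucial structural point, visible here, is that all chain constraints are measured from the fixed start index $P$ (not from the running index $r$), so $P$ never changes during peeling. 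This should be contrasted with the non-strict analogue, in which $r$ ranges up to $P-\nu_{\a}D'$ and $U$ is replaced by $\ntc$; that analogue is exactly \e_ref{littletic_e} rewritten through \e_ref{ntcexp_e}. The two recursions differ only in the boundary index $r=P-\nu_{\a}D'$.

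I would then prove the corollary by strong induction on $d$. The base case $d=1$ is immediate: $\cS_{\hat{s}}^*(1)$ contains only the length-one chain, so $U_{\hat{s},\hat{p}}^{(1)}=-\nc_{\hat{s},\hat{p}}^{(1)}$, the split sum over $d_1+d_2=1$ with $d_1,d_2\ge1$ is empty, and the asserted identity collapses to the $d=1$ case of Lemma~\ref{ncsym_lmm}. For the inductive step I would begin with Lemma~\ref{ncsym_lmm} and subtract the claimed right-hand side; the common $\de_{p-s,\nu_{\a}d}A^{d}$ cancels, and the length-one part of $U_{\hat{s},\hat{p}}^{(d)}$ cancels the flip term $-(-1)^{\nu_{\a}d+p+s}\nc_{\hat{s},\hat{p}}^{(d)}$. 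What remains is an identity purely among products $\nc_{p,t}^{(d_1)}\nc_{\hat{s},\hat{t}}^{(d_2)}$ and the longer structures $U_{\hat{s},r}^{(D')}\nc_{r,\hat{p}}^{(e)}$ and $\nc_{p,t}^{(d_1)}U_{\hat{s},r'}^{(D'')}\nc_{r',\hat{t}}^{(e')}$ produced by the recursion above. I would then \emph{flip} the trailing single factors $\nc_{r,\hat{p}}^{(e)}$ and $\nc_{r',\hat{t}}^{(e')}$ using the inductive hypothesis (legitimate since $e,e'<d$), which grows the "start-$\hat s$" chains one factor at a time; a secondary induction on chain length would organize the bookkeeping.

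The main obstacle will be reconciling the two genuine discrepancies between Lemma~\ref{ncsym_lmm} and Corollary~\ref{ncflip_crl}: (i) the truncation of the inner summation index from $t\le n\!-\!1\!-\!l$ in the lemma to $t\le s+\nu_{\a}d_2$ in the corollary, i.e.\ the disposal of the "overflow" terms with $\hat{t}<\hat{s}-\nu_{\a}d_2$; and (ii) the boundary index $r=P-\nu_{\a}D'$ separating $\cS_{\hat s}$ from $\cS_{\hat s}^*$. I would aim to show that, once the trailing factors are fully flipped, the overflow terms assemble — via the inverse relation \e_ref{littletic_e} — into convolutions whose total positive degree forces their vanishing, while the boundary terms recombine with the $A^{d}$ contribution (the residue at infinity in the proof of Lemma~\ref{ncsym_lmm}, cf.\ the leading-coefficient analysis behind \e_ref{ncrsres_e}). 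Keeping the global-budget constraints consistent through all these regroupings, so that the intermediate indices always satisfy the correct strict inequalities relative to the fixed start index, is the delicate part; this is where I expect the bulk of the work to lie, and it is why the statement is phrased with the precise ranges $t\le s+\nu_{\a}d_2$ and the specific chain classes $\cS_{\hat s}^*$.
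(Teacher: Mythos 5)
Your setup (the strict chain sums $U^{(D)}_{P,S}$, the peeling recursion, the base case $d=1$) is fine, and you have correctly located the two issues any proof must address: the truncation of the $t$-range to $t\le s+\nu_{\a}d_2$ and the strict inequalities defining $\cS_{\hat{s}}^*$. But the mechanism you propose for the inductive step does not work as described. Applying the inductive hypothesis (the corollary for degree $e<d$) to a trailing factor $\nc_{r,\hat{t}}^{(e)}$ produces chains lying in $\cS_{t}^*(e)$ (since $\hat{\hat{t}}=t$), i.e.\ chains that start at $t$ and end at $\hat{r}$ --- these cannot be concatenated onto your start-$\hat{s}$ chains, so the ``grow one factor at a time'' step has no way to close up; moreover you never verify the hypothesis $r-\hat{t}\le\nu_{\a}e$ needed to invoke the corollary there. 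Also, the overflow terms with $t>s+\nu_{\a}d_2$ do not ``assemble into convolutions whose total positive degree forces their vanishing,'' and no boundary terms need to be recombined with $A^d$: the overflow is nonzero at every finite stage and must be re-expanded, not discarded.

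The paper's argument is a single induction, but on the chain length $K$ rather than on $d$, and the tool applied at each step is Lemma~\ref{ncsym_lmm} itself (to the single factor $\nc_{p,t}^{(d_1)}$ in the overflow term), not the corollary's inductive hypothesis. Everything rests on one numerical coincidence your plan does not exploit: the overflow condition $t\ge s+\nu_{\a}d_2+1$ together with $p-s\le\nu_{\a}d$ gives $p-t<\nu_{\a}d_1$, which simultaneously (a)~makes $\de_{p-t,\nu_{\a}d_1}=0$, so re-applying Lemma~\ref{ncsym_lmm} to $\nc_{p,t}^{(d_1)}$ generates no new $A$-terms, and (b)~says exactly that $\hat{t}<\hat{s}-\nu_{\a}d_2$, i.e.\ that appending the resulting factor $\nc_{\hat{t},\hat{p}}^{(d_1)}$ (or $\nc_{\hat{t},\hat{u}}^{(e_2)}$) to a chain in $\cS_{\hat{s}}^*(d_2)$ yields a chain satisfying the \emph{strict} inequalities of $\cS_{\hat{s}}^*$. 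This converts the overflow attached to length-$K$ chains into length-$(K\!+\!1)$ chains plus a new overflow, and the process terminates at $K=d$ because a tuple $\bfd$ with $\ell(\bfd)=d$, all $d_i\ge1$, and $\sum d_i\le d-1$ cannot exist. If you want to keep your $U^{(D)}_{P,S}$ formulation, replace the induction on $d$ by this induction on $K$ and use Lemma~\ref{ncsym_lmm}, not the corollary, as the expansion device.
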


\begin{proof}
By Lemma~\ref{ncsym_lmm} and induction on $K$, 
\begin{equation*}\begin{split}
\nc_{p,s}^{(d)}&=\de_{p-s,\nu_{\a}d}A^d
+(-1)^{\nu_{\a}d+p+s}\hspace{-.15in}
\sum_{\begin{subarray}{c}(\bfd,\bfr )\in \cS_{\hat{s}}^*(d)\\ \ell(\bfd)\le K\end{subarray}}\!\!\!
\nc_{\hat{s}\bfr\hat{p}}^{(\bfd)}
+\sum_{\begin{subarray}{c}d_1+d_2=d\\ d_1,d_2\ge1\end{subarray}}
\sum_{\begin{subarray}{c}(\bfd,\bfr )\in \cS_{\hat{s}}^*(d_2)\\ \ell(\bfd)\le K\end{subarray}}\!\!\!\!
\sum_{t=0}^{s+\nu_{\a}d_2}\!\!\! (-1)^{\nu_{\a}d_2+s+t}
\nc_{p,t}^{(d_1)}\nc_{\hat{s}\bfr\hat{t}}^{(\bfd)}\\
&\qquad+\sum_{\begin{subarray}{c}d_1+d_2=d\\ d_1,d_2\ge1\end{subarray}}
\sum_{\begin{subarray}{c}(\bfd,\bfr )\in \cS_{\hat{s}}^*(d_2)\\ \ell(\bfd)=K\end{subarray}}
\sum_{t=s+\nu_{\a}d_2+1}^{n-1-l}\!\!\!\!\!\!\! (-1)^{\nu_{\a}d_2+s+t}
\nc_{p,t}^{(d_1)}\nc_{\hat{s}\bfr\hat{t}}^{(\bfd)}\,
\end{split}\end{equation*}
for all $K\!\ge\!1$.\footnote{The $K\!=\!1$
case of this identity is Lemma~\ref{ncsym_lmm}.
The inductive step is carried out by applying Lemma~\ref{ncsym_lmm} to 
the factor $\nc_{p,t}^{(d_1)}$ on the last line of this identity and noting
that the assumptions imply that $p\!-\!t\!<\!\nu_{\a}d_1$.}
Setting $K\!=\!d$, we obtain the claim.
\end{proof}

\begin{crl}\label{ntcflip_crl}
If $d,p\!\in\!\Z$ are such that $d\!\ge\!2$ and
$\nu_{\a}d\!\le\!p\le\!n\!-\!1\!-\!l$, then
$$\sum_{(\bfd,\bfr)\in \cS_p(d)}\!\!\!\!\!\!\nc_{p\bfr(p-\nu_{\a}d)}^{(\bfd)}
=-\!\!\!\sum_{(\bfd,\bfr)\in \cS_{\hat{p}+\nu_{\a}d}^*(d)}\!\!\!\!\!\!\!\!\!
\nc_{(\hat{p}+\nu_{\a}d)\bfr\hat{p}}^{(\bfd)}
+A\sum_{(\bfd,\bfr)\in \cS_{\hat{p}+\nu_{\a}(d-1)}^*(d-1)}\!\!\!\!\!\!\!\!\!
\nc_{(\hat{p}+\nu_{\a}(d-1))\bfr\hat{p}}^{(\bfd)}\,.$$
\end{crl}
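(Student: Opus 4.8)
The plan is to read both sides off as sums of \emph{chains} of $\nc$'s via \e_ref{ntcexp_e} and to prove the identity by induction on~$d$, using Corollary~\ref{ncflip_crl} (equivalently the one‑step reflection of Lemma~\ref{ncsym_lmm}) as the only analytic input. First I would abbreviate
\[
E(p,d)\equiv\ntc_{p,p-\nu_{\a}d}^{(d)}=\!\!\!\sum_{(\bfd,\bfr)\in\cS_p(d)}\!\!\!\!\nc_{p\bfr(p-\nu_{\a}d)}^{(\bfd)},
\qquad
E^*(P,e)\equiv\!\!\!\sum_{(\bfd,\bfr)\in\cS_P^*(e)}\!\!\!\!\nc_{P\bfr(P-\nu_{\a}e)}^{(\bfd)} .
\]
By \e_ref{ntcexp_e} the left‑hand side of the corollary is $E(p,d)$ and the two sums on the right are $E^*(\hat p+\nu_{\a}d,d)$ and $E^*(\hat p+\nu_{\a}(d-1),d-1)$, so the claim reads $E(p,d)=-E^*(\hat p+\nu_{\a}d,d)+A\,E^*(\hat p+\nu_{\a}(d-1),d-1)$. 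Since the target index $p-\nu_{\a}d$ of every chain in $\cS_p(d)$ is \emph{extremal}, the membership condition $r_{k-1}\le p-\nu_{\a}(d-d_k)$ forces the final factor $\nc_{r_{k-1},p-\nu_{\a}d}^{(d_k)}$ to satisfy the hypotheses of Corollary~\ref{ncflip_crl}; this is what makes reflection available on every term.

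Next I would record the elementary \textbf{first‑saturation decomposition}: a chain in $\cS_p(d)$ is either strict (hence counted by $E^*(p,d)$) or splits uniquely at its first saturated intermediate node $r_i=p-\nu_{\a}(d_1+\cdots+d_i)$ into a strict extremal prefix of degree $D$ and a (possibly non‑strict) extremal suffix, the two sign conventions multiplying correctly. This gives
\[
E(p,d)=E^*(p,d)+\sum_{D=1}^{d-1}E^*(p,D)\,E(p-\nu_{\a}D,d-D).
\]
I would then induct on $d\ge2$, the step invoking the claim at the suffix degrees $d'=d-D\in[1,d-1]$: for $d'\ge2$ the inductive hypothesis (the corollary), and for $d'=1$ the genuine degree‑one relation $E(p',1)=-E^*(\hat{p'}+\nu_{\a},1)-A$, which is just $E(p',1)=-\nc_{p',p'-\nu_{\a}}^{(1)}$ combined with $\nc_{p',p'-\nu_{\a}}^{(1)}+\nc_{\hat{p'}+\nu_{\a},\hat{p'}}^{(1)}=A$ from the $d=1$ case of Lemma~\ref{ncsym_lmm} (the first identity in \e_ref{cs1_e}). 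It is precisely this $d'=1$ term that supplies the sign flip distinguishing the two $A$‑contributions.

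A key simplification is that after reflection the source index $\hat p+\nu_{\a}d$ is \emph{independent of $D$}, so the $-E^*$‑contributions collapse into one convolution; substituting the decomposition and reorganizing (splitting off $D=d-1$) shows that the whole inductive step is equivalent to the single self‑similar identity
\[
\Theta(d)=A\,\Theta(d-1),\qquad
\Theta(e)\equiv E^*(p,e)+E^*(\hat p+\nu_{\a}e,e)-\sum_{D=1}^{e-1}E^*(p,D)\,E^*(\hat p+\nu_{\a}e,e-D),
\]
with base value $\Theta(1)=-A$ (again the $d=1$ reflection), i.e.\ $\Theta(d)=-A^{d}$. Thus everything reduces to this pure \emph{strict}‑chain reflection statement, which I expect to be the main obstacle.

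The way I would attack $\Theta(d)=-A^d$ is to reflect the chains $E^*(\hat p+\nu_{\a}e,\cdot)$ onto the $p$‑side: factorwise application of Lemma~\ref{ncsym_lmm} acts as a chain‑reversing anti‑homomorphism $\nc_{a,b}^{(c)}\mapsto\nc_{\hat b,\hat a}^{(c)}$ (carrying the sign $(-1)^{\nu_{\a}c+a+b}$), sending a strict chain $\hat p+\nu_{\a}e\to\hat p+\nu_{\a}D$ to one $p-\nu_{\a}D\to p-\nu_{\a}e$, which concatenates with the factor $E^*(p,D)$ into a chain in $\cS_p(d)$. The $\de_{\cdot,\nu_{\a}c}A^{c}$ terms of Lemma~\ref{ncsym_lmm} survive only at a saturated final factor — that is, at the undecomposed $k=1$ chain — and it is exactly these that produce the single factor $A$ and the degree shift relating $\Theta(d)$ to $\Theta(d-1)$. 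The genuinely delicate points, which I expect to be the real work, are purely bookkeeping: (i) checking that reflection converts the strict constraints of $\cS^*$ into the correct strict constraints on the reflected side, where source and target (and hence the directions of the defining inequalities) are interchanged; and (ii) tracking the signs $(-1)^{\nu_{\a}c+a+b}$ so that they telescope and leave only the one surviving $A^{c}$. No new analytic input beyond the residue computation already performed in Lemma~\ref{ncsym_lmm} should be required.
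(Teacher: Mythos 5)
Your first half is correct and genuinely different from the paper's route. The first-saturation decomposition $E(p,d)=E^*(p,d)+\sum_{D=1}^{d-1}E^*(p,D)\,E(p-\nu_{\a}D,d-D)$ is valid (the signs $(-1)^{\ell(\bfd)}$ are multiplicative under concatenation of chains), the bookkeeping $\widehat{p-\nu_{\a}D}+\nu_{\a}(d-D)=\hat p+\nu_{\a}d$ does make the reflected source index independent of $D$, and the outer induction on $d$ together with the base case $\Theta(1)=-A$ (which is the $d\!=\!1$ case of Lemma~\ref{ncsym_lmm}, not of \e_ref{cs1_e}) correctly reduces the corollary to $\Theta(d)=A\,\Theta(d-1)$. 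The paper instead keeps the total degree fixed and inducts on the degree $|\bfd)$ of the terminal factor of each chain, applying Corollary~\ref{ncflip_crl} to that factor alone and carrying an explicit three-term intermediate identity through the induction; your organization is cleaner up to this point.

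The gap is the last step. Since your reduction is an equivalence, $\Theta(d)=-A^{d}$ carries the entire content of the corollary, and the mechanism you propose for it fails as stated. First, a ``factorwise'' application of Lemma~\ref{ncsym_lmm} is not even defined on all factors: the lemma requires $p-s\le\nu_{\a}d$, which holds automatically for the \emph{terminal} factor of an extremal chain but typically fails for interior factors, whose drop $r_{i-1}-r_i$ is not bounded by $\nu_{\a}d_i$ (the constraints in $\cS^*_p$ bound each $r_i$ from above, not the successive differences). Second, even where it applies, Lemma~\ref{ncsym_lmm} is not an anti-homomorphism up to a single $\de$-term: each application produces a convolution $\sum_{c_1+c_2=c}\sum_{t=0}^{n-1-l}(\pm)\,\nc^{(c_1)}_{a,t}\nc^{(c_2)}_{\hat b,\hat t}$ over \emph{all} $t$, and inserting this into an interior slot of a chain yields products with interleaved hatted and unhatted indices that do not reassemble into elements of $\cS_p$ or $\cS^*_{\hat p+\nu_{\a}e}$, so there is nothing that telescopes factorwise. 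A correct completion must reflect only the terminal factor via the fully iterated Corollary~\ref{ncflip_crl} (whose application to the length-one chain of $E^*(p,e)$ does produce the $-A^{e}$ and $-E^*(\hat p+\nu_{\a}e,e)$ you need, plus corrections) and then run an auxiliary induction on how much of the tail has been reflected --- which is precisely the intermediate identity, with its two explicit error sums, that the paper proves by induction on $d_2^*$. So the plan is salvageable, but the ``bookkeeping'' you defer is not routine; it is the proof.
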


\begin{proof}
By Corollary~\ref{ncflip_crl} and induction on $d_2^*$,
\begin{equation*}\begin{split}
&\sum_{\begin{subarray}{c}(\bfd,\bfr)\in \cS_p(d)\\ |\bfd)\le d_2^*\end{subarray}}
\!\!\!\!\!\!\nc_{p\bfr(p-\nu_{\a}d)}^{(\bfd)}
=-A\sum_{\begin{subarray}{c}(\bfd,\bfr)\in \cS_p(d-1)\\ |\bfd)\ge d_2^*\end{subarray}}
\!\!\!\!\!\!\!\!\!
\nc_{p\bfr(p-\nu_{\a}(d-1))}^{(\bfd)}\\
&\qquad\quad
-\sum_{\begin{subarray}{c}d_1+d_2=d\\ 1\le d_2\le d_2^*\end{subarray}}
\sum_{\begin{subarray}{c}(\bfd',\bfr')\in \cS_p(d_1)\\ 
|\bfd')>d_2^*-d_2\end{subarray}}
\sum_{(\bfd'',\bfr'')\in \cS_{\hat{p}+\nu_{\a}d}^*(d_2)}
\!\!\!\sum_{t=0}^{p-\nu_{\a}d_1}\!\!\!(-1)^{p-\nu_{\a}d_1+t}
 \nc_{p\bfr't}^{(\bfd')}\nc_{(\hat{p}+\nu_{\a}d)\bfr''\hat{t}}^{(\bfd'')}\\
&\qquad\quad
+A\sum_{\begin{subarray}{c}d_1+d_2=d\\ 2\le d_2\le d_2^*\end{subarray}}
\sum_{\begin{subarray}{c}(\bfd',\bfr')\in \cS_p(d_1)\\ 
|\bfd')>d_2^*-d_2\end{subarray}}
\sum_{(\bfd'',\bfr'')\in \cS_{\hat{p}+\nu_{\a}(d-1)}^*(d_2-1)}
\!\!\!\sum_{t=0}^{p-\nu_{\a}d_1}\!\!\!(-1)^{p-\nu_{\a}d_1+t}
 \nc_{p\bfr't}^{(\bfd')}\nc_{(\hat{p}+\nu_{\a}(d-1))\bfr''\hat{t}}^{(\bfd'')}
\end{split}\end{equation*}
for all $d_2^*\!=\!1,\ldots,d\!-\!1$.\footnote{The $d_2^*\!=\!1$
case of this identity is obtained from Corollary~\ref{ncflip_crl}
with $(d,p,s)$ replaced by $(1,r_{\ell(\bfd)-1},p\!-\!\nu_{\a}d)$.
The inductive step is carried out by using Corollary~\ref{ncflip_crl}
with $(d,p,s)$ replaced by $(d_2^*\!+\!1,r_{\ell(\bfd)-1},p\!-\!\nu_{\a}d)$
and $(d_2^*,r_{\ell(\bfd)-1},p\!-\!\nu_{\a}(d-1))$.}
Combining the $d_2^*\!=\!d\!-\!1$ case of this identity and 
Corollary~\ref{ncflip_crl} with $(d,s)$ replaced by 
$(d,p\!-\!\nu_{\a}d)$ and $(d\!-\!1,p\!-\!\nu_{\a}(d\!-\!1))$, we obtain the claim.
\end{proof}

We now verify \e_ref{ntcsym_e} for $d\!\ge\!2$.
By \e_ref{ntcexp_e} and Corollary~\ref{ntcflip_crl},
\begin{equation*}\begin{split}
&\ntc^{(d)}_{p,p-\nu_{\a}d}=
-\Bigg(\ntc^{(d)}_{\hat{p}+\nu_{\a}d,\hat{p}} 
-\sum_{\begin{subarray}{c} d_1+d_2=d\\ d_1,d_2\ge1\end{subarray}} 
\sum_{\begin{subarray}{c}(\bfd',\bfr')\in \cS_{\hat{p}+\nu_{\a}d}^*(d_1)\\
(\bfd'',\bfr'')\in \cS_{\hat{p}+\nu_{\a}d_2}(d_2)\end{subarray}}
\hspace{-.45in}
\nc_{(\hat{p}+\nu_{\a}d)\bfr'(\hat{p}+\nu_{\a}d_2)}^{(\bfd')}
\nc_{(\hat{p}+\nu_{\a}d_2)\bfr''\hat{p}}^{(\bfd'')} \Bigg)\\
&\hspace{.7in} +A\Bigg(\ntc^{(d-1)}_{\hat{p}+\nu_{\a}(d-1),\hat{p}} 
-\sum_{\begin{subarray}{c} d_1+d_2=d\\ 2\le d_1\le d-1\end{subarray}} 
\sum_{\begin{subarray}{c}(\bfd',\bfr')\in \cS_{\hat{p}+\nu_{\a}(d-1)}^*(d_1-1)\\
(\bfd'',\bfr'')\in \cS_{\hat{p}+\nu_{\a}d_2}(d_2)\end{subarray}}
\hspace{-.45in}
\nc_{(\hat{p}+\nu_{\a}(d-1))\bfr'(\hat{p}+\nu_{\a}d_2)}^{(\bfd')}
\nc_{(\hat{p}+\nu_{\a}d_2)\bfr''\hat{p}}^{(\bfd'')} \Bigg)\\
&\quad 
=-\ntc^{(d)}_{\hat{p}+\nu_{\a}d,\hat{p}}+ 
\ntc^{(1)}_{\hat{p}+\nu_{\a}d,\hat{p}+\nu_{\a}(d-1)}\ntc^{(d-1)}_{\hat{p}+\nu_{\a}(d-1),\hat{p}} \
+A\ntc^{(d-1)}_{\hat{p}+\nu_{\a}(d-1),\hat{p}} 
-\sum_{\begin{subarray}{c} d_1+d_2=d\\ 2\le d_1\le d-1\end{subarray}}\hspace{-.15in} 
\ntc^{(d_1)}_{p-\nu_{\a}d_2,p-\nu_{\a}d} \ntc^{(d_2)}_{\hat{p}+\nu_{\a}d_2,\hat{p}}\,.
\end{split}\end{equation*}
Combining this with the first equation in~\e_ref{cs1_e}, we obtain
\begin{equation*}\begin{split}
\sum_{\begin{subarray}{c}d_1+d_2=d\\ d_1,d_2\ge0\end{subarray}}\hspace{-.1in} 
\ntc^{(d_1)}_{p-\nu_{\a}d_2,p-\nu_{\a}d}\ntc^{(d_2)}_{\hat{p}+\nu_{\a}d_2,\hat{p}}
=\big(\ntc^{(1)}_{p-\nu_{\a}(d-1),p-\nu_{\a}d} 
+\ntc^{(1)}_{\hat{p}+\nu_{\a}d,\hat{p}+\nu_{\a}(d-1)} +A\big)
\ntc^{(d-1)}_{\hat{p}+\nu_{\a}(d-1),\hat{p}}=0;
\end{split}\end{equation*}
the last equality is the $d\!=\!1$ case of \e_ref{ntcsym_e} with $p$ replaced by 
$p-\nu_{\a}(d\!-\!1)$.\\

\vspace{3mm}

\noindent
{\it Department of Mathematics, Rutgers University, Piscataway, NJ 08854-8019 \\
alexandra@math.rutgers.edu}\\

\noindent
{\it Department of Mathematics, SUNY Stony Brook, NY 11794-3651\\
azinger@math.sunysb.edu}


\begin{thebibliography}{99}

\bibitem[ABo]{ABo} M.\,Atiyah and R.\,Bott, 
{\it The moment map and equivariant cohomology}, Topology 23 (1984), 1--28.
 
\bibitem[BeK]{BeK} A.~Bertram and H.~Kley, 
{\it New recursions for genus-zero Gromov-Witten invariants}, 
Topology 44 (2005), no.~1, 1–-24. 
 
\bibitem[BDPP]{BDPP} G.\,Bini, C.\,de Cocini, M.\,Polito, and C.\,Procesi,  
{\it On the work of Givental relative to mirror symmetry},
{\it Appunti dei Corsi Tenuti da Docenti della Scuola}, 
Scuola Normale Superiore, Pisa,~1998.

\bibitem[C]{C} L.~Cherveny,  
{\it Genus-zero mirror principle for two marked points},
math/1001.0242.

\bibitem[CdGP]{CdGP} P.~Candelas, X.~de la Ossa, P.~Green, and L.~Parkes, 
{\it A pair of Calabi-Yau manifolds as an exactly soluble superconformal theory},
Nuclear Phys.~B359 (1991), 21--74.

\bibitem[Du]{Du} B.~Dubrovin, 
{\it Geometry of 2d topological field theories},
hep-th/9407018.

\bibitem[ES]{ES} G.~Ellingsrud and S.~A.~Stromme,   
{\it Bott's formula and enumerative geometry},
J.~Amer.~Math.~Soc.~175 (1996), no.~1, 175-–193.

\bibitem[GhT]{GhT}  A.~Gholampour and H.-H.~Tseng, 
{\it On computations of genus zero two-point descendant Gromov-Witten invariants},
math/1207.6071.

\bibitem[Gi96]{Gi96} A.~Givental, 
{\it Equivariant Gromov-Witten invariants}, IMRN (1996),  no.~13,
613--663.

\bibitem[Gi98]{Gi98} A.~Givental, 
{\it Elliptic Gromov-Witten invariants and the generalized mirror conjecture},
Integrable systems and algebraic geometry, 107–155, World Sci.~Publ.~1998. 


\bibitem[Gi99]{Gi99} A.~Givental, 
{\it The mirror formula for quintic threefolds}, 
AMS Transl.~Ser.~2, 196 (1999). 

\bibitem[Gi04]{Gi04} A.~Givental, {\it Symplectic geometry of Frobenius structures}, 
Frobenius manifolds, 91–112, Aspects Math., E36, Vieweg, Wiesbaden, 2004.


\bibitem[Ka]{Ka} S.~Katz,   
{\it Rational curves on Calabi-Yau manifolds: verifying predictions of mirror symmetry},
Algebraic Geometry (E.~Ballico, ed.), Marcel-Dekker, 1994.

\bibitem[KP]{KP} A.~Klemm and R.~Pandharipande,   
{\it Enumerative geometry of Calabi-Yau 4-folds}, 
Comm.~Math.~Phys.~281 (2008),  no.~3, 621--653.

\bibitem[LP]{LP} Y.-P.~Lee and R.~Pandharipande,
{\it  A reconstruction theorem in quantum cohomology and quantum $K$-theory},
Amer.~J.~Math. 126 (2004), no.~6, 1367–1379.

\bibitem[LiZ]{LiZ} J.~Li and A.~Zinger, 
{\it On the genus-one Gromov-Witten invariants of complete intersections},
JDG 82 (2009), no.~3, 641--690.

\bibitem[LLY]{LLY} B.~Lian, K.~Liu, and S.T.~Yau,   
{\it Mirror Principle I}, Asian J.~of Math.~1, no.~4 (1997), 729--763. 

\bibitem[MirSym]{MirSym} K.~Hori, S.~Katz, A.~Klemm, R.~Pandharipande, 
R.~Thomas, C.~Vafa, R.~Vakil, and E.~Zaslow, {\it Mirror Symmetry},
Clay Math.\ Inst., AMS, 2003. 

\bibitem[Po]{Po} A.~Popa, 
{\it The genus one Gromov-Witten invariants of Calabi-Yau complete intersections},
Trans.~Amer.~Math.~Soc.~365 (2013), no.~3, 1149–1181. 

\bibitem[PSoW]{PSW} R.~Pandharipande, J.~Solomon and J.~Walcher,  
{\it Disk enumeration on the quintic 3-fold}, 
J.~~Amer.~Math.~Soc.~21 (2008), 1169-1209.

\bibitem[Sh]{Sh} V.~Shende, {\it One point disc descendants of complete
intersections}, in preparation. 

\bibitem[So]{So} J.~Solomon, 
{\it Intersection theory on the moduli space of holomorphic curves 
with Lagrangian boundary conditions},
math/0606429.

\bibitem[W]{W1} J.~Walcher,   
{\it Evidence for tadpole cancellation in the topological string}, 
Comm.~Number Theory Phys.~3 (2009), no.~1, 111–-172.

\bibitem[ZaZ]{ZaZ} D.~Zagier and A.~Zinger,  
{\it Some properties of hypergeometric series 
associated with mirror symmetry}, 
{\it Modular Forms and String Duality},  
163--177, Fields Inst.~Commun.~54, AMS, 2008. 

\bibitem[Z09a]{g1comp2} A.~Zinger,  
{\it Reduced genus-one Gromov-Witten invariants},
JDG 83 (2009), no.~2, 407--460.

\bibitem[Z09b]{bcov1} A.~Zinger,
{\it The reduced genus-one Gromov-Witten invariants of Calabi-Yau hypersurfaces},  
J.~Amer.~Math.~Soc.~22 (2009),  no.~3, 691--737.

\bibitem[Z10]{bcov0} A.~Zinger,   
{\it Genus zero two-point hyperplane integrals in Gromov-Witten theory},
Comm.~Analysis Geom.~17 (2010), no.~5, 1–-45.

\bibitem[Z11]{g0ci} A.~Zinger, 
{\it On the Genus 0 Gromov-Witten Invariants of Projective Complete Intersections},
math/1106.1633.


\end{thebibliography}
\end{document}